\documentclass[11pt]{amsart}
\usepackage{latexsym,amsmath,amsfonts,amscd,amssymb}
\usepackage{stmaryrd}
\usepackage{hyperref}
\usepackage{array}   
\usepackage[mathscr]{eucal} 
\usepackage{mathrsfs}
\usepackage{graphicx}
\usepackage{nicematrix}
\usepackage{xy}
\usepackage{calligra}
\usepackage[T1]{fontenc}
\usepackage[nocompress]{cite}
\DeclareFontFamily{OT1}{pzc}{}
\DeclareFontShape{OT1}{pzc}{m}{it}{<-> s * [1.10] pzcmi7t}{}
\DeclareMathAlphabet{\mathpzc}{OT1}{pzc}{m}{it}
\DeclareFontFamily{OT1}{rsfs}{}
\DeclareFontShape{OT1}{rsfs}{n}{it}{<->rsfs10}{}
\DeclareMathAlphabet{\curly}{OT1}{rsfs}{n}{it}
\theoremstyle{plain}
\newtheorem{theorem}{Theorem}[section]
\newtheorem*{theorem*}{Theorem}

\newtheorem{lemma}[theorem]{Lemma}
\newtheorem{proposition}[theorem]{Proposition}
\newtheorem{conjecture}[theorem]{Conjecture}
\newtheorem*{conjecture*}{Conjecture}
\newtheorem*{proposition*}{Proposition}
\theoremstyle{definition}
\newtheorem{definition}[theorem]{Definition}
\theoremstyle{remark}
\newtheorem{example}[theorem]{Example}
\newtheorem{remark}[theorem]{Remark}

\newtheorem*{claim*}{Claim}
\numberwithin{equation}{section}

\renewcommand{\le}{\leqslant}

\renewcommand{\ge}{\geqslant}
\renewcommand{\setminus}{\smallsetminus}

\newcommand{\R}{\mathbb{R}}

\newcommand{\Z}{\mathbb{Z}}
\newcommand{\C}{\mathbb{C}}

\DeclareMathOperator*{\heightr}{ht}

\newcommand{\Waff}[1][]{
	W_{\mathrm{aff}
		\if\relax\detokenize{#1}\relax
		\else
		,#1
		\fi
	}
}
\newcommand{\gen}[1]{\left< #1 \right>}

\newcommand{\lie}{\mathfrak}

\newcommand{\PGL}{\mathrm{PGL}}

\DeclareMathOperator{\ord}{ord}

\DeclareMathOperator{\Spec}{Spec}
\DeclareMathOperator{\ad}{ad}
\DeclareMathOperator{\Ad}{Ad}

\DeclareMathOperator{\rank}{rank}

\DeclareMathOperator{\Hom}{Hom}

\DeclareMathOperator{\Id}{Id}

\DeclareMathOperator{\Sym}{Sym}

\DeclareMathOperator{\Aut}{Aut}

\DeclareMathOperator{\Fl}{Fl}

\DeclareMathOperator*{\GL}{GL}
\DeclareMathOperator*{\SL}{SL}

\DeclareMathOperator*{\Prym}{Prym}
\DeclareMathOperator*{\Pic}{Pic}

\newcommand{\pdeg}{\operatorname{pardeg}}

\newcommand{\Gr}{\operatorname{Gr}}
\renewcommand{\phi}{\varphi}
\newcommand{\git}{\mathbin{/\mkern-6mu/}}

\newcommand{\liep}{\mathfrak{p}}
\newcommand{\liet}{\mathfrak{t}}

\newcommand{\lieg}{\mathfrak{g}}

\newcommand{\liel}{\mathfrak{l}}

\newcommand{\liesl}{\mathfrak{sl}}

\renewcommand{\phi}{\varphi}

\usepackage{tikz-cd}
\usetikzlibrary{calc}
\tikzset{curve/.style={settings={#1},to path={(\tikztostart)
			.. controls ($(\tikztostart)!\pv{pos}!(\tikztotarget)!\pv{height}!270:(\tikztotarget)$)
			and ($(\tikztostart)!1-\pv{pos}!(\tikztotarget)!\pv{height}!270:(\tikztotarget)$)
			.. (\tikztotarget)\tikztonodes}},
	settings/.code={\tikzset{quiver/.cd,#1}
		\def\pv##1{\pgfkeysvalueof{/tikz/quiver/##1}}},
	quiver/.cd,pos/.initial=0.35,height/.initial=0}
\tikzset{tail reversed/.code={\pgfsetarrowsstart{tikzcd to}}}
\tikzset{2tail/.code={\pgfsetarrowsstart{Implies[reversed]}}}
\tikzset{2tail reversed/.code={\pgfsetarrowsstart{Implies}}}
\tikzset{no body/.style={/tikz/dash pattern=on 0 off 1mm}}

\begin{document}
	\title[Very stable parabolic $G$-Higgs bundles and affine flag varieties]{Very stable parabolic $G$-Higgs bundles \\ and affine flag varieties}
	\author[Miguel González]{Miguel González}
	\address{Instituto de Ciencias Matem\'aticas \\
		CSIC-UAM-UC3M-UCM \\ Nicol\'as Cabrera, 13--15 \\ 28049 Madrid \\ Spain}
	\email{miguel.gonzalez@icmat.es}
	\noindent
	\thanks{ \noindent The project that gave rise to these results received the support of a fellowship from ``la Caixa'' Foundation (ID 100010434). The fellowship code is LCF/BQ/DR23/12000030. The work was also partially supported by the Spanish Ministry of Science, Innovation, and Universities through the ``Severo Ochoa Programme for Centres of Excellence in R{\&}D (CEX2023-001347-S)'' and grant PID2022-141387NB-C21, as well as ILINK25127 CSIC grant. This work was also partially supported by ERC grant (ViaFiPos, 101199663). Views and opinions expressed are however those of the author only and do not necessarily reflect those of the European Union or the European Research Council Executive Agency. Neither the European Union nor the granting authority can be held responsible for them.
	}
	\subjclass[2020]{Primary 14H60; Secondary 14H70, 14D20}
	\begin{abstract}
	Let $G$ be a semisimple complex algebraic group. We study the $\C^\times$-action on the moduli space of strongly parabolic $G$-Higgs bundles over a punctured curve with weights in the interior of the standard Weyl alcove. We describe its fixed points and, for the components with generically regular Higgs field, we classify the fixed points that have closed Bia\l ynicki-Birula upward flows, called \textit{very stable}. The classification is stated in terms of a naturally associated divisor valued in the extended affine Weyl group of $G$ which encodes the information of the fixed point. Under this point of view, very stable fixed points naturally coincide with divisors where the coefficients are minimal under the Bruhat order. Furthermore, when the divisor is supported at the parabolic punctures, the corresponding very stable upward flow is a section of the Hitchin map. For these upward flows, we propose a conjectural mirror line bundle over the dual moduli space, and verify its validity for $G=\PGL_n(\C)$.
	\end{abstract}
	
	\maketitle
	
	\section{Introduction}\label{secintro}
	
	The theory of parabolic Higgs bundles over a smooth projective complex curve $C$ was initiated by Simpson \cite{simpsonHarmonicBundlesNoncompact1990} motivated by finding a generalisation of the non-abelian Hodge correspondence of Corlette, Donaldson, Hitchin and Simpson \cite{corlette_flat_1988, donaldson_twisted_1987,hitchin_self-duality_1987,simpson_higgs_1992} to the setting with punctures $D = c_1+\dots+c_s$ in $C$, in a similar way to how the theory of parabolic bundles of Mehta--Seshadri \cite{mehtaModuliVectorBundles1980} generalises the theorem of Narasimhan--Seshadri \cite{narasimhan_stable_1965} in the presence of parabolic punctures. Roughly speaking, given a choice of weights $\alpha = (\alpha_1,\dots,\alpha_s)$, each of which is a vector of $n$ real numbers, there are notions of stability and a quasiprojective moduli space $\mathcal M(\GL_n(\C),\alpha)$, constructed by Biquard \cite{biquardFibresParaboliquesStables1991}, Konno \cite{konnoConstructionModuliSpace1993} and Yokogawa \cite{yokogawaCompactificationModuliParabolic1993}, satisfying the existence of a homeomorphism
	$$\mathcal M(\GL\nolimits_n(\C),\alpha) \simeq \left(\Hom(\pi_1(C \setminus D), \GL\nolimits_n(\C))\git \GL\nolimits_n(\C)\right)_\alpha,$$
	\noindent where the subscript denotes the subvariety where the compact part (i.e. the factor belonging to $U(n)$ in the Cartan decomposition of $\GL_n(\C)$) of the monodromies at the punctures is determined by $\alpha$. Since then, the theory has been generalised to accommodate for arbitrary complex or real reductive structure groups \cite{biquard_parabolic_2019}. 
	
	We consider $G$ to be complex semisimple and work with the corresponding moduli space $\mathcal M(G,\alpha)$, related via non-abelian Hodge theory to representations into $G$. The space $\mathcal M(G,\alpha)$ presents a very rich geometry \cite[Proposition 7.1]{biquard_parabolic_2019}, \cite{bottacinSymplecticGeometryModuli1995,markmanSpectralCurvesIntegrable1994,logaresModuliParabolicHiggs2010}: it is endowed with a Poisson structure foliated by symplectic leaves corresponding to prescribing certain extra information related to the non-compact part of the monodromies around the punctures of the corresponding representation. Each symplectic leaf is furthermore endowed with a hyperkähler structure. In this paper we focus on the leaf of \textbf{strongly parabolic} Higgs bundles
	$$\mathcal M_{sp}(G,\alpha) \subseteq \mathcal M(G,\alpha).$$
	It is equipped with an algebraically completely integrable system
	$$h_G : \mathcal M_{sp}(G,\alpha) \to \mathcal A(G)$$
	\noindent onto a vector space, known as the \textbf{Hitchin system} (originally introduced by Hitchin \cite{hitchin_stable_1987} in the non-parabolic setting), whose geometry was studied in detail in \cite{bottacinSymplecticGeometryModuli1995,markmanSpectralCurvesIntegrable1994}.  
	
	As it is the case for moduli spaces of usual (non-parabolic) $G$-Higgs bundles, the space $\mathcal M_{sp}(G,\alpha)$ plays an important role in the setting of mirror symmetry and Langlands duality for Higgs bundles initiated by Hausel--Thaddeus \cite{hausel_mirror_2003}, who already considered the parabolic setting in their original announcement. The physical aspects of this duality and their relation to the geometric Langlands program were studied in \cite{kapustin_electric-magnetic_2007} (for the non-parabolic setting) and \cite{gukovGaugeTheoryRamification2008} (for the parabolic setting, related to the ramified geometric Langlands program). It is a duality between $\mathcal M_{sp}(G,\alpha)$ and $\mathcal M_{sp}(G^\vee, \alpha^\vee)$, where $G^\vee$ denotes the Langlands dual group, which manifests in several different ways. One is \textbf{topological mirror symmetry}, i.e. the agreement of certain invariants (\textit{stringy} $E$-polynomials), established for $G=\SL_2(\C)$ and $G=\SL_3(\C)$ by Gothen and Oliveira \cite{gothenTopologicalMirrorSymmetry2019}, and later for $G=\SL_n(\C)$ by Shen \cite{shenMirrorSymmetryParabolic2024} and Su--Wang--Wen \cite{suTopologicalMirrorSymmetry2022} using the $p$-adic integration techniques \cite{groechenigMirrorSymmetryModuli2020} that were originally applied to settle this form of the duality in the non-parabolic setting. Another form is \textbf{Strominger--Yau--Zaslow (SYZ) mirror symmetry}, i.e. a duality of abelian fibrations
	\[\begin{tikzcd}[ampersand replacement=\&]
		{\mathcal M_{sp}(G,\alpha)} \&\& {\mathcal M_{sp}(G^\vee,\alpha^\vee)} \\
		\\
		\& {\mathcal A(G) \simeq \mathcal A(G^\vee)}
		\arrow["{h_G}", from=1-1, to=3-2]
		\arrow["{h_{G^\vee}}"', from=1-3, to=3-2]
	\end{tikzcd}\]
	
	\noindent such that the generic fibres are dual abelian varieties. Originally considered in the non-parabolic setting for $G=\SL_n(\C)$ by Hausel--Thaddeus \cite{hausel_mirror_2003} and general $G$ by Donagi--Pantev \cite{donagi_langlands_2012}, it has been proven in the parabolic setting for $G=\SL_n(\C)$ by Biswas--Dey \cite{biswasSYZDualityParabolic2012} and there has also been recent work for type $B$ and $C$ by Wang--Wen--Wen \cite{wangSpringerCorrespondenceMirror2025}.
	
	In the non-parabolic setting, the work of Donagi--Pantev \cite{donagi_langlands_2012} establishes a \textit{classical limit} of the geometric Langlands correspondence, i.e. an equivalence of categories
	$$D_c^b(\mathcal M(G)) \simeq D_c^b(\mathcal M(G^\vee))$$
	\noindent generically over $\mathcal A(G)$, obtained by a fibrewise Fourier--Mukai transform. Taking into account the hyperkähler structure, this duality should \cite[Section 12]{kapustin_electric-magnetic_2007} exchange \textbf{BAA-branes} (which include local systems supported on holomorphic Lagrangian subvarieties) on $\mathcal M(G)$ with \textbf{BBB-branes} (hyperholomorphic connections) on $\mathcal M(G^\vee)$. Motivated by obtaining tractable examples of this phenomenon, Hausel and Hitchin \cite{hauselEnhancedMirrorSymmetry2022,hausel_very_2022} introduced the notion of a \textbf{very stable} $\GL_n(\C)$-Higgs bundle, extending the notion of a very stable vector bundle of Drinfeld and Laumon \cite{laumon_analogue_1988}. Such a very stable Higgs bundle is associated to a particularly simple BAA-brane supported on a closed holomorphic Lagrangian and closely related to the geometry of the nilpotent cone $h_G^{-1}(0)$. The \textbf{main purpose} of the current paper is to study and classify the same very stable Lagrangians in the setting of $\mathcal M_{sp}(G,\alpha)$.
	
	Now we introduce the setting and problem more precisely. Let $\lieg$ denote the Lie algebra of $G$ and $K_C$ the canonical line bundle of $C$. Fix a maximal torus and Borel subgroup $T \subseteq B \subseteq G$. For simplicity, we choose parabolic weights $\alpha_i$ in the interior of the standard Weyl alcove in the Cartan subalgebra $\liet_{\R} \subseteq \liet$ of a split real form of $G$, although much of the work here generalises to arbitrary weights in $\liet_\R$ by working with parahoric torsors and logahoric Higgs bundles \cite{balajiModuliParahorictorsors2015, kydonakisLogahoricHiggsTorsors2024}, as we indicate throughout the paper. Then, a strongly parabolic $G$-Higgs bundle is a triple $(E,\varphi,Q)$ where $E$ is a principal $G$-bundle over $C$, $Q=(Q_1,\dots,Q_s)$ is a choice of a parabolic structure $Q_i \in E|_{c_i}/B$ at each parabolic puncture, and $\varphi \in H^0(E(\lieg) \otimes K_C(D))$ is a Higgs field whose restriction to the punctures is nilpotent and compatible with $Q$.
	
	The holomorphic Lagrangian subvarieties in question are obtained from the Bia\l ynicki-Birula cells of the natural $\C^\times$-action, where $\lambda \in \C^\times$ maps
	$$(E,\varphi,Q) \mapsto (E,\lambda \varphi, Q).$$
	Given a fixed point $(E,\varphi,Q) \in \mathcal M_{sp}(G,\alpha)^{\C^\times}$, its \textbf{upward flow}
	$$W^+_{(E,\varphi,Q)} := \left\{(E',\varphi',Q') \in \mathcal M_{sp}(G,\alpha) : \lim\limits_{\lambda \to 0}(E',\lambda \varphi', Q') = (E,\varphi,Q)\right\} \subseteq \mathcal M_{sp}(G,\alpha)$$
	\noindent turns out to be a holomorphic Lagrangian subvariety whenever $(E,\varphi,Q)$ is a smooth point. Following Hausel and Hitchin \cite{hausel_very_2022}, we say that a smooth fixed point $(E,\varphi,Q)$ is \textbf{very stable} if $W^+_{(E,\varphi,Q)}$ contains no nilpotent parabolic $G$-Higgs bundles other than $(E,\varphi,Q)$ itself, which is equivalent to $W^+_{(E,\varphi,Q)}$ being closed. A smooth fixed point which is not very stable is termed \textbf{wobbly}.
	
	Very stable and wobbly Higgs bundles have been studied extensively in the non-parabolic setting in multitude of works. For $G=\GL_n(\C)$, the original work of Laumon \cite{laumon_analogue_1988} considers the case of very stable vector bundles, that is, when $\varphi = 0$. In this work, it is shown that the very stable locus is open and dense, and recently Pal \cite{palConjectureDrinfeld2026} showed that the wobbly locus is of pure codimension one. At the other extreme, the work of Hausel and Hitchin \cite{hausel_very_2022} considers fixed points where $\varphi$ is generically regular and fully classifies them in terms of a natural divisor over $C$ associated with such a fixed point. In the same setting, \cite{gonzalez_even_2024} gives a similar result restricted to the fixed point locus of the involution given by the action of $\lambda = -1$, which is related \cite{garcia-prada_involutions_2019} to Higgs bundles for the real structure group $U(n,n)$ and $U(n,n+1)$. For non-regular, non-zero Higgs fields there is work of Peón-Nieto \cite{peon-nieto_wobbly_2023} establishing the wobbliness of many of the components where $E=E_0\oplus E_1$ and $\varphi : E_0 \to E_1\otimes K_C$. Finally, for general $G$, the case with zero Higgs field was studied by Zelaci \cite{zelaci_very_2018} and the case with regular Higgs field was carried out in \cite{gonzalez_very_2025}, providing a full classification in terms of a coweight-valued divisor on $C$ in the spirit of the work of Hausel and Hitchin.
	
	Meanwhile, in the setting of strongly parabolic Higgs bundles, for $G=\GL_n(\C)$ the case of very stable parabolic bundles (i.e. with zero Higgs field) was analysed in work of Peón-Nieto \cite{peon-nietoEqualityWobblyShaky2024}, and the generically regular $\varphi$ case has been considered in the PhD thesis of Henke \cite{henkeQuantumGeometryParabolic2024}. The goal of the present work is to establish a simple classification criterion for general $G$ and generically regular $\varphi$ in terms of a divisor valued in the extended affine Weyl group $\widetilde{W}$ for $\lieg$ by exploiting the Bruhat decomposition of affine flag varieties.
	
	Classifying very stable parabolic $G$-Higgs bundles requires understanding the fixed point locus $\mathcal M_{sp}(G,\alpha)^{\C^\times}$. We provide a description in terms of \textbf{Vinberg $\C^\times$-pairs} similar to that of \cite{biquard_arakelov-milnor_2021}. Given a $\Z$-grading of the Lie algebra
	$$\lieg = \bigoplus_{j \in \Z}\lieg_j,$$
	\noindent with $\liet \subseteq \lieg_0$ and denoting by $G_0 \subseteq G$ the connected subgroup corresponding to the subalgebra $\lieg_0$, the natural representations $(G_0,\lieg_k)$ for $k \neq 0$ are called Vinberg $\C^\times$-pairs (since these representations were studied by Vinberg \cite{vinberg_graded}). Then, we have the following result (see \cite[Proposition 4.7]{biquard_arakelov-milnor_2021} for the non-parabolic setting):
	\begin{theorem*}[Theorem \ref{fixedpointsparab}]
		A strongly parabolic $G$-Higgs bundle $(E,\varphi,Q) \in \mathcal M_{sp}(G,\alpha)$ is fixed by the $\C^\times$-action if and only if there is a Vinberg $\C^\times$-pair $(G_0,\lieg_k)$ such that
		\begin{itemize}
			\item There is a reduction of structure group $\sigma \in H^0(E(G/G_0))$.
			\item The Higgs field satisfies $\varphi \in H^0(E_\sigma(\lieg_k) \otimes K_C(D)) \subseteq H^0(E(\lieg) \otimes K_C(D))$.
			\item The parabolic structures satisfy
			$$Q_i \in \bigsqcup_{w \in W_{G_0} \backslash W}\left(\left(E_\sigma\right)|_{c_i} \cdot w \cdot B\right)/B \subseteq E|_{c_i}/B.$$
		\end{itemize}
	\end{theorem*}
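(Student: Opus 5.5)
The plan is to adapt the argument of \cite[Proposition 4.7]{biquard_arakelov-milnor_2021} and keep track of the parabolic structures. The ``if'' direction is direct. Choose $\zeta \in \liet$ (a multiple of the grading element) acting by $j$ on $\lieg_j$, and let $\gamma(\lambda)=\exp(\log(\lambda)\,\zeta/k)$, interpreted in the centre of $G_0$ after passing to a finite cover of $\C^\times$ if necessary. Then $\gamma(\lambda)$ acts on $E_\sigma(\lieg_k)$ by $\lambda$. Because $\gamma$ lies in the centre of $G_0$, it gives an automorphism of $E_\sigma$, and hence of $E$. This automorphism sends $\varphi$ to $\lambda\varphi$. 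It also preserves each $Q_i$: if $Q_i = e\cdot w\cdot B$ with $e\in (E_\sigma)|_{c_i}$, then $\gamma(\lambda)$ acts as $e\cdot w \mapsto e\cdot \gamma(\lambda) w = e\cdot w\cdot (w^{-1}\gamma(\lambda) w)$, and $w^{-1}\gamma(\lambda)w \in T\subseteq B$. So $(E,\lambda\varphi,Q)\simeq(E,\varphi,Q)$. Passing to a finite cover does not matter, because a point is $\C^\times$-fixed exactly when it is fixed by a finite cover of the action.

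For the ``only if'' direction, start from a stable fixed point $(E,\varphi,Q)$ with $\varphi\neq 0$; the case $\varphi=0$ is trivial with $G_0=G$. The first step is to obtain, for each $\lambda$, an isomorphism $f_\lambda$ of parabolic $G$-bundles with $f_\lambda^*(\lambda\varphi)=\varphi$. The parabolic automorphism group of a stable point modulo the centre is finite, and this finiteness lets us lift the action to a homomorphism $\C^\times\to \Aut(E,Q)$, possibly after replacing $\C^\times$ by a finite cover. This is the standard infinitesimal argument: differentiating gives a section $\psi$ of $E(\lieg)$ with $[\psi,\varphi]=\varphi$ that preserves $Q$. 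Since $\psi$ is a global section, its conjugacy class is constant over the compact curve $C$. Its semisimple part has rational eigenvalues, after normalising using stability, because it integrates to a $\C^\times$-action. We can therefore conjugate it into $\liet$ as an element $\zeta$ defining a $\Z$-grading. The centraliser $G_0$ of $\zeta$ then gives the reduction $\sigma$, namely the locus of frames in which $\psi$ equals $\zeta$. Finally, $\varphi$ lies in the $1$-eigenbundle of $\ad\psi$, which is $E_\sigma(\lieg_k)$ after rescaling $\zeta$ so that the eigenvalue $1$ corresponds to the integer $k$.

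The parabolic condition is where the new work lies. At $c_i$, the automorphism $\exp(t\psi)$ fixes $Q_i$, which is a point of $E|_{c_i}/B \simeq (E_\sigma)|_{c_i}\times^{G_0} G/B$. So we need the fixed points of the one-parameter torus $\exp(t\zeta)\subseteq Z(G_0)$ on $G/B$. These are exactly $\bigsqcup_{w\in W_{G_0}\backslash W} G_0 wB/B$: a fixed flag $gB$ satisfies $g^{-1}\exp(t\zeta)g\in B$. Hence $\zeta' := \Ad(g^{-1})\zeta$ lies in $\lieb$ and is semisimple, so it can be conjugated by some $b\in B$ into $\liet$. Write $\Ad(b)\zeta' = w^{-1}\zeta$, using that both are $G$-conjugate elements of $\liet$ and so differ by a Weyl element. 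Then $g b^{-1} w^{-1}$ centralises $\zeta$, so it lies in $G_0$ because centralisers of semisimple elements in $G$ simply connected-or-not are connected up to the component issue handled by $W_{G_0}$. This gives $gB\in G_0 wB$. Disjointness of the union is the Bruhat-type decomposition $G=\bigsqcup G_0 w B$ over $W_{G_0}\backslash W$ for a Levi $G_0$.

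I expect the main obstacle to be the lifting step: producing a genuine $\C^\times$ (or finite cover) action by automorphisms of the triple, with semisimple $\psi$ conjugate to a constant element, in the parabolic setting. I would first try to handle it by embedding into the moduli stack and using that stable triples have finite automorphism groups modulo $Z(G)$. The resulting extension of $\C^\times$ by a finite group then has a splitting over a finite cover of $\C^\times$, and its image is a torus. That torus is reductive, so its generator is semisimple, which gives the constant conjugacy class of $\psi$ needed above.
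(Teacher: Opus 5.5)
Your proposal follows the paper's route. You produce a $G_0$-reduction from a one-parameter family of automorphisms as in \cite{biquard_arakelov-milnor_2021}, then identify the admissible parabolic structures with the fixed locus $\bigsqcup_{w\in W_{G_0}\backslash W}G_0wB/B$ of the corresponding torus on $G/B$ (Propositions \ref{cstarfixedmeromorphic}, \ref{localparab} and Lemma \ref{localparabfixed}).

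Where you deviate, it is to your credit. The paper first forgets $Q$, gets the reduction from $(E,\varphi)$ alone, and then asks which $Q$ are preserved by that particular family $f_\lambda$. This tacitly assumes the given parabolic isomorphisms $(E,\varphi,Q)\simeq(E,\lambda\varphi,Q)$ can be taken to be the $f_\lambda$. But $(E,\varphi)$ by itself need not be stable and may have a large automorphism group. You instead build $\psi$ from the extension of $\C^\times$ by the finite group $\Aut(E,\varphi,Q)$, whose identity component is a $\mathbb G_m$ covering $\C^\times$. This makes each $Q_i$ automatically fixed by the torus that defines $\sigma$. Your local step (conjugating $\Ad(g^{-1})\zeta\in\mathfrak b$ into $\liet$ by $B$, then by $W$) replaces the paper's uniqueness of normal forms $uwB$ with $u\in U\cap wU^{\mathrm{opp}}w^{-1}$; both work.

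Three points need repair.

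\textbf{The decomposition $G=\bigsqcup_{W_{G_0}\backslash W}G_0wB$ is false unless $G_0=G$.} For $G_0=T\subseteq\SL_2(\C)$ it would give $G=B\cup\dot sB$, where $\dot s$ represents the nontrivial Weyl element. The union is only the fixed locus, and its disjointness needs its own argument. For instance, $G_0wB/B\simeq G_0/(G_0\cap wBw^{-1})$ is a flag variety of $G_0$ whose $T$-fixed points are exactly the $vwB$ with $v\in W_{G_0}$. Since distinct $w\in W$ give distinct points $wB$, two such pieces coincide only when $w\equiv w'$ modulo $W_{G_0}$.

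\textbf{There is no component issue.} For semisimple $\zeta\in\lieg$, $C_G(\zeta)$ is the centraliser of a torus, hence a connected Levi subgroup. So $gb^{-1}w^{-1}\in G_0$ follows immediately.

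\textbf{Your ``only if'' direction covers only stable points.} The statement concerns all (polystable) points of $\mathcal M_{sp}(G,\alpha)$. At strictly polystable points $\Aut(E,\varphi,Q)$ is not finite. You then need it to be reductive, so that your extension group is reductive and one of its maximal tori surjects onto $\C^\times$. The paper also leaves this to the citation of \cite{biquard_arakelov-milnor_2021}.
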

	
	In the theorem, $W:=N_G(T)/T \supseteq W_{G_0} := N_{G_0}(T)/T$ denote the respective Weyl groups.
	
	Our classification of very stable Higgs bundles occurs in the setting where $\varphi$ is generically regular. This corresponds to the Vinberg pair $(T,\lieg_1)$ where $\lieg_1$ is the sum of the simple root spaces corresponding to $B$. This can be exploited to attach, for each point $c \in C$, an element $w_c(E,\varphi,Q) \in \widetilde{W}$ of the extended affine Weyl group of $\lieg$ to the fixed point $(E,\varphi,Q)$. These elements can be arranged in the \textbf{twisted multiplicity divisor} $w(E,\varphi,Q)$, a $\widetilde{W}$-valued divisor on $C$. The extended affine Weyl group $\widetilde{W}$ is endowed (after choosing the Borel subgroup $B$) with a Bruhat order coming from its finite-index normal subgroup $\Waff$, the affine Weyl group, which is a Coxeter group. Therefore, we can consider the notion of the divisor $w(E,\varphi,Q)$ being \textbf{reduced} whenever every coefficient is minimal under that order (for $c \notin D$, minimality has to be considered after projecting to $W\backslash \widetilde{W}/W$). With this, we can state our main result.
	
	\begin{theorem*}[Theorem \ref{main}]
		Let $(E,\varphi,Q)$ be a smooth fixed point of Borel type. Then, $(E,\varphi,Q)$ is very stable if and only if $w(E,\varphi,Q)$ is reduced.
	\end{theorem*}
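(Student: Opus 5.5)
The plan is to follow the strategy of Hausel--Hitchin and of the non-parabolic $G$ case. The first step is to reduce very stability to a statement about the nilpotent cone. A smooth fixed point $(E,\varphi,Q)$ is very stable if and only if there is no nilpotent $(E',\varphi',Q') \neq (E,\varphi,Q)$ with $\lim_{\lambda\to 0}(E',\lambda\varphi',Q') = (E,\varphi,Q)$. Because we are at a smooth point, the upward flow $W^+_{(E,\varphi,Q)}$ is an affine space modelled on the positive-weight part $T^+$ of the tangent space. The tangent space is the hypercohomology of the deformation complex
\[
C^\bullet : \mathrm{ParEnd}(E,Q) \xrightarrow{\ \ad\varphi\ } \mathrm{SParEnd}(E,Q)\otimes K_C(D).
\]
Using the Vinberg-pair description of Theorem \ref{fixedpointsparab} for the pair $(T,\lieg_1)$, this complex splits by the grading $\lieg=\bigoplus_j \lieg_j$. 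The positive-weight part $T^+$ therefore decomposes over $j\ge 1$ into pieces built from the line bundles $E_\sigma(\lieg_j)$, each twisted by local conditions at the points $c$ recorded by $w_c(E,\varphi,Q)$.

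The second step is the usual equivariance argument. A nilpotent point in $W^+$ other than the fixed point exists if and only if some nonzero $\C^\times$-eigenvector $\dot\varphi$ in the upward flow, say of weight $j$, yields a Higgs field $\varphi+\dot\varphi$ that is still nilpotent. Replacing the point by its limit along a suitable one-parameter subgroup reduces this to a single homogeneous component. Concretely, I would use the $\mathbb{G}_m$-equivariant isomorphism $W^+\simeq T^+$, which identifies nilpotent points with the fibre over $0$ of a homogeneous map $T^+\to\mathcal A(G)$ that is weighted with positive weights. The condition becomes: some nonzero section $\psi$ of a positive graded piece, satisfying the parabolic compatibility at $D$, makes $\varphi+\psi$ nilpotent. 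Since $\varphi$ is generically a principal nilpotent $e=\sum_{\alpha\text{ simple}}e_\alpha$, Kostant's theory shows that $\varphi+\psi$ with $\psi\in\bigoplus_{j\ge1}\lieg_j$ is generically nilpotent. Nilpotency is automatic there, so the real obstruction is \emph{existence} of global sections with prescribed zeros and poles. Here one uses the Kostant slice: $\psi$ modulo $[\lieg_0\oplus\cdots,e]$ lies in $\ker\ad f$ pieces. Exactly as in \cite{gonzalez_very_2025}, the existence of such $\psi$ reduces to the existence of a section of some $E_\sigma(\lieg_j)\otimes K_C(D)$ whose divisor dominates a local bound determined by $w_c$ at each point $c$.

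The third step, which I expect to be the main obstacle, is the local analysis. At each $c$ I will show that the local obstruction vanishes precisely when $w_c$ is Bruhat-minimal. For this I pass to the formal disc and view the local data $(E|_{\hat D_c},\varphi,Q_c)$ as a point of the affine flag variety $G((z))/I$ (for $c\in D$) or of the affine Grassmannian (for $c\notin D$, which accounts for the double coset $W\backslash\widetilde W/W$). Its position relative to the Iwahori orbit of the standard fixed point is $w_c$. A local deformation $\psi$ that lowers the Bruhat cell is then an element of $\mathrm{Lie}(I)$ that moves the point to a smaller Schubert cell $I\,w'I$ with $w'<w_c$, which is obtained by an affine root reflection $s_\beta$ with $\ell(w_cs_\beta)<\ell(w_c)$. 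I would show the following. If $w_c$ is not minimal, a nonzero local nilpotent deformation exists in the affine root space $\lieg_\beta z^k$. Combined with Riemann--Roch on the relevant line bundle, whose degree is controlled by the smoothness and stability of the fixed point, this local deformation globalises. Conversely, if every $w_c$ is minimal, the positive-weight deformations must vanish at every point to an order that exceeds the degree of the line bundle, which forces $\psi=0$. The delicate part is matching the numerical degree inequality with the Bruhat condition uniformly in the type of $G$. I would try to handle it root by root, using the dictionary between affine roots $\beta$ with $w_c^{-1}\beta<0$ and inversions of $w_c$.
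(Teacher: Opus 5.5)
Your plan has a genuine gap: its deformation-theoretic steps fail as stated.

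First, you have the upward directions backwards. By Proposition \ref{parabolicupwardflow}, the piece $\mathbb H^1(PE(\lieg_k)\to SPE(\lieg_{k+1})\otimes K_C(D))$ has $\C^\times$-weight $-k$. So $W^+$ deforms the transition functions in $\lieg_{<0}$ and the Higgs field in $\mathfrak b^{opp}=\lieg_{\le 0}$, not in $\bigoplus_{j\ge1}\lieg_j$; you are conflating $\C^\times$-weight with grading degree. Under your identification, $\varphi+\psi$ would lie in $\mathfrak n$ for every $\psi$ and hence be nilpotent, so every upward flow would sit inside $h_G^{-1}(0)$. This contradicts Example \ref{paraboliccanonical}, where $W^+$ is the Hitchin section $\varphi^0+\sum_i a_if_i$ with the $f_i$ of negative degree. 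With the correct directions, nilpotency is exactly the non-trivial constraint, since Kostant's slice $e+\mathfrak z_{\lieg}(f)\to\lieg\git G$ is an isomorphism. So your reduction to ``existence of sections with prescribed zeros and poles'' is not available.

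Second, the reduction to a single weight vector is unjustified. $W^+\cap h_G^{-1}(0)$ is a closed $\C^\times$-stable cone in $T^+$, but such a cone need not contain any nonzero weight vector (e.g.\ $\{y=x^2\}\subseteq\C^2$ with weights $(1,2)$). Limits along $\C^\times$ merely return the fixed point. Moreover, points of $W^+$ are in general not of the form $(E,\varphi+\psi,Q)$, since $E$ deforms too.

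Third, your converse proves too much. Forcing every positive-weight deformation to vanish would give $T^+=0$, whereas $\dim W^+=\dim\mathcal A(G)>0$ even when $w$ is reduced. What must be shown is finiteness of $h_G|_{W^+}$, and nothing in your argument separates nilpotent directions from the others. Finally, globalising a local deformation ``by Riemann--Roch'' needs an $H^1$-vanishing you have no handle on.

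The missing idea is to replace first-order deformation theory by Hecke transformations. A local modification at $c$ given by a point of the affine Springer fibre glues (Beauville--Laszlo) to a global parabolic Higgs bundle with no obstruction. It stays in the same Hitchin fibre, so nilpotency comes for free. Proposition \ref{heckeequivariance} identifies the $\C^\times$-action with $\sigma\mapsto\xi_\lambda^{-1}\sigma$.

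When $l(w_c)\neq0$, Proposition \ref{affineflagwobblycurves} produces $\tilde\sigma_{w_c}$ whose $\C^\times$-orbit stays in the Springer fibre and tends to $1$ as $\lambda\to0$. It is either a root-subgroup curve raising $u_c$ inside ${}^{J_{\mu_c}}W$ (by the chain property), or, when $\mu_c$ is not minuscule, a curve shifting $\mu_c$ by a coroot. Stability along the orbit follows from openness plus the endpoint checks of Propositions \ref{bruhatstable} and \ref{bruhatstable2}. This gives a nilpotent orbit in $W^+$.

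For the converse, the paper never analyses $T^+$ directly. For adjoint $G$ it inducts on the support. It removes points $c\notin D$ by a minuscule modification $z^{\omega_j^\vee}$, which also lies in $\Gr_G^{\varphi'_1}$ for every $\varphi'$ in the upward flow. When the support lies in $D$, it exhibits $W^+$ explicitly as a section of $h_G$ (Example \ref{lengthzeroverystable}). This section is obtained by modifying the Hitchin section of the uniformising Higgs bundle by the length-zero elements $w_{c_i}$, which normalise $\mathcal I$. Non-adjoint $G$ is handled by passing to $G_{\ad}$.
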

	
	The main technique used in the proof is, as in the original work of Hausel and Hitchin \cite{hausel_very_2022} (and in \cite{gonzalez_very_2025}), that of \textbf{Hecke transformations}, i.e. modifications of the fixed point $(E,\varphi,Q)$ locally around a point $c \in C$, in such a way that the result is isomorphic over $C \setminus \{c\}$. Studying the space of Hecke transformations at a given fixed point $(E,\varphi,Q)$ and $c \in C$ leads naturally to the consideration of affine Springer fibres of nilpotent elements inside affine Flag varieties $\Fl_G$, and the problem can be attacked by relating the $\C^\times$-action in $\mathcal M_{sp}(G,\alpha)$ with an induced $\C^\times$-action on these affine Springer fibres. It turns out that, specifically for fixed points with generically regular $\varphi$, these local modifications are sufficient to understand properties of their upward flows.
	
	Whenever the twisted multiplicity divisor $w(E,\varphi,Q)$ is reduced and supported at $D$, the corresponding very stable upward flow turns out to be a section of $h_G$. Recalling that mirror symmetry is realised by a Fourier--Mukai transform along the fibres of $h_G$, this renders such upward flows as the simplest kind of BAA-branes for which the duality can be tested: since they intersect each fibre at a single point, the dual should be realised by a line bundle on $\mathcal M_{sp}(G^\vee,\alpha)$. We propose a construction of such a line bundle in terms of a universal $G^\vee$-bundle
	$$\mathbb E \to \mathcal M_{sp}(G^\vee,\alpha) \times C.$$
	
	As a universal parabolic bundle, it comes with reductions of structure group $$\mathbb Q_i \in H^0(\mathcal M_{sp}(G^\vee,\alpha) \times \{c_i\}, \mathbb E|_{\mathcal M_{sp}(G^\vee,\alpha) \times \{c_i\}}/B^\vee)$$
	\noindent at each $c_i \in D$. Denote by $\mathbb E_i$ the corresponding $B^\vee$-bundle over $\mathcal M_{sp}(G^\vee,\alpha)$. Since $X^*(B^\vee) = X^*(T^\vee) = X_*(T)$, we may evaluate it at cocharacters of $T$. Then, we propose the following mirror line bundle.
	
	\begin{conjecture*}[Conjecture \ref{mirrorconjecture}]
		Let $(E,\varphi,Q)$ be a smooth fixed point of Borel type with twisted multiplicity divisor $w := w(E,\varphi,Q)$ supported at $D$. The line bundle on $\mathcal M_{sp}(G^\vee,\alpha)$ mirror to the structure sheaf $\mathcal O_{W^+_{(E,\varphi,Q)}}$ on $\mathcal M_{sp}(G,\alpha)$ is given by
		$$\mathcal L_w := \bigotimes_{c_i \in D}\mu_{c_i}(E,\varphi)^{-1}(\mathbb E_i)$$
		\noindent up to tensoring by a fixed line bundle given by the chosen normalisation for $\mathbb E$.
	\end{conjecture*}
	
	There are several implicit assumptions for this conjecture to make sense, such as the existence of a globally defined universal bundle in the moduli space or the lifting of each $\mu_{c_i}(E,\varphi)$ to $X_*(T)$. Another important difficulty is the precise statement of SYZ mirror symmetry and the duality via Fourier--Mukai transforms in the general setting of strongly parabolic $G$-Higgs for any choice of semisimple $G$: as discussed above, this program is still in progress even for classical groups.
	
	However, for $G=\PGL_n(\C)$, these issues can be avoided as the universal bundle descends to the moduli space (cf. \cite{biswasBrauerGroupModuli2011}), every coweight lifts to the group (since $\PGL_n(\C)$ is of adjoint type) and, as previously mentioned, mirror symmetry for type $A$ has been significantly developed in several of its incarnations. Thus, in this setting we can verify that the proposed duals are correct.
	
	\begin{proposition*}[Proposition \ref{conjecturecheck}]
		For $G=\PGL_n(\C)$ and generic weights $\alpha$, the dual of the structure sheaf $\mathcal O_{W^+_{(E,\varphi,Q)}}$ for a fixed point $(E,\varphi,Q)$ of Borel type with twisted multiplicity divisor $w$ supported at $D$ equals the line bundle $\mathcal L_w$ proposed in Conjecture \ref{mirrorconjecture} over the locus $\mathcal A^s \subseteq \mathcal A(\SL_n(\C))$ of smooth spectral curves.
	\end{proposition*}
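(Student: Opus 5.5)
We have to prove the Proposition: for $G=\PGL_n(\C)$ and generic $\alpha$, the Fourier--Mukai dual of $\mathcal O_{W^+_{(E,\varphi,Q)}}$ over $\mathcal A^s$ is $\mathcal L_w$. The plan is to work fibrewise over a smooth spectral curve $a\in\mathcal A^s$, using the spectral correspondence for strongly parabolic Higgs bundles. By the main classification (Theorem \ref{main}) and the remark preceding the conjecture, when $w$ is reduced and supported at $D$ the upward flow $W^+_{(E,\varphi,Q)}$ is a section $s_w$ of $h_G$. The dual of the skyscraper $\mathcal O_{s_w(a)}$ under the fibrewise Fourier--Mukai transform is then the line bundle on the dual fibre $h_{G^\vee}^{-1}(a)$ classified by the point $s_w(a)$, viewed via the duality of Prym-type abelian varieties (as in Biswas--Dey and Hausel--Thaddeus for $\SL_n/\PGL_n$). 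So the statement reduces to identifying two line bundles on each fibre, plus checking that the identification varies algebraically in $a$, which will be automatic once both sides are written in terms of universal objects.

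First, I will make the spectral data explicit. For $G^\vee=\SL_n(\C)$, a point of $h_{G^\vee}^{-1}(a)$ corresponds to a line bundle $L$ on the spectral curve $\pi:\Sigma_a\to C$, with $\pi_*L$ recovering the bundle and the parabolic flags at $c_i$ recovered from the ordering of the sheets of $\Sigma_a$ over $c_i$ (strongly parabolic plus generic weights gives a total ramification/ordering structure at $c_i$ on the spectral side). On the $\PGL_n$ side, I will compute the section $s_w(a)$ by flowing the fixed point: starting from the companion-type Borel fixed point, the twisted multiplicity $w_{c_i}\in\widetilde W$ dictates a Hecke modification at $c_i$, and on the spectral side this should amount to twisting the ``canonical'' spectral line bundle (the one giving the Hitchin section) by a divisor supported on $\pi^{-1}(c_i)$ whose coefficients are read from the translation part $\mu_{c_i}(E,\varphi)\in X_*(T)$, arranged according to the sheet ordering determined by the finite Weyl part. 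I will verify this by a local computation near $c_i$ in the affine flag variety, using the Bruhat-cell description of fixed points developed for Theorem \ref{main}.

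Second, I will compute the right-hand side: restricted to $h_{G^\vee}^{-1}(a)$, the flag bundle $\mathbb E_i$ has associated line bundles indexed by characters of $T^\vee$, and evaluating at the $j$-th fundamental coordinate gives the line $L|_{p_{ij}}$, where $p_{ij}\in\pi^{-1}(c_i)$ is the $j$-th ordered point (the successive quotients of the flag are the fibres of $L$ at the sheets). Hence $\mu_{c_i}^{-1}(\mathbb E_i)$ restricts to $\bigotimes_j L_{p_{ij}}^{\otimes -m_{ij}}$ with $m_{ij}$ the coordinates of $\mu_{c_i}$. Third, the standard fact that the Poincaré bundle on $\Pic(\Sigma_a)\times\Pic(\Sigma_a)$ restricted to $\{\mathcal O(\sum m_{ij}p_{ij})\}\times \Pic$ is $L\mapsto\bigotimes L_{p_{ij}}^{m_{ij}}$ (up to normalisation) identifies the two sides, after restricting to the Prym and passing to the quotient by the finite group for $\PGL_n$; the normalising line bundle absorbs the choice of Hitchin-section base point and the normalisation of $\mathbb E$.

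The main obstacle I expect is the first step: precisely matching the $\widetilde W$-valued coefficient at $c_i$ with the spectral divisor on $\pi^{-1}(c_i)$, including signs and the ordering of sheets given by the parabolic structure, and checking compatibility with the $\PGL_n$/$\SL_n$ duality (the $\mu_{c_i}$ only lie in the coweight lattice, so the determinant/Prym normalisation must be tracked). I would first treat $n=2$ with a single puncture to fix conventions, then argue locally at each $c_i$ independently since Hecke modifications at distinct points commute.
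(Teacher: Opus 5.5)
There is a genuine gap in your description of the spectral data at the punctures. In the strongly parabolic setting $\varphi|_{c_i}$ is nilpotent. Hence every $a_j$ vanishes at $c_i$ (as a section of $K_C^j(jD)$), and $s_a|_{c_i}=x^n$. The spectral curve therefore meets the fibre over $c_i$ only on the zero section, with multiplicity $n$. For $a\in\mathcal A^s$ this forces $C_a\to C$ to be totally ramified at $c_i$, with a single point $c_i^a$ above it. This comes from the strongly parabolic condition, not from genericity of $\alpha$.

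Consequently there are no ordered sheets $p_{ij}\in\pi^{-1}(c_i)$, and the Weyl part of $w_{c_i}$ does not order anything. For length-zero $w_{c_i}$ it is determined by $\mu_{c_i}$ (Proposition \ref{lengthzero}), and for $a\in\mathcal A^s$ the flag is in any case the unique one compatible with the regular nilpotent $\varphi_a(c_i)$. Your Step 2 claim that the graded pieces of the flag are fibres of $L$ at distinct sheets is false. The flag is $Q^j_{L,i}=(\pi_a)_*\bigl(L(-(n-j)c_i^a)/L(-nc_i^a)\bigr)$, and each graded piece is $L|_{c_i^a}$ tensored with a power of the cotangent line at $c_i^a$, which does not vary with $L$. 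So neither your predicted spectral divisor in Step 1 ("coefficients read from $\mu_{c_i}$ arranged according to the sheet ordering") nor the formula $\bigotimes_j L_{p_{ij}}^{\otimes -m_{ij}}$ for $\mu_{c_i}^{-1}(\mathbb E_i)$ makes sense. Step 3 then has nothing correct to match.

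What replaces them is the following. The only possible twist at $c_i$ is by $k\,c_i^a$, and the real content is that $k=n-j_i$, where $\mu_{c_i}=\omega^\vee_{j_i}$. The paper obtains this without the affine flag variety. It starts from the explicit form $E=\bigoplus_j K_C^{-j}(-jD+\sum_{k\le j}D_k)$ of the upward flow coming from Example \ref{lengthzeroverystable}. It then checks that $(\pi_a)_*\mathcal O_{C_a}\bigl(\sum_i(n-j_i)c_i^a\bigr)\simeq E$ with $(\pi_a)_*x=\varphi_a$. This is a local computation: write local sections as $\sum_{j=0}^{n-1}f_j(z)x^j$, where $z$ equals $x^n$ up to a unit. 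Then $\ord_{c_i^a}(f_jx^j)=j+n\ord_{c_i}(f_j)$, so $f_j$ acquires a simple pole exactly when $j\ge j_i$.

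The Poincaré slice of $\mathcal O((n-j_i)c_i^\bullet)$ is then $\det Rp_*\bigl(\mathbb L|_{(n-j_i)c_i^\bullet}\bigr)$, which is the fat-point version of your $\bigotimes L_p^{m_p}$. It equals $\det(\mathbb E_i/\mathbb Q_i^{j_i})=\omega_{j_i}^{-1}(\mathbb E_i)$ precisely because of the BNR identity $E_L/Q^j_{L,i}=(\pi_a)_*\bigl(L|_{(n-j)c_i^a}\bigr)$. Your overall architecture (section, then spectral line bundle, then Poincaré slice, then associated line bundle of the universal flag) is the right one. It only goes through once the distinct-sheet picture is replaced by this totally ramified one.
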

	
	The paper is structured as follows. In Section \ref{sechiggs} we start by recalling and fixing notation for the relevant Lie-theoretic data of $G$ that will be needed, with special emphasis on the different variants of Weyl groups and their Coxeter theory properties that play a central role in our study. Then we define parabolic Higgs bundles, their stability conditions and we recall relevant geometric aspects of their moduli spaces. We proceed to study the $\C^\times$-action and classify its fixed points in Theorem \ref{fixedpointsparab}. We conclude by introducing relevant properties of the Bia\l ynicki--Birula decomposition for this action. Then, in Section \ref{sechecke} we describe the spaces of Hecke transformations of parabolic $G$-Higgs bundles as affine Springer fibres in affine flag varieties, their Bruhat decomposition and we study explicit curves inside these spaces which realise the closure relations of the Bruhat cells. Section \ref{sectheorem} is devoted to the proof of the main Theorem \ref{main}. For this, we compute the induced $\C^\times$-action on Hecke transformation spaces of a smooth fixed point, and we show how this $\C^\times$-action preserves the explicit curves mentioned before. These also preserve stability and thus give paths inside $\mathcal M_{sp}(G,\alpha)$, relating the different upward flows. We also study how Hecke transformations fixed under this induced $\C^\times$-action, which naturally result in different $\C^\times$-fixed parabolic $G$-Higgs bundles with generically regular $\varphi$, transform the twisted multiplicity coweight. Then we show that, in the case where $w_c(E,\varphi,Q)$ is reduced, the corresponding upward flow is very stable. We finish by combining everything in the proof of the main theorem. Finally, Section \ref{secmirror} studies the mirror symmetry aspects. For the very stable upward flows constituting sections of the Hitchin map, we compute their mirror line bundles in order to check our conjectural proposal. This is done in the case of $G=\PGL_n(\C)$, for which we first briefly recall the necessary aspects of the Beauville--Narasimhan--Ramanan correspondence in the parabolic setting, then we introduce the necessary universal bundles and their properties, and we conclude with the verification of the expected duality.

	\textbf{Acknowledgements.} I would like to thank Oscar García-Prada and Tamás Hausel for their guidance and encouragement. I would also like to thank Olivier Biquard, Guillermo Gallego and Olga Trapeznikova for helpful discussions.   

	\section{Moduli space of parabolic and strongly parabolic \texorpdfstring{$G$}{G}-Higgs bundles}\label{sechiggs}
	
	\subsection{Lie theory preliminaries and notation.}\label{subseclietheory} In this section we define parabolic and strongly parabolic Higgs bundles and recall properties about their moduli spaces. First, we need to review some Lie theory about their structure group that will be used throughout the paper.
	
	Let $G$ be a connected, semisimple complex algebraic group with Lie algebra $\lieg$. Fix a maximal torus $T \subseteq G$ and a Borel subgroup $B \subseteq G$ containing $T$. This choice determines a Cartan subalgebra $\liet := \mathrm{Lie}(T)$ and a Borel subalgebra $\lie b := \mathrm{Lie}(B)$ satisfying $\liet \subseteq \lie b \subseteq \lieg$. Denote by $\Delta := \Delta(\lieg,\liet) \subseteq \liet^*$ the corresponding root system and $\Pi = \{\beta_1,\dots,\beta_r\} \subseteq \Delta$ the choice of simple roots corresponding to $\lie b$, where $r = \rank(\lieg)$. This determines a choice of positive roots $\Delta^+ \subseteq \Delta$. Consider also its dual basis $\{\omega_1^\vee,\dots,\omega_r^\vee\} \subseteq \liet$, which is the basis of \textbf{fundamental coweights}. 
	
	The \textbf{root lattice}, which is the lattice in $\liet^*$ consisting of linear combinations with integer coefficients of the simple roots, is denoted by $Q_\Delta \subseteq \liet^*$. On the other hand, the \textbf{coweight lattice}, consisting of linear combinations with integer coefficients of the fundamental coweights, is denoted by $P_{\Delta^\vee} \subseteq \liet$. If $(\cdot,\cdot)$ denotes the Killing form (either of $\lieg$ or $\lieg^*$), for any element $\beta \in \liet^*$ we define
	$$\beta^\vee := 2\frac{(\beta,\cdot)}{(\beta,\beta)} \in \liet,$$
	\noindent and similarly from $\liet$ to $\liet^*$. We define the \textbf{coroot lattice} $Q_{\Delta^\vee} \subseteq \liet$ and the \textbf{weight lattice} $P_\Delta \subseteq \liet^*$ by applying this operation to the previous lattices.  
	
	The coweights whose pairing with any positive root is non-negative are called \textbf{dominant}, defining the subset $P_{\Delta^\vee}^+ \subseteq P_{\Delta^\vee}$. Similarly, if the pairing with any positive root is non-positive, they are called \textbf{antidominant} and denoted by $P_{\Delta^\vee}^- \subseteq P_{\Delta^\vee}$. Dually one defines notions of dominant and antidominant weights $P^+_\Delta, P^-_\Delta \subseteq P_\Delta$.
	
	We also define the set of \textbf{characters} and \textbf{cocharacters} of $T$, which are the algebraic group homomorphisms
	$$T \to \mathbb G_m,$$
	\noindent and
	$$\mathbb G_m \to T,$$
	\noindent respectively. The groups formed by these are denoted by $X^*(T)$ and $X_*(T)$ respectively. Naturally, they induce linear morphisms $\liet \to \C$ and $\C \to \liet$, therefore they can be viewed in $\liet^*$ and $\liet$, respectively. We have inclusions
	$$Q_\Delta \subseteq X^*(T) \subseteq P_\Delta$$
	\noindent and
	$$Q_{\Delta^\vee}\subseteq X_*(T) \subseteq P_{\Delta^\vee},$$
	\noindent therefore we can talk about dominant and antidominant characters and cocharacters, defining $X^*_+(T), X^*_-(T) \subseteq X^*(T)$, $X_*^+(T), X_*^-(T) \subseteq X_*(T)$.
	
	Now let $C$ be a smooth projective complex curve of genus $g$ with canonical line bundle $K_C$. Choose a reduced effective divisor $D = c_1+\dots + c_s$ of $s$ punctures in $C$. We assume that $\deg K_C(D) = 2g-2+s > 0$ to ensure the existence of an interesting moduli space. 
	
	Let $\liet_\R \subseteq \liet$ denote the $\R$-span of $P_{\Delta^\vee}$ (in other words, it is the Cartan subalgebra for the split real form of $\lieg$). At each puncture $c_i$, we choose \textbf{parabolic weights} $\alpha_i \in \liet_\R$, and denote by $\alpha := (\alpha_1,\dots,\alpha_s)$ the vector of parabolic weights.
	
	For simplicity in the definitions, we \textbf{assume} that the parabolic weights lie in the interior of the \textbf{standard Weyl alcove} of $\liet$. This means that, for each $\alpha_i$, we assume:
	\begin{itemize}
		\item $\beta_j(\alpha_i) > 0$, for every $\beta_j \in \Pi$.
		\item $\delta(\alpha_i) < 1$, where $\delta \in \Delta$ denotes the highest root.
	\end{itemize}
	
	\begin{remark}\label{parahoric}
		It is possible to consider more general weights, in the closure of the standard Weyl alcove (which, in the context of non-abelian Hodge theory, covers all the possible conjugacy classes of the compact part of the monodromy, see \cite[Appendix A]{biquard_parabolic_2019}) or even in any alcove of $\liet_\R$. This introduces more subtleties in the definition of parabolic $G$-Higgs bundles \cite[Sections 2--4]{biquard_parabolic_2019}.
		
		The alternative, which is more convenient for our purposes, would be to work with \textbf{parahoric} torsors \cite{balajiModuliParahorictorsors2015} and \textbf{logahoric} Higgs torsors \cite{kydonakisLogahoricHiggsTorsors2024} for the corresponding weight. For our assumption of weights in the interior of the standard alcove, this notion is equivalent to the parabolic one presented in this paper (see \cite[Section 3]{biquard_parabolic_2019}).
	\end{remark}
	
	Let $W = N_G(T)/T$ denote the \textbf{Weyl group} of $\lieg$, it is a finite reflection group acting on $T$ and on $\liet$ as well as $\liet^*$. It is generated by the set $S_W := \{s_1 := s_{\beta_1},\dots,s_r := s_{\beta_r}\}$ of reflections about the simple root hyperplanes $\{\beta_j = 0\} \subseteq \liet$. 
	
	Similarly, we have the \textbf{affine Weyl group}
	$$\Waff := Q_{\Delta^\vee} \rtimes W$$ which is another (non-finite) reflection group acting on $\liet$: the coroot lattice $Q_{\Delta^\vee}$ acts by translations, and we adopt the convention that $\mu \in Q_{\Delta^\vee}$ maps $X \in \liet$ to $X-\mu$; this convention shall become clearer in Section \ref{sechecke} when this group is regarded inside of $G((z))$. It is generated by the set $S_{\Waff} = S_W \cup \{s_0 := s_{\delta,1}\}$ where $\delta$ denotes the highest root and $s_{\delta,1}$ is the reflection by the affine root hyperplane $\{\delta = 1\} \subseteq \liet$. 
	
	We have two slightly larger groups that will be crucial in our study. The first one is the \textbf{Iwahori--Weyl group}
	$$\widetilde{W}_G := X_*(T) \rtimes W$$
	\noindent and the second one is the \textbf{extended affine Weyl group}
	$$\widetilde{W} := P_{\Delta^\vee} \rtimes W.$$
	
	The inclusions of lattices described above imply inclusions
	$$\Waff \subseteq \widetilde{W}_G \subseteq \widetilde{W}.$$

	The last two groups are not reflection groups in general: the quotient 
	$$\widetilde{W}/\Waff \simeq P_{\Delta^\vee}/Q_{\Delta^\vee}$$ 
	\noindent is finite and elements in the nontrivial classes cannot be expressed as a product of root hyperplane reflections. A set of representatives of the quotient is that of \textbf{dominant minuscule coweights}, i.e. elements $\mu \in P_{\Delta^\vee}$ such that $\beta(\mu) \in \{0,1\}$ for every $\beta \in \Delta^+$. The quotient $\widetilde{W}_G/\Waff \subseteq \widetilde{W}/\Waff$ can be identified with the set of dominant minuscule coweights contained in $X_*(T)$.   

	All these Weyl groups fit (up to finite index in the last two cases) within the theory of Coxeter groups. In what follows, we recall the necessary aspects of this theory for later use, a good reference is \cite{bjorner2005combinatorics}. We denote by $(\widehat{W},\widehat{S})$ a Coxeter system, for our purposes this is either $(W,S_W)$ or $(\Waff, S_{\Waff})$. Similarly, we denote by $(\widehat{W}',\widehat{S})$ a quasi-Coxeter system, meaning that $\widehat{W}'$ is a group containing some Coxeter group $\widehat{W}$ (which is part of a Coxeter system ($\widehat{W}, \widehat{S}$)) as a normal subgroup such that the quotient $\widehat{W}'/\widehat{W}$ is finite abelian. For our purposes this is either $(\widetilde{W}_G,S_{\Waff})$ or $(\widetilde{W},S_{\Waff})$, in both cases containing $\Waff$. We have a natural splitting $\widehat{W}' = \widehat{W} \rtimes \Omega$, where $\Omega \simeq \widehat{W}'/\widehat{W}$ is defined to be the normaliser of $\widehat{S}$.
	
	\begin{definition}
		The \textbf{length} of an element $w' = w \cdot \omega \in \widehat{W}' = \widehat{W} \rtimes \Omega$ is the minimal number $k \in \Z_{\ge 0}$ such that there is an expression
		$$w = s_{i_1} \cdot \dots \cdot s_{i_k}$$
		\noindent for $s_{i_j} \in \widehat{S}$. We denote it by $l(w')$.
	\end{definition}
	
	\begin{definition}\label{bruhatorder}
		The \textbf{Bruhat order} in $\widehat{W}'$ is defined by the following rule: for $w\cdot \omega, w'\cdot \omega' \in \widehat{W}'$, we have $w \cdot \omega \le w' \cdot \omega'$ if and only if both of these hold:
		\begin{itemize}
			\item The length zero parts agree: $\omega = \omega'$.
			\item Given an expression $w' = s_{i_1}\cdot \dots \cdot s_{i_k}$ where $s_{i_j} \in \widehat{S}$, there exists a subexpression, obtained by omitting some entries, whose product equals $w$.
		\end{itemize}
	\end{definition}
	
	Some basic properties that we use frequently are that length is order-preserving and that the Bruhat order is generated by relations of the form $w \le wt$ for $w \in \widehat{W}$, $t=usu^{-1}$ with $u \in \widehat{W}$, $s \in \widehat{S}$ and $l(w) \le l(wt)$. Moreover, inverting elements preserves length.
	
	\begin{definition}\label{parabolicweyl}
		Given a subset $J \subseteq \widehat{S}$, we define the \textbf{parabolic subgroup} $\widehat{W}_J \subseteq \widehat{W}$ as the subgroup generated by $J$. The pair $(\widehat{W}_J,J)$ is itself a Coxeter system. The quotient $\widehat{W}'/\widehat{W}_J$ has a natural set of representatives given by those with minimal length:
		$$(\widehat{W}')^J := \{w \in \widehat{W}' : l(ws) > l(w) \text{ for all } s \in J\}.$$
		This set is called the \textbf{left parabolic quotient} for $J$. Analogously, the quotient $\widehat{W}_J \backslash \widehat{W}'$ has minimal length representatives:
		$${}^{J}\widehat{W}' := \{w \in \widehat{W}' : l(sw) > l(w) \text{ for all } s \in J\} = \left((\widehat{W}')^J\right)^{-1}.$$
		This set is called \textbf{right parabolic quotient} for $J$. Similarly, for subsets $J,J' \subseteq \widehat{S}$ the double quotient $\widehat{W}_{J'}\backslash \widehat{W}'/\widehat{W}_J$ has a natural set of representatives:
		$${}_{J'}\left(\widehat{W}'\right)^J =  \left\{w \in \widehat{W}' : l(w) = \max_{w_1 \in \widehat{W}_{J'}}\min_{w_2 \in \widehat{W}_J}l(w_1ww_2)\right\}.$$
		This set is called the \textbf{double parabolic quotient} for $J,J'$.
	\end{definition}
	
	The Bruhat order and length restrict to parabolic quotients and double parabolic quotients in such a way that the quotient morphisms are order preserving. Note that for double quotients it is also standard to take ${}^{J'}(\widehat{W}')^J$ the set of minimum length representatives (i.e. not mixing minimum and maximum), but we will see later in Section \ref{sechecke} that geometrically the previous choice is cleaner.
	
	\begin{remark}
		In the case of the finite Weyl group $(W,S_W)$, parabolic quotients have a unique longest element which is of special importance to us. It suffices to recall that the length of an element in $W$ can be regarded as the number of positive roots which are sent to negative roots. Since elements $w \in W^J$ are characterised by $l(ws) > l(w)$ for $s \in J$ or, equivalently, by sending the simple roots $\Pi_J \subseteq \Pi$ whose associated reflections are in $J$ to positive roots, the unique longest element is the one sending every positive root outside the span of $\Pi_J$ to a negative root. Inverting, the same is true for ${}^JW$ by considering $w^{-1}$.
	\end{remark}
	
	\begin{remark}\label{Jmu}
		Given an element $\mu \in \liet$, we often associate parabolic subgroups and quotients by taking $J_\mu \subseteq \widehat{S}$ to be the stabiliser of $\mu$.
	\end{remark}
	
	We conclude this section by identifying precisely the length zero elements (i.e. the set $\Omega$) for the extended affine Weyl group $\widetilde{W}$. It turns out that this does not simply agree with the set of minuscule representatives we gave earlier. In our convention (where $P_{\Delta^\vee}$ translates with a negative sign), we have the length formula:
	$$l(\mu \cdot u) = \sum_{\beta, u^{-1}(\beta) \in \Delta^+}|\mu(\beta)| + \sum_{\beta, -u^{-1}(\beta) \in \Delta^+}|\mu(\beta)+1|.$$
	This was given in \cite[Proposition 1.23]{iwahori_bruhat_1965}, noting that they provide the formula in the case that $P_{\Delta^\vee}$ translates by adding.
	
	\begin{proposition}\label{lengthzero}
		Let $\mu \cdot u \in \widetilde{W} = P_{\Delta^\vee} \rtimes W$. Then, $l(\mu \cdot u) = 0$ if and only if
		\begin{itemize}
			\item $\mu$ is antidominant minuscule, i.e. $\mu(\beta) \in \{0,-1\}$ for all $\beta \in \Delta^+$.
			\item $u$ is the longest element in the parabolic quotient (of $W$)
			$${}^{J_\mu}W = \{w \in W : w^{-1}(\beta) \in \Delta^+ \text{ for all } \beta \in \Pi \text{ such that } \mu(\beta) = 0\}.$$
		\end{itemize}
	\end{proposition}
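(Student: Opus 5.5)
The plan is to read everything off the length formula of Iwahori--Matsumoto recalled just before the statement. In that formula both sums run over $\beta \in \Delta^+$, split by whether $u^{-1}(\beta)$ is positive or negative. It expresses $l(\mu\cdot u)$ as a sum of non-negative terms. So $l(\mu \cdot u)=0$ holds if and only if every term vanishes, that is:
\begin{itemize}
\item for $\beta \in \Delta^+$ with $u^{-1}(\beta)\in\Delta^+$, we have $\mu(\beta)=0$;
\item for $\beta\in\Delta^+$ with $u^{-1}(\beta)\in -\Delta^+$, we have $\mu(\beta)=-1$.
\end{itemize}
The rest of the argument translates these two conditions into the two bullets of the statement.

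First, the two conditions together say exactly that $\mu(\beta)\in\{0,-1\}$ for all $\beta\in\Delta^+$, i.e. $\mu$ is antidominant minuscule. They also say that, for $\beta\in\Delta^+$, $u^{-1}(\beta)$ is positive if and only if $\mu(\beta)=0$.

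Next I identify the positive roots on which $\mu$ vanishes when $\mu$ is antidominant. Write $\beta=\sum_j n_j\beta_j$ with $n_j\ge 0$. Then $\mu(\beta)=\sum_j n_j\mu(\beta_j)$ is a sum of non-positive terms. Hence $\mu(\beta)=0$ if and only if $\beta$ is supported on the simple roots $\Pi_{J_\mu}$ killed by $\mu$. Here $J_\mu$ is the stabiliser as in Remark \ref{Jmu}, which for the finite Weyl group is generated by the $s_j$ with $\mu(\beta_j)=0$.

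So the condition on $u$ becomes: $u^{-1}$ sends the positive roots in the span of $\Pi_{J_\mu}$ to positive roots, and every other positive root to a negative root. The first half is equivalent to $u^{-1}(\beta)\in\Delta^+$ for simple $\beta\in\Pi_{J_\mu}$, since positivity is preserved by non-negative combinations; this is membership in ${}^{J_\mu}W$. By the remark on parabolic quotients of $W$, the full condition characterises the unique longest element of ${}^{J_\mu}W$. That remark characterises the longest element of $W^J$, and the statement for ${}^JW$ follows by inverting.

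For the converse, if $\mu$ is antidominant minuscule and $u$ is that longest element, the same dictionary shows every term in the length formula vanishes, so $l(\mu\cdot u)=0$.

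The only delicate point is bookkeeping the sign convention: translations act by $-\mu$, which is why antidominant rather than dominant coweights appear and why the second sum carries $+1$. I would double-check this by verifying one rank-one example, $\mu=-\omega^\vee$ with $u=s$.
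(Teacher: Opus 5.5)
Your proposal is correct and follows the paper's proof. Both arguments read $l(\mu\cdot u)=0$ off the Iwahori--Matsumoto length formula term by term, obtaining $\mu(\beta)\in\{0,-1\}$ and $u^{-1}(\beta)\in\Delta^+ \iff \mu(\beta)=0$ for $\beta\in\Delta^+$, and then invoke the characterisation of the longest element of ${}^{J_\mu}W$; your extra step identifying the positive roots killed by an antidominant $\mu$ with those supported on $\Pi_{J_\mu}$ only makes explicit what the paper leaves implicit.
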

	
	\begin{proof}
		It suffices to use the length formula given immediately above. For the length to be zero, it is clear that $\mu(\beta) \in \{0,-1\}$ for positive roots $\beta \in \Delta^+$. Moreover, we need that $\mu(\beta) = 0$ if and only if $u^{-1}(\beta) \in \Delta^+$. This characterises $u$ as the longest element of ${}^{J_\mu}W$. Both conditions are clearly sufficient.
	\end{proof}

	\subsection{Parabolic Higgs bundles and their moduli spaces.}We start by defining parabolic bundles over the punctured curve $(C,D)$.
	
	\begin{definition}
		A \textbf{parabolic $G$-bundle} over $(C,D)$ with weights $\alpha = (\alpha_1,\dots,\alpha_s)$ is a pair $(E,Q)$ where
		\begin{itemize}
			\item $E$ is a principal $G$-bundle over $C$.
			\item $Q = (Q_1,\dots,Q_s)$ is a choice of a \textbf{parabolic structure} for each $c_i \in D$, that is, a $B$-orbit $Q_i \in E|_{c_i}/B$.
		\end{itemize}
	\end{definition}
	
	The choice of a parabolic structure at $c_i$ is equivalent to choosing a subgroup $Q'_i \subseteq E(G)|_{c_i}$ isomorphic to $B$ in the fibre of the adjoint bundle $E(G)$. Indeed, said fibre can be described as the space of $G$-equivariant maps
	$$f : E|_{c_i} \to G$$
	\noindent so given a $B$-orbit $eB \in E|_{c_i}/B$ we get a subgroup 
	$$Q_i' = \{f \in E(G)|_{c_i} : f(e) \in B\}.$$
	Conversely, given such a subgroup, there is an element $e \in E|_{c_i}$ such that $f(e) \in B$ for all $f \in Q'_i$, determining the orbit $eB \in E|_{c_i}$ (which is well-defined as $B$ is self-normalising). 
	
	Therefore, we get an induced Borel subalgebra $\lie q_i \subseteq E(\lieg)|_{c_i}$. Let $\lie n_i \subseteq \lie q_i$ denote its nilpotent radical. Define the subsheaves $SPE(\lieg) \subseteq PE(\lieg) \subseteq E(\lieg)$ via the exact sequences
	$$0 \to PE(\lieg) \to E(\lieg) \to \bigoplus_{i=1}^sE(\lieg)|_{c_i}/\lie q_i \to 0$$
	$$0 \to SPE(\lieg) \to E(\lieg) \to \bigoplus_{i=1}^sE(\lieg)|_{c_i}/\lie n_i \to 0.$$
	These are called the sheaves of \textbf{parabolic} and \textbf{strongly parabolic} sections of the adjoint bundle, respectively.
	
	\begin{definition}
		A \textbf{parabolic $G$-Higgs bundle} with weights $\alpha$ over $(C,D)$ is a triple $(E,\varphi,Q)$ where:
		\begin{itemize}
			\item $(E,Q)$ is a parabolic $G$-bundle with weights $\alpha$ over $(C,D)$. 
			\item $\varphi \in H^0(PE(\lieg) \otimes K_C(D))$ is a section called \textbf{Higgs field}.
		\end{itemize}
		If $\varphi \in H^0(SPE(\lieg) \otimes K_C(D))$, we say that the parabolic $G$-Higgs bundle is \textbf{strongly parabolic}.
	\end{definition}
	
	We want to study the moduli space $\mathcal M(G,\alpha)$ parametrising isomorphism classes of polystable parabolic $G$-Higgs bundles. For this, we first recall the suitable notions of stability \cite[Section 4.2]{biquard_parabolic_2019} \cite[Definition 2.2]{telemanParabolicBundlesProducts2003a}.
	
	\begin{definition}
		Let $\hat{G} \le G$ be a closed subgroup and $E$ a principal $G$-bundle over a variety $X$. A \textbf{reduction of structure group} of $E$ to $\hat{G}$ is a section $\sigma \in H^0(X,E(G/\hat{G}))$.
	\end{definition}
	
	Such a section $\sigma : X \to E(G/\hat{G})$ allows to obtain a principal $\hat{G}$-bundle $E_\sigma := \sigma^*E$ on $X$ by pulling back the $\hat{G}$-bundle $E \to E(G/\hat{G})$. It is a subvariety $E_\sigma \subseteq E$ and $E_\sigma(G) \simeq E$.
	
	The stability notions for parabolic $G$-Higgs bundles are defined via reductions to parabolic subgroups of $G$. Let $P \subseteq G$ be a standard parabolic subgroup, that is, any closed subgroup containing $B$. Let $L \subseteq P$ be its standard Levi subgroup, by which we mean the Levi subgroup that contains $T$. Choose a dominant character $\chi \in X^*_+(P)$, i.e. a character $P \to \mathbb G_m$ such that $\chi|_T$ is dominant. 
	
	Given a principal $G$-bundle $E$ and a reduction $\sigma \in H^0(E(G/P))$ we have a notion of \textbf{degree} of $\sigma$ with respect to the dominant character $\chi$, given by
	$$\deg(E,\sigma,\chi) := \deg \chi(E_\sigma),$$
	\noindent where we use that $\chi(E_\sigma)$ is a $\C^\times$-bundle, i.e. a line bundle. 
	
	Moreover, taking into account the parabolic structure $Q$ on $E$, we may define a \textbf{parabolic degree}. For this we add an extra term depending on the parabolic structures and weights. Notice that in $E|_{c_i}$ we have both a $P$-orbit $\sigma(c_i)$ and a $B$-orbit $Q_i$, therefore there is a well-defined \textbf{relative position} by writing $\sigma(c_i) = gP$ and $Q_i = g'B$ in any trivialisation, and taking the element 
	$$(g,g')\in G \backslash (G \times G)/(P \times B) \simeq P \backslash G/B \simeq W_P \backslash W,$$
	\noindent where $W_P \subseteq W$ denotes the \textit{parabolic subgroup} corresponding to the Weyl group of $L$ (note that this is indeed a parabolic subgroup in the sense of Definition \ref{parabolicweyl}). This does not depend on the chosen trivialisation. We denote this relative position by $\deg(\sigma,Q_i)$.
	
	\begin{definition}
		Let $(E,Q)$ be a parabolic $G$-bundle over $(C,D)$ with weights $\alpha$, denote by $\sigma \in H^0(E(G/P))$ a reduction of structure group and let $\chi \in X^*_+(P)$ a dominant character. We define its \textbf{parabolic degree} as
		$$\pdeg(E,Q,\sigma,\chi) := \deg(E,\sigma,\chi) + \sum_{i=1}^s\chi(\deg(\sigma,Q_i)\alpha_i).$$
	\end{definition}
	
	Notice that the rightmost part of the expression is well defined: since $\chi$ is a character of $P$, the expression does not depend on the chosen representative in $W_P\backslash W$ acting on $\alpha$, as elements in $W_P$ fix $\chi$.
	
	\begin{remark}
		The previous quantity can also be defined using analytic techniques. On one hand, the degree can be defined via Chern--Weil theory by reducing $E_\sigma$ further to the maximal compact subgroup of $L$, yielding $E_{\sigma'}$, choosing a connection $A$ on $E_{\sigma'}$ with curvature $F_A$ and setting
		$$\deg(E,\sigma,\chi) := \frac{i}{2\pi}\int_C\chi(F_A).$$
		
		On the other hand, the local part of the parabolic degree can be defined using the Tits distance on the \textit{visual boundary} of a symmetric space associated to the local data, see \cite[Appendix B]{biquard_parabolic_2019}.
	\end{remark}
	
	Now we define the stability notions for parabolic $G$-Higgs bundles.
	
	\begin{definition}
		A parabolic $G$-Higgs bundle $(E,\varphi,Q)$ with weights $\alpha$ over $(C,D)$ is
		\begin{itemize}
			\item \textbf{semistable}, if for any standard parabolic subgroup $P \subseteq G$, dominant character $\chi \in X^*_+(P)$ and reduction $\sigma \in H^0(C,E(G/P))$ such that $\varphi|_{C \setminus D} \in H^0(C \setminus D, E_\sigma(\liep) \otimes K_C|_{C\setminus D})$, we have
			$$\pdeg(E,Q,\sigma,\chi) \le 0.$$
			\item \textbf{stable}, if for any proper standard parabolic subgroup $P \subsetneq G$, dominant character $\chi \in X^*_+(P)$ and reduction $\sigma \in H^0(C,E(G/P))$ such that $\varphi|_{C \setminus D} \in H^0(C \setminus D, E_\sigma(\liep) \otimes K_C|_{C\setminus D})$, we have
			$$\pdeg(E,Q,\sigma,\chi) < 0.$$
			\item \textbf{polystable}, if it is semistable and, for any standard parabolic subgroup $P \subseteq G$, dominant character $\chi \in X^*_+(P)$ and reduction $\sigma \in H^0(C,E(G/P))$ such that $\varphi|_{C \setminus D} \in H^0(C \setminus D, E_\sigma(\liep) \otimes K_C|_{C\setminus D})$ and
			$$\pdeg(E,Q,\sigma,\chi) = 0,$$
			\noindent there exists a further reduction $\sigma' \in H^0(C,E_\sigma(P/L))$ such that $\varphi \in H^0(C \setminus D, E_{\sigma'}(\liel) \otimes K_C|_{C\setminus D})$ and a parabolic structure on the $L$-bundle $E_{\sigma'}$ inducing that of $E$. 
		\end{itemize}
	\end{definition}
	
	Using these notions, a \textbf{moduli space} of isomorphism classes of polystable parabolic $G$-Higgs bundles with weight $\alpha$ can be constructed. We denote it by $\mathcal M(G,\alpha)$. It is a complex, quasiprojective variety with a Poisson structure, foliated by symplectic leaves corresponding to fixing the extra data of the orbit for the associated graded to the residues at the parabolic punctures \cite[Proposition 7.1]{biquard_parabolic_2019}, \cite{bottacinSymplecticGeometryModuli1995,markmanSpectralCurvesIntegrable1994,logaresModuliParabolicHiggs2010}. In particular, the locus of strongly parabolic Higgs bundles defines a symplectic leaf
	$$\mathcal M_{sp}(G,\alpha) \subseteq \mathcal M(G,\alpha)$$
	\noindent which has a hyperkähler structure, and thus it is equipped with a holomorphic symplectic form $\omega \in H^0(\Omega^2_{\mathcal M_{sp}(G,\alpha)})$. This is the space where we focus or work for the remainder of the paper. It has dimension
	$$\dim \mathcal M_{sp}(G,\alpha) = 2\dim(G)(g-1) + 2s\dim(G/B).$$
	
	\subsection{Deformation theory, the symplectic form and the Hitchin system.}\label{sympform} We now give a more precise definition of the holomorphic symplectic form $\omega$ mentioned above. For this, we need to understand the deformation theory of strongly parabolic $G$-Higgs bundles.
	
	\begin{definition}\label{deformationcomplex}
		Let $(E,\varphi,Q) \in \mathcal M_{sp}(G,\alpha)$ be a strongly parabolic $G$-Higgs bundle. Its \textbf{deformation complex} is the following complex of sheaves over $C$:
		$$C^\bullet(E,\varphi,Q) : PE(\lieg) \xrightarrow{[\varphi,-]} SPE(\lieg) \otimes K_C(D).$$
	\end{definition} 
	
	Notice that indeed $PE(\lieg)$ maps to $SPE(\lieg) \otimes K_C(D)$ because $\varphi \in H^0(SPE(\lieg) \otimes K_C(D))$ and $[\lie q_i, \lie n_i] \subseteq \lie n_i$. This complex governs the deformations of $(E,\varphi,Q)$ in the moduli space.
	
	\begin{proposition}[{\cite[Remark 2.1, Proposition 4.2]{yokogawaInfinitesimalDeformationParabolic1995}, \cite[Section 6]{markmanSpectralCurvesIntegrable1994}}]\label{deformationtheory}
		The space of deformations of a strongly parabolic $G$-Higgs bundle $(E,\varphi,Q)$ is naturally isomorphic to $\mathbb H^1(C^\bullet(E,\varphi,Q))$. In particular, if $(E,\varphi,Q) \in \mathcal M_{sp}^s(G,\alpha)$ is a smooth point, we have a description of its tangent space
		$$T_{(E,\varphi,Q)}\mathcal M(G,\alpha) \simeq \mathbb H^1(C^\bullet(E,\varphi,Q)).$$
	\end{proposition}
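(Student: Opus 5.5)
This is the standard deformation-theoretic statement, and the plan is to reduce it to the usual argument for Higgs bundles, keeping track of the parabolic conditions at the punctures. First I will describe first-order deformations over the dual numbers $\C[\epsilon]/(\epsilon^2)$ by Čech cocycles. Choose an open affine cover $\{U_a\}$ of $C$ such that each $c_i$ lies in exactly one $U_a$ and $E|_{U_a}$ is trivial. A first-order deformation of $(E,Q)$ is given on overlaps by transition data $1+\epsilon\,\eta_{ab}$ with $\eta_{ab}\in \Gamma(U_{ab},E(\lieg))$. Since the punctures never lie in overlaps, $\eta_{ab}$ can be taken as sections of $PE(\lieg)$. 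The deformation of the parabolic flag $Q_i$ inside $U_a$ is encoded by changing the local trivialisation by $1+\epsilon\,\xi_a$, with $\xi_a\in\Gamma(U_a,E(\lieg))$. Only the class of $\xi_a(c_i)$ in $E(\lieg)|_{c_i}/\lie q_i = T_{Q_i}(E|_{c_i}/B)$ matters, because infinitesimal automorphisms preserving $Q_i$ are exactly sections of $PE(\lieg)$.

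Absorbing $\xi_a$ into the gauge, a deformation of $(E,\varphi,Q)$ becomes a pair $(\{\eta_{ab}\},\{\dot\varphi_a\})$ with:
\begin{itemize}
\item $\eta_{ab}\in\Gamma(U_{ab},PE(\lieg))$ and $\dot\varphi_a\in \Gamma(U_a,SPE(\lieg)\otimes K_C(D))$, the latter because the deformed Higgs field must remain strongly parabolic with respect to the deformed flag;
\item the compatibility $\dot\varphi_a-\dot\varphi_b=[\varphi,\eta_{ab}]$ on overlaps, together with the cocycle condition for $\eta$.
\end{itemize}
Two such pairs give isomorphic deformations exactly when they differ by $(\delta\zeta, [\varphi,\zeta])$ for a $0$-cochain $\zeta$ of $PE(\lieg)$. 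This is precisely the Čech description of $\mathbb H^1$ of the complex $C^\bullet(E,\varphi,Q)$ in Definition \ref{deformationcomplex}, so the space of deformations is $\mathbb H^1(C^\bullet(E,\varphi,Q))$. The well-definedness of $[\varphi,-]:PE(\lieg)\to SPE(\lieg)\otimes K_C(D)$ was already noted after that definition.

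For the tangent space statement at a smooth stable point, I will use that the moduli space locally represents the deformation functor. Stability gives that the automorphism group is finite modulo the centre. Since $G$ is semisimple, $\mathbb H^0 = H^0$ of the kernel of $[\varphi,-]$ on $PE(\lieg)$ vanishes. Together with a Kuranishi/Luna slice argument as in the cited references, this gives that the Zariski tangent space equals $\mathbb H^1$.

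The main obstacle is justifying that the infinitesimal change of flag is correctly absorbed so that $\dot\varphi_a$ lands in $SPE(\lieg)\otimes K_C(D)$ rather than a larger sheaf. To handle this, I will check pointwise at $c_i$ that the condition $(\varphi+\epsilon\dot\varphi)(c_i)\in (1+\epsilon\xi)\lie n_i(1-\epsilon\xi)$ reduces, after gauge, to $\dot\varphi(c_i)\in\lie n_i$. As a consistency check, I will also verify the dimension count via Serre duality: $PE(\lieg)^\vee\otimes K_C \simeq SPE(\lieg)\otimes K_C(D)$ through the Killing form, since $\lie q_i^\perp=\lie n_i$. Hence $\chi(C^\bullet)$ gives $\dim\mathbb H^1 = 2\dim G(g-1)+2s\dim(G/B)$, matching the stated dimension.
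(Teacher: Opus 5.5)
Your proposal is correct and is essentially the standard \v{C}ech computation of first-order deformations (Biswas--Ramanan style, with the flag variation gauged away so that the residual gauge freedom is $PE(\lieg)$ and the Higgs variation lands in $SPE(\lieg)\otimes K_C(D)$); this is what the cited references (Yokogawa, Markman) carry out, and the paper gives no proof of its own beyond citing them. One point to state more precisely: identifying the Zariski tangent space of the coarse moduli space with $\mathbb H^1$ via a slice uses $\Aut(E,\varphi,Q)=Z(G)$, which is what smoothness means here (cf.\ Remark \ref{smoothness}), whereas finiteness of $\Aut(E,\varphi,Q)/Z(G)$ alone only gives $\mathbb H^0=0$ and a local model that is a finite quotient of the Kuranishi space.
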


	\begin{remark}\label{smoothness}
		For a point $(E,\varphi,Q) \in \mathcal M_{sp}(G,\alpha)$ to be smooth, it has to be stable. Due to the definition of stability and of the parabolic degree, a generic choice of the weights $\alpha$ implies that every point in the moduli space is stable. However, the resulting space may still have singularities of orbifold type at those points whose automorphism group $\Aut(E,\varphi,Q)$ is strictly larger than the centre $Z(G)$ (see \cite[Section 7.1]{biquard_parabolic_2019}). Although this does not pose significant problems, we will typically work over the smooth locus $\mathcal M_{sp}^s(G,\alpha)$. 
	\end{remark}
	
	The previous deformation theory can be used to describe the symplectic form $\omega$ using the standard Serre duality argument. Indeed, the Killing form induces an isomorphism
	$$SPE(\lieg)^* \simeq PE(\lieg) \otimes \mathcal O(D)$$
	\noindent so that
	$$\mathbb H^1(C^\bullet(E,\varphi,Q))^* \simeq \mathbb H^1(C^\bullet(E,\varphi,Q)^* \otimes K_C) \simeq \mathbb H^1(C^\bullet(E,\varphi,Q))$$
	\noindent where the first isomorphism is Serre duality and the second is the Killing form pairing explained above. This identification defines $\omega$.
	
	The moduli space $\mathcal M_{sp}(G,\alpha)$ is furthermore endowed with an algebraically completely integrable system, the \textbf{Hitchin system} (originally \cite{hitchin_stable_1987} in the non parabolic setting, see \cite{bottacinSymplecticGeometryModuli1995,markmanSpectralCurvesIntegrable1994}), defined as follows: consider the algebra of invariant polynomial functions $\C[\lieg]^G = \C[p_1,\dots,p_r]$, which is a polynomial ring on $r$ generators. Set $d_i := \deg(p_i)$. Then, given a section $\varphi \in H^0(E(\lieg)\otimes K_C(D))$, the evaluation $p_i(\varphi) \in H^0(C,K_C^{d_i}(d_i\cdot D))$ is well defined and, moreover, if $\varphi$ is strongly parabolic, it lies in $H^0(C,K_C^{d_i}((d_i-1) \cdot D))$ since $\varphi|_{c_i}$ is nilpotent. 
	
	\begin{definition}
		The \textbf{Hitchin map} is defined by
	\begin{align*}
		h_G : \mathcal M_{sp}(G,\alpha) &\to \mathcal A(G) := \bigoplus_{i=1}^rH^0(C,K_C^{d_i}((d_i-1) \cdot D))\\
		(E,\varphi,Q) &\mapsto (p_1(\varphi),\dots,p_r(\varphi)).
	\end{align*}
	\end{definition}
	
	It is a proper map of algebraic varieties and the base has half the dimension of the moduli space.
	
	\subsection{The \texorpdfstring{$\C^\times$}{C*}-action and fixed points.} Given a parabolic $G$-Higgs bundle $(E,\varphi,Q)$, it can be scaled by a nonzero complex number $\lambda \in \C^\times$:
	$$(E,\varphi,Q) \mapsto (E,\lambda \varphi, Q).$$
	This scaling results in a well-defined parabolic $G$-Higgs bundle (since the sheaf of parabolic sections is invariant by scaling) and the stability notions are preserved. Moreover, some of the symplectic leaves are preserved, including $\mathcal M_{sp}(G,\alpha)$. This results in a well defined $\C^\times$-action
	\begin{align*}
	\mathcal M_{sp}(G,\alpha) &\to \mathcal M_{sp}(G,\alpha)\\
(E,\varphi,Q) &\mapsto (E,\lambda \varphi, Q).
	\end{align*}
	
	The Hitchin map $h_G$ is $\C^\times$-equivariant for the $\C^\times$-action on $\mathcal A(G)$ scaling each of the direct summands with weight $d_i$. Also, it is not hard to see from this description that the $\C^\times$-action maps the holomorphic symplectic form $\omega$ defined above to $\lambda \omega$.
	
	We are interested in the study of this action: first, we describe the classification of its fixed points. As in the case of usual (non-parabolic) $G$-Higgs bundles, this description is related to $\Z$-gradings of the Lie algebra $\lieg$ \cite[Section 4.2]{biquard_arakelov-milnor_2021}, \cite{simpson_constructing_1988}.
	
	\begin{definition}
		A \textbf{$\Z$-grading} of $\lieg$ is a direct sum decomposition of the underlying vector space
		$$\lieg = \bigoplus_{j \in \Z}\lieg_j$$
		\noindent such that $[\lieg_j,\lieg_k] \subseteq \lieg_{j+k}$.
	\end{definition}
	
	Since $\lieg_0$ is preserved by the bracket, there is a corresponding connected subgroup $G_0 \subseteq G$. Moreover, since $[\lieg_0,\lieg_k] \subseteq \lieg_k$, the restriction of the adjoint action of $G$ to $G_0$ preserves each $\lieg_k$ resulting in an action of $G_0$ on $\lieg_k$. These representations of $G_0$ (whenever $k\neq 0$) are called \textbf{Vinberg $\C^\times$-pairs} \cite{garcia-prada_cyclic_2024, garcia-pradaVinbergPairsHiggs2024}.
	
	As $\lieg$ is semisimple, there exists a \textbf{grading element} $\zeta \in \lieg_0$ such that $\lieg_k$ is the eigenspace with eigenvalue $k$ for $\ad(\zeta)$. We have $G_0 = C_G(\zeta)$, that is, the subgroup $G_0$ can be described as the centraliser of this grading element. Without loss of generality (by conjugating if necessary) we may assume that $\zeta \in \liet$. In particular $T \subseteq G_0$ is still a maximal torus.
	
	In order to classify the fixed points by the $\C^\times$-action, we first set the parabolic structure aside. 
	
	\begin{definition}
		A \textbf{$K_C(D)$-twisted} $G$-Higgs bundle over $C$ is a pair $(E,\varphi)$ where $E$ is a principal $G$-bundle over $C$ and $\varphi \in H^0(E(\lieg) \otimes K_C(D))$.
	\end{definition}
	
	Note that, since for our choice of parabolic weights we have $PE(\lieg) \subseteq E(\lieg)$, forgetting the parabolic structure on a parabolic $G$-Higgs bundle results in a $K_C(D)$-twisted $G$-Higgs bundle. Being fixed by the $\C^\times$-action as a $K_C(D)$-twisted $G$-Higgs bundle (i.e. having a family of automorphisms $(E,\varphi) \simeq (E,\lambda \varphi)$) is weaker than being fixed as a parabolic $G$-Higgs bundle, since the automorphisms need not be compatible with the parabolic structure. Thus, our strategy to identify the fixed points in $\mathcal M(G,\alpha)$ is to first do so for $K_C(D)$-twisted Higgs bundles and then study which parabolic structures can we equip in a compatible way with the corresponding $\C^\times$-family of automorphisms.
	
	\begin{proposition}\label{cstarfixedmeromorphic}
		Let $(E,\varphi)$ be a $K_C(D)$-twisted $G$-Higgs bundle. There is a family of automorphisms
		$$f_\lambda : (E,\varphi) \xrightarrow{\sim} (E,\lambda \varphi)$$
		\noindent if and only if there is a Vinberg $\C^\times$-pair $(G_0,\lieg_k)$ and a reduction $\sigma \in H^0(C,E(G/G_0))$ such that $\varphi \in H^0(E_\sigma(\lieg_k) \otimes K_C(D))$.
	\end{proposition}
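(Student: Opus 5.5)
The reverse implication is a direct construction, so I will do it first. Suppose $\sigma \in H^0(C,E(G/G_0))$ is a reduction with $\varphi \in H^0(E_\sigma(\lieg_k)\otimes K_C(D))$, where $\zeta \in \liet$ is the grading element and $G_0 = C_G(\zeta)$. For $t \in \C$ the element $\exp(2\pi i t\zeta)$ is central in $G_0$. Its values at $t$ and $t+1$ differ by $\exp(2\pi i \zeta)$, which acts trivially on $\lieg$ because all eigenvalues of $\ad(\zeta)$ are integers.

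To avoid choosing a logarithm, I will use the cocharacter directly. Since $\ad(\zeta)$ has integer eigenvalues, some multiple $m\zeta$ lies in $X_*(T)$. For a cleaner statement, note that the adjoint group image $\bar\zeta : \C^\times \to G_{ad}$, $\lambda \mapsto \lambda^{\zeta}$, is a well-defined cocharacter of the adjoint torus, and its image is central in the image of $G_0$. Central elements of $G_0$ give automorphisms of any $G_0$-bundle, hence of $E = E_\sigma(G)$. However, we need elements of $G$ rather than of $G_{ad}$.

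The fix is to take a $k$-th root. I will choose $\eta \in X_*(T)\otimes\mathbb Q$ with $k\eta = \zeta$, a lift $\tilde\lambda$ of $\lambda$, and set $f_\lambda$ to be the action of $\exp(\log(\tilde\lambda)\,\zeta/k)$, which is central in $G_0$. On $\lieg_k$ this acts by $\tilde\lambda^{k/k} = \lambda$, so the element only needs to exist pointwise for each $\lambda$; continuity in $\lambda$ is not required for a "family" of isomorphisms. This gives $f_\lambda : (E,\varphi)\to(E,\lambda\varphi)$.

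The direct implication is the main obstacle. Given isomorphisms $f_\lambda$, I will follow Simpson's argument as in \cite[Proposition 4.7]{biquard_arakelov-milnor_2021}. First I would choose $\lambda$ of infinite order, so that $f_\lambda \in \Aut(E)$ and $f_\lambda\cdot\varphi = \lambda\varphi$. Since $C$ is projective, $\Aut(E)$ is a linear algebraic group. I would take the Jordan decomposition of $f_\lambda$ and its semisimple part $s$, which still satisfies $s\cdot\varphi=\lambda\varphi$, because the unipotent part acts trivially on that eigenvector in $H^0(E(\lieg)\otimes K_C(D))$.

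The Zariski closure of $\langle s\rangle$ is a diagonalisable group. Its identity component is a torus, and I would extract from it a one-parameter subgroup $\psi:\C^\times\to\Aut(E)$ with $\psi(t)\cdot\varphi = t^{k}\varphi$ for some nonzero integer $k$; replacing $s$ by a power handles the finite component group. The point to check is that $\varphi \neq 0$ forces a nontrivial weight.

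Next I would evaluate $\psi$ at a point $x\in C$. Its conjugacy class in $G$ is constant in $x$, since $C$ is connected and conjugacy classes of cocharacters are discrete. I would conjugate so that the cocharacter is $\zeta'\in X_*(T)$, and define $\zeta := \zeta'/k$, which gives a grading with $\lieg_j$ equal to the $\ad(\zeta')$-eigenspace of eigenvalue $jk$; any other eigenspaces get absorbed into nonintegral degrees, which I would simply discard. The centraliser $G_0 = C_G(\zeta')$ then yields the reduction $\sigma$: at each $x$, it is the point of $E_x(G/G_0)$ where $\psi(x)$ equals $\zeta'$. Finally, the eigenvalue condition places $\varphi$ in $E_\sigma(\lieg_k)\otimes K_C(D)$. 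If $\varphi=0$, I would take the trivial grading with $G_0=G$ and $k$ arbitrary.
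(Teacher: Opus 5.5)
Your argument is sound in substance, but one step fails as written. After conjugating $\psi$ to $\zeta'\in X_*(T)$, you set $\zeta:=\zeta'/k$ and ``discard'' the eigenspaces of non-integral degree. That is not a $\Z$-grading: the pieces $\lieg_j$ must exhaust $\lieg$, and $k$ need not divide the integers $\beta(\zeta')$. For instance, for $\SL_3(\C)$ with $\zeta'=\mathrm{diag}(1,0,-1)$ and $\varphi$ along the highest root space, one gets $k=2$, while the simple root spaces would acquire degree $\tfrac12$. It also contradicts your last sentence: for the $\zeta'/k$-grading, $\varphi$ sits in degree $1$, not $k$.

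The repair is not to rescale at all: take $\zeta'$ itself as grading element. Its eigenvalues $\beta(\zeta')$ are integers because $\zeta'$ is a cocharacter. $\lieg_0$, and hence $G_0=C_G(\zeta')$, are unchanged; $C_G(\zeta')$ is connected, being the centraliser of a torus. Then $\varphi$ lands in $E_\sigma(\lieg_k)\otimes K_C(D)$ with $k\neq0$, which is exactly what the statement allows.

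Two smaller points. First, the nontrivial weight you flag as ``the point to check'' comes from $\lambda$ being non-torsion. The group $\overline{\langle s\rangle}$ preserves $\C\varphi$ and acts on it by a character $\chi$ with $\chi(s^N)=\lambda^N\neq1$, where $s^N$ lies in the identity component. Second, $\sigma$ is best written as the $G$-equivariant map $E\to G\cdot\zeta'\simeq G/G_0$ sending $e\in E_x$ to the cocharacter $g_e$ with $\psi_x(t)(e)=e\,g_e(t)$. This makes its algebraicity evident.

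With that fix, your proof differs from the paper's mainly in the hard direction. For the easy direction, both of you use a central element of $G_0$ obtained by exponentiating the grading element. The paper takes the cocharacter $\xi_\lambda$ of $\Ad(G)$, gets $(E,\varphi)\simeq(E,\lambda^k\varphi)$ and extracts $k$-th roots, implicitly lifting $\xi_\lambda$ to $G$. Your $\exp(\log(\lambda)\,\zeta/k)\in T$ acts by $\lambda$ on $\lieg_k$ directly and lives in $G$ from the start, so your detours through $G_{ad}$ and $X_*(T)\otimes\mathbb{Q}$ can be deleted.

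For the converse, the paper gives no argument and refers to \cite{biquard_arakelov-milnor_2021}. You supply a self-contained one: the Jordan decomposition of a single $f_\lambda$ in the affine algebraic group $\Aut(E)$, a cocharacter of the torus $\overline{\langle s\rangle}^0$, and rigidity of its conjugacy class along $C$. This needs no stability hypothesis, matching the generality in which the proposition is stated. It also shows that a single isomorphism $(E,\varphi)\simeq(E,\lambda\varphi)$ with $\lambda$ not a root of unity already forces the Vinberg reduction.
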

	
	\begin{proof}
		The proof is analogous to usual $K_C$-twisted Higgs bundles, see \cite[Proposition 4.15]{biquard_arakelov-milnor_2021}. For convenience later, we recall why having such a reduction implies the existence of the $f_\lambda$. Introduce the cocharacter $\xi_\bullet : \mathbb G_m \to \Ad(G)$ defined by exponentiating the grading element $\zeta$. Since $\zeta$ is centralised by $G_0$, this cocharacter lies in the centre of $G_0$ and defines an automorphism of $E_\sigma$, which acts on the Higgs field as $\Ad_{\xi_\lambda}(\varphi) = \lambda^k\varphi$ because $\varphi$ takes values in $\lieg_k$. Since $k \neq 0$, we get that $(E,\varphi)$ is isomorphic to $(E,\lambda^k \varphi)$ and hence to $(E,\lambda \varphi)$ for any $\lambda \in \C^\times$. 
	\end{proof}
 		
 	Now we investigate which parabolic structures $Q$ on a $K_C(D)$-twisted $G$-Higgs bundle fixed by the $\C^\times$-action are compatible with the family of automorphisms $f_\lambda$, resulting in the identification of the $\C^\times$-fixed parabolic $G$-Higgs bundles.
 	
 	\begin{proposition}\label{localparab}
 	Let $(E,\varphi)$ be a $K(D)$-twisted Higgs bundle fixed by the $\C^\times$-action. Let $f_\lambda : (E,\varphi) \xrightarrow{\sim} (E,\lambda \varphi)$ denote the family of automorphisms given by Proposition \ref{cstarfixedmeromorphic}, i.e. obtained by using $\xi_\lambda \in \Ad(G)$ the cocharacter given by exponentiating the grading element $\zeta$. Let $c \in D$ be one of the parabolic punctures. 
 	
 	There is an identification between the space of parabolic structures $E|_c/B$ and $G/B$ such that the following are equivalent:
 	\begin{itemize}
 		\item Every $f_\lambda$ is a parabolic automorphism with respect to the parabolic structure $gB \in G/B$.
 		\item The parabolic structure $gB$ is fixed by the $\C^\times$-action
 		$$gB \mapsto \xi_\lambda gB$$
 		\noindent on $G/B$.
 	\end{itemize}
    \end{proposition}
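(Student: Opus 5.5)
The plan is to build the identification from a point of the reduction $E_\sigma$, so that $f_\lambda$ becomes left multiplication by $\xi_\lambda$. By Proposition \ref{cstarfixedmeromorphic} we have a Vinberg pair $(G_0,\lieg_k)$ and a reduction $\sigma \in H^0(C,E(G/G_0))$ with $\varphi \in H^0(E_\sigma(\lieg_k)\otimes K_C(D))$. The automorphisms $f_\lambda$ are given by the central cocharacter $\xi_\lambda$ of $G_0$, acting fibrewise on $E_\sigma$ and extended $G$-equivariantly to $E = E_\sigma \times_{G_0} G$.

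Concretely, choose a point $e_0 \in (E_\sigma)|_c$. This gives a trivialisation $E|_c \simeq G$, $e_0 g \mapsto g$, and hence an identification $E|_c/B \simeq G/B$, $e_0 gB \mapsto gB$. Since $\xi_\lambda$ is central in $G_0$, the automorphism of $E_\sigma$ it defines is $e \mapsto e\,\xi_\lambda$. This is well defined: for $e' = e h$ with $h \in G_0$, we get $e h \xi_\lambda = e\xi_\lambda h$. Extending $G$-equivariantly, $f_\lambda(e_0 g) = e_0 \xi_\lambda g$.

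Under the identification, the induced map on $E|_c/B$ is therefore $gB \mapsto \xi_\lambda gB$. The identification depends on $e_0$ only up to left multiplication by an element $h \in G_0$, which commutes with $\xi_\lambda$. So the action, and the notion of a fixed point, is independent of the choice of $e_0$.

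For the equivalence, recall that $f_\lambda$ is a parabolic automorphism with respect to $Q = e_0 gB$ exactly when it preserves the $B$-orbit $Q$, i.e. $f_\lambda(Q) = Q$. Equivalently, $f_\lambda$ preserves the associated subgroup $Q' \subseteq E(G)|_c$ described after the definition of parabolic structure, and this is equivalent because $B$ is self-normalising. By the computation above, $f_\lambda(Q) = Q$ translates to $\xi_\lambda gB = gB$. Requiring this for every $\lambda$ is precisely $gB$ being fixed by the $\C^\times$-action.

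The main obstacle is the careful bookkeeping in the second step. One has to check that the automorphism of $E_\sigma$ induced by the central cocharacter really is right multiplication by $\xi_\lambda$ in the chosen trivialisation. One also has to make sure that $\xi_\lambda$, a priori valued in $\Ad(G)$, acts correctly on $G/B$. For the latter I would lift to a cocharacter of $G_0$ after replacing $\lambda$ by a suitable power, which changes neither the fixed locus nor the action on $G/B$, since the centre of $G$ lies in $B$ and acts trivially on $G/B$.
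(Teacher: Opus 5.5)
Your proposal is correct and follows essentially the same route as the paper, which also trivialises $E|_c$ at a point $e_\lambda$ of the reduction where the equivariant map $f'_\lambda$ takes the central value $\tilde{\xi}_\lambda$, computes $f_\lambda(e_\lambda g) = e_\lambda \tilde{\xi}_\lambda g$, and reads off the equivalence. Your treatment of the lift of $\xi_\lambda \in \Ad(G)$, using $Z(G) \subseteq B$, matches the paper's, and your extra checks (independence of the base point, $B$ self-normalising) are harmless but not needed.
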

 
    \begin{proof}
	 	Note that $\xi_\lambda \in \Ad(G)$ acts on the flag variety because $Z(G) \subseteq B$. As explained in the proof of Proposition \ref{cstarfixedmeromorphic}, we can find $e_\lambda \in E|_c$ a trivialisation via which $f'_\lambda(e_\lambda) = \tilde{\xi}_\lambda$ where $\tilde{\xi}_\lambda \in G$ lifts $\xi_\lambda$ and $f'_\lambda : E|_c \to G$ is the $G$-equivariant section corresponding to the bundle automorphism $f_\lambda: E \to E$. This element is the desired trivialisation $E|_c \simeq G$ that identifies $E|_c/B$ with $G/B$: let $Q_g$ denote the parabolic structure corresponding to $gB \in G/B$.
	 	
	 	Recall that $f_\lambda$ and $f_\lambda'$ are related as follows: $f_\lambda(e) = ef'_\lambda(e)$. Thus 
	 	$$f_\lambda(e_g) = e_gf'_\lambda(e_g) = e_gg^{-1}\tilde{\xi}_\lambda g = e_\lambda \tilde{\xi}_\lambda g.$$
	 	
	 	Therefore, we have the following diagram:
	 	\[\begin{tikzcd}[ampersand replacement=\&]
	 		{E|_c/B} \&\& {E|_c/B} \\
	 		\\
	 		{G/B} \&\& {G/B}
	 		\arrow["{\bar{f}_\lambda|_c}", from=1-1, to=1-3]
	 		\arrow["{\bar{e}_\lambda}"', from=1-1, to=3-1]
	 		\arrow["{\bar{e}_\lambda}"', from=1-3, to=3-3]
	 		\arrow["{\tilde{\xi}_\lambda \cdot -}", from=3-1, to=3-3]
	 	\end{tikzcd}.\]
	 	
	 	\noindent and a parabolic structure $e_g$ will be fixed by $f_\lambda$ if and only if it is fixed by the proposed action.
    \end{proof}
 
	\begin{remark}
		We can arrive at the same conclusion applying the definition of parabolic automorphism directly: $f_\lambda$ is a parabolic automorphism with respect to the parabolic structure in the $B$-orbit of $e_g \in E|_c$ if and only if $f'_\lambda(e_g) \in B$. But we have
		$$f'_\lambda(e_g) = f_\lambda(e_\lambda g) = g^{-1}\tilde{\xi}_\lambda g,$$
		\noindent therefore the condition is equivalent to
		$$g^{-1}\tilde{\xi}_\lambda g \in B.$$
		In other words, it is equivalent to $\tilde{\xi}_\lambda g B = g B$.
	\end{remark}
 
	\begin{remark}\label{cstarparabremark}
		From the proof above we deduce that the constructed identification has the following property: let $Q_g = (Q_{g_1},\dots, Q_{g_s})$ be the parabolic structure corresponding to elements $g_iB \in G/B$ under the previous identification at each of the punctures $c_i \in D$. Then
		$$(E,\varphi, Q_g) \simeq (E,\lambda \varphi, Q_{\xi_\lambda g}),$$
	 \noindent via $f_\lambda$, where $\xi_\lambda g$ denotes the diagonal action.
	\end{remark}	
 
 	Having understood the action on the space of parabolic structures, we can identify the fixed points. For this we also introduce the Weyl group $W_{G_0} \subseteq W_G := W$ given by $N_{G_0}(T)/T$.
 	
 	\begin{lemma}\label{localparabfixed}
 		The fixed point locus under the $\C^\times$-action in Proposition \ref{localparab} is a disjoint union of subvarieties 
 		$$G_0wB/B \subseteq G/B$$
 		\noindent indexed by $w \in W_{G_0} \backslash W_G$.
 	\end{lemma}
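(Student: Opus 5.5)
The plan is to describe the fixed locus of the one-parameter subgroup $\xi_\lambda = \exp(\log\lambda\,\zeta)$ acting on $G/B$, where $\zeta \in \liet$ is the grading element. This is the classical Białynicki-Birula picture for a torus element acting on a flag variety. The key observation is that a point $gB$ is fixed by the whole $\C^\times$ if and only if $\zeta$ lies in $\mathrm{Ad}_g(\lie b)$. Equivalently, the vector field generated by $\zeta$ vanishes at $gB$, i.e. $\mathrm{Ad}_{g^{-1}}\zeta \in \lie b$.

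First, I will show that each subvariety $G_0 w B/B$ lies in the fixed locus. Let $\dot w \in N_G(T)$ be a lift of $w$ and $h \in G_0$. Since $G_0 = C_G(\zeta)$, we have $\xi_\lambda h \dot w B = h \xi_\lambda \dot w B$. Moreover $\dot w^{-1}\xi_\lambda \dot w \in T \subseteq B$, so $\xi_\lambda \dot w B = \dot w B$. Thus $G_0 w B/B$ is fixed, and it depends only on the coset $W_{G_0}w$ because $N_{G_0}(T) \subseteq G_0$.

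Conversely, suppose $gB$ is fixed, so that $\mathrm{Ad}_{g^{-1}}\zeta \in \lie b$. Write $\mathrm{Ad}_{g^{-1}}\zeta$ via its Jordan decomposition: it is semisimple, being conjugate to $\zeta$. A semisimple element of $\lie b$ is $B$-conjugate into $\liet$, so there is $b \in B$ with $\mathrm{Ad}_{(gb)^{-1}}\zeta \in \liet$. Replacing $g$ by $gb$ does not change $gB$. Now $\zeta$ and $\mathrm{Ad}_{g^{-1}}\zeta$ are two elements of $\liet$ that are $G$-conjugate, hence $W$-conjugate. So there is a lift $\dot w$ with $\mathrm{Ad}_{g^{-1}}\zeta = \mathrm{Ad}_{\dot w^{-1}}\zeta$. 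Then $g\dot w^{-1} \in C_G(\zeta) = G_0$, so $gB \in G_0 \dot w B/B$. Hence the fixed locus is the union of the sets $G_0 w B/B$.

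Disjointness reduces to the Bruhat-type decomposition $G_0\backslash G/B \supseteq$ images of $N_G(T)$. Suppose $G_0 \dot w B = G_0 \dot w' B$. Then $\dot w' = h \dot w b$ with $h \in G_0$ and $b \in B$. Applying $\mathrm{Ad}^{-1}$ to $\zeta$ gives $\mathrm{Ad}_{\dot w'^{-1}}\zeta = \mathrm{Ad}_{b^{-1}}\mathrm{Ad}_{\dot w^{-1}}\zeta$. Both sides lie in $\liet$, and two elements of $\liet$ that are $B$-conjugate are equal, because the projection $\lie b \to \liet$ is $B$-invariant. Hence $w'^{-1}\zeta = w^{-1}\zeta$, so $w' w^{-1}$ stabilises $\zeta$. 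The stabiliser of $\zeta$ in $W$ is $W_{G_0}$, since $C_G(\zeta)$ is a connected Levi subgroup with Weyl group the stabiliser. Therefore $W_{G_0}w = W_{G_0}w'$.

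Finally, each $G_0 w B/B$ is a closed $G_0$-orbit, isomorphic to $G_0/(G_0 \cap \dot w B \dot w^{-1})$, and $G_0 \cap \dot w B \dot w^{-1}$ is a Borel subgroup of $G_0$. These orbits are finitely many and pairwise disjoint, so each one is open and closed in the fixed locus, and the union is a disjoint union of connected components. The main obstacle is expected to be the converse step, namely making precise that fixedness under all of $\C^\times$ forces $\mathrm{Ad}_{g^{-1}}\zeta \in \lie b$ and hence $W$-conjugacy. This should follow by differentiating $g^{-1}\xi_\lambda g \in B$ at $\lambda = 1$.
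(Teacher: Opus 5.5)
Your argument is correct, but it takes a different route from the paper's proof.

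\textbf{The paper's route.} It works at the group level. It uses the Bruhat decomposition $G/B=\bigsqcup_w BwB/B$ with the unique normal form $u\dot wB$, where $u\in U\cap \dot wU^{opp}\dot w^{-1}$. Conjugation by $\xi_\lambda$ preserves this $T$-stable subgroup, so by uniqueness $u\dot wB$ is fixed iff $\xi_\lambda u\xi_\lambda^{-1}=u$, i.e.\ iff $u\in G_0$. This places every fixed point in some $G_0\dot wB/B$, and describes the fixed locus cell by cell. The statement that $G_0wB/B=G_0w'B/B$ exactly when $W_{G_0}w=W_{G_0}w'$ is only asserted (``of course'').

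\textbf{Your route.} You instead linearise. Fixedness gives $\mathrm{Ad}_{g^{-1}}\zeta\in\mathfrak b$. Then $B$-conjugacy of semisimple elements of $\mathfrak b$ into $\mathfrak t$, together with $W$-conjugacy of $G$-conjugate elements of $\mathfrak t$, yields $g\in G_0\dot w$ up to $B$.

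\textbf{Comparison.} Your approach relies on these two standard conjugacy theorems, whereas the paper needs only the Bruhat normal form. In exchange you get:
\begin{itemize}
\item an actual proof of disjointness, via the $B$-invariance of $\mathfrak b\to\mathfrak b/\mathfrak n\cong\mathfrak t$ and $\mathrm{Stab}_W(\zeta)=W_{G_0}$;
\item the identification of each piece as a closed $G_0$-orbit $G_0/(G_0\cap\dot wB\dot w^{-1})$, so the pieces are exactly the connected components of the fixed locus, which the paper leaves implicit.
\end{itemize}

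One small notational point: $\zeta$ is in general only a coweight. So $\xi_\lambda$ should be read in $\mathrm{Ad}(G)$, acting on $G/B=G_{\mathrm{ad}}/B_{\mathrm{ad}}$, as the paper does. This does not affect your argument.
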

 	\begin{proof}
 		This is a standard result, see e.g. \cite[Section 3.1.7]{chriss2010representation} for the case where $\xi_\bullet$ is a regular cocharacter. In general, we can use the Bruhat decomposition
 		$$G/B = \bigsqcup_{w \in W}BwB/B$$
 		\noindent deduced in \textit{loc.cit}. This decomposition shows that an element in $G/B$ can be written as $uwB/B$ where $u \in U$ is an element in the unipotent radical $U \subseteq B$. This representation is unique if we select the representative $u \in U \cap wU^{opp}w^{-1}$. From this uniqueness it is easy to see that $\xi_\lambda uwB/B = uw/B$ if and only if $u$ is centralised by $\xi_\lambda$, that is, if and only if $u \in G_0$, as desired. Of course, the components $G_0wB/B$ and $G_0w'B/B$ are the same if and only if $w \equiv w' \mod W_{G_0}$.
 	\end{proof}
 	
 	We are now in position of describing the fixed point locus in the moduli space of parabolic $G$-Higgs bundles. Given a Vinberg $\C^\times$-pair $(G_0,\lieg_k)$ and a reduction of structure group $\sigma \in H^0(E(G/G_0))$ of a principal $G$-bundle $E$ with parabolic structure $Q$, let 
 	$$PE_\sigma(\lieg_k) = E_\sigma(\lieg_k) \cap PE(\lieg)$$ 
 	\noindent denote the sheaf of parabolic sections passing through $E_\sigma(\lieg_k) \subseteq E(\lieg)$. Define $SPE_\sigma(\lieg_k)$ similarly.
 	
 	\begin{theorem}\label{fixedpointsparab}
 		A parabolic $G$-Higgs bundle $(E,\varphi,Q) \in \mathcal M(G,\alpha)$ is fixed by the $\C^\times$-action if and only if there is a Vinberg $\C^\times$-pair $(G_0,\lieg_k)$ such that
 		\begin{itemize}
 			\item There is a reduction of structure group $\sigma \in H^0(E(G/G_0))$.
 			\item The Higgs field satisfies $\varphi \in H^0(PE_\sigma(\lieg_k) \otimes K_C(D)) \subseteq H^0(PE(\lieg) \otimes K_C(D))$.
 			\item The parabolic structures satisfy
 			 $$Q_i \in \bigsqcup_{w \in W_{G_0} \backslash W}\left(\left(E_\sigma\right)|_{c_i} \cdot w \cdot B\right)/B \subseteq E|_{c_i}/B.$$
 		\end{itemize}
 	\end{theorem}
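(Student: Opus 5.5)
The plan is to combine Proposition \ref{cstarfixedmeromorphic}, Proposition \ref{localparab} and Lemma \ref{localparabfixed}. A point of the moduli space is fixed exactly when there is a family of parabolic isomorphisms $(E,\varphi,Q)\simeq(E,\lambda\varphi,Q)$. Forgetting $Q$, such a family is in particular a family of isomorphisms of the underlying $K_C(D)$-twisted Higgs bundle.

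\emph{Necessity.} Proposition \ref{cstarfixedmeromorphic} then yields a Vinberg pair $(G_0,\lieg_k)$ and a reduction $\sigma$ with $\varphi\in H^0(E_\sigma(\lieg_k)\otimes K_C(D))$. Since $\varphi$ was already a section of $PE(\lieg)\otimes K_C(D)$, it lies in the intersection $PE_\sigma(\lieg_k)\otimes K_C(D)$. This settles the first two conditions.

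The subtle point is the third condition. The family of parabolic automorphisms we are given need not be the canonical family $f_\lambda$ built from $\xi_\lambda$. To fix this, I would argue that it can be replaced by one of that form. Let $g_\lambda$ be any family of parabolic isomorphisms $(E,\varphi,Q)\to(E,\lambda\varphi,Q)$. Then $f_\lambda^{-1}\circ g_\lambda$ is an automorphism of $(E,\varphi)$ carrying $Q$ to $f_\lambda^{-1}(Q)$. To avoid dealing with these extra automorphisms I would work at stable points first. There $\Aut(E,\varphi,Q)$ is finite, so the fixed-point condition produces a $\C^\times$ (after a finite cover) acting by parabolic automorphisms. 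The argument of \cite[Proposition 4.15]{biquard_arakelov-milnor_2021} then shows that this action is generated by a semisimple infinitesimal automorphism $\zeta$. The grading and the reduction $\sigma$ should be defined from this $\zeta$, so that $f_\lambda=\xi_\lambda$ preserves $Q$ by construction. Proposition \ref{localparab} then says that each $Q_i$, seen in $G/B$ via the trivialisation $e_\lambda$, is fixed by $\xi_\lambda$. Lemma \ref{localparabfixed} places it in some $G_0wB/B$. That trivialisation comes from $E_\sigma|_{c_i}$, so $G_0wB/B$ corresponds to $(E_\sigma)|_{c_i}\cdot w\cdot B/B$, which is the third condition.

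For polystable non-stable points I would reduce to the stable case. Take a Levi reduction through which $\varphi$ and $Q$ factor and apply the same argument inside the Levi subgroup. This is the step I expect to be the main obstacle: I need the infinitesimal generator to be chosen compatibly with the parabolic structure, which means taking $\zeta$ to be a parabolic section.

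\emph{Sufficiency.} Given $\sigma$, $\varphi$ and $Q$ as in the three conditions, Proposition \ref{cstarfixedmeromorphic} provides $f_\lambda$ with $f_\lambda(\varphi)=\lambda^k\varphi$. Under the identification of Proposition \ref{localparab}, the third condition together with Lemma \ref{localparabfixed} shows that each $Q_i$ is $\xi_\lambda$-fixed. Hence each $f_\lambda$ is a parabolic automorphism, and Remark \ref{cstarparabremark} gives $(E,\varphi,Q)\simeq(E,\lambda^k\varphi,Q)$. Taking $k$-th roots of $\lambda$ gives the isomorphism to $(E,\lambda\varphi,Q)$ for every $\lambda\in\C^\times$.
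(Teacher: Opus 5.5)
Your proposal is correct and follows the paper's route, which simply combines Proposition \ref{cstarfixedmeromorphic}, Proposition \ref{localparab} and Lemma \ref{localparabfixed}. Your extra step of extracting the grading element from a one-parameter group of \emph{parabolic} automorphisms fills in a point the paper leaves implicit, namely that the canonical $f_\lambda$ can be taken parabolic. The Levi reduction for polystable points is unnecessary, however. The group of pairs $(g,\lambda)$ with $g\cdot(\varphi,Q)=(\lambda\varphi,Q)$ is a linear algebraic group surjecting onto $\C^\times$, so a cocharacter of a maximal torus of its identity component already gives a $\C^\times$ of parabolic automorphisms, and the generator of that $\C^\times$ is automatically a parabolic section.
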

 	\begin{proof}
 		Follows directly from Proposition \ref{cstarfixedmeromorphic}, Proposition \ref{localparab} and Lemma \ref{localparabfixed}.
 	\end{proof}
 	
 	\begin{remark}\label{nilpotentissp}
 		Note that for $k \neq 0$ we have $SPE_\sigma(\lieg_k) = PE_\sigma(\lieg_k)$. Indeed, elements $X \in \lieg_k$ are nilpotent because $\ad^m(X)$ increases the degree by $mk$, and degrees are bounded since $\lieg$ is finite dimensional. Moreover, since our parabolic weights are in the interior of the standard Weyl alcove, nilpotent elements in $\mathfrak q_i$ are automatically in $\mathfrak n_i$ (this would not be true for other choices of weights). In particular, all fixed points of the $\C^\times$-action lie in $\mathcal M_{sp}(G,\alpha)$. 
 	\end{remark}
 	
 	\begin{example}
 		Let us illustrate the above theorem with a simple example. Let $G=\PGL_2(\C)$ and choose any weight in the interior of the standard Weyl alcove. The Lie algebra is $\lieg = \liesl_2(\C)$. Letting $(e,h,f)$ be a triple of generators satisfying the usual relations, we can grade it by $\zeta = \frac{h}{2}$. Then, we have
 		$$G_0 = (\C^\times \times \C^\times)/\C^\times = T$$
 		\noindent (where the quotient is by the diagonal action) embedded diagonally in $\PGL_2(\C)$ and acting on $\lieg_1 = \C$ by $(\lambda, \beta) \cdot z = \lambda \beta^{-1}z$. A $G_0$ bundle is equivalent via the standard representation to a vector bundle
 		$$E = L_0 \oplus L_1$$
 		\noindent splitting as a direct sum of two line bundles and up to tensoring by any line bundle. Therefore we can normalise it to
 		$$E = \mathcal O_C \oplus L$$
 		\noindent where $L$ is any line bundle (although stability requires $\deg L \le 0$).
 		
 		A Higgs field taking values in $\lieg_1$ is the same as a section $\varphi \in H^0(LK_C(D))$. Finally, a parabolic structure at a point $c \in C$ is equivalent to the choice of a line in the fibre $E|_c$. According to the theorem, the resulting Higgs bundle is fixed if and only if the line belongs to either $(E_\sigma)|_{c_i}B/B$ or $(E_\sigma)|_{c_i}wB/B$ where $w \in W$ is the nontrivial element and $B$ has the Lie algebra spanned by $(e,h)$. The first choice corresponds to the line $L|_c \subseteq E|_c$ and is always compatible with the Higgs field. The second choice corresponds to the line $\mathcal O_C|_c \subseteq E|_c$ and requires the section $\varphi$ to vanish at $c$.
 		
 		In any case, it is clear that the resulting Higgs bundle is fixed by the $\C^\times$-action, since $E$ has an automorphism given by scaling $L$ by a factor of $\lambda$: this maps $\varphi$ to $\lambda \varphi$ and preserves any of those two lines in $E|_c$. Note that no other lines are preserved.      
 	\end{example}
 	
 	\begin{example}\label{typeafixed}
 		Let us consider more generally what is the description of fixed points for $G=\PGL_n(\C)$. The $\Z$-gradings of semisimple Lie algebras are classified, see \cite[Chapter 3.3]{onivin}. For $\lieg = \liesl_n(\C)$, they can be described as follows. Set $V$ a $n$-dimensional vector space over $\C$, so that $\lieg = \liesl(V)$. Gradings are determined by ordered partitions $(l_k)_k$ of $n$. Once such a partition is selected, fix a splitting
 		$$V = \bigoplus_{k=0}^{m-1}V_k$$
 		\noindent in vector subspaces of dimensions $l_k$. This determines the grading
 		$$\lieg_p := \bigoplus_{k \in \Z}\Hom(V_k,V_{k+p})_0$$
 		\noindent where the subscript $0$ denotes morphisms that are traceless inside of $\lieg$ and is only meaningful when $p=0$. We have
 		$$G_0 = \left(\prod_{k=0}^{m-1}\GL(V_k)\right)/\C^\times$$
 		\noindent where $\C^\times$ acts diagonally by scalars.
 		
 		A strongly parabolic $\PGL_n(\C)$-Higgs bundle (for any choice of weights in the interior of the Weyl alcove, which in this case corresponds to choosing real numbers $\alpha_i^1 > \alpha_i^2 > \dots \alpha_i^n$ such that $\sum_j\alpha_i^j = 0$ and $\alpha_i^1 - \alpha_i^n < 1$ at each $c_i \in D$) is given by
 		\begin{itemize}
 			\item A vector bundle $E$ of rank $n$ (up to line bundle tensoring).
 			\item A choice of a full flag
 			$$Q_i^0 = 0 \subsetneq Q_i^1 \subsetneq \dots \subsetneq Q_i^n = E|_{c_i}$$
 			\noindent at each $c_i \in D$.
 			\item A bundle morphism $\varphi : E \to E\otimes K_C(D)$ such that
 			$$\varphi(Q_i^j)\subseteq Q_i^{j-1}\otimes K_C(D).$$
 		\end{itemize}
 		
 		For such a strongly parabolic $G$-Higgs bundle to be a fixed point associated to the previous grading (say, for the pair $(G_0,\lieg_1)$, the others being analogous), the vector bundle has to split
 		$$E = \bigoplus_{k=0}^{m-1}E_k$$
 		\noindent in vector subbundles, the Higgs field has to map
 		$$\varphi(E_k) \subseteq E_{k+1}\otimes K_C(D)$$
 		\noindent and the parabolic structures need to be \textit{graded} with respect to the splitting, in other words:
 		$$Q_i^j = \bigoplus_{k=0}^{m-1}\left(E_k|_{c_i} \cap Q_i^j\right)$$
 		\noindent has to hold for every $i \in \{1,\dots,s\}$ and $j \in \{1,\dots,n\}$.
 	\end{example}
 	
 	\begin{remark}
 		Theorem \ref{fixedpointsparab} could be rephrased in a similar fashion to \cite[Proposition 4.15]{biquard_arakelov-milnor_2021} by defining \textit{moduli spaces of parabolic $(G_0,\lieg_k)$-Higgs pairs}, $\mathcal M(G_0,\lieg_k,\alpha)$, similar to how moduli spaces for $(\hat{G},\hat{V})$-Higgs pairs are defined for any representation $\hat{V}$ of a complex reductive group $\hat{G}$ (e.g. \cite{schmitt2008geometric}) in the non-parabolic setting.
 		
 		These parametrise triples $(E,\varphi,Q)$ where $E$ is a principal $G_0$-bundle, $Q_i \in E|_{c_i}/(G_0\cap B)$ is a choice of parabolic structure at $c_i \in D$ and $\varphi \in H^0(PE(\lieg_k) \otimes K_C(D))$ (here we define $PE(\lieg)$ by including $E|_{c_i}/(G_0 \cap B)$ in $E(G)|_{c_i}/B$ and then define $PE(\lieg_k)$ by intersecting with $E(\lieg_k)$).
 		
 		Then, for any choice of $w_i \in W_{G_0} \backslash W_G$ for each $c_i \in D$, resulting in $w=(w_1,\dots,w_s)$, we have the map
 		$$\mathcal M(G_0,\lieg_k,w(\alpha)) \to \mathcal M(G,\alpha)$$
 		\noindent by 
 		$$E \mapsto E(G)$$
 		$$\varphi \mapsto \varphi$$
 		$$Q_i \mapsto Q_i \cdot w.$$
 		
 		By the previous Theorem \ref{fixedpointsparab}, the union of the images of all these maps covers the fixed point locus of the $\C^\times$-action. The agreement of both stability conditions is checked as in the discussion preceding \cite[Proposition 4.15]{biquard_arakelov-milnor_2021} by noticing the equivalence of the gauge-theoretic equations involved in both moduli spaces. 
 	\end{remark}
 	
 	\begin{remark}
		For more general parabolic weights, giving parabolic subgroups $P_\alpha \subseteq G$ other than the Borel, it is possible to carry the same analysis to conclude that if the parabolic structure belongs (under the identification in Proposition \ref{cstarfixedmeromorphic}) to $G_0wP_\alpha/P_\alpha$ for $w \in W_{G_0}\backslash W_G / W_{P_\alpha}$, the resulting point is fixed. However, as observed in \cite[Section 4]{biquard_parabolic_2019}, these more general weights require considering the notion of \textit{meromorphic equivalence} of the principal $G$-bundles instead of isomorphism, so a priori not every fixed point is obtained in this way. As stated in Remark \ref{parahoric}, in this situation we would consider more natural to work with parahoric torsors and logahoric Higgs torsors for the corresponding weights $\alpha$. In that case, the fixed point locus is described by reductions (as a logahoric Higgs torsor!) to \textit{Vinberg pairs} ($\mathcal G_{\alpha,0},\underline{\lieg_k})$ determined by gradings of the scheme of Lie algebras corresponding to the Bruhat--Tits group scheme $\mathcal G_\alpha$ for weights $\alpha$.
 	\end{remark}
 	
 	To conclude the discussion about fixed points, we highlight a distinguished class which we term of \textbf{Borel type} (see \cite[Definition 3.17]{gonzalez_very_2025}). These correspond to components of the fixed point locus where the Higgs field is generically a regular (nilpotent) element of the Lie algebra. For this to happen, the corresponding Vinberg pair $(G_0,\lieg_k)$ needs to satisfy that $\lieg_k$ contains regular nilpotent elements, and this restricts the grading to the following.
 	
 	\begin{definition}
 		The \textbf{Borel grading} of $\lieg$ is given by the grading element $\zeta = \sum_{j=1}^r\omega_j^\vee$. In other words, it satisfies $\lieg_0 = \liet$ and
 		$$\lieg_k := \bigoplus_{\beta \in \Delta : \heightr(\beta) = k}\lieg_\beta,$$
 		\noindent where $\heightr(\beta)$ denotes the height of $\beta$ with respect to $\Pi$.
 	\end{definition}
 	
 	The terminology refers to the associated parabolic $\bigoplus_{k \ge 0}\lieg_k$ being precisely the Borel subalgebra  $\lie b$. Notice that for the Borel grading we have $G_0 = T$ and its action on $\lieg_1 = \bigoplus_{j=1}^r\lieg_{\beta_j}$ has an open orbit $\Omega \subseteq \lieg_1$ consisting of regular nilpotent elements. This orbit is given by the elements with nonzero projection to each $\lieg_{\beta_j}$.
 		
 	\begin{definition}
 		A fixed point $(E,\varphi,Q) \in \mathcal M_{sp}(G,\alpha)^{\C^\times}$ is of \textbf{Borel type} if it verifies the condition of Theorem \ref{fixedpointsparab} for the Vinberg $\C^\times$-pair $(T,\lieg_1)$ corresponding to the Borel grading, and $\varphi$ takes values in $\Omega \subseteq \lieg_1$ generically. 
 	\end{definition}
 	
 	Given a fixed point of Borel type $(E,\varphi,Q)$ we can associate certain Lie-theoretic data encoding the vanishing multiplicities of $\varphi$ and the parabolic structures $Q$. Denote by $E_T$ the reduction of $E$ to $T$. First, since $T$ acts on each simple root space $\lieg_{\beta_j}$, we may extract sections of line bundles by projecting:
 	$$\varphi_j := \pi_j(\varphi) \in H^0(E_T(\lieg_{\beta_j}) \otimes K_C(D)),$$
 	\noindent where we used
 	$$\pi_j : H^0(E(\lieg) \otimes K_C(D)) \to H^0(E_T(\lieg_{\beta_j}) \otimes K_C(D)).$$
 	These are the \textbf{components} of $\varphi$ along the direction of the simple root $\beta_j$. They are non-vanishing sections of the line bundle $E_T(\lieg_{\beta_j}) \otimes K_C(D)$: denote by $\ord_{c}(\varphi_j)$ the order of vanishing.
 	
 	\begin{definition}
 		Given a fixed point $(E,\varphi,Q) \in \mathcal M_{sp}(G,\alpha)^{\C^\times}$ of Borel type, we define its \textbf{multiplicity coweight at $c \in C$} by the expression
 		$$\mu_c(E,\varphi) := \sum_{j=1}^r\ord_c(\varphi_j) \cdot \omega_j^\vee \in P_{\Delta^\vee}^+.$$
 	\end{definition}
 	
 	It is an element of the space of dominant coweights and it does not depend on $Q$.
 	
 	Additionally, Theorem \ref{fixedpointsparab} indicates that the parabolic structure comes from
 	$$E_T|_{c_i} \cdot w_i \cdot B/B \subseteq E|_{c_i}/B$$
 	\noindent for some $w_i \in W$. Notice that the space on the left is a single point, so we can naturally associate the Weyl group element $w_i \in W$ to $(E,\varphi,Q)$ at each $c_i \in D$ given by the parabolic structure. We introduce the following notation:
 	$$u_c(E,\varphi,Q) := \begin{cases}
 		w_i & c = c_i,\\
 		1 & c \notin D
 	\end{cases} \in W.$$
 	
 	There is a natural way to package all the information above using the extended affine Weyl group.
 	
 	\begin{definition}\label{twistmultdiv}
 		Given a smooth fixed point $(E,\varphi,Q) \in \mathcal M_{sp}(G,\alpha)^{\C^\times}$ of Borel type, we define its \textbf{twisted multiplicity divisor} by
 		$$w(E,\varphi,Q) := \sum_{c \in C}w_c(E,\varphi,Q) \cdot c$$
 		\noindent where
 		$$w_c(E,\varphi,Q) := \left(-\mu_c(E,\varphi)\right) \cdot u_c(E,\varphi,Q) \in \widetilde{W} = P_{\Delta^\vee} \rtimes W.$$
 	\end{definition}
 	
 	Notice that it is a divisor on $C$ whose coefficients lie in $\widetilde{W}$. Only finitely many points in $c$ have nontrivial coefficient (they are contained in the union of $D$ with the locus where $\varphi$ is non-regular). If $c \in C \setminus D$, the Weyl group element is trivial and it is more natural to think of the coefficient as an element in $P_{\Delta^\vee}^- \simeq W\backslash \widetilde{W}/W$.
 	
 	\begin{remark}\label{signconvetion}
 		Note that, in the definition of the twisted multiplicity divisor, the coweight part given by $\mu_c(E,\varphi)$ is inverted. This is related to the convention explained in Section \ref{subseclietheory} where $P_{\Delta^\vee}$ acts on $\liet$ by subtracting. This is merely a choice since there is an automorphism of $\widetilde{W}$ given by inverting on the $P_{\Delta^\vee}$ factor as the latter is abelian. The reason for this choice will become clearer in Section \ref{sechecke} when $\widetilde{W}$ is regarded inside the loop group $G((z))$.
 	\end{remark}
 	
 	\begin{remark}\label{sphericalquotient}
 		Since $\mu_c(E,\varphi)$ is dominant, the part of $w_c(E,\varphi,Q)$ in $P_{\Delta^\vee}$ is always antidominant. Similarly, recalling the definition of $u_{c_i}(E,\varphi,Q)$, the compatibility condition between $\varphi|_{c_i}$ and $Q_i$ can be cleanly restated in terms of combinatorics (recall the notation of Section \ref{subseclietheory}) as
 		$$u_{c_i}(E,\varphi,Q) \in {}^{J_{\mu_{c_i}(E,\varphi)}}W.$$
 		These two conditions determine a subset of $\widetilde{W}$ where $w_{c_i}(E,\varphi,Q)$ must lie. This subset is precisely the right parabolic quotient ${}^{S_W}\widetilde{W}$, of minimal length representatives of $W \backslash \widetilde{W}$, sometimes referred to as \textbf{spherical quotient}.
 	\end{remark}
 	
	\begin{remark}\label{toptypeborelparabolic}
		Recall that there is a \textbf{topological type} $d_E \in \pi_1(T) \simeq X_*(T)$ associated to a $T$-bundle $E$, uniquely determined by the condition
		$$\deg \chi(E) = \chi(d_E)$$
		\noindent for every character $\chi \in X^*(T)$. For a fixed point $(E,\varphi,Q)$ of Borel type, it can be recovered from $\deg (\mu(E,\varphi)) := \sum_{c \in C}\mu_c(E,\varphi) \in P_{\Delta^\vee}$. Indeed, we have $$E(\lieg_{\beta_j}) \otimes K_C(D) \simeq \mathcal O_C(\sum_{c \in C}\beta_j(\mu_c(E,\varphi))),$$ \noindent which automatically forces
		$$d_E = \deg(\mu(E,\varphi)) + \mu_{s}$$
		\noindent where $\mu_{s} = -(2g-2+s)\zeta = -(2g-2+s)\sum_{j=1}^r\omega_j^\vee$.
	\end{remark}
 	
 	\begin{remark}\label{recoverfromtwistedmult}
 		More strongly, knowing $w(E,\varphi,Q)$ it is possible to recover $(E/Z(G),\varphi,Q)$, i.e recover $(E,\varphi,Q)$ up to a twist by a $Z(G)$-bundle (therefore uniquely if $G$ is of adjoint type). From the data of $\mu_c(E,\varphi)$ one recovers the line bundles $E_T(\lieg_{\beta_j})$ and their sections $\pi_j(\varphi)$. Since the simple roots span the root lattice, knowing $E_T(\lieg_{\beta_j})$ recovers $E_T/Z(G)$ and therefore $E_T$ is recovered up to a twist of a $Z(G)$-bundle, and so is $E = E_T(G)$. The Higgs field is completely recovered from the $\pi_j(\varphi)$, and the parabolic structures are completely recovered from the $u_c(E,\varphi,Q)$. On the other hand, it is not always possible to construct $(E,\varphi,Q)$ with arbitrarily prescribed $w(E,\varphi,Q)$. One immediate constraint is that the \textit{degree} of the divisor (i.e. the sum of the coefficients) needs to lie in the Iwahori--Weyl group $\widetilde{W}_G$ (this is always satisfied if $G=G_{\ad}$ is of adjoint type).
 		Even then, the resulting bundle may not be stable. Another constraint is provided by Remark \ref{sphericalquotient} involving the compatibility with the Higgs field.
 	\end{remark}
 	
 	\begin{example}\label{typeafixedborel}
 		Consider the fixed points for $G=\PGL_n(\C)$ as described in Examples \ref{typeafixed} and \ref{typeafixedborel}. The Borel grading corresponds to the decomposition where each piece has degree one. Therefore, fixed points of Borel type are given by sums of $n$ line bundles, which we may write in the form
 		$$E = \mathcal O_C \oplus K_C^{-1}(-D+D_1) \oplus K_C^{-2}(-2D+D_1+D_2) \oplus \dots \oplus K_C^{1-n}((1-n)D+D_1+\dots+D_{n-1})$$
 		\noindent where $D_i \subseteq C$ are effective divisors. The Higgs field takes the form
 		$$\varphi = \begin{pmatrix}
		0 & 0 & \dots & 0 & 0 \\
		\varphi_1 & 0 & \dots & 0 & 0 \\
		0 & \varphi_2 & \dots & 0 & 0 \\
		\vdots & \vdots & \ddots & \vdots & \vdots \\
		0 & 0 & \dots & \varphi_{n-1} & 0\end{pmatrix}$$
		\noindent where $\varphi_j = \pi_j(\varphi) \in H^0(\mathcal O_C(D_j))$ has divisor $D_j$.
		The parabolic structure at $c_i \in D$ consists of a full flag obtained by taking at each step one extra term in the decomposition of $E|_{c_i}$ in line bundles given above, subject to the compatibility condition with $\varphi$ (i.e. not every flag is allowed). The only flag which is always permitted, no matter the Higgs field, corresponds to $u_{c_i}(E,\varphi,Q) = 1$ and is given by
		\begin{equation*}
			\begin{split}
				0 &\subsetneq K_C^{1-n}((1-n)D+D_1+\dots+D_{n-1})|_{c_i} \\
				&\subsetneq K_C^{2-n}((2-n)D+D_1+\dots+D_{n-2})|_{c_i} \oplus  K_C^{1-n}((1-n)D+D_1+\dots+D_{n-1})|_{c_i} \\
				&\subsetneq \dots \subsetneq E|_{c_i}.
			\end{split}
		\end{equation*}
		
		The twisted multiplicity divisor in this case is obtained by noticing that $u_{c_i}(E,\varphi,Q) \in W \simeq S_n$ corresponds to the permutation of the $n$ factors that brings the previous flag to the parabolic structure over $c_i$. Then, we have
		$$w(E,\varphi,Q) = \left(\sum_{j=1}^r-D_j\omega_j^\vee\right) \cdot \sum_{c \in C}u_c(E,\varphi,Q)$$
		\noindent where the product is taken coefficient-wise.
		
		For a more concrete example, take $n=2$, consider $D=c_1$ a single point, let $c \neq c_1$ be another point and take
		$$E = \mathcal O_C \oplus K_C^{-1}(3c+c_1)$$
		$$\varphi = \begin{pmatrix}
			0 & 0 \\
			s_c^3s_{c_1}^2 & 0
		\end{pmatrix}$$
		\noindent where $s_c \in H^0(\mathcal O_C(c))$ is the canonical section with divisor $c$, and similarly for $s_{c_1}$. Take the parabolic structure $Q_1$ to be given by the line $\mathcal O_C|_{c_1}$, which is compatible since $\varphi|_{c_1} = 0$. 
		
		The twisted multiplicity divisor for this example is
		$$w(E,\varphi,Q) = (-2\omega_1^\vee \cdot s_{\beta_1})c_1 + (-3\omega_1^\vee)c$$
		\noindent where $s_{\beta_1} \in W$ is the non-trivial element.
 	\end{example}
 		
 			We are mainly interested in fixed points $(E,\varphi,Q) \in \mathcal M_{sp}(G,\alpha)^{\C^\times}$ of Borel type that define smooth points in $\mathcal M_{sp}(G,\alpha)$. Therefore, by Remark \ref{smoothness}, we need to analyse their stability as well as their automorphism group. It turns out that stability is sufficient due to the following result.
 		
 		\begin{proposition}\label{simplicityparab}
 			Let $(E,\varphi,Q)$ be a stable parabolic $\C^\times$-fixed point of Borel type. Then, $\Aut(E,\varphi,Q) = Z(G)$. In particular, $(E,\varphi,Q) \in \mathcal M_{sp}(G,\alpha)^s$ defines a smooth point of the moduli space.
 		\end{proposition}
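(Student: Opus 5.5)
Take an automorphism $f \in \Aut(E,\varphi,Q)$, viewed as a section of the adjoint group bundle $E(G)$ that fixes $\varphi$ and lies in the Borel $Q_i'$ at each puncture. The aim is to show $f$ is central. The plan first reduces $f$ to a section of the $T$-bundle and then shows it is constant in $Z(G)$. This follows the non-parabolic argument of \cite{gonzalez_very_2025}, adapted to keep track of the parabolic structures.

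\textbf{Step 1 (reduction to the centraliser).} On the dense open set $U \subseteq C \setminus D$ where $\varphi$ takes values in the regular orbit $\Omega \subseteq \lieg_1$, the element $f(x)$ centralises $\varphi(x)$. The centraliser of a regular nilpotent element $e$ is $Z(G) \times C_U(e)$, where $C_U(e)$ is a unipotent abelian group. Hence, over $U$, $f = z \cdot \exp(n)$ with $z$ a locally constant central element and $n$ a section of $E_T(\lie z(\varphi)) \subseteq E_T(\bigoplus_{k \ge 1}\lieg_k)$. This decomposition extends over all of $C$ by taking closures, since $f$ is regular on $C$ and $E_T(\lieg_{\ge 1})$ is a subbundle. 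Multiplying by $z^{-1}$, it suffices to show $n = 0$, i.e.\ that there is no nonzero nilpotent infinitesimal automorphism of the required form. Equivalently, $H^0$ of the deformation complex $C^\bullet(E,\varphi,Q)$ of Definition \ref{deformationcomplex} vanishes in positive degrees.

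\textbf{Step 2 (vanishing via stability).} Use the $\C^\times$-action. The family $\xi_\lambda$ of Proposition \ref{cstarfixedmeromorphic} normalises $\Aut(E,\varphi,Q)$ by Remark \ref{cstarparabremark}, so the space of parabolic infinitesimal automorphisms $H^0(PE(\lieg))^{[\varphi,-]=0}$ splits into $\ad\zeta$-weight pieces $H^0(PE_T(\lieg_k))\cap\ker\ad\varphi$. Take a nonzero piece of lowest positive weight $k$, with section $n_k$. Consider the parabolic subgroup $P$ whose Lie algebra is $\bigoplus_{j \ge 0}\lieg_j$ intersected with the appropriate stabiliser. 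A cleaner route is to use the kernel filtration: $n_k$ generically regular-ish nilpotent commuting with $\varphi$ determines, as in the Jacobson--Morozov argument for regular nilpotents, a reduction $\sigma$ to a proper parabolic $P \supseteq B$ with $\varphi \in E_\sigma(\liep)$. Concretely, in the smallest case one takes $P$ to be the stabiliser of the image filtration of $n_k$.

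\textbf{Step 3 (degree estimate, the main obstacle).} One must show $\pdeg(E,Q,\sigma,\chi) \ge 0$ for a suitable dominant $\chi$, contradicting stability. The degree of $\chi(E_\sigma)$ is controlled because $n_k$ is a nonzero section of a line bundle $E_T(\lieg_\gamma)$ with $\heightr\gamma=k$, so $\deg E_T(\lieg_\gamma) \ge 0$. The parabolic condition $n_k|_{c_i}\in \lie q_i$ (with $Q_i = E_T w_i B$) constrains how the weight terms $\chi(w_i\alpha_i)$ enter. The weights lie in the open alcove, so $|\gamma(w_i\alpha_i)|<1$, and the sign is favourable precisely when $n_k(c_i) \in \lie q_i$ while $w_i^{-1}\gamma$ is positive, or $n_k$ vanishes at $c_i$. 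Balancing these local contributions is the delicate point.

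I would first try the simplest choice: $P$ the maximal parabolic attached to $\gamma$, and $\chi = $ the corresponding fundamental weight. I would then verify the inequality pointwise at each $c_i$ using the length formula and Proposition \ref{lengthzero}. Should that fail, the fallback is to mimic \cite[Proposition 3.18]{gonzalez_very_2025} directly, pairing $n_k$ with $\varphi$ through the Killing form to produce a section of $K_C(D)$-twisted line bundles whose degrees contradict stability. Once $n=0$, $\Aut = Z(G)$, and smoothness follows from Remark \ref{smoothness}.
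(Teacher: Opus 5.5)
Your Step 1 is correct and matches the first half of the paper's argument. Generically $f$ lies in the centraliser $Z(G)\cdot U$ of a regular nilpotent, so $f=z\exp(n)$ with $z\in Z(G)$ constant and $n$ an infinitesimal automorphism of $(E,\varphi,Q)$.

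\textbf{The gap.} Steps 2--3, where all the content lies, never actually prove $n=0$. The destabilising reduction is only gestured at (``regular-ish'', ``in the smallest case'', ``should that fail, the fallback is\dots''). Some of the concrete claims also fail as stated:
\begin{itemize}
\item A weight-$k$ element commuting with $\varphi$ is in general a combination of several root vectors of height $k$. For $k=1$ it is, pointwise on each simple factor, proportional to $\varphi=\sum_j\varphi_jX_{\beta_j}$ itself. So $n_k$ is not a section of a single root line bundle $E_T(\lieg_\gamma)$.
\item ``The maximal parabolic attached to $\gamma$'' is not defined when $\gamma$ is not simple.
\item By the proof of Proposition~\ref{boreltypestabilityparab}, the only $\varphi$-compatible reductions are the standard ones induced from $E_T$. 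So what you really need is that $n\neq 0$ forces $\chi(d_E)+\sum_i\chi(u_{c_i}(\alpha_i))\ge 0$ for some dominant $\chi$. That inequality is the whole argument, and it is left unverified.
\end{itemize}

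\textbf{The missing idea.} You do not have to prove this by hand. For $G$ semisimple, stability of a parabolic $G$-Higgs bundle already implies that $\Aut(E,\varphi,Q)$ is finite. This is the parabolic adaptation of \cite[Proposition 3.11]{garcia-prada_hitchin-kobayashi_2012}, which is what the paper invokes. With it, your Step 1 finishes at once. If $n\neq 0$, then $t\mapsto\exp(tn)$ is a one-parameter subgroup of $\Aut(E,\varphi,Q)$: it fixes $\varphi$ because $[n,\varphi]=0$, and it preserves each $Q_i$ because $n|_{c_i}\in\lie q_i$. This contradicts finiteness.

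Equivalently, as in the paper: $f$ has finite order, and the only finite-order elements of $Z(G)\cdot U$ are central because $U$ is unipotent. Hence $f$ is generically, and therefore everywhere, a constant in $Z(G)$.
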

 		
 		\begin{proof}
 			Since $(E,\varphi,Q)$ is of Borel type, the Higgs field $\varphi$ takes values in the regular nilpotent orbit generically over $C$.  Now, if $f \in \Aut(E,\varphi,Q) \simeq H^0(C, E(G))$ is an automorphism, which we regard as a $G$-equivariant section $f: E \to G$ (with respect to the left conjugation action of $G$ on $G$) compatible with $\varphi$ and $Q$, the compatibility with the Higgs field is given by $\Ad_f(\varphi) = \varphi$. In other words, generically over $C$, the automorphism $f$ takes values in the centraliser of a regular nilpotent element. Such a centraliser is of the form $Z(G) \cdot U$ where $U \subseteq G$ is a connected abelian unipotent subgroup with $\dim(U)=\rank(\lieg)$ (see e.g. \cite[Proposition 14 and Remark 15]{kostantLieGroupRepresentations1963}). Therefore, its only finite order elements are those of $Z(G)$. By a direct adaptation of \cite[Proposition 3.11]{garcia-prada_hitchin-kobayashi_2012} to the parabolic setting, the stability of $(E,\varphi,Q)$ implies that $\Aut(E,\varphi,Q)$ is finite. Therefore, $f$ is a finite order element and must take values generically in $Z(G)$. This implies that $f$ is constant with values in $Z(G)$ over the whole of $C$, as $Z(G)$ is discrete.
 		\end{proof}
 		
 		Now, we characterise the stability of a fixed point of Borel type in terms of its twisted multiplicity divisor.
 		
 		\begin{proposition}\label{boreltypestabilityparab}
 			Let $(E,\varphi,Q)$ be a strongly parabolic $G$-Higgs bundle satisfying the condition of Theorem \ref{fixedpointsparab} for the Borel grading and such that $\varphi$ takes values in the open orbit $\Omega \subseteq \lieg_1$ generically. Let $d_E = \deg(\mu(E,\varphi)) + \mu_{\textup{s}} \in X_*(T)$ denote the topological type of the $T$-reduction of $E$ from Remark \ref{toptypeborelparabolic}. The following are equivalent:
 			\begin{enumerate}
 				\item $(E,\varphi,Q)$ is stable as a parabolic $G$-Higgs bundle.
 				\item For all $\chi \in X^*_+(T)$ we have $\chi(d_E) + \sum_{i=j}^s\chi(u_{c_j}(E,\varphi,Q)(\alpha_j))< 0.$
 			\end{enumerate}
 			The previous equivalences also hold replacing stable by semistable and strict inequalities by non-strict inequalities.
 		\end{proposition}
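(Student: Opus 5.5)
The plan is to show that the only reductions that enter the stability condition are the ones induced from the canonical Borel reduction $\sigma_B$. This is the reduction of $E$ to $B$ given by $E_T \times_T B \subseteq E$, coming from the $T$-reduction $E_T$ of Theorem \ref{fixedpointsparab}. We then compute the parabolic degree of these induced reductions explicitly. Throughout, $\chi$ ranges over nontrivial dominant characters, as implicit in the definition of stability.

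\emph{Step 1: the Higgs-invariant reductions.} Let $P \subseteq G$ be a standard parabolic and $\sigma \in H^0(C,E(G/P))$ a reduction with $\varphi|_{C\setminus D} \in H^0(C\setminus D, E_\sigma(\liep)\otimes K_C|_{C\setminus D})$. On the dense open set $U \subseteq C\setminus D$ where $\varphi$ takes values in $\Omega$, the element $\varphi(x)$ is regular nilpotent and lies in the parabolic subalgebra $\sigma(x) \subseteq E(\lieg)|_x$. A nilpotent element of a parabolic subalgebra lies in some Borel subalgebra of it, and that Borel is also a Borel of $E(\lieg)|_x$. A regular nilpotent element lies in a unique Borel subalgebra, and here that Borel is $E_T(\lie b)|_x$, since $\varphi \in E_T(\lieg_1) \subseteq E_T(\lie b)$. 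Hence $\sigma(x)$ is the unique parabolic of type $P$ containing $E_T(\lie b)|_x$, i.e.\ $\sigma(x)$ equals the reduction $\sigma_P$ induced from $\sigma_B$ via $B\subseteq P$. Since $E(G/P)$ is proper over $C$ and the two sections agree on a dense open set, we get $\sigma=\sigma_P$ on all of $C$. Conversely, each $\sigma_P$ is $\varphi$-invariant, because $\lieg_1\subseteq\lie b\subseteq\liep$.

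\emph{Step 2: computing the parabolic degree.} For $\chi \in X^*_+(P)$ we have $\chi(E_{\sigma_P}) = (\chi|_T)(E_T)$, so $\deg(E,\sigma_P,\chi) = \chi(d_E)$ by the definition of $d_E$ in Remark \ref{toptypeborelparabolic}. At $c_j$, in a trivialisation adapted to $E_T$, we have $\sigma_P(c_j) = P$ and $Q_j = w_jB$ with $w_j = u_{c_j}(E,\varphi,Q)$. The relative position is therefore the class of $w_j$ in $W_P\backslash W$, which contributes $\chi(u_{c_j}(E,\varphi,Q)(\alpha_j))$. Thus
$$\pdeg(E,Q,\sigma_P,\chi)=\chi(d_E)+\sum_{j=1}^s\chi(u_{c_j}(E,\varphi,Q)(\alpha_j)).$$

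\emph{Step 3: the equivalence.} Dominant characters of $P$ restrict to elements of $X^*_+(T)$, namely those vanishing on the coroots of $L$. Taking $P=B$, so that $X^*_+(B)=X^*_+(T)$, shows that (1) implies (2). Conversely, every pair $(P,\chi)$ entering the stability condition yields a character already covered by (2), so (2) implies (1). Since $G$ is semisimple, the only proper case to exclude is the trivial character. The semistable version follows by the same argument with non-strict inequalities.

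The main obstacle is Step 1, specifically the two Lie-theoretic facts it uses. The first is that a parabolic subalgebra containing a regular nilpotent element $e$ contains the unique Borel subalgebra through $e$. The second is the extension of $\sigma=\sigma_P$ across the finitely many points where $\varphi$ is non-regular or at $D$. I would handle the second by the valuative/properness argument above. I would also double-check the sign convention for $\alpha$ acted on by $u_{c_j}$ against the definition of $\deg(\sigma,Q_j)$.
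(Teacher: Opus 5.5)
Your proposal is correct and follows the paper's proof essentially step for step. Both arguments use uniqueness of the $\varphi$-compatible reduction via the unique Borel through a regular nilpotent element, extend it across the bad points by properness of $G/P$, identify it with the reduction induced from $E_T$, and compute $\deg(E,\sigma_P,\chi)=\chi(d_E)$ and $\deg(\sigma_P,Q_j)=u_{c_j}(E,\varphi,Q)$. Your explicit Step 3 only spells out what the paper leaves implicit: taking $P=B$ for (1)$\Rightarrow$(2), and excluding the trivial character, which the statement's ``for all $\chi\in X^*_+(T)$'' tacitly requires.
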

 		
 		\begin{proof}
 			Let $P \subseteq G$ be a standard parabolic subgroup and $\chi \in X_+^*(P)$ a dominant character. Let $\sigma \in H^0(C,E(G/P))$ be a reduction of structure group such that $\varphi|_{C\setminus D} \in H^0(C \setminus D, E_\sigma(\liep) \otimes K_C(D))$. Since $\varphi$ is generically a regular nilpotent element of $\lieg$, $\sigma$ is uniquely determined over the dense open subset of $C \setminus D$ where $\varphi$ is regular. Indeed, a regular nilpotent element $e \in \lieg$ only belongs to a unique conjugate of $\liep \subseteq \lieg$. This is well known when $\liep$ is Borel (e.g. \cite[Proposition 3.2.14]{chriss2010representation}), and, if $e \in \Ad_{g}(\liep)$ for some $g \in G$, the nilpotency of $e$ implies that there is some $g' \in G$ with $e \in \Ad_{g'}(\lie b) \subseteq \Ad_{g}(\liep)$ (see e.g. \cite[Lemma in Section 30.3]{humphreys2012linear}), from which the claim follows by reducing to the case of a Borel subalgebra.
 			
 			Then, properness of $G/P$ implies that $\sigma$ is uniquely determined everywhere. We can construct such $\sigma$ by extending the structure group via $T \subseteq P$ and $\lieg_1 \subseteq \lie b \subseteq \liep$ from the reduction to the Vinberg $\C^\times$-pair $(T,\lieg_1)$ that $(E,\varphi,Q)$ possesses. To conclude, it suffices to notice that, due to the previous construction, we have
 			$$\deg(E,\sigma,\chi) = \chi(d_E)$$
 			\noindent and $\deg(\sigma, Q_i) = u_{c_i}(E,\varphi,Q)$ by definition, so that
 			$$\pdeg(E,\sigma,\chi) = \chi(d_E) + \sum_{i=j}^s\chi(u_{c_j}(E,\varphi,Q)(\alpha_j)).$$
 		\end{proof}
 		
	\subsection{The Bia\l ynicki-Birula decomposition and very stable upward flows.}	
	Now we recall some aspects of the Bia\l ynicki-Birula decomposition \cite{bialynicki-birula_theorems_1973} via $\C^\times$-actions on semiprojective varieties that are relevant for us; see \cite[Sections 2 and 3]{hausel_very_2022} for a more detailed account.
	
	Even if the moduli space is only quasiprojective, the $\C^\times$-action on $\mathcal M_{sp}(G,\alpha)$ is \textbf{semiprojective}, that is, the fixed point locus is projective and, for any $(E,\varphi,Q) \in \mathcal M_{sp}(G,\alpha)$, the limit $\lim\limits_{\lambda \to 0}(E,\lambda \varphi,Q)$ always exists.
	
	Indeed, since the Hitchin map $h_G$ is $\C^\times$-equivariant and the only fixed point in $\mathcal A(G)$ is zero, the fixed point locus defines a closed subvariety of $h_G^{-1}(0)$, and the latter is projective because $h_G$ is proper. Therefore, the fixed point locus $\mathcal M_{sp}^{\C^\times}(G,\alpha)$ is projective. Moreover, since limits when $\lambda \to 0$ exist on $\mathcal A(G)$ and $h_G$ is proper, the valuative criterion of properness implies that limits when $\lambda \to 0$ exist in $\mathcal M_{sp}(G,\alpha)$.
	
	Therefore, we get a partition of the moduli space according to the limit point.
	
	\begin{definition}
		Let $(E,\varphi,Q) \in \mathcal M^{\C^\times}_{sp}(G,\alpha)$. The \textbf{upward flow} from $(E,\varphi,Q)$ is the subvariety
		$$W^+_{(E,\varphi,Q)} := \left\{(E',\varphi',Q') \in \mathcal M_{sp}(G,\alpha) : \lim\limits_{\lambda \to 0}(E',\lambda \varphi', Q') = (E,\varphi,Q)\right\} \subseteq \mathcal M_{sp}(G,\alpha).$$
	\end{definition} 
	
	Since $\mathcal M_{sp}(G,\alpha)$ is semiprojective, upward flows define a partition
	$$\mathcal M_{sp}(G,\alpha) = \bigsqcup_{(E,\varphi,Q) \in \mathcal M_{sp}^{\C^\times}(G,\alpha)}W^+_{(E,\varphi,Q)}$$
	\noindent known as the \textbf{Bia\l ynicki-Birula partition}.
	
	Considering limits in the opposite direction, we have the following.
	\begin{definition}
		Let $(E,\varphi,Q) \in \mathcal M^{\C^\times}_{sp}(G,\alpha)$. The \textbf{downward flow} from $(E,\varphi,Q)$ is the subvariety
		$$W^-_{(E,\varphi,Q)} := \left\{(E',\varphi',Q') \in \mathcal M_{sp}(G,\alpha) : \lim\limits_{\lambda \to \infty}(E',\lambda \varphi', Q') = (E,\varphi,Q)\right\} \subseteq \mathcal M_{sp}(G,\alpha).$$
	\end{definition} 
	
	Since the moduli space is not projective, these do not partition the whole moduli space and they determine its \textbf{core}:
	$$\mathcal C:= \bigsqcup_{(E,\varphi,Q) \in \mathcal M_{sp}^{\C^\times}(G,\alpha)}W^-_{(E,\varphi,Q)}.$$
	As before, $\C^\times$-equivariance and properness of $h_G$ allow to deduce that (the underlying reduced schemes for) $\mathcal C$ and $h^{-1}_G(0)$ coincide \cite[Section 3.3]{hausel_very_2022}.
	
	Now we recall some of the main properties of upward and downward flows.
	
	\begin{proposition}[{\cite{bialynicki-birula_theorems_1973}, \cite[Propositions 2.1 and 2.10]{hausel_very_2022}}]
		Let $\mathcal E:=(E,\varphi,Q) \in \mathcal M_{sp}^{s\C^\times}(G,\alpha)$ be a smooth fixed point of the $\C^\times$-action. Consider the weight decomposition for the induced $\C^\times$-action on the tangent space 
		$$T_{\mathcal E}\mathcal M_{sp}(G,\alpha) = \bigoplus_{k \in \Z}T_{\mathcal E}^k.$$
		Let 
		$$T_{\mathcal E}^+ := \bigoplus_{k > 0}T_{\mathcal E}^k \subseteq T_{\mathcal E}\mathcal M_{sp}(G,\alpha)$$
		\noindent denote the subspace of positive weights and $T_{\mathcal E}^- := \bigoplus_{k < 0}T_{\mathcal E}^k$ the subspace of negative weights. Then
		\begin{itemize}
			\item Upward and downward flows $W^+_{\mathcal E}, W^-_{\mathcal E} \subseteq \mathcal M_{sp}(G,\alpha)$ are locally closed subvarieties of $\mathcal M_{sp}(G,\alpha)$.
			\item There is an isomorphism of $\C^\times$-varieties $W_{\mathcal E}^+ \simeq T^+_{\mathcal E}$. In particular, upward flows are isomorphic to affine spaces. 
			\item Similarly, there is an isomorphism of $\C^\times$-varieties $W^-_{\mathcal E} \simeq T^-_{\mathcal E}$.
			\item With respect to the holomorphic symplectic form $\omega$ from Section \ref{sympform},
			the subvarieties $W^+_{\mathcal E}$ are Lagrangian.
		\end{itemize}
	\end{proposition}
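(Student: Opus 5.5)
The plan is to follow the standard Białynicki-Birula argument for semiprojective actions, adapted to this moduli space as in \cite[Propositions 2.1 and 2.10]{hausel_very_2022}. Fix a smooth fixed point $\mathcal E$. Since $\mathcal E$ is a fixed point, the $\C^\times$-action linearises on $T_{\mathcal E}\mathcal M_{sp}(G,\alpha)\simeq \mathbb H^1(C^\bullet(E,\varphi,Q))$ (Proposition \ref{deformationtheory}), giving the weight decomposition. The first step is to invoke the Białynicki-Birula theorem \cite{bialynicki-birula_theorems_1973} on a $\C^\times$-stable neighbourhood of $\mathcal E$. The moduli space is quasiprojective with a linearisable action, so Sumihiro-type results give an affine $\C^\times$-stable open set around $\mathcal E$ inside the smooth locus. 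The theorem then yields that the attracting set of $\mathcal E$ in this neighbourhood is a locally closed smooth subvariety equivariantly isomorphic to $T^+_{\mathcal E}$, and similarly the repelling set is isomorphic to $T^-_{\mathcal E}$.

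The second step is to globalise. Since the action is semiprojective, every point of $W^+_{\mathcal E}$ flows into any given neighbourhood of $\mathcal E$ as $\lambda\to 0$. Acting by $\lambda^{-1}$ therefore identifies $W^+_{\mathcal E}$ with the $\C^\times$-saturation of the local attracting cell. Because $\mathcal E$ is an isolated point of its fixed component's transversal directions, this saturation equals the local cell itself, viewed as the $\C^\times$-equivariant affine space $T^+_{\mathcal E}$ with positive weights. Local closedness then follows from the corresponding statement in the BB decomposition of the smooth semiprojective locus. The same reasoning applies to $W^-_{\mathcal E}$, with limits at $\infty$. This step requires some care, because $W^-$ need not contain its limit points globally; however, only the local BB statement and the equivariant isomorphism are needed.

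The third step is the Lagrangian property, which I expect to be the main point. It follows from $\lambda^*\omega=\lambda\omega$, shown earlier. On $T_{\mathcal E}$, $\omega$ pairs $T^k$ with $T^{l}$ nontrivially only if $k+l=1$, because the pairing is equivariant of weight one. Hence for $k,l>0$, the sum $k+l\ge 2\neq 1$ forces $\omega$ to vanish on $T^+_{\mathcal E}$. By nondegeneracy, $T^{k}$ is dual to $T^{1-k}$, which gives $\dim T^+=\dim T^{\le 0}=\tfrac12\dim T$. At other points $p\in W^+_{\mathcal E}$, isotropy propagates by a limiting argument. Take tangent vectors $v,w$ at $p$ and compute $\omega(\lambda_*v,\lambda_*w)$ at $\lambda p$. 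This equals $\lambda^{-1}\cdot(\lambda\text{-scaled})$ expressions that, through the equivariant chart $W^+\simeq T^+$ with positive weights, tend to $0$ as $\lambda\to 0$. Combining this with the homogeneity of $\omega$ shows that $\omega|_{W^+}$ is a weight-one form on a positively graded affine space. Such a form has polynomial coefficients and total weight at least $2$ on constant 2-forms, so it vanishes identically. Together with the dimension count, this shows that $W^+_{\mathcal E}$ is Lagrangian.
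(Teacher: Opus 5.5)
Your proposal is correct and is essentially the standard argument behind the paper's citation (the paper gives no proof of its own). It applies Bia\l ynicki-Birula on a $\C^\times$-stable affine (Sumihiro) neighbourhood inside the smooth locus, and uses the weight-one homogeneity $\lambda^*\omega=\lambda\omega$ both to kill $\omega$ on the positively graded $T^+_{\mathcal E}\simeq W^+_{\mathcal E}$ and to pair $T^k$ with $T^{1-k}$, giving $\dim T^+_{\mathcal E}=\tfrac12\dim T_{\mathcal E}$, as in Hausel--Hitchin.

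The only muddled part is the globalisation step: the ``transversal directions'' phrase is not a meaningful reason, and no saturation is needed. Any $\C^\times$-stable open $U\ni\mathcal E$ already contains all of $W^{\pm}_{\mathcal E}$, since $\lambda p$ lies in $U$ for suitable $\lambda$ and then $p\in U$. So both isomorphisms and local closedness come directly from the affine chart, not from a decomposition of the smooth locus, which need not be semiprojective.
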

	
	Therefore, upward flows from smooth fixed points provide Lagrangian subvarieties with very simple geometry and a $\C^\times$-action. It is then reasonable to ask whether these are closed Lagrangians (since otherwise the process of closing them down by recursively attaching the upward flows of limit points results in a more complicated geometry). 
	
	\begin{definition}[{\cite[Definition 2.12, Proposition 2.14, Lemma 4.6]{hausel_very_2022}}]
		A smooth fixed point $\mathcal E:=(E,\varphi,Q) \in \mathcal M^{s\C^\times}_{sp}(G,\alpha)$ is \textbf{very stable} if any of the following equivalent conditions is satisfied:
		\begin{itemize}
			\item We have $W^+_{\mathcal E} \cap h^{-1}_G(0) = \{\mathcal E\}$.
			\item The subvariety $W^+_{\mathcal E} \subseteq \mathcal M_{sp}(G,\alpha)$ is closed.
			\item The restriction of the Hitchin map $h|_{W^+_{\mathcal E}} : W_{\mathcal E}^+ \to \mathcal A(G)$ is a finite, flat surjective morphism.
		\end{itemize}
	\end{definition}
	
	We can describe upward flows locally (i.e. the deformations of a fixed point that lie in the upward flow) by taking advantage of the previous identification with the space of positive weights of the tangent space. We record here this description for fixed points of Borel type for later use.
	
	\begin{proposition}\label{parabolicupwardflow}
		Let $(E,\varphi,Q)$ be a smooth fixed point of Borel type and $(E',\varphi',Q')$ be a point on its upward flow. Let $\{(U_j,t_j)\}$ be a trivialisation of $E$ where the $U_j \subseteq C$ are affine opens, the transition functions $f_{ij} : U_i \cap U_j \to T \subseteq G$ take values in $T$, the local expressions of the Higgs field $\varphi_i : U_i \to \lieg_1 \subseteq \lieg$ take values in $\lieg_1$ and the local expressions of the parabolic structures take values in $W \subseteq G/B$. Then, $(E',\varphi',Q')$ is isomorphic to the bundle obtained by deforming
		$$f_{ij}' = (1+\varepsilon_{ij})f_{ij}$$
		$$\varphi_i' = \varphi_i + \delta_i$$
		\noindent (and the local expression for the parabolic structures unchanged) for a \v Cech cocycle 
		$$\varepsilon_{ij} \in H^0(U_{ij},PE(\lieg_{<0})),$$ $$\delta_i \in H^0(U_i, SPE(\lie b^{opp}) \otimes K_C(D)),$$ \noindent where $\lieg_{<0} := \bigoplus_{k < 0}\lieg_k$, representing an element in
		$$\mathbb H^1(PE(\lieg) \to SPE(\lieg) \otimes K_C(D)).$$ 
	\end{proposition}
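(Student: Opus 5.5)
The plan is to combine the Bia\l ynicki-Birula isomorphism $W^+_{\mathcal E}\simeq T^+_{\mathcal E}$ with the description of the tangent space in Proposition \ref{deformationtheory}. Set $\mathcal E=(E,\varphi,Q)$. By Proposition \ref{simplicityparab}, $\mathcal E$ is a smooth point. First I will compute the $\C^\times$-weights on
$$T_{\mathcal E}\mathcal M_{sp}(G,\alpha)\simeq \mathbb H^1(PE(\lieg)\to SPE(\lieg)\otimes K_C(D)).$$
Linearise the action through the automorphism family $f_\lambda$ of Proposition \ref{cstarfixedmeromorphic}, i.e. through $\xi_\lambda$, the exponential of the Borel grading element $\zeta$.

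The reduction $E_T$ splits the adjoint bundle as $E(\lieg)=\bigoplus_k E_T(\lieg_k)$, and the splitting is compatible with the parabolic subsheaves. This holds because $Q_i$ lies in $E_T|_{c_i}w_iB/B$, so $\lie q_i$ is spanned by $\liet$ and the root spaces $\lieg_{w_i\beta}$ with $\beta\in\Delta^+$, and these are $T$-stable. Hence the deformation complex splits into graded pieces
$$C^\bullet_k: PE_T(\lieg_k)\xrightarrow{[\varphi,-]} SPE_T(\lieg_{k+1})\otimes K_C(D).$$
A class in $\mathbb H^1(C^\bullet_k)$ changes the transition functions in the $\lieg_k$ direction and the Higgs field in the $\lieg_{k+1}$ direction. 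Under $f_\lambda$ composed with scaling by $\lambda$, such a class acquires the factor $\lambda^{-k}$ from $\Ad_{\xi_\lambda^{-1}}$, while the Higgs part acquires $\lambda\cdot\lambda^{-(k+1)}=\lambda^{-k}$. So $\mathbb H^1(C^\bullet_k)$ is the weight $-k$ subspace, with sign conventions to be checked against the direction of the limit $\lambda\to 0$.

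The positive-weight part is therefore $\bigoplus_{k<0}\mathbb H^1(C^\bullet_k)$. Its cochains are $\varepsilon\in PE(\lieg_{<0})$ and $\delta\in SPE(\lieg_{\le 0})\otimes K_C(D)$, and these are contained in $SPE(\lie b^{opp})\otimes K_C(D)$. To lift from the tangent space to actual points of $W^+_{\mathcal E}$, I will use the $\C^\times$-equivariant isomorphism $W^+_{\mathcal E}\simeq T^+_{\mathcal E}$ as follows. Consider the family obtained by exponentiating the \v Cech data, with $f'_{ij}=(1+\varepsilon_{ij})f_{ij}$ read as $\exp(\varepsilon_{ij})f_{ij}$, which is unipotent in $U^{opp}$, and with $\varphi'_i=\varphi_i+\delta_i$. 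The cocycle condition holds exactly because $\lieg_{<0}$ is nilpotent. Conjugating by $\xi_\lambda$ scales each graded piece with positive power, so the family converges to $\mathcal E$ as $\lambda\to0$; hence it lies in $W^+_{\mathcal E}$. The parabolic structures are left unchanged, since $\exp(\varepsilon)$ lies in $PE$ and preserves $Q_i$.

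The construction gives a $\C^\times$-equivariant map from $T^+_{\mathcal E}$ (as an affine space) to $W^+_{\mathcal E}$ whose differential at $\mathcal E$ is the identity. A $\C^\times$-equivariant morphism between affine spaces with positive weights that is étale at the origin is an isomorphism onto the upward flow, by the standard contracting-action argument. This gives the claim.

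The main obstacle is checking that the nonlinear completion is well defined. This requires that $[\varphi,\cdot]$ and the exponentials interact only within $\lie b^{opp}$, so that no extra terms leave the prescribed sheaves, and that compatibility with the strongly parabolic condition at the $c_i$ is preserved under $\Ad_{w_i}$. I would first verify this locally at each $c_i$ using the description of $\lie n_i$ in terms of $w_i$.
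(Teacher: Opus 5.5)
Your weight computation is the paper's proof. The paper also splits the deformation complex into the pieces $PE(\lieg_k)\to SPE(\lieg_{k+1})\otimes K_C(D)$ and represents a class in the $k$-th piece by the first-order deformation $f'_{ij}=(1+z\varepsilon_{ij})f_{ij}$, $\varphi'_i=\varphi_i+z\delta_i$ with $z^2=0$. It then removes the scaling with the local automorphism $\xi_\lambda^{-1}$, finds weight $-k$, and reads off $\varepsilon_{ij}\in\lieg_{<0}$ and $\delta_i\in\lieg_{\le 0}=\lie b^{opp}$. At that point it stops: the passage from $T^+_{\mathcal E}$ to points of $W^+_{\mathcal E}$ is left implicitly to the Bia\l ynicki-Birula isomorphism $W^+_{\mathcal E}\simeq T^+_{\mathcal E}$. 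Your remark that $\lie q_i$ is spanned by $\liet$ and $T$-stable root spaces, so the parabolic subsheaves respect the grading, is correct and fills in a detail the paper leaves implicit.

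The step you add on top of this fails as written. You build an explicit family by exponentiating the linear \v Cech data, but a linear cocycle does not exponentiate to a nonlinear one.

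\textbf{Transition functions.} The relation $\varepsilon_{ik}=\varepsilon_{ij}+\Ad_{f_{ij}}\varepsilon_{jk}$ does not give $\exp(\varepsilon_{ij})\exp(\Ad_{f_{ij}}\varepsilon_{jk})=\exp(\varepsilon_{ik})$. The Baker--Campbell--Hausdorff terms $\tfrac12[\varepsilon_{ij},\Ad_{f_{ij}}\varepsilon_{jk}]+\dots\in\lieg_{\le -2}$ get in the way. Nilpotency of $\lieg_{<0}$ only makes this series finite; it does not kill it. For instance, for $\liesl_3$ one has $[\lieg_{-1},\lieg_{-1}]=\lieg_{-2}\neq 0$.

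\textbf{Higgs field.} The gluing already fails in rank one, where $\lieg_{<0}$ is abelian. For $\PGL_2(\C)$, $\Ad_{\exp(\varepsilon_{ij})f_{ij}}(\varphi_j+\delta_j)$ differs from $\varphi_i+\delta_i$ by
\[
[\varepsilon_{ij},\Ad_{f_{ij}}\delta_j]+\tfrac12[\varepsilon_{ij},[\varepsilon_{ij},\varphi_i]] .
\]
With $\varepsilon=aF$, $\varphi=\phi E$, $\delta=dH$, this is $a(2d_j-a\phi)F$, which is generally nonzero.

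So your equivariant map $T^+_{\mathcal E}\to W^+_{\mathcal E}$ is not defined, and the contracting-action argument has nothing to apply to. You have two ways out:

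\begin{enumerate}
\item Drop this step and conclude from the weight decomposition together with $W^+_{\mathcal E}\simeq T^+_{\mathcal E}$, as the paper does. This amounts to reading the statement as a description of $T^+_{\mathcal E}$.
\item For an honest nonlinear description, make the following changes. Replace $\exp(\varepsilon_{ij})$ by a general $U^{opp}$-valued cocycle $u_{ij}$. Allow $\delta_i$ to acquire corrections in $\lieg_{<0}\subseteq\lie b^{opp}$. Construct both order by order in the grading. The obstructions lie in $\mathbb H^2(C^\bullet(E,\varphi,Q))\simeq\mathbb H^0(C^\bullet(E,\varphi,Q))^*$. This group vanishes because $\Aut(E,\varphi,Q)=Z(G)$ is finite (Proposition \ref{simplicityparab}). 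The resulting map is equivariant and \'etale at the origin, so your final argument then goes through.
\end{enumerate}
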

	
	\begin{proof}
		Recall that the tangent space at $(E,\varphi,Q)$ is given in Proposition \ref{deformationtheory} by
		$$\mathbb H^1(PE(\lieg) \to SPE(\lieg) \otimes K_C(D)),$$
		\noindent where the arrow is induced by $\ad(\varphi)$. Since $\varphi \in H^0(SPE(\lieg_1) \otimes K_C(D))$, this can be decomposed as
		$$\bigoplus_{k \in \Z}\mathbb H^1(PE(\lieg_k) \to SPE(\lieg_{k+1}) \otimes K_C(D)).$$
	
		In our fixed trivialisation, an element in the $k$-th piece of this hypercohomology corresponds to a \v{C}ech cocycle $\varepsilon_{ij} \in H^0(U_{ij},PE(\lieg_k))$, $\delta_i \in H^0(U_i, SPE(\lieg_{k+1}))$ satisfying the cocycle condition described in detail in \cite{biswas_infinitesimal_1994}. They correspond to the deformation 
		$$f_{ij}' = (1+z\varepsilon_{ij})f_{ij},$$
		$$\varphi_i' = \varphi_i + z\delta_i.$$
		\noindent (and unchanged local expression for the parabolic structure) where $z \in \C[z]/z^2$ is the deformation parameter.
		
		We may find the induced weight for this piece of this deformation space by applying the $\C^\times$-action, which corresponds to scaling the Higgs field by $\lambda$ and hence the local data 
		$$f_{ij}' = (1+z\varepsilon_{ij})f_{ij},$$
		$$\lambda\varphi_i' = \lambda\varphi_i + \lambda z\delta_i.$$
		However this no longer has the form of a deformation as above. We can recover that form by applying the local automorphism given by $\xi_\lambda^{-1}$, getting:
		$$\xi_\lambda^{-1}f_{ij}'\xi_\lambda = (1+z\lambda^{-k}\varepsilon_{ij})f_{ij}$$
		$$\Ad_{\xi_\lambda^{-1}}\lambda \varphi_i' = \varphi_i + z\lambda^{-k}\delta_i$$
		\noindent and the same local expression for the parabolic structure since $\xi_\lambda^{-1}$ fixes $W \subseteq G/B$.
		
		Therefore, we see that the weight on this piece is $-k$ and therefore the deformations corresponding to the upward flow are those with $k < 0$, so the $\varepsilon_{ij}$ take values in $\bigoplus_{j < 0}\lieg_j = \lieg_{<0}$ and the $t_i$ take values in $\bigoplus_{j \le 0}\lieg_j = \lie b^{opp}$.
	\end{proof}
	
	We conclude this section by describing the most important example of very stable upward flow.
	
	\begin{example}\label{paraboliccanonical}
		We now explain the construction of the fixed point $(E^0,\varphi^0,Q^0)$ of Borel type that satisfies $w(E^0,\varphi^0,Q^0) = 1$, i.e. that whose Higgs field $\varphi^0$ is everywhere regular and $Q^0$ is always the \textit{trivial} parabolic structure (i.e. the one preserved by $\lieg_1$). This is called the \textbf{canonical uniformising Higgs bundle}. We do so assuming that $G$ is of adjoint type, since otherwise they cannot be constructed in general; this will be enough for our purposes. 
		
		Let $E_T$ denote the $T$-bundle obtained from the reduction of $E^0$. The fact that $\varphi^0$ is everywhere regular implies that all the sections $\pi_j(\varphi^0) \in H^0(E_T(\lieg_{\beta_j}) \otimes K_C(D))$ are nowhere vanishing. Therefore, the line bundles $E_T(\lieg_{\beta_j})$ are all isomorphic to $(K_C(D))^{-1}$. Since $G$ is of adjoint type, the previous condition determines a unique $T$-bundle $E_T$. 
		
		Moreover, since all the line bundles are equal, this $E_T$ admits a reduction of structure group to $T' \subseteq T$, the one-dimensional subtorus spanned by the principal cocharacter corresponding to the grading element $\zeta = \sum_{j=1}^r\omega_j^\vee$. Consider the regular nilpotent element $e:= \sum_{j=1}^rX_{\beta_j} \in \lieg_1$ and the associated principal $\mathfrak {sl}_2$-triple $(2\zeta, e, f)$ obtained by the Jacobson--Morozov theorem.
		
		By construction, since $[\zeta,e] = e$, we have $E_{T'}(\left< e \right>) \simeq (K_C(D))^{-1}$. Therefore, we have a constant section $e \in H^0(E_{T'}(\left< e \right>) \otimes K_C(D))$ and, extending the structure group of the pair $(E_{T'},e)$ to $G$, we get a $K_C(D)$-twisted $G$-Higgs bundle $(E^0,\varphi^0)$ which, by construction, reduces to a $K_C(D)$-twisted $(T,\lieg_1)$-Higgs pair (in fact, to a $(T',\left< e \right>)$-Higgs pair).
		
		Finally, we endow it with the unique parabolic structure $Q_{c_i}$ at each $c_i \in D$ such that $u_{c_i}(E^0,\varphi^0,Q^0) = 1$. The result is a parabolic Higgs bundle (i.e. $\varphi^0$ is compatible with this parabolic structure) since $\lieg_1 \subseteq \mathfrak b$. Note that this is the only possible one, since any non-trivial Weyl group element $u \in W$ verifies $u(e) \notin \mathfrak b$.
		
		Now we explicitly construct its upward flow as a section of the Hitchin map, as was done originally in \cite{hitchin_lie-groups_1992}. Given a point $a = (a_1,\dots,a_r) \in \mathcal A(G) =  \bigoplus_{i=1}^rH^0(C,K^{\deg(p_i)}_C((\deg(p_i)-1)D))$ in the Hitchin base, decompose $\lieg$ into irreducible representations of the $\liesl_2(\C)$-representation induced by $(2\zeta,e,f)$, getting
		$$\lieg = \bigoplus_{i=1}^r V_i.$$
		There are $r = \rank \lieg$ summands, and they can be ordered so that $\dim V_i = 2\deg p_i - 1$ where the $p_i \in \C[\lieg]^G$ are the chosen basis elements of invariant polynomials for the construction of $h_G$. Thus, if $f_i \in V_i$ is a lowest weight vector, we have that $[2\zeta,f_i] = (-2\deg p_i + 2) f_i$, and hence $E_{T'}(\gen{f_i}) \otimes K_C(D) \simeq (K_C(D))^{\deg p_i - 1} \otimes K_C(D) \simeq (K_C(D))^{\deg p_i}$. This means that we have a well-defined constant section $f_i \in H^0(C,E_{T'}(\gen{f_i}) \otimes K_C(D) \otimes (K_C(D))^{-\deg p_i})$ and hence $a_if_i \in H^0(C, E^0(\lieg) \otimes K_C) \subseteq H^0(E^0(\lieg) \otimes K_C(D))$. This results in a well defined Higgs field
		$$\varphi^a := \varphi^0 + \sum_{i=1}^ra_if_i$$
		\noindent such that $\varphi^a(c_i) = \varphi^0 \in \lieg_1$. In particular, the parabolic structure $Q^0$ is still compatible with all the Higgs fields $\varphi^a$.
		
		The $f_i$ can be selected so that $h_G(E^0,\varphi^a,Q^0) = a$. We also have that $(E^0,\varphi^a,Q^0)$ are smooth points \cite[Section 5]{hitchin_lie-groups_1992} in $\mathcal M_{sp}(G,\alpha)$. By the $\C^\times$-equivariance of $h_G$, we get the subvariety
		$$\mathcal A(G) \simeq \{(E^0,\varphi^a,Q^0) : a \in \mathcal A(G)\} \subseteq W^+_{(E^0,\varphi^0,Q^0)}.$$
		
		However, both sides are vector spaces of the same dimension, hence equality holds. It then follows from this explicit description of the upward flow that $(E^0,\varphi^0,Q^0)$ is very stable.
		
		Notice from Remark \ref{recoverfromtwistedmult} that when $G$ is not of adjoint type there may not exist an everywhere regular fixed point of Borel type since the association of the line bundle $(K_C(D))^{-1}$ to every simple root need not extend to the cocharacter lattice of $T$ (essentially because $\deg K_C(D)$ may be odd).
	\end{example}
	
	\section{Hecke transformations and affine flag varieties}\label{sechecke}
	
	\subsection{Loop groups and affine flag varieties}
	In order to understand the space of Hecke transformations we first recall the relevant notions about loop groups and affine flag varieties. Our main references are \cite{pappasTwistedLoopGroups2008, richarzSchubertVarietiesTwisted2013,zhu_introduction_2016}.
	
	Let $\mathbb D := \Spec \C[[z]]$ be the formal disk and $\mathbb D^* := \Spec \C((z))$ be the punctured disk. We denote by $0 \in \mathbb D$ the closed point. For a $\C$-algebra $R$ we also define $\mathbb D_R := \Spec(R[[z]])$ and $\mathbb D_R^* := \Spec(R((z)))$. We also have a closed subscheme $0_R := \Spec R \hookrightarrow \mathbb D_R$. The notion of Hecke transformation captures local modifications of principal and parabolic bundles. Hence, the following are the natural definitions in this context. We still assume that $\alpha \in \liet_\R$ is in the interior of the standard Weyl alcove.
	
	\begin{definition}
		The \textbf{affine Grassmannian} $\Gr_G$ is defined by its functor of points: to a $\C$-algebra $R$ it assigns the space of isomorphism classes of pairs $(E,\Psi)$ where $E$ is a principal $G$-bundle over $\mathbb D_R$ and $\Psi : E|_{\mathbb D_R^*} \to \mathbb D_R^* \times G$ is a trivialisation over $\mathbb D_R^*$.
		
		Similarly, the \textbf{affine flag variety} $\Fl_G$ maps $R$ to the space of isomorphism classes of triples $(E,Q,\Psi)$ where $(E,Q)$ is a parabolic $G$-bundle with weight $\alpha$ over $(\mathbb D_R,0_R)$ and $\Psi : E|_{\mathbb D_R^*} \to \mathbb D_R^* \times G$ is a trivialisation over $\mathbb D_R^*$. 
	\end{definition}
	
	In this setting, the parabolic structure $Q$ corresponds to a reduction of structure group of $E$ to $B$ over $0_R$. Both functors have the structure of an ind-projective ind-scheme, and there is a forgetful map $(E,Q,\Psi) \mapsto (E,\Psi)$ defining a projection $\Fl_G \to \Gr_G$.
	
	For our purposes it suffices to understand the $\C$-points, which can be done in terms of loop groups. Consider the \textbf{loop group} $G((z)) := G(\mathbb D^*)$ and the \textbf{positive loop group} $G[[z]] := G(\mathbb D)$. The latter is a subgroup of the former. Similarly, we can define the \textbf{standard Iwahori subgroup} $\mathcal I \subseteq G[[z]]$ by considering the evaluation map
	$$G[[z]] \to G$$
	\noindent and taking the preimage of $B \subseteq G$.
	
	Then, the $\C$-points of $\Gr_G$ can be identified (see \cite[Proposition 1.3.6]{zhu_introduction_2016}) with the quotient $G((z))/G[[z]]$. For a pair $(E,\Psi)$ we can fix a trivialisation $\varepsilon : \mathbb D \times G \xrightarrow{\sim} E$, which always exists since the base is $\mathbb D$, and then we get an element $\Psi \circ \varepsilon \in \Aut(\mathbb D^* \times G) \simeq G((z))$. A change in trivialisation $\varepsilon'$ results in a change by $(\varepsilon')^{-1} \circ \varepsilon \in \Aut(\mathbb D \times G) \simeq G[[z]]$.
	
	Similarly, the $\C$-points of $\Fl_G$ can be identified with $G((z))/\mathcal I$. Indeed, now the trivialisation $\varepsilon : \mathbb D \times G \xrightarrow{\sim} E$ is required to send $0 \times B$ to $Q \subseteq E|_0/B$. Thus, a change in trivialisation results in a change by an element of $\Aut(\mathbb D \times G, 0 \times B) \simeq \mathcal I$.
	
	\begin{remark}
		Since we only need the $\C$-points, we will use the previous identifications liberally, in particular denoting by $\Fl_G$ and $\Gr_G$ the spaces of $\C$-points. Nevertheless, the previous identifications can be defined schematically as well, by considering the loop groups and their subgroups as schemes (for example, $LG(R) := G(\mathbb D_R)$).
	\end{remark}
	
	\begin{remark}\label{twistedaffine}
		For more general parabolic weights $\alpha \in \liet_\R$, everything works identically by replacing parabolic bundles with parahoric torsors \cite{balajiModuliParahorictorsors2015} as indicated in Remark \ref{parahoric}.
		The subgroups of $G((z))$ that appear as automorphisms of the trivial parahoric torsor over $\mathbb D$ are the \textbf{parahoric subgroups} $G_\alpha \subseteq G((z))$, defined by restricting their valuation along each direction of the root space decomposition: they have valuation at least $0$ along $T$, and at least $\lceil -\beta(\alpha) \rceil$ along the root subgroup for $\beta \in \Delta$.
		In particular, for $\alpha = 0$ we have $G_\alpha = G[[z]]$ and for $\alpha$ in the interior of the standard Weyl alcove we have $G_\alpha = \mathcal I$, recovering our two main cases of interest.
	\end{remark}
	
	Now we recall the Bruhat decomposition for both $\Fl_G$ and $\Gr_G$. This can be done simultaneously by setting $\Fl_\alpha := G((z))/G_\alpha$ (see Remark \ref{twistedaffine}) for $\alpha \in \liet_\R$; the case $\alpha = 0$ recovers $\Gr_G$ and any $\alpha$ in the interior of the standard Weyl alcove recovers $\Fl_G$. The following decomposition results will be stated for $\C$-points, but they hold schematically.
	
	Given $\mu \in X_*(T)$ a cocharacter, we may regard it as an element $z^\mu \in G((z))$ by taking the map on $\mathbb C((z))$-points, $\mu: \mathbb C((z))^\times \to T((z)) \subseteq G((z))$, and then taking $z^\mu$ to be the image of $z$. Moreover, since $T \subseteq G_\alpha$, we may map the Weyl group $W = N(T)/T$ into $\Fl_\alpha$. This gives a way to map
	$$\widetilde{W}_G = X_*(T)\rtimes W \rightarrow \Fl_\alpha.$$ 
	
	\begin{remark}
		At this point we can clarify the sign convention in Remark \ref{signconvetion}. We want our action of $X_*(T)$ on $\liet$ to satisfy $G_{\mu(\alpha)} = z^{\mu}G_\alpha z^{-\mu}$. Since $G_{\alpha=0}= G[[z]]$, $z^\mu$ sends $0$ to the point stabilised by
		$$z^\mu G[[z]]z^{-\mu}.$$
		Along the direction of the root $\beta \in \Delta$, this group has elements with valuation at least $\beta(\mu)$. Now, the stabiliser of a point $\alpha \in \liet$ is by definition the parahoric subgroup $G_\alpha$, whose elements along that direction have valuation at least $-\beta(\alpha)$. We therefore conclude that $z^\mu$ maps $0$ to $-\mu$.
		Another common way around this is to change the sign in the definition of $z^\mu$ instead of in the action of $\widetilde{W}_G$. 
	\end{remark}
	
	\begin{definition}
		Let $w \in \widetilde{W}_G$. We define the \textbf{Bruhat cell} of $w$ in $\Fl_\alpha$ as 
		$$\Fl\nolimits_{\alpha,w} := G_\alpha \cdot w \subseteq \Fl\nolimits_\alpha.$$
		
		We define the \textbf{Schubert variety} of $w$ in $\Fl\nolimits_\alpha$ as
		$$\Fl\nolimits_{\alpha, \le w} := \overline{\Fl\nolimits_{\alpha,w}} \subseteq \Fl\nolimits_\alpha.$$
	\end{definition}
		
	For what follows, recall the Bruhat order on $\widetilde{W}_G$ from Definition \ref{bruhatorder} and the parabolic quotients in Definition \ref{parabolicweyl} and Remark \ref{Jmu}.
	
	\begin{proposition}[{\cite{iwahori_bruhat_1965}, \cite[Proposition 0.1]{richarzSchubertVarietiesTwisted2013}}]\label{bruhatdecomposition}
		Let $\alpha,\alpha' \in \liet_\R$ be parabolic weights. We have the Cartan--Iwahori--Matsumoto decomposition in Bruhat cells:
		$$\Fl\nolimits_\alpha := \bigsqcup_{w \in {}_{J_{\alpha'}}\widetilde{W}_G^{J_\alpha}} G_{\alpha'} \cdot w.$$
		
		Furthermore, when $\alpha=\alpha'$, we have the following description of the Schubert varieties:
		$$\Fl\nolimits_{\alpha, \le w} = \overline{\Fl\nolimits_{\alpha,w}} = \bigsqcup_{w' \le w}\Fl\nolimits_{\alpha,w'}$$ 
		\noindent and their dimension equals the length $l(w)$.
	\end{proposition}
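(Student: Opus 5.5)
The plan is to reduce everything to the classical Iwahori--Matsumoto theory of the affine Weyl group acting on the building, with the two parahorics $G_{\alpha'}$ and $G_\alpha$ playing symmetric roles.

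\textbf{Step 1: disjointness and covering.} First I would establish the Iwahori--Bruhat decomposition for the pair of Iwahori subgroups,
$$G((z)) = \bigsqcup_{w \in \widetilde{W}_G} \mathcal I\, w\, \mathcal I.$$
I would derive it from the $BN$-pair (Tits system) structure on $(G((z)),\mathcal I, N_{G((z))}(T((z))))$. The Weyl group of this Tits system is $N(T((z)))/T[[z]] \simeq X_*(T)\rtimes W = \widetilde{W}_G$. Its Coxeter part is $\Waff$, and the length-zero part $\Omega$ normalises $\mathcal I$. Two inputs are needed:
\begin{itemize}
\item the existence of the decomposition, which I would get from the Cartan decomposition $G((z)) = G[[z]]\, X_*(T)\, G[[z]]$ together with the finite Bruhat decomposition via the evaluation map $G[[z]]\to G$;
\item the axiom $s\,\mathcal I\, w \subseteq \mathcal I w \mathcal I \cup \mathcal I s w \mathcal I$, which I would check in the rank-one subgroups $\SL_2$ attached to the affine simple roots.
\end{itemize}

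\textbf{Step 2: passing to parahorics.} Every $G_\alpha$ is generated by $\mathcal I$ and the representatives of $\widetilde{W}_{J_\alpha}$; for $\alpha$ in a general position this holds after conjugating $\alpha$ into the closed standard alcove by an element of $\widetilde{W}$. Hence $G_\alpha = \mathcal I\, \widetilde{W}_{J_\alpha}\, \mathcal I$, and the double cosets $G_{\alpha'}\backslash G((z))/G_\alpha$ are indexed by $\widetilde{W}_{J_{\alpha'}}\backslash \widetilde{W}_G / \widetilde{W}_{J_\alpha}$. The representatives ${}_{J_{\alpha'}}\widetilde{W}_G^{J_\alpha}$ of Definition \ref{parabolicweyl} are a set of coset representatives, which follows from standard Coxeter combinatorics in \cite{bjorner2005combinatorics}. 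This gives the first displayed decomposition.

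\textbf{Step 3: closure relations and dimension.} Take $\alpha=\alpha'$. Using the $BN$-pair relations, I would show that for $w$ in the parabolic quotient and $s$ simple with $l(sw)>l(w)$, the closure of the cell $G_\alpha sw$ contains $G_\alpha w$. The argument is via the $\mathbb P^1$ given by the root subgroup $U_{-a_s}$ (the affine simple root for $s$): the family $u_{-a_s}(t)\, s w$ degenerates to $w$ as $t\to\infty$. Iterating along a reduced word and invoking the subword characterisation of the Bruhat order (Definition \ref{bruhatorder}) gives the inclusion $\supseteq$.

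For the reverse inclusion, I would write the Schubert variety as the image of the Demazure--Bott--Samelson resolution
$$P_{s_{1}}\times^{\mathcal I}\dots\times^{\mathcal I} P_{s_k}/\mathcal I \to \Fl_\alpha,$$
where $P_s$ is the minimal parahoric attached to $s$. This map is proper, and its image is the union of the cells $\mathcal I w'$ for $w'$ running over subwords, so it is closed and contained in $\bigsqcup_{w'\le w}$.

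The dimension claim follows from $\mathcal I w \mathcal I/\mathcal I \simeq \mathcal I/(\mathcal I\cap w\mathcal I w^{-1}) \simeq \mathbb A^{l(w)}$. Here I would count the affine roots positive for $\mathcal I$ but negative after applying $w^{-1}$, matching the length formula recalled before Proposition \ref{lengthzero}.

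\textbf{Main obstacle.} I expect the hardest part to be the bookkeeping for $\alpha\neq\alpha'$ and for non-minimal representatives. In particular, one has to check that the max--min representatives of Definition \ref{parabolicweyl} are exactly the ones for which the closure statement and the dimension $l(w)$ hold in $\Fl_\alpha$. Concretely, $\dim G_\alpha w G_\alpha/G_\alpha$ should equal the length of the element that is maximal on the left and minimal on the right. This requires showing that left multiplication by $\widetilde{W}_{J_\alpha}$ only fills out the fibres of $G_\alpha/\mathcal I$ without changing the cell, and this is where I would rely on the references cited in the statement.
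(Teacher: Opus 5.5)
The paper gives no argument for this proposition beyond citing Iwahori--Matsumoto and Richarz. Your overall route is the standard one in those sources: Iwahori--Bruhat decomposition from the generalised Tits system, $G_\alpha=\mathcal I\,\widetilde W_{J_\alpha}\,\mathcal I$ for $\alpha$ in the closed alcove, the Demazure resolution, and $\mathcal I\dot w\mathcal I/\mathcal I\simeq\mathbb A^{l(w)}$. However, the inclusion $\bigsqcup_{w'\le w}\Fl_{\alpha,w'}\subseteq\overline{\Fl_{\alpha,w}}$ fails as you argue it, already for $\alpha$ in the open alcove. Write $C(w):=\mathcal I\dot w\mathcal I/\mathcal I$.

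First, your family is constant. We have $\dot s^{-1}U_{-a_s}\dot s=U_{a_s}$, and $l(sw)>l(w)$ means $w^{-1}(a_s)$ is a positive affine root. Hence $u_{-a_s}(t)\,\dot s\dot w\,\mathcal I=\dot s\dot w\,u_{w^{-1}(a_s)}(\pm t)\,\mathcal I=\dot s\dot w\,\mathcal I$ for every $t$. The degenerating family is instead $u_{-a_s}(t)\,\dot w\,\mathcal I$: it equals $u_{a_s}(t^{-1})\,\dot s\dot w\,\mathcal I\in C(sw)$ for $t\neq0$, and equals $\dot w\mathcal I$ at $t=0$.

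Second, even the corrected one-step relation $C(w)\subseteq\overline{C(sw)}$, iterated along a reduced word, only strips letters off the left end. For $w=s_1s_2$ it produces $s_2$ and $1$, but never $s_1\le w$. You would need either curves joining $w'$ and $w't$ for arbitrary affine reflections $t$ (the affine analogue of Lemma \ref{bruhatcurve}), or the stronger inclusion $P_s\cdot\overline{C(w)}\subseteq\overline{C(sw)}$. The simplest repair is already in your next paragraph. The Bott--Samelson variety is irreducible and projective, and its open stratum maps onto $C(w)$, so its image is exactly $\overline{C(w)}$. Since that image is $P_{s_1}\cdots P_{s_k}/\mathcal I=\bigcup_{\text{subwords}}C(w')$, the subword property gives both inclusions at once.

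For parahoric $\alpha$, do not run the one-step argument on $G_\alpha$-orbits: if $s\in J_\alpha$ then $G_\alpha\dot s\dot w=G_\alpha\dot w$. Instead, push the Iwahori statements forward along the proper map $\Fl_G\to\Fl_\alpha$.

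Your claim in Step 2 that ${}_{J_{\alpha'}}\widetilde W_G^{J_\alpha}$, as literally defined in Definition \ref{parabolicweyl}, is a transversal is false. In $S_3$ with $J=J'=\{s_1\}$, both $s_1s_2$ and $s_2s_1$ have length $2=\max\min$ and lie in the same double coset. The intended representative is the longest element of $\widetilde W_G^{J_\alpha}\cap\widetilde W_{J_{\alpha'}}w\widetilde W_{J_\alpha}$, which is unique because $\widetilde W_{J_{\alpha'}}$ is finite.

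Your ``main obstacle'' also has left and right swapped. Right multiplication by $\widetilde W_{J_\alpha}$ moves along the fibres $G_\alpha/\mathcal I$ of $\Fl_G\to\Fl_\alpha$, so $\mathcal I\dot vG_\alpha/G_\alpha\simeq C(v)$ for $v\in\widetilde W_G^{J_\alpha}$. Left multiplication by $\widetilde W_{J_\alpha}$ moves between these $\mathcal I$-orbits inside one irreducible $G_\alpha$-orbit, and they have different dimensions. Hence $\dim G_\alpha\dot wG_\alpha/G_\alpha$ is the maximum of $l(v)$ over $v\in\widetilde W_G^{J_\alpha}\cap\widetilde W_{J_\alpha}w\widetilde W_{J_\alpha}$, attained exactly at that representative.

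Finally, conjugate a general $\alpha$ into the closed alcove using $\Waff$ rather than $\widetilde W$. Elements of $\widetilde W\setminus\widetilde W_G$ do not lift to $G((z))$ unless $G$ is adjoint, and $J_\alpha\subseteq S_{\Waff}$ only makes sense for $\alpha$ in the closed alcove anyway.
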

	
	As an example let us examine the Bruhat order in the main two cases of interest.
	
	\begin{example}
		First, consider $\alpha = 0$ corresponding to the affine Grassmannian. In this case, $J_0 = S_W$ is the set of simple root reflections, so $(\widetilde{W}_G)_{S_W} = W$ is the Weyl group (such values of $\alpha$ are called \textbf{special} in the literature). The left quotient is $(\widetilde{W}_G)^{S_W} = (X_*(T) \rtimes W)/ W = X_*(T)$. The double quotient ${}_{S_W}(\widetilde{W}_G)^{S_W} = X_*^+(T)$ consists of the dominant cocharacters: abstractly, it is clear that double orbits in $ W \backslash\widetilde{W}_G/W$ contain a dominant cocharacter representative, and it turns out \cite[Corollary 1.8]{richarzSchubertVarietiesTwisted2013} that it is precisely the one in ${}_{S_W}(\widetilde{W}_G)^{S_W}$. Its Bruhat order agrees with the usual dominance order: $\lambda \ge \mu$ if and only if $\lambda - \mu$ is a sum of positive coroots. Now consider $\alpha$ in the interior of the standard alcove. Then, $J_\alpha = \emptyset$, so $(\widetilde{W}_G)_{J_\alpha} = 1$ and the relevant (left, right or double) quotient is ${}_{J_\alpha}(\widetilde{W}_G)^{J_\alpha} = \widetilde{W}_G$ with its Bruhat order.
	\end{example}
	
	These affine flag varieties are related to local transformations of parabolic $G$-bundles, but in the presence of a Higgs field we need to look at certain closed subspaces. We consider the infinitesimal versions of the loop groups above, namely the loop Lie algebra $\lieg((z)) := \lieg \otimes_{\C} \C((z))$ and its parahoric subalgebras for $\alpha \in \liet_\R$ given by
	$$\lieg_\alpha = \liet[[z]] \oplus \bigoplus_{\beta \in \Delta}z^{\lceil -\beta(\alpha)\rceil}\lieg_\beta[[z]] \subseteq \lieg((z)).$$
	For $\alpha=0$ we recover the positive loop Lie algebra
	$$\lieg[[z]] := \lieg \otimes_{\C} \C[[z]]$$
	\noindent and for $\alpha$ in the interior of the standard Weyl alcove we recover the standard Iwahori Lie algebra $\lie i \subseteq \lieg((z))$. Note that, when considering Higgs fields over $\mathbb D$, they are simply sections in $H^0(\mathbb D, E(\lieg))$ without further twisting.
	
	\begin{definition}
		Let $X \in \mathfrak \lieg[[z]]$. The moduli space of isomorphism  classes of pairs $((E,\varphi),\Psi)$ where $(E,\varphi)$ is a $G$-Higgs bundle over $\mathbb D$ and $\Psi : (E,\varphi)|_{\mathbb D^*} \xrightarrow{\sim} (\mathbb D^* \times G, X)$ is a trivialisation over $\mathbb D^*$ is called the \textbf{affine Springer fibre} of $X$ in $\Gr_G$ and denoted $\Gr_G^X$.
				
		Let $X \in \mathfrak i$. The moduli space of isomorphism  classes of pairs $((E,\varphi,Q),\Psi)$ where $(E,\varphi,Q)$ is a parabolic $G$-Higgs bundle over $\mathbb D$ of weight $\alpha$ in the interior of the standard Weyl alcove and $\Psi : (E,\varphi,Q)|_{\mathbb D^*} \xrightarrow{\sim} (\mathbb D^* \times G, X, 0 \times B)$ is a trivialisation over $\mathbb D^*$ is called the \textbf{affine Springer fibre} of $X$ in $\Fl_G$ and denoted $\Fl_G^X$.
	\end{definition}
	
	The reason for the name and the notation is that there is an inclusion inside of $\Gr_G$ and $\Fl_G$, respectively, given by the forgetful maps that omit $\varphi$. Naturally, using the descriptions via quotients of $G((z))$ explained above, we have the following concrete identifications:
	$$\Gr\nolimits_{G}^{X} := \left\{\sigma \in \Gr\nolimits_{G} : \Ad_{\sigma^{-1}}(X) \in \lie \lieg[[z]]\right\} \subseteq \Gr\nolimits_{G},$$
	$$\Fl\nolimits_{G}^{X} := \left\{\sigma \in \Fl\nolimits_{G} : \Ad_{\sigma^{-1}}(X) \in \lie i\right\} \subseteq \Fl\nolimits_{G}.$$
	
	These descriptions follow by taking a given $(E,\Psi)$ or $((E,Q),\Psi)$ and imposing that $\Psi^*(X)$ extends to $0 \in \mathbb D$ and is compatible with the parabolic structure. It is clear from these descriptions how they can be generalised to other $\alpha \in \liet_\R$, and the modular interpretation corresponds to that of \textit{logahoric Higgs torsors} \cite{kydonakisLogahoricHiggsTorsors2024} over $\mathbb D$ with trivialisation over $\mathbb D^*$.
	
	For our purposes with fixed points of Borel type, it is interesting as a first step to understand which elements in $\widetilde{W}_G$ lie in the affine Springer fibre in $\Fl_G$ for the regular nilpotent $e = \sum_{j=1}^rX_{\beta_j}$. It suffices to understand the \textit{larger} case of $\widetilde{W}_{G_{ad}} = \widetilde{W}$.
	
	\begin{lemma}\label{affineflagspringercondition}
		An element $w=\mu \cdot u \in \widetilde{W} = P_{\Delta^\vee} \rtimes W$ belongs to $\Fl^e_{G_{\ad}}$ if and only if $\mu$ is antidominant and $u \in {}^{J_\mu}W$. Equivalently, if and only if $w \in {}^{S_W}{\widetilde{W}}$.
	\end{lemma}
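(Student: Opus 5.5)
The plan is to compute $\Ad_{w^{-1}}(e)$ explicitly, using the identification $\Fl^e_{G_{\ad}} = \{\sigma \in \Fl_{G_{\ad}} : \Ad_{\sigma^{-1}}(e) \in \lie i\}$, and then read off the Iwahori condition one root space at a time.

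\textbf{Step 1: reduce to a root-by-root computation.} Write $w = \mu \cdot u$ and represent it in $G_{\ad}((z))$ by $z^{\mu}\dot u$, where $\dot u \in N(T)$ lifts $u$. Since $T \subseteq \mathcal I$, membership in $\Fl^e_{G_{\ad}}$ does not depend on the chosen lift. We have $w^{-1} = \dot u^{-1} z^{-\mu}$ and $\Ad_{z^{-\mu}}(X_\beta) = z^{-\beta(\mu)} X_\beta$. Hence
$$\Ad_{w^{-1}}(e) = \sum_{j=1}^r c_j\, z^{-\beta_j(\mu)} X_{u^{-1}(\beta_j)}$$
for some nonzero constants $c_j$. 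The roots $u^{-1}(\beta_j)$ are pairwise distinct, and $\lie i = \liet[[z]] \oplus \bigoplus_{\gamma \in \Delta^+}\lieg_\gamma[[z]] \oplus \bigoplus_{\gamma \in \Delta^-} z\lieg_\gamma[[z]]$ is graded by root spaces. Therefore $\Ad_{w^{-1}}(e) \in \lie i$ if and only if each summand separately lies in $\lie i$.

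\textbf{Step 2: the condition for each simple root.} Fix $j$. If $u^{-1}(\beta_j) \in \Delta^+$, the condition is $-\beta_j(\mu) \ge 0$. If $u^{-1}(\beta_j) \in \Delta^-$, the condition is $-\beta_j(\mu) \ge 1$. Taken over all $j$, these say exactly two things:
\begin{itemize}
\item $\beta_j(\mu) \le 0$ for every simple root, i.e. $\mu$ is antidominant;
\item whenever $\beta_j(\mu) = 0$, we have $u^{-1}(\beta_j) \in \Delta^+$, i.e. $u \in {}^{J_\mu}W$ in the description of Proposition~\ref{lengthzero}.
\end{itemize}
Both implications are reversible, which gives the first equivalence. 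The main point needing care here is sign bookkeeping: the paper's convention that $z^\mu$ represents $\mu$ is what makes the condition land on antidominant rather than dominant $\mu$.

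\textbf{Step 3: identify the set with the spherical quotient.} It remains to show that these pairs $(\mu,u)$ are precisely the elements of ${}^{S_W}\widetilde W$, i.e. those with $l(s_j w) > l(w)$ for all $s_j \in S_W$. This is the assertion of Remark~\ref{sphericalquotient}. I would verify it directly with the Iwahori--Matsumoto length formula. Left multiplication by $s_j$ sends $\mu \cdot u$ to $s_j(\mu)\cdot s_j u$, so the condition $l(s_j w) > l(w)$ can be rewritten in terms of the pairings $\mu(\beta)$ and the signs of $u^{-1}(\beta)$.

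The difference $l(s_j w) - l(w)$ collapses to the single contribution of $\beta_j$, because $s_j$ permutes $\Delta^+ \setminus \{\beta_j\}$. That contribution is $|\mu(\beta_j)| - |\mu(\beta_j)+1|$ or its negative, depending on the sign of $u^{-1}(\beta_j)$, and it is positive exactly under the two conditions of Step 2. This length bookkeeping is the step I expect to be most error-prone.

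As an alternative route, one can use that ${}^{S_W}\widetilde W$ consists of the minimal length representatives of $W\backslash\widetilde W$. Each coset $W\mu u$ contains a unique element whose translation part is antidominant and whose finite part satisfies the ${}^{J_\mu}W$ condition, and this representative has minimal length.
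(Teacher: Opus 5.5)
Your Steps 1 and 2 are exactly the paper's proof. You expand $\Ad_{u^{-1}z^{-\mu}}(e)$ root space by root space, read off antidominance of $\mu$ from the $z$-exponents, and read off the ${}^{J_\mu}W$ condition from the simple roots with $\beta_j(\mu)=0$.

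The paper does not actually argue the final clause $w\in{}^{S_W}\widetilde W$; it is only asserted in Remark~\ref{sphericalquotient}. So your Step~3 adds something, and the method via the Iwahori--Matsumoto formula is right. However, the contribution you wrote down is wrong in one of the two cases.

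After substituting $\gamma=s_j\beta$ in $l(s_j(\mu)\cdot s_ju)$, the only term not already in $l(w)$ is the one for $\gamma=-\beta_j$. Suppose $u^{-1}(\beta_j)\in\Delta^+$. Then $u^{-1}(-\beta_j)\in\Delta^-$, so this term is $|\mu(-\beta_j)+1|=|\mu(\beta_j)-1|$. Hence $l(s_jw)-l(w)=|\mu(\beta_j)-1|-|\mu(\beta_j)|$, which is not the negative of $|\mu(\beta_j)|-|\mu(\beta_j)+1|$.

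With your version, positivity in this case would force $\mu(\beta_j)\ge 0$, which contradicts Step~2. Concretely, take $G=\PGL_2(\C)$ and $w=(-\omega_1^\vee)\cdot 1$. Your formula predicts $l(s_1w)<l(w)$, whereas in fact $s_1w=\omega_1^\vee\cdot s_1$ and $l(w)=1<2=l(s_1w)$.

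With the corrected term, and using $\mu(\beta_j)\in\Z$, the difference is positive in exactly these cases:
\begin{itemize}
\item $\mu(\beta_j)\le 0$, when $u^{-1}(\beta_j)\in\Delta^+$;
\item $\mu(\beta_j)\le -1$, when $u^{-1}(\beta_j)\in\Delta^-$.
\end{itemize}
These are precisely the Step~2 conditions, so your conclusion stands once the term is fixed.
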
 
	\begin{proof}
		Write $\mu = \sum_{i=1}^rn_i\omega_i^\vee$ in terms of the fundamental coweight basis. Then
		$$\Ad_{u^{-1}z^{-\mu}}(e) = \Ad_{u^{-1}}(\sum_{i=1}^rz^{-n_i}X_{\beta_i}) = \sum_{i=1}^rz^{-n_i}X_{u^{-1}(\beta_i)}.$$
		For this to be in the standard Iwahori subalgebra in particular it has to be in $\lieg[[z]]$, so $n_i \le 0$ for all $i$ and $\mu$ is antidominant. Moreover, we need that $u^{-1}(\beta_i)$ is a positive root whenever $n_i = 0$. This can be restated as $l(u^{-1}s_{\beta_i}) > l(u^{-1})$ whenever $n_i = 0$. Inverting, this is the same as $l(s_{\beta_i}u) > l(u)$ for all $\beta_i$ with $\beta_i(\mu) = 0$, as desired.
	\end{proof}
	
	\begin{remark}
		The general philosophy, at least for the adjoint group $G_{ad}$, is that every fixed point of Borel type can be obtained by Hecke transformations of the canonical uniformising Higgs bundle (i.e. everywhere regular nilpotent Higgs field) in Example \ref{paraboliccanonical}. The local picture around a parabolic puncture, as we will detail in the next subsection, is therefore given by $\Fl_{G_{ad}}^e$ from previous Lemma, which is therefore interesting to compare with Remark \ref{sphericalquotient}. 
	\end{remark}
	
	\subsection{Hecke transformations for parabolic \texorpdfstring{$G$}{G}-Higgs bundles}
	In this section we describe Hecke transformations for parabolic $G$-Higgs bundles, both at parabolic and non-parabolic points. Due to the local nature of the operation, at non-parabolic points the analysis is the same as for usual $G$-Higgs bundles \cite[Section 4.2]{gonzalez_very_2025}. 
	
	First, we understand Hecke transformations of principal $G$-bundles \cite{wong_hecke_2013} and then we incorporate the Higgs field and the parabolic structure. Fix a point $c \in C$ and let $C_0 := C \setminus \{c\}$ be the complement of $c$ and $C_1 := \Spec \C[[z]]$ a formal disk. We have a natural open embedding $C_0 \hookrightarrow C$, as well as a map $C_1 \to C$ defined from the algebraic map
	$$\mathcal O_C \to \mathcal O_{C,c} \to \widehat{\mathcal O_{C,c}} \simeq \C[[z]]$$
	\noindent given by localisation at $c$ and completion, where $z$ is a uniformising parameter at $c \in C$. Let $C_{01} = C_0 \cap C_1 \simeq \Spec \C((z))$ be the intersection.

	Now, given a principal $G$-bundle $E$, it defines restrictions $E_0 := E|_{C_0}$, $E_1 := E|_{C_1}$ as well as a gluing map
	$$\Phi : E_1|_{C_{01}} \xrightarrow{\sim} E_0|_{C_{01}}.$$
	By the Beauville--Laszlo theorem, the data $(E_0,E_1,\Phi)$ is equivalent to $E$. More precisely, the category of principal $G$-bundles over $C$ is equivalent to the category of triples $(E_0,E_1,\Phi)$ as above, where a morphism between $(E_0,E_1,\Phi)$ and $(E'_0,E_1',\Phi')$ consists of a pair of $G$-bundle morphisms $(f_0 : E_0 \to E_0',f_1: E_1 \to E_1')$ making the following diagram commute:
	
	\[\begin{tikzcd}[ampersand replacement=\&]
		{E_0|_{C_{01}}} \&\& {E_0'|_{C_{01}}} \\
		\\
		{E_1|_{C_{01}}} \&\& {E_1'|_{C_{01}}}
		\arrow["{f_0|_{C_{01}}}", from=1-1, to=1-3]
		\arrow["\Phi"', from=3-1, to=1-1]
		\arrow["{\Phi'}", from=3-3, to=1-3]
		\arrow["{f_1|_{C_{01}}}", from=3-1, to=3-3]
	\end{tikzcd}\]
	
	\begin{definition}
		Let $E$ be a principal $G$-bundle over $C$. A \textbf{Hecke transformation} of $E$ at $c \in C$ is another principal $G$-bundle $E'$ over $C$ together with an isomorphism
		$$\Psi : E'|_{C_0} \to E|_{C_0}.$$
		Two Hecke transformations $(E',\Psi)$ and $(E'',\Psi')$ are \textbf{equivalent} if and only if there is an isomorphism
		$E' \xrightarrow{f} E''$
		\noindent such that $\Psi' \circ f|_{C_0} = \Psi$.
	\end{definition}
	
	Now we describe the space of Hecke transformations for a fixed principal $G$-bundle $E$. For this, fix an isomorphism
	$$t_1 : E|_{C_1} \to C_1 \times G$$
	\noindent with the trivial $G$-bundle over $C_1$. This can be done because $C_1$ is a formal disk. Note here that it actually suffices that $G|_{C_1}$ is a smooth affine group scheme, allowing all of this framework to work for parahoric torsors.
	
	Then, $E$ defines local data $(E_0,E_1,\Phi')$ which is isomorphic via $(\Id,t_1)$ to the local data $(E_0,C_1 \times G, \Phi)$ with which we work (here $\Phi = \Phi' \circ t_1|_{{C_{01}}}^{-1}$). Given $\sigma \in \Aut(C_{01} \times G) \simeq G((z))$, we can transform
	$$(E_0,C_1 \times G, \Phi) \mapsto (E_0,C_1 \times G, \Phi \circ \sigma).$$
	
	The result is clearly a Hecke transformation, where the isomorphism $\Psi$ is the identity in terms of the local data.	
	
	Suppose now that $\sigma, \sigma' \in G((z))$ produce equivalent Hecke transformations, that is, $(E_0,C_1 \times G, \Phi \circ \sigma) \simeq (E_0,C_1 \times G, \Phi \circ \sigma')$. Let $(f_0,f_1)$ denote the corresponding isomorphisms. Since $\Psi$ is the identity automorphism of $E_0$, we must have $f_0 = \Id$. On the other hand, $f_1 \in \Aut(C_1 \times G) \simeq G[[z]]$. The compatibility condition is
	$$\Phi \circ \sigma = \Phi \circ \sigma' \circ f_1$$
	\noindent so that $\sigma = \sigma'f_1$. In other words, $\sigma$ and $\sigma'$ differ by a positive loop $f_1 \in G[[z]]$.
	
	We have seen that given an element of $\Gr_G = G((z))/G[[z]]$ we obtain a Hecke transformation of $E$, and different elements give inequivalent transformations. We conclude by showing that any transformation is equivalent to one that can be obtained in this way. Suppose that $(E',\Psi)$ is a Hecke transformation of $E$. Consider the local data $(E'_0,E'_1,\Phi')$. Note that we have a distinguished trivialisation induced from $t_1$:
	$$t_1' := (\Phi'): E'_1 \to C_1 \times G$$
	and apply the isomorphism given by $(\Psi,t_1')$, resulting in isomorphic local data $(E_0, C_1 \times G, \Phi'')$ where $\Phi'' = \Psi|_{C_{01}} \circ \Phi' \circ (t_1'|_{C_{01}})^{-1}$. Now, both $\Phi''$ and $\Phi$ are isomorphisms from $C_{01} \times G$ to $E_0|_{C_{01}}$, therefore they must differ by an element of $\Aut(C_{01} \times G) \simeq G((z))$, as desired. Therefore, we get the following.
	
	\begin{proposition}
		Fixing a trivialisation
		$$t_1 : E|_{C_1} \to C_1 \times G$$
		\noindent identifies the space of Hecke transformations of $E$ at $c \in C$ up to equivalence with $\Gr_G$.
	\end{proposition}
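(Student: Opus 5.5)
The plan is to construct mutually inverse maps between $\Gr_G = G((z))/G[[z]]$ and the set of equivalence classes of Hecke transformations of $E$ at $c$, using the Beauville--Laszlo equivalence between $G$-bundles on $C$ and gluing triples $(E_0,E_1,\Phi)$. First, we use $t_1$ to replace the local data of $E$ by the isomorphic triple $(E_0, C_1\times G, \Phi)$. To $\sigma \in G((z)) \simeq \Aut(C_{01}\times G)$ we attach the triple $(E_0, C_1\times G, \Phi\circ\sigma)$. Beauville--Laszlo glues this to a $G$-bundle $E^\sigma$ on $C$ with $E^\sigma|_{C_0} = E_0$, and the identity of $E_0$ gives $\Psi : E^\sigma|_{C_0}\to E|_{C_0}$. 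Hence $(E^\sigma,\Psi)$ is a Hecke transformation.

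Next we show the map factors through $G((z))/G[[z]]$ and is injective. An equivalence between the transformations attached to $\sigma$ and $\sigma'$ is an isomorphism of triples $(f_0,f_1)$ compatible with the $\Psi$'s, so $f_0=\Id_{E_0}$. Also $f_1\in\Aut(C_1\times G)\simeq G[[z]]$, and the commutative square reads $\Phi\circ\sigma = \Phi\circ\sigma'\circ f_1$. Since $\Phi$ is an isomorphism, this gives $\sigma=\sigma' f_1$. Conversely, for any $f_1\in G[[z]]$ the pair $(\Id,f_1)$ is an isomorphism of triples respecting $\Psi$. So equivalence classes in the image correspond exactly to cosets $\sigma G[[z]]$.

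For surjectivity, let $(E',\Psi)$ be an arbitrary Hecke transformation with local data $(E'_0,E'_1,\Phi')$. Choose any trivialisation $t_1' : E'_1 \to C_1\times G$, which exists because $C_1$ is a formal disk. Apply the isomorphism of triples $(\Psi, t_1')$ to obtain the equivalent datum $(E_0, C_1\times G, \Phi'')$, where $\Phi'' = \Psi|_{C_{01}}\circ\Phi'\circ (t_1'|_{C_{01}})^{-1}$. Both $\Phi''$ and $\Phi$ are isomorphisms $C_{01}\times G\to E_0|_{C_{01}}$, so $\sigma:=\Phi^{-1}\circ\Phi''\in G((z))$ satisfies $\Phi''=\Phi\circ\sigma$. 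By construction this identification intertwines $\Psi$ with the identity on $E_0$, so $(E',\Psi)$ is equivalent to the transformation attached to $\sigma$. Changing the choice of $t_1'$ changes $\sigma$ by right multiplication by an element of $G[[z]]$, consistently with the previous step.

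The main obstacle is bookkeeping rather than mathematics. One must check that the morphisms of triples produced by Beauville--Laszlo really correspond to bundle isomorphisms $f$ with $\Psi'\circ f|_{C_0}=\Psi$. In particular, the $C_0$-component of any such isomorphism must be forced to be the identity, and this is what pins down right (not left) multiplication by $G[[z]]$. It is also worth remarking that the identification depends on $t_1$: changing $t_1$ by $g\in G[[z]]$ changes $\Phi$ to $\Phi\circ g^{-1}$, and hence acts on $\Gr_G$ by left multiplication.
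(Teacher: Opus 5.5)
Your proposal is correct and follows the paper's argument essentially step by step: Beauville--Laszlo triples $(E_0, C_1\times G, \Phi\circ\sigma)$, then $f_0=\Id$ forcing $\sigma=\sigma' f_1$ with $f_1\in G[[z]]$, then surjectivity by trivialising $E'_1$ and applying the isomorphism of triples $(\Psi,t_1')$. You also make explicit two points the paper leaves implicit: the converse check that every $(\Id,f_1)$ gives an equivalence, so the map is well defined on cosets, and that the choice of $t_1'$ only affects $\sigma$ by right multiplication.
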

	
	Before continuing, let us remark that, had we chosen a different trivialisation $t_1' = \eta \circ t_1$ with $\eta \in \Aut(C_1 \times G) \simeq G[[z]]$, then the local data via $t_1$, which we denoted by $(E_0, C_1 \times G, \Phi)$, is isomorphic to the local data via $t_1'$, which is $(E_0,C_1 \times G, \Phi \circ \eta^{-1})$. Therefore the spaces of Hecke transformations are related by a left multiplication by $\eta \in G[[z]]$. This means that if we do not fix any trivialisation, we still have a well defined \textit{type} of the Hecke transformation, as an element in
	$$G[[z]] \backslash G((z)) / G[[z]] \simeq X_*^{-}(T).$$
	\noindent the space of antidominant cocharacters.
	
	Now we consider the presence of a Higgs field $\varphi \in H^0(E(\lieg) \otimes L)$ where $L$ is any line bundle over $C$, still without parabolic structures. First we understand the local data. The Higgs field gives
	$$\varphi_0 := \varphi|_{C_0} : E_0 \to \lieg \otimes L$$
	$$\varphi_1 := \varphi|_{C_1} : E_1 \to \lieg \otimes L$$
	\noindent where both maps are $G$-equivariant. The data must satisfy the compatibility condition
	
	\[\begin{tikzcd}[ampersand replacement=\&]
		{E_0|_{C_{01}}} \&\& {E_1|_{C_{01}}} \\
		\\
		\& {\lieg \otimes L}
		\arrow["{\varphi_0|_{C_{01}}}"', from=1-1, to=3-2]
		\arrow["\Phi"', from=1-3, to=1-1]
		\arrow["{\varphi_1|_{C_{01}}}", from=1-3, to=3-2]
	\end{tikzcd}.\]
	
	A morphism between local data $(E_0,E_1,\Phi,\varphi_0,\varphi_1)$ and $(E_0',E_1',\Phi',\varphi_0',\varphi_1')$ is again a pair $(f_0,f_1)$ which, on top of being a morphism between $(E_0,E_1,\Phi)$ and $(E_0',E_1',\Phi')$, has to satisfy the compatibility conditions
	$$\varphi_i' \circ f_i = \varphi_i$$
	\noindent for $i \in \{0,1\}$.
	
	\begin{definition}
		Let $(E,\varphi)$ be a $G$-Higgs bundle over $C$. A \textbf{Hecke transformation} of $(E,\varphi)$ at $c \in C$ is another principal $G$-bundle $E'$ over $C$ together with an isomorphism
		$$\Psi : E'|_{C_0} \to E|_{C_0}$$
		\noindent such that $\varphi_0 \circ \Psi = \varphi_0'$.
		Two Hecke transformations $(E',\varphi',\Psi)$ and $(E'',\varphi'',\Psi')$ are \textbf{equivalent} if and only if there is an isomorphism
		$E' \xrightarrow{f} E''$
		\noindent such that $\Psi' \circ f|_{C_0} = \Psi$ and $\varphi'' \circ f = \varphi'$.
	\end{definition}
	
	Note that Hecke transformations for $(E,\varphi)$ are in particular Hecke transformations for $E$. Therefore, we may describe them (as before) by fixing a trivialisation
	$$t_1 : E_1 \to C_1 \times G.$$
	Using $t_1$ we may assume that our local data is $(E_0,C_1 \times G, \Phi, \varphi_0, \varphi_1)$.  Since $\varphi_1 : C_1 \times G \to \lieg \otimes L$ is $G$-equivariant, fixing once and for all a trivialisation of $L$ we may also regard it as an element $\varphi_1 : C_1 \to \lieg$ of $\mathrm{Map}(C_1,\lieg) \simeq \lieg[[z]]$ by evaluating at the identity element of $G$. This is an identification that we will apply liberally.
	
	As before, let $(E',\varphi',\Psi)$ be a Hecke transformation with local data $(E'_0,E'_1, \Phi', \varphi'_0, \varphi'_1)$. Taking any trivialisation $t_1' : E_1 \simeq C_1 \times G$ and applying the isomorphism $(\Psi,t_1')$ allows us to assume that the local data of the transformation is of the form
	$$(E_0, C_1 \times G, \Phi'', \varphi_0, \varphi_1'').$$
	(We know that the local expression of the Higgs field over $C_0$ has to be $\varphi_0$ because the Hecke transformation isomorphism over $C_0$ with respect to this local data is the identity).
	
	Therefore, $\Phi'' = \Phi \circ \sigma$ for some $\sigma \in \Aut(C_{01} \times G) \simeq G((z))$. Then, over $C_{01}$ we are forced to have
	$$\varphi_1'' = \varphi_0 \circ \Phi \circ \sigma = \varphi_1 \circ \sigma = \Ad_{\sigma^{-1}}\varphi_1$$
	\noindent where the last step is due to the $G$-equivariance of $\varphi_1$. Therefore, we must require the condition
	$$\Ad_{\sigma^{-1}}\varphi_1 \text{ extends to } C_1.$$
	Therefore, we get a Hecke modification for every $\sigma \in G((z))$ such that
	$$\Ad_{\sigma^{-1}}(\varphi_1) \in \lieg[[z]].$$
	As before, two such modifications are equivalent if and only if they differ by a positive loop on the right.
	
	\begin{proposition}\label{heckespace}
		Fixing a trivialisation
		$$t_1 : E|_{C_1} \to C_1 \times G$$
		\noindent as well as a trivialisation of $L$ identifies the space of Hecke transformations of $(E,\varphi)$ at $c \in C$ up to equivalence with the affine Springer fibre $\Gr_G^{\varphi_1}$
	\end{proposition}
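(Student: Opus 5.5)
The plan is to run the argument for the non-Higgs Proposition identifying Hecke transformations of $E$ with $\Gr_G$, and keep track of the extra condition coming from the Higgs field. The identification will be $\sigma G[[z]] \mapsto$ the transformation with local data $(E_0, C_1 \times G, \Phi\circ\sigma, \varphi_0, \Ad_{\sigma^{-1}}\varphi_1)$. It then suffices to show that this map is well defined, injective and surjective onto the set $\{\sigma G[[z]] : \Ad_{\sigma^{-1}}(\varphi_1)\in\lieg[[z]]\} = \Gr_G^{\varphi_1}$. Throughout we use the trivialisations $t_1$ of $E|_{C_1}$ and of $L$ to regard $\varphi_1$ as an element of $\lieg[[z]]$.

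\textbf{Step 1: the map is well defined.} Let $\sigma\in G((z))$ satisfy $\Ad_{\sigma^{-1}}\varphi_1\in\lieg[[z]]$. Then the tuple $(E_0,C_1\times G,\Phi\circ\sigma,\varphi_0,\Ad_{\sigma^{-1}}\varphi_1)$ satisfies the compatibility triangle over $C_{01}$, by the computation $\varphi_0\circ\Phi\circ\sigma=\varphi_1\circ\sigma=\Ad_{\sigma^{-1}}\varphi_1$. By Beauville--Laszlo, now applied to pairs of a bundle and a section of the associated bundle $E(\lieg)\otimes L$, this glues to a $G$-Higgs bundle $(E',\varphi')$. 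The gluing comes with the identity isomorphism $\Psi$ over $C_0$, which intertwines the Higgs fields by construction.

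We must also check that replacing $\sigma$ by $\sigma f$ with $f\in G[[z]]$ gives an equivalent transformation. The pair $(\Id, f^{-1})$ is an isomorphism of local data, since $\Ad_{f^{-1}}\Ad_{\sigma^{-1}}\varphi_1$ is again the Higgs field on the $C_1$-side. Note that the condition $\Ad_{\sigma^{-1}}\varphi_1\in\lieg[[z]]$ is itself invariant under right multiplication by $G[[z]]$, so $\Gr_G^{\varphi_1}$ is well defined as a subset of $\Gr_G$.

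\textbf{Step 2: injectivity.} Suppose two Higgs Hecke transformations are equivalent. Forgetting the Higgs fields, they are in particular equivalent Hecke transformations of $E$, so by the earlier argument $f_0=\Id$ and $\sigma=\sigma' f_1$ with $f_1\in G[[z]]$. Hence $\sigma$ and $\sigma'$ give the same class in $\Gr_G$.

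\textbf{Step 3: surjectivity.} Let $(E',\varphi',\Psi)$ be a Hecke transformation. Choose a trivialisation of $E'_1$ and apply $(\Psi,t_1')$, as in the text preceding the statement, to reduce its local data to $(E_0,C_1\times G,\Phi\circ\sigma,\varphi_0,\varphi_1'')$. The condition $\varphi_0\circ\Psi=\varphi_0'$ ensures that the $C_0$-component of the Higgs field is exactly $\varphi_0$. Compatibility over $C_{01}$ then forces $\varphi_1''=\Ad_{\sigma^{-1}}\varphi_1$. Since $\varphi_1''$ is defined on all of $C_1$, we get $\Ad_{\sigma^{-1}}\varphi_1\in\lieg[[z]]$, i.e. $\sigma G[[z]]\in\Gr_G^{\varphi_1}$.

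The main point needing care is in Step 1: Beauville--Laszlo must be used for bundles \emph{with} a section. This holds because a section of $E(\lieg)\otimes L$ is determined by compatible sections on $C_0$ and $C_1$; this is the fpqc, equivalently Beauville--Laszlo, gluing of sections of a vector bundle. Once that is granted, the rest is bookkeeping identical to the non-Higgs case.
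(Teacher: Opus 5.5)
Your proposal is correct and follows the paper's argument. Like the paper, you reduce to the description of Hecke transformations of $E$ by $G((z))/G[[z]]$, use the gluing compatibility over $C_{01}$ to force $\varphi_1''=\Ad_{\sigma^{-1}}\varphi_1$ (which must therefore extend over $C_1$), and identify equivalence with right multiplication by $G[[z]]$. Your explicit checks that gluing works for a bundle together with a section of $E(\lieg)\otimes L$, and that $\Gr_G^{\varphi_1}$ is right $G[[z]]$-invariant, fill in details the paper leaves implicit.
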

	
	Similarly to the previous analysis, changing the trivialisation to $t_1' = \eta \circ t_1$ results in a change of Springer fibre, since $\Ad_{\eta^{-1}}(\varphi_1') = \varphi_1.$ The previous relation between affine Grassmannians for each trivialisation (multiplying by $\eta$) naturally maps one space to the other.
	$$\Gr_G^{\Ad_{\eta}(\varphi_1)} = \eta \Gr_G^{\varphi_1}.$$
	
	Finally, we study Hecke transformations for parabolic $G$-Higgs bundles. Once again we start by understanding the local data. Fortunately, parabolic structures are local in nature, so if $c_i \neq c$ we can identify $E|_{c_i}/B \simeq E_0|_{c_i}/B$ and if $c_i = c$ we can identify $E|_{c_i}/B \simeq E_1|_{c_i}/B$. The local data is then
	$$(E_0,E_1,\Phi,\varphi_0,\varphi_1,Q).$$
	
	The only main difference when considering Hecke transformations for parabolic $G$-bundles and $G$-Higgs bundles is that bundle isomorphisms must be compatible with the parabolic structures.
	
	\begin{definition}
		Let $(E,\varphi,Q)$ be a parabolic $G$-Higgs over $C$. A \textbf{Hecke transformation} of $(E,\varphi,Q)$ at $c \in C$ is another parabolic $G$-Higgs bundle $(E',\varphi',Q')$ over $C$ together with a parabolic isomorphism
		$$\Psi : E'|_{C_0} \to E|_{C_0}$$
		\noindent such that $\varphi_0 \circ \Psi = \varphi_0'$. The isomorphism being parabolic means that $\Psi|_{c_i}(Q'_i) = Q_i$ for every $c_i \in D \setminus \{c\}$.
		Two Hecke transformations $(E',\varphi',\Psi)$ and $(E'',\varphi'',\Psi')$ are \textbf{equivalent} if and only if there is a parabolic isomorphism
		$E' \xrightarrow{f} E''$
		\noindent such that $\Psi' \circ f|_{C_0} = \Psi$ and $\varphi'' \circ f = \varphi'$.
	\end{definition}
	
	The study of Hecke transformations mimics the case without parabolic structures, so we only mention the differences. If $c \notin D$, everything is identical. If $c \in D$ (say, without losing generality, that $c=c_1$) we fix a trivialisation
	$$t_1 : E_1 \to C_1 \times G$$
	\noindent that sends the parabolic structure to the trivial $B$-orbit, i.e. such that $t_1|_{c_1}(Q_1) = B \in (C_1 \times G)|_{c_1}/B = G/B$. Then, since $B$ is self-normalising, we have that $\Aut(C_1 \times G,B) \simeq \mathcal I$ and the compatibility of the Higgs field with the parabolic structure at $c_1$ translates to $\varphi_1 \in \lie i$. Therefore, we have:
	
	\begin{proposition}\label{heckespaceparab}
		Fixing a trivialisation
		$$t_1 : E|_{C_1} \to C_1 \times G,$$
		\noindent with $t|_{c_1}(Q_1) = B \in (C_1 \times G)|_{c_1}/B = G/B$, as well as a trivialisation of $L$, identifies the space of Hecke transformations of $(E,\varphi,Q)$ at $c_1 \in D$ up to equivalence with the affine Springer fibre
		$$\Fl\nolimits_G^{\varphi_1} := \{\sigma \in \Fl\nolimits_G : \Ad_{\sigma^{-1}}(\varphi_1) \in \lie i\} \subseteq \Fl\nolimits_G$$
		\noindent inside of the affine flag variety $\Fl\nolimits_G= G((z))/\mathcal I$.
	\end{proposition}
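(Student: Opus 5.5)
We will prove Proposition \ref{heckespaceparab} by running the argument behind Proposition \ref{heckespace} again, adding the parabolic structure at $c_1$. Away from $c_1$ nothing changes: parabolic structures at $c_i \neq c_1$ live on $E_0$, and $\Psi$ is the identity there. We therefore only track the constraints at $c_1$, which live on $E_1$.

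\textbf{Step 1: from a Hecke transformation to a loop.} Let $(E',\varphi',Q',\Psi)$ be a Hecke transformation at $c_1$ with local data $(E'_0,E'_1,\Phi',\varphi'_0,\varphi'_1,Q')$. Since $C_1$ is a formal disk, we can choose a trivialisation $t'_1 : E'_1 \to C_1 \times G$ with $t'_1|_{c_1}(Q'_1) = B$. To do this, take any trivialisation and right-multiply it by a constant element $g \in G \subseteq G[[z]]$ moving the given $B$-orbit to $B$. Applying $(\Psi,t'_1)$ brings the local data to the form $(E_0, C_1 \times G, \Phi\circ\sigma, \varphi_0, \varphi''_1, B)$ for some $\sigma \in G((z))$. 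As in the proof of Proposition \ref{heckespace}, $\varphi''_1 = \Ad_{\sigma^{-1}}(\varphi_1)$ on $C_{01}$. The Higgs field must extend over $C_1$ and be compatible with the trivial parabolic structure at $c_1$; we check that this is exactly the condition $\Ad_{\sigma^{-1}}(\varphi_1) \in \lie i$. Extension gives membership in $\lieg[[z]]$, and compatibility means the value at $z=0$ lies in $\lie b$. Because $\lie i$ is the preimage of $\lie b$ under evaluation, these two conditions together say precisely $\Ad_{\sigma^{-1}}(\varphi_1) \in \lie i$.

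\textbf{Step 2: from a loop to a Hecke transformation.} Conversely, take $\sigma \in G((z))$ with $\Ad_{\sigma^{-1}}(\varphi_1) \in \lie i$. The glued data $(E_0, C_1 \times G, \Phi\circ\sigma, \varphi_0, \Ad_{\sigma^{-1}}\varphi_1, B)$ then define, via Beauville--Laszlo, a parabolic $G$-Higgs bundle. The identity on $E_0$ serves as $\Psi$, and it is automatically parabolic at the other punctures.

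\textbf{Step 3: equivalences.} This is the only point needing care. Suppose $\sigma$ and $\sigma'$ give equivalent transformations. The equivalence has the form $(f_0,f_1)$ with $f_0 = \Id$, forced by compatibility with $\Psi$. The component $f_1$ is an automorphism of $C_1 \times G$ preserving the $B$-orbit at $c_1$, so $f_1 \in \Aut(C_1\times G, B) \simeq \mathcal I$; here we use that $B$ is self-normalising. The gluing condition then gives $\sigma = \sigma' f_1$. Conversely, right multiplication by any element of $\mathcal I$ gives an equivalence. It automatically respects the Higgs fields, because $\varphi''_1$ is determined by $\sigma$.

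\textbf{Step 4: independence of choices.} The condition $\Ad_{\sigma^{-1}}(\varphi_1) \in \lie i$ depends only on the class $\sigma\mathcal I$, since $\mathcal I$ normalises $\lie i$. This shows the affine Springer fibre $\Fl_G^{\varphi_1}$ is well defined. Combining Steps 1–3 yields the claimed bijection.
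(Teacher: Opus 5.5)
Your proposal is correct and matches the paper's argument: rerun the non-parabolic analysis of Proposition \ref{heckespace} with a trivialisation sending $Q_1$ to $B$, use that $B$ is self-normalising to get $\Aut(C_1\times G,B)\simeq\mathcal I$, so that equivalences become right multiplication by $\mathcal I$, and translate compatibility of the Higgs field with the parabolic structure into $\Ad_{\sigma^{-1}}(\varphi_1)\in\lie i$. The only slip is cosmetic: in Step 1 the constant change of trivialisation is post-composition with left multiplication by $g^{-1}\in G$ (automorphisms of $C_1\times G$ act on the left), not right multiplication.
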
 
	
	Choosing a different trivialisation $t_1' = \eta \circ t_1$, where now $\eta \in \mathcal I$ (since $t_1'$ must also correspond $Q_1$ with the trivial parabolic structure $B$), results in a different space of Hecke transformations, since now $\varphi_1' = \Ad_{\eta}(\varphi_1)$. As before, they are related by left multiplication by $\eta$.
	$$\Fl\nolimits_G^{\Ad_{\eta}(\varphi_1)} = \eta\Fl\nolimits_G^{\varphi_1}.$$
	
	Thus, without fixing any trivialisation we still have a well defined \textit{type} of the Hecke transformation, which is an element in
	$$\mathcal I \backslash G((z)) / \mathcal I = \widetilde{W}_G.$$  
	
	\begin{remark}\label{heckespacesrelation}
		Suppose that the trivialisation $t_1$ that we fix does not send the parabolic structure to the trivial one, but rather $t|_{c_1}(Q_1) = gB \in (C_1 \times G)|_{c_1}/B = G/B$. Then, everything proceeds similarly, with the only difference that $\Aut(C_1 \times G, gB) = g\mathcal Ig^{-1}$. Therefore the space of Hecke transformations is
		$$\Fl\nolimits_{G,g}^{\varphi_1} = \{\sigma \in \Fl\nolimits_{G,g} : \Ad_{\sigma^{-1}}(\varphi_1) \in \Ad_{g}(\lie i)\} \subseteq \Fl\nolimits_{G,g} := G((z))/g\mathcal I g^{-1}.$$
		
		If we change the previous trivialisation by $t_1' = \eta \circ t_1$, where $\eta \in G[[z]]$ (i.e. without forcing that the parabolic structure is mapped to the the same element), the relation between the spaces of Hecke transformations is as follows:
		$$\Fl\nolimits_{G,\eta(c_1)g}^{\Ad_{\eta}(\varphi_1)} = \eta\Fl\nolimits_{G,g}^{\varphi_1}\eta(c_1)^{-1}.$$
	\end{remark}
	
	\begin{example}\label{exampleheckemod}
		Suppose $D = c$ for $c \in C$, structure group $G=\PGL_2(\C)$, maximal torus $T$ of diagonal matrices and Borel $B$ of lower triangular matrices. Let $\beta = \beta_1$ denote the positive root. Take the following parabolic $G$-Higgs bundle, which is a fixed point of Borel type:
		$$E = \mathcal O_C \oplus (K_C(D))^{-1}(c)$$
		$$\varphi = \begin{pmatrix}
			0 & 0\\
			s_c & 0
		\end{pmatrix}$$
		$$Q = 0 \subsetneq (K_C(D))^{-1}|_c \subsetneq E|_c.$$
		
		Here $s_c \in H^0(C,\mathcal O(c))$ denotes the defining section. Note that any choice of flag $Q$ is compatible with $\varphi$ since the residue vanishes at $c$. We pick as our trivialisation $t_1$ one that splits as a direct sum of a trivialisation of each line bundle summand. Then, with a Hecke transformation we may modify just the flag by using the (standard) Weyl group component of $\widetilde{W}$. For example, take
		$$\sigma = \begin{pmatrix}
			0 & -1\\
			1 & 0
		\end{pmatrix}.$$
		
		This element is in the space of Hecke modifications, i.e. in the affine Springer fibre at $\varphi_1 = zX_\beta$. Indeed, $\Ad_{\sigma^{-1}}(zX_\beta) = zX_{-\beta} \in \lie i$. It acts by permuting the transition functions of both line bundle summands, therefore the resulting bundle is
		
		$$E' = (K_C(D))^{-1}(c) \oplus \mathcal O_C$$
		$$\varphi' = \begin{pmatrix}
			0 & s_c \\
			0 & 0
		\end{pmatrix}$$
		$$Q' = 0 \subsetneq (\mathcal O_C)|_c \subsetneq E'|_c.$$
		
		Notice that only the parabolic structure $Q'$ is changed, whereas $E'$ and $\varphi'$ are the same. 
		
		By using the cocharacter component of $\widetilde{W}$ we may change the vanishing multiplicities of our Higgs field. Indeed, take now the fundamental cocharacter 
		$$\sigma = z^{\omega^\vee} = \begin{pmatrix}
			1 & 0 \\
			0 & z
		\end{pmatrix}.$$ 
		It is in the space of Hecke modifications, because $\Ad_{\sigma^{-1}}(zX_\beta) = z^{1+\beta(-\omega^\vee)}X_\beta = X_\beta \in \lie i$. The resulting modification is
		$$E' = \mathcal O_C \oplus (K_C(D))^{-1}$$
		$$\varphi' = \begin{pmatrix}
			0 & 0\\
			1 & 0
		\end{pmatrix}$$
		$$Q' = Q.$$
		
		Of course, we may perform modifications combining both operations if they are compatible. For example, if we take 
		$$\sigma = \begin{pmatrix}
			1 & 0\\
			0 & z
		\end{pmatrix}\begin{pmatrix}
			0 & -1\\
			1 & 0
		\end{pmatrix}$$
		
		\noindent the product of the previous two transformations, it is not compatible since $\Ad_{\sigma^{-1}}(zX_\beta) = X_{-\beta} \notin \lie i$. Indeed, this is reflected by the fact that $\mathcal O_C \oplus (K_C(D))^{-1}$ with Higgs field $\begin{pmatrix}
			0 & 0 \\
			1 & 0
		\end{pmatrix}$ only admits one parabolic structure at $c$ and it cannot be modified without breaking the compatibility with $\varphi$. Of course, we can also perform modifications by more general elements in $\Fl_G$ (not necessarily in $\widetilde{W}$) but these are not as easily expressible as the previous ones (they involve nontrivial extensions of the line bundles).  
	\end{example}
	
	\subsection{Explicit curves in Hecke transformation spaces}	
	Now we construct some explicit curves in $\Fl_G$ that showcase the closure relations in Proposition \ref{bruhatdecomposition}. The main interest for this is that these particular curves are contained in certain affine Springer fibres, providing paths in the spaces of Hecke transformations explained above. Via the projection $\Fl_G \to \Gr_G$ we obtain similar curves that work outside of parabolic punctures.
	
	We consider two main kinds of curves. We start by the simplest ones, which project to a point in $X_*(T) \subseteq \Gr_G$. These are of course well-known from the study of the classical Bruhat decomposition of $G/B$. Conceptually, they correspond to curves in the space of parabolic structures of a fixed meromorphic Higgs bundle.
	
	\begin{lemma}\label{bruhatcurve}
		Let $w \in \widetilde{W}_G$ and $\beta \in \Delta^+$. Let $u_{-\beta} : \mathbb G_a \to G$ be a root subgroup for $-\beta$. Then, the expression $w \cdot u_{-\beta}(\lambda) \subseteq \Fl_G$, for $\lambda \in \C^\times$, defines a curve whose closure is isomorphic to $\mathbb P^1_{\C}$ in $\Fl_G$, with limits $w$ as $\lambda \to 0$ and $ws_\beta$ when $\lambda \to \infty$.
	\end{lemma}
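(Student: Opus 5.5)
The plan is to reduce everything to the classical $\mathrm{SL}_2$ picture inside $G$. The curve $\lambda \mapsto w\, u_{-\beta}(\lambda)\mathcal I$ is the image of $\lambda \mapsto u_{-\beta}(\lambda)\mathcal I$ under left multiplication by a lift $\dot w \in G((z))$ of $w$. Left multiplication is an automorphism of the ind-scheme $\Fl_G$. So it suffices to treat $w = 1$, i.e. to show that $\lambda \mapsto u_{-\beta}(\lambda)\mathcal I$ extends to a closed embedding $\mathbb P^1 \hookrightarrow \Fl_G$ with limit $1$ at $0$ and $s_\beta$ at $\infty$.

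\textbf{Step 1: the map through $G/B$.} Since $u_{-\beta}(\lambda) \in G \subseteq G[[z]]$ is constant, the curve lies in the closed subvariety
$$G[[z]]/\mathcal I \simeq G/B \subseteq \Fl\nolimits_G,$$
the fibre of $\Fl_G \to \Gr_G$ over the base point. This identification comes from evaluation at $z=0$, using that $\mathcal I$ is the preimage of $B$. It is therefore enough to prove the claim for $\lambda \mapsto u_{-\beta}(\lambda)B$ in $G/B$, and then translate by $w$.

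\textbf{Step 2: the $\mathrm{SL}_2$ computation.} Let $\phi_\beta : \mathrm{SL}_2 \to G$ be the homomorphism attached to $\beta$, sending lower unipotents to $u_{-\beta}$, upper unipotents to $u_\beta$, and the diagonal torus into $T$. Since $\phi_\beta^{-1}(B)$ is the upper-triangular Borel $B_2$, the orbit map
$$\mathbb P^1 = \mathrm{SL}_2/B_2 \to G/B, \qquad gB_2 \mapsto \phi_\beta(g)B,$$
is well defined and injective. Its image is the closed $\phi_\beta(\mathrm{SL}_2)$-orbit of the point $B$, and it is isomorphic to $\mathbb P^1$, as in the standard rank-one Schubert curve picture. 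In this chart, $\lambda \mapsto u_{-\beta}(\lambda)$ corresponds to the affine coordinate on $\mathbb P^1$, so $\lambda \to 0$ gives the base point $B$.

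For $\lambda \to \infty$, use the identity in $\mathrm{SL}_2$:
$$\begin{pmatrix}1&0\\ \lambda&1\end{pmatrix}=\begin{pmatrix}1&\lambda^{-1}\\ 0&1\end{pmatrix}\begin{pmatrix}0&-\lambda^{-1}\\ \lambda&0\end{pmatrix}\begin{pmatrix}1&\lambda^{-1}\\ 0&1\end{pmatrix}.$$
The right-most factor lies in $B_2$ and can be dropped. The middle factor equals the Weyl element $\dot s$ times a diagonal matrix in $B_2$. Applying $\phi_\beta$ gives
$$u_{-\beta}(\lambda)B = u_\beta(\lambda^{-1})\,\dot s_\beta B,$$
which tends to $\dot s_\beta B$ as $\lambda \to \infty$.

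\textbf{Step 3: translating back.} Left translation by $\dot w$ sends $\mathcal I \mapsto w$ and $\dot s_\beta \mathcal I \mapsto w s_\beta$. This gives the stated limits and identifies the closure with $\mathbb P^1$. The closure in $\Fl_G$ agrees with the image of $\mathbb P^1$ because that image is closed (projective) in the ind-projective $\Fl_G$.

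The main point of care is that $\beta$ is an arbitrary positive root, not necessarily simple, so $u_{-\beta}$ and $s_\beta$ do not correspond to a single simple reflection. The argument above avoids this: it only uses the root $\mathrm{SL}_2$ for $\beta$, which exists for any root. No statement about Bruhat length is claimed in the lemma, so none is needed. One should also check that the endpoint $ws_\beta$ is independent of the chosen lifts, which holds because $T \subseteq \mathcal I$.
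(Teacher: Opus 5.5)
Your proposal is correct and is essentially the paper's argument: work inside the root $\mathrm{SL}_2$ for $\beta$ and factor the lower unipotent $u_{-\beta}(\lambda)$ as $u_\beta(\lambda^{-1})$, times a representative of $s_\beta$, times an element of $B$, so that the $\lambda \to \infty$ limit is $s_\beta$. You also make explicit two points the paper leaves implicit: the reduction to $w=1$ by left translation, and why the closure is a $\mathbb P^1$, namely via the orbit map $\mathrm{SL}_2/B_2 \to G/B = G[[z]]/\mathcal I \subseteq \Fl_G$.
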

	\begin{proof}
		It suffices to work in the subgroup of $G$ corresponding to the $\liesl_2$-subalgebra for $\beta$. There,
		$$u_{-\beta}(\lambda) = \begin{pmatrix}
			1 & 0\\
			\lambda & 1
		\end{pmatrix}.$$
		It is clear that when $\lambda \to 0$ this limits to the identity. The other limit is computed by using
		$$u_{-\beta}(\lambda) = \begin{pmatrix}
			1 & \lambda^{-1}\\
			0 & 1
		\end{pmatrix}\begin{pmatrix}
		0 & -1\\
		1 & 0
		\end{pmatrix}\begin{pmatrix}
		\lambda & 1 \\
		0 & \lambda^{-1}
		\end{pmatrix}.$$
		The rightmost term lies in $B$ so it vanishes in the quotient $\Fl_G$, and the limit when $\lambda \to \infty$ is $s_\beta = \begin{pmatrix}
			0 & -1\\
			1 & 0
		\end{pmatrix}$.
	\end{proof}
	
	Now we show that, whenever the endpoints lie in the affine Springer fibre $\Fl_{G}^e$ for the regular nilpotent element $e=\sum_{j=1}^rX_{\beta_j}$, the curve also does.
	
	\begin{lemma}\label{bruhatcurvespringer}
	Let $e=\sum_{j=1}^rX_{\beta_j}$. Let $w \in \widetilde{W}_G$ and $\beta \in \Delta^+$ be such that $l(ws_\beta) = l(w)+1$ and $w \le ws_\beta$ in the Bruhat order. Assume that $w,ws_\beta \in \Fl_{G}^e$. Then, the curve in Lemma \ref{bruhatcurve} for $w$ and $\beta$ is also contained in $\Fl_{G}^e$.
	\end{lemma}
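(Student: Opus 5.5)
The plan is to verify the defining condition of $\Fl_G^e$ directly along the curve. A point $w\cdot u_{-\beta}(\lambda)$ lies in $\Fl_G^e$ if and only if
$$\Ad_{u_{-\beta}(\lambda)^{-1}}\bigl(\Ad_{w^{-1}}(e)\bigr)\in\lie i .$$
Set $X:=\Ad_{w^{-1}}(e)$. Writing $w=\mu\cdot u$ as in the proof of Lemma \ref{affineflagspringercondition}, $X$ is a sum of affine root vectors
$$X=\sum_{j=1}^r z^{-n_j}X_{u^{-1}(\beta_j)} .$$
Each summand corresponds to an affine root $a_j=(u^{-1}(\beta_j),-n_j)$. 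Recall that $\lie i=\liet[[z]]\oplus\bigoplus_{\gamma}z^{\lceil-\gamma(\alpha)\rceil}\lieg_\gamma[[z]]$. So a term $z^mX_\gamma$ lies in $\lie i$ exactly when $(\gamma,m)$ is a positive affine root, meaning $m>0$, or $m=0$ and $\gamma\in\Delta^+$.

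The hypotheses then translate as follows. First, $w\in\Fl_G^e$ says every $a_j$ is a positive affine root. Second, $ws_\beta\in\Fl_G^e$ says $\Ad_{s_\beta}X\in\lie i$, i.e. every $s_\beta(a_j)$ is a positive affine root; here $s_\beta$ acts only on the finite part, since $\beta$ has $z$-degree $0$. Third, $l(ws_\beta)=l(w)+1$ will be used in the form that $w(\beta)$, viewed as an affine root, is positive.

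Since $u_{-\beta}(\lambda)^{-1}=u_{-\beta}(-\lambda)=\exp(-\lambda X_{-\beta})$, expanding the adjoint action gives
$$\Ad_{u_{-\beta}(-\lambda)}X=\sum_{k\ge0}\frac{(-\lambda)^k}{k!}\,\ad(X_{-\beta})^k(X).$$
The term indexed by $k$ and $j$ is, when nonzero, a multiple of $z^{-n_j}X_{u^{-1}(\beta_j)-k\beta}$. It therefore suffices to show that every affine root $(u^{-1}(\beta_j)-k\beta,\,-n_j)$ that actually occurs is positive.

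This reduces to a case analysis for each $j$.
\begin{itemize}
\item If $-n_j>0$, the affine root is positive whatever its finite part, because subtracting $\beta$ does not change the $z$-degree.
\item If $n_j=0$, set $\gamma=u^{-1}(\beta_j)$. We know $\gamma\in\Delta^+$ and $s_\beta(\gamma)=\gamma-\langle\gamma,\beta^\vee\rangle\beta\in\Delta^+$. When $\langle\gamma,\beta^\vee\rangle\ge k\ge 0$, the root $\gamma-k\beta$ lies on the segment between $\gamma$ and $s_\beta\gamma$. Since height is linear and both endpoints have positive height, $\gamma-k\beta$ has positive height and is a positive root.
\end{itemize}

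The main obstacle is the remaining case: $n_j=0$ with $\gamma-k\beta$ a root for some $k>\max(0,\langle\gamma,\beta^\vee\rangle)$. This is where I would use the length hypothesis. I would conjugate the curve by $w$, noting that $w\,u_{-\beta}(\lambda)\,w^{-1}$ is a root subgroup for the affine root $-w(\beta)$. The condition then becomes one about $\Ad_{\exp(\lambda' X_{-w(\beta)})}(e)$, whose terms involve the affine roots $\beta_j-k\,w(\beta)$. I would then show that the positivity of $w(\beta)$ (equivalently $w<ws_\beta$), together with $u^{-1}(\beta_j)\in\Delta^+$ whenever $n_j=0$, forces any such root string with $k$ beyond the reflection point to be empty. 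The underlying reason is that the $u^{-1}(\beta_j)$ with $n_j=0$ form part of a simple system for $u^{-1}(\Delta^+)$. Consequently $\gamma-k\beta$ can only be a root for $k$ within the range $0\le k\le\langle\gamma,\beta^\vee\rangle$ already handled above.

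If that combinatorial argument turns out to be awkward, the fallback is to reduce to the rank-two subsystem spanned by $\gamma$ and $\beta$. There one would check the finitely many string configurations of types $A_2$, $B_2$ and $G_2$ directly against the three translated hypotheses. Once every occurring term is shown to lie in $\lie i$, the whole curve $\lambda\mapsto w\cdot u_{-\beta}(\lambda)$ lies in $\Fl_G^e$. Its closure also lies in $\Fl_G^e$, because $\Fl_G^e$ is closed and the endpoints $w$ and $ws_\beta$ belong to it by assumption.
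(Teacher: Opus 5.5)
Your translation of membership in $\Fl^e_G$ into positivity of affine roots is correct. So are your treatments of the terms with $-n_j>0$ and of those with $n_j=0$, $0\le k\le\langle\gamma,\beta^\vee\rangle$. The gap is in the remaining case, which is where all the content of the lemma lies.

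You use the length hypothesis only through positivity of $w(\beta)$, i.e.\ only $l(ws_\beta)>l(w)$ (equivalently $w<ws_\beta$). You then claim that this, together with $u^{-1}(\beta_j)\in\Delta^+$ for $n_j=0$, prevents $\gamma-k\beta$ from being a root when $k>\max(0,\langle\gamma,\beta^\vee\rangle)$. That claim is false. Here is a counterexample:
\begin{itemize}
\item Take $G=\mathrm{SO}_5(\C)$ with $\alpha_1$ long and $\alpha_2$ short, $\mu=-\omega_1^\vee$ (so $n_1=-1$, $n_2=0$), $u=s_1s_2s_1=s_{\alpha_1+\alpha_2}$ and $\beta=\alpha_1+\alpha_2$.
\item Then $w=z^\mu u$ has length $0$ by Proposition \ref{lengthzero}, while $ws_\beta=z^\mu$ has length $3$. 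Both lie in $\Fl^e_G$, and $w(\beta)=(-\alpha_1-\alpha_2,1)$ is positive.
\item Up to scalars, $\Ad_{w^{-1}}(e)=zX_{-\alpha_1-2\alpha_2}+X_{\alpha_2}$.
\item Here $\gamma=\alpha_2$ is the middle of the $\beta$-string $-\alpha_1,\alpha_2,\alpha_1+2\alpha_2$. So $\langle\gamma,\beta^\vee\rangle=0$, but $\gamma-\beta=-\alpha_1$ is a root.
\item The $k=1$ term is a nonzero multiple of $\lambda X_{-\alpha_1}$ in $z$-degree $0$. Hence the curve leaves $\Fl^e_G$ for every $\lambda\neq 0$.
\end{itemize}
Every hypothesis you actually invoke holds here, and the conclusion fails. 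The simple-system heuristic cannot rescue this. For $v\in{}^{J_\mu}W$ the length formula gives $l(z^\mu v)=\sum_{\delta\in\Delta^+}(-\mu(\delta))-l(v)$. Hence $l(ws_\beta)>l(w)$ forces $u(\beta)\in\Delta^-$, so $\beta$ is negative for $u^{-1}(\Delta^+)$, and subtracting it from a simple root of that system is unobstructed. Your rank-two fallback fails for the same reason, since this example is itself rank two.

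What is missing is a real use of the equality $l(ws_\beta)=l(w)+1$. By the formula above, it says that $u$ and $us_\beta$ differ in finite length by exactly one. The curves for $(w,\beta)$ and $(ws_\beta,\beta)$ coincide up to $\lambda\mapsto\pm\lambda^{-1}$, by the computation in Lemma \ref{bruhatcurve}. So you may relabel so that $u(\beta)\in\Delta^+$ and $l(us_\beta)=l(u)+1$.

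This finite covering relation is what the paper's proof exploits:
\begin{itemize}
\item Assuming some $u^{-1}(\beta_p)-k\beta$ is negative, it shows the $\beta$-string through $u^{-1}(\beta_p)$ has length at least three.
\item It writes $us_\beta=s_{l_1}\cdots s_{l_k}$ reduced, with $u$ obtained by deleting a single letter $s_{l_i}$, so that $\beta=s_{l_k}\cdots s_{l_{i+1}}(\beta_{l_i})$.
\item Comparing the inversion sets of $u$ and $us_\beta$ forces $u^{-1}(\beta_p)-\beta=-\beta$, i.e.\ $u^{-1}(\beta_p)=0$, a contradiction.
\item Strings of length four (type $G_2$) are checked by hand.
\end{itemize}
In the example above, $u$ and $us_\beta$ differ in length by $3$, which is exactly what this argument excludes. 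Your proof needs an input of this kind; positivity of $w(\beta)$ alone cannot provide it.
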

	\begin{proof}		
		
		Write $w=z^\mu \cdot u$ with $u \in W$. We know that $\mu$ is antidominant (recall Lemma \ref{affineflagspringercondition}) and then $w,ws_\beta \in \Fl_{G}^e$ is equivalent to $u^{-1}(\beta_p), s_\beta u^{-1}(\beta_p) \in \Delta^+$ for every $s_{\beta_p} \in J_\mu$. The statement of the Lemma follows by checking that
		$$\Ad_{\exp(X_{-\beta})u^{-1}}(X_{\beta_p}) \in \lie b$$
		\noindent holds for every $s_{\beta_p} \in J_\mu$.
		
		In order to show this, it suffices to prove that
		$$u^{-1}(\beta_p) - k\beta$$
		\noindent can never be a negative root when $k \ge 0$. Suppose otherwise, i.e. that $u^{-1}(\beta_p)-k\beta \in \Delta^-$ for some $k \ge 0$. Then, we must have $u^{-1}(\beta_p)+\beta \in \Delta^+$: if not, the root reflection $s_\beta$ (which acts on the root string through $u^{-1}(\beta_p)$ with shift $\beta$ by \textit{flipping} it) would map $u^{-1}(\beta_p)$ to a negative root. Therefore, the root string through $u^{-1}(\beta_p)$ with shift $\beta$ has length at least three. We start with the case where the length is exactly three. In other words, the root string consists of 
		$$-\gamma := u^{-1}(\beta_p)-\beta,$$ 
		$$u^{-1}(\beta_p),$$ \noindent and 
		$$-s_\beta(\gamma) = u^{-1}(\beta_p)+\beta,$$
		\noindent with $\gamma \in \Delta^+$.
		
		Applying $u$, we have $-us_\beta(\gamma) = \beta_p + u(\beta)$. Note that $l(u)<l(us_\beta)$ implies $u(\beta) \in \Delta^+$, so we deduce $us_\beta(\gamma) \in \Delta^-$. This implies (e.g. \cite[Section 5.6]{humphreys1990reflection} applied to a reduced expression of $us_\beta$) that we can write
		$$\gamma = s_{l_k}\dots s_{l_{j+1}}(\beta_{l_j})$$
		\noindent for some $1 \le j \le k$, where
		$$u = s_{l_1}\dots s_{l_{i-1}}s_{l_{i+1}}\dots s_{l_k}$$
		\noindent is a reduced expression and 
		$$s_\beta = s_{l_k}\dots s_{l_{i+1}}s_{l_i}s_{l_{i+1}}\dots s_{l_k},$$
		\noindent for simple root reflections $s_{l_i} = s_{\beta_{l_i}} \in S_W$. Note that here we use that the lengths of $u$ and $us_\beta$ differ by one.
		
		On the other hand, applying $u$ to the other endpoint of the root string gives $-u(\gamma) = \beta_p - u(\beta)$. Since $\beta_p$ is simple (and therefore cannot be the sum of two positive roots) we deduce that $u(\gamma) \in \Delta^+$. From the reduced expression of $u$ this implies that $j \le i$. We claim that $i=j$, for if $j < i$ we can compute
		$$s_\beta(\gamma) = (s_{l_k}\dots s_{l_{i+1}}s_{l_i}s_{l_{i+1}}\dots s_{l_k})s_{l_k}\dots s_{l_{j+1}}(\beta_{l_j}) = s_{l_k}\dots \widehat{s_{l_i}}\dots s_{l_{j+1}}(\beta_{l_j})$$
		\noindent and then, using the reduced expression of $u$ and \cite[Section 5.6]{humphreys1990reflection} again, we deduce $s_\beta(\gamma) \in \Delta^+$, contradicting the fact that $s_\beta(\gamma) = -(u^{-1}(\beta_p)+\beta)$. However, this means that $\beta = \gamma$, so that $u^{-1}(\beta_p) = \beta - \gamma = 0$, a contradiction.
		
		Having ruled out the case where the root string has length three, it suffices to address the case of length four. Root strings of this length only occur when one of the simple factors of $\lieg$ is of type $\lieg_2$, and there a manual check suffices: if $\beta_1$ denotes the short simple root and $\beta_2$ the long simple root, the Weyl group elements in the Springer fibre over $X_{\beta_1}$ are $$\{1, s_2, s_2s_1, s_2s_1s_2, s_2s_1s_2s_1, s_2s_1s_2s_1s_2\}$$\noindent and for $X_{\beta_2}$ they are $$\{1, s_1, s_1s_2, s_1s_2s_1, s_1s_2s_1s_2, s_1s_2s_1s_2s_1\}.$$ In both cases, we see that the condition on the length (differing by one) forces $\beta$ to be a simple root, but then $u^{-1}(\beta_p)-k\beta \in \Delta^-$ for some $k \ge 0$ forces the root string to be of the form $\{-\beta,0,\beta\}$, which is of length three.
	\end{proof}
	
	\begin{remark}\label{chainproperty}
		Lemma \ref{bruhatcurvespringer} implies that if $w = z^\mu \cdot u \in \Fl_{G}^e$ is such that $u$ is not the longest element under the Bruhat order of ${}^{J_\mu}W$, then $\beta \in \Delta^+$ can be found such that the curve $wu_{-\beta}(\lambda)$ belongs to $\Fl_{G}^e$ and so does the limit $ws_\beta \ge w$. This is due to the \textbf{chain property} of ${}^{J_\mu}W$ \cite[Theorem 2.5.5]{bjorner2005combinatorics}, one of which consequences is that any element different than the maximum has another element covering it in the Bruhat order, with length differing by one.
	\end{remark}

	The second kind of curve projects to a curve in $\Gr_G$ and involves shifting the cocharacter part by a coroot.
	
		\begin{proposition}\label{affineflagcurve}
		Let $z^\mu \cdot u \in \widetilde{W}_G$ and $\beta \in \Delta$. Let $u_\beta : \mathbb G_a \to G$ be a root subgroup for $\beta$. Let $m:=\beta(\mu)+1$. Then, the expression
		$$u_\beta(z^m\lambda^{-1})z^{\mu+\beta^\vee} s_\beta u \in G((z)),$$
		\noindent for $\lambda \in \C^\times$, defines a curve in $\Fl_G$ with limit $z^{\mu}u$ as $\lambda \to 0$ and $z^{\mu+\beta^\vee}s_\beta u$ as $\lambda \to \infty$.
	\end{proposition}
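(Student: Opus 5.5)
The plan is to compute directly inside the copy of $\SL_2$ (or $\PGL_2$) attached to $\beta$, using the homomorphism $\phi_\beta : \SL_2 \to G$ whose upper unipotent is $u_\beta$, lower unipotent is $u_{-\beta}$ and diagonal torus is $\beta^\vee$. Write $\gamma(\lambda) := u_\beta(z^m\lambda^{-1})z^{\mu+\beta^\vee}s_\beta u\,\mathcal I \in \Fl_G$. The argument works for any $\beta \in \Delta$, positive or negative, since it only uses the $\SL_2$ relations.

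\textbf{Limit $\lambda \to \infty$.} The expression is polynomial in $\lambda^{-1}$. Hence $\gamma$ extends to $\lambda = \infty$, where $u_\beta(0) = 1$, and the limit is $z^{\mu+\beta^\vee}s_\beta u$.

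\textbf{Limit $\lambda \to 0$.} Here the idea is to rewrite $\gamma(\lambda)$ so that it becomes polynomial in $\lambda$, exactly as in Lemma \ref{bruhatcurve}.

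1. Use the $\SL_2$ identity
$$u_\beta(x) = u_{-\beta}(x^{-1})\,\beta^\vee(x)\,n_\beta\,u_{-\beta}(x^{-1}),\qquad n_\beta = \phi_\beta\begin{pmatrix}0&1\\-1&0\end{pmatrix},$$
with $x = z^m\lambda^{-1}$. Then $\beta^\vee(x) = \beta^\vee(\lambda^{-1})\,z^{m\beta^\vee}$.

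2. Move the right-hand factor $u_{-\beta}(z^{-m}\lambda)$ past $z^{\mu+\beta^\vee}$, using $u_{-\beta}(a)z^\nu = z^\nu u_{-\beta}(z^{\beta(\nu)}a)$. Since $\beta(\mu+\beta^\vee) = m+1$, the factor becomes $u_{-\beta}(z\lambda)$.

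3. Conjugate $u_{-\beta}(z\lambda)$ through $s_\beta u$. This gives a root subgroup element $u_{u^{-1}s_\beta(-\beta)}(\pm z\lambda)$. It has positive $z$-valuation, so it lies in $\mathcal I$ and disappears in $\Fl_G$.

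4. The remaining middle part simplifies. We have $n_\beta z^{\mu+\beta^\vee} = z^{s_\beta(\mu+\beta^\vee)}n_\beta = z^{\mu-m\beta^\vee}n_\beta$. Therefore $z^{m\beta^\vee}n_\beta z^{\mu+\beta^\vee}s_\beta u = z^\mu\,(n_\beta s_\beta)\,u$, and $n_\beta s_\beta \in T$ because both represent $s_\beta$.

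5. Commute the torus elements $\beta^\vee(\lambda^{-1})$ and $n_\beta s_\beta$ to the right past $z^\mu$ and $u$. They become elements of $T \subseteq \mathcal I$, so they also disappear.

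This yields
$$\gamma(\lambda) = u_{-\beta}(z^{-m}\lambda)\,z^\mu u\,\mathcal I.$$
This is polynomial in $\lambda$, so its value at $\lambda = 0$ is $z^\mu u$, as claimed.

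\textbf{Main obstacle.} The only delicate point is the bookkeeping of signs and valuations: the sign convention for $\Ad_{z^\nu}$ on root spaces, and the exact value $\beta(\mu+\beta^\vee) = m+1$. The conjugated unipotent in step 3 must end up with valuation at least $1$, so that it lies in $\mathcal I$ independently of whether $u^{-1}s_\beta(-\beta)$ is positive or negative. I would verify this first in the $\PGL_2$ example of Example \ref{exampleheckemod} before writing the general computation.
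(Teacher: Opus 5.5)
Your proof is correct and takes essentially the paper's route: both reduce to an $\SL_2$ computation for $\beta$ showing that, modulo $\mathcal I$, the curve equals a lower unipotent factor polynomial in $\lambda$ times $z^\mu u$. Your $u_{-\beta}(z^{-m}\lambda)z^\mu u$ equals the paper's $z^\mu u_{-\beta}(\lambda z^{-1})$ (for $u=1$) after commuting past $z^\mu$. The difference is only bookkeeping: the paper factors the explicit matrix $\begin{pmatrix}\lambda^{-1} & -z\\ z^{-1} & 0\end{pmatrix}$ for $u=1$ and then moves $u$ to the left, whereas you use the identity $u_\beta(x)=u_{-\beta}(x^{-1})\beta^\vee(x)n_\beta u_{-\beta}(x^{-1})$ and absorb the leftover unipotent into $\mathcal I$ through $s_\beta u$, which handles all $u$ at once.
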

	\begin{proof}
	Since $u_\beta(0) = \Id$ it is clear that the limit when $\lambda \to \infty$ is the claimed one. Now we compute the other limit. Start by assuming $u=1$. Note that the curve may be rewritten as 
	$$u_\beta(z^m\lambda^{-1})z^{\mu+\beta^\vee} s_\beta = z^{\mu}u_\beta(z^{m-\beta(\mu)}\lambda^{-1})z^{\beta^\vee}s_\beta = z^{\mu}u_\beta(z\lambda^{-1})z^{\beta^\vee}s_\beta.$$
	This expression has the advantage that the factor multiplying $z^\mu$ on the right is fully contained in the subgroup of $G((z))$ corresponding to the $\liesl_2$-subalgebra of $\lieg$ associated to $\beta$. Thus, we may work in the copy of $\SL_2((z))$ mapping to said subgroup, and rewrite that term as
	$$\begin{pmatrix}
		1 & \lambda^{-1}z\\
		0 & 1
	\end{pmatrix} \begin{pmatrix}
		z & 0\\
		0 & z^{-1}
	\end{pmatrix}\begin{pmatrix}
		0 & -1\\
		1 & 0
	\end{pmatrix} = \begin{pmatrix}
		\lambda^{-1} & -z\\
		z^{-1} & 0
	\end{pmatrix}.$$
	Now we can rewrite as follows:
	$$\begin{pmatrix}
		\lambda^{-1} & -z\\
		z^{-1} & 0
	\end{pmatrix} = \begin{pmatrix}
		1 & 0\\
		\lambda z^{-1} & 1
	\end{pmatrix}\begin{pmatrix}
		\lambda^{-1} & -z\\
		0 & \lambda
	\end{pmatrix}.$$
	
	The rightmost term can be disregarded in $\Fl_G$ since it is mapped inside of $\mathcal I$, therefore we see that the limit as $\lambda \to 0$ of the above expression is the identity, so the limit of the whole curve is $z^{\mu}$.
	
	If $u$ is not trivial, we may observe the following:
	
	$$u_\beta(z^m\lambda^{-1})z^{\mu+\beta^\vee} s_\beta u = uu_{u^{-1}(\beta)}(\pm z^m\lambda^{-1})z^{u^{-1}(\mu+\beta^\vee)} s_{u^{-1}(\beta)}$$
	\noindent where the sign change in the root subgroup is not relevant for our purposes. Since now $u$ is at the very left, it does not interfere with the right quotient by $\mathcal I$ and we may repeat the computations above (notice that $u^{-1}(\beta)(u^{-1}(\mu)) = \beta(\mu)$ so the multiplicity $m$ does not change) to find that the limit at $\lambda \to 0$ is $uz^{u^{-1}(\mu)} = z^\mu u$.
	\end{proof}

	The remainder of this section is destined, as before, to investigating when these curves are contained in $\Fl_{G}^e$.
	
	\begin{lemma}\label{affineflagcurvespringer}
		Let $z^\mu \cdot u \in \widetilde{W}_G$ and $\beta \in \Delta$. Let $e = \sum_{i=1}^rX_{\beta_i} \in \lieg$. Then, the curve in Proposition \ref{affineflagcurve} is contained in $\Fl^e_{G}$ if and only if, for every simple root $\beta_i \in \Pi$, either
		$$-\mu(\beta_i) - \beta^\vee(\beta_i) - q_i > 0,$$
		\noindent or
		$$-\mu(\beta_i) - \beta^\vee(\beta_i) - q_i = 0 \,\text{ and }\,u^{-1}(s_\beta(\beta_i+q_i\beta)) \in \Delta^+,$$
		\noindent where $q_i \ge 0$ is the greatest integer such that $\beta_i + q_i\beta \in \Delta$.
	\end{lemma}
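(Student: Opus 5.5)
The plan is to compute $\Ad_{\sigma_\lambda^{-1}}(e)$ explicitly for a point
$$\sigma_\lambda := u_\beta(z^m\lambda^{-1})\,z^{\mu+\beta^\vee}s_\beta u$$
of the curve of Proposition \ref{affineflagcurve}, one simple root at a time, using the description $\Fl^e_G = \{\sigma : \Ad_{\sigma^{-1}}(e) \in \lie i\}$. The curve lies in $\Fl^e_G$ exactly when this holds for every $\lambda \in \C^\times$. The limit points come for free, since $\Fl^e_G$ is closed. Because $\Ad$ is linear and $e = \sum_i X_{\beta_i}$, I will compute each $\Ad_{\sigma_\lambda^{-1}}(X_{\beta_i})$ separately and then argue that membership in $\lie i$ can be checked root space by root space.

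\textbf{Step 1: the unipotent factor.} Write $c := z^m\lambda^{-1}$. Then
$$\Ad_{u_\beta(c)^{-1}}(X_{\beta_i}) = \sum_{k \ge 0}\frac{(-c)^k}{k!}\ad_{X_\beta}^k(X_{\beta_i}).$$
This is a sum of terms $a_k c^k X_{\beta_i+k\beta}$ for $0 \le k \le q_i$. By standard $\liesl_2$-representation theory on the $\beta$-root string through $\beta_i$, the constants $a_k$ are nonzero for all $0 \le k \le q_i$, since $\ad_{X_\beta}$ moves injectively up the string until it reaches the top $\beta_i + q_i\beta$. This nonvanishing is essential for the "only if" direction.

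\textbf{Step 2: the translation factor.} Applying $\Ad_{z^{-(\mu+\beta^\vee)}}$ multiplies $X_{\beta_i+k\beta}$ by $z^{-(\beta_i+k\beta)(\mu+\beta^\vee)}$. Using $m = \beta(\mu)+1$ and $\beta(\beta^\vee) = 2$, the total exponent of $z$ in the $k$-th term is
$$mk - \beta_i(\mu) - \beta^\vee(\beta_i) - k\beta(\mu) - 2k = -\mu(\beta_i) - \beta^\vee(\beta_i) - k,$$
with $\lambda$ appearing only through the nonzero scalar $\lambda^{-k}$.

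\textbf{Step 3: the Weyl group factor.} Finally $\Ad_{(s_\beta u)^{-1}} = \Ad_{u^{-1}s_\beta}$ sends $\lieg_{\beta_i+k\beta}$ to $\lieg_{u^{-1}s_\beta(\beta_i+k\beta)}$. The roots $u^{-1}s_\beta(\beta_i+k\beta)$ are pairwise distinct as $k$ varies. Moreover, for different simple roots $\beta_i$ the strings $\beta_i + k\beta$ are disjoint (comparing coefficients of simple roots when $\beta$ is not a multiple of $\beta_i$; when $\beta = \pm\beta_i$ the string is trivial). Hence no cancellation occurs, and $\Ad_{\sigma_\lambda^{-1}}(e) \in \lie i$ if and only if each monomial $z^{n}X_\gamma$ with nonzero coefficient lies in $\lie i$. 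Since $\alpha$ is in the interior of the alcove, this means $n > 0$, or $n = 0$ with $\gamma \in \Delta^+$.

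\textbf{Conclusion.} The exponent in Step 2 is strictly decreasing in $k$, so the binding constraint comes from the top of the string, $k = q_i$. If the exponent at $k = q_i$ is positive, then all terms are fine. If it equals zero, all terms with $k < q_i$ have positive exponent, and the single remaining condition is $u^{-1}(s_\beta(\beta_i+q_i\beta)) \in \Delta^+$. If it is negative, the term is nonzero by Step 1 and lies outside $\lie i$ for every $\lambda$. This gives exactly the stated dichotomy, independently of $\lambda$.

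The main obstacle I expect is the bookkeeping in Step 1 and Step 3: justifying that the coefficients $a_k$ never vanish along the string, including for non-simply-laced root strings of length up to four, and that distinct $(i,k)$ yield distinct root spaces. Only with both facts does the termwise criterion become an "if and only if".
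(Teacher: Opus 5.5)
Your proposal is correct and takes the same route as the paper. You factor $\Ad_{\sigma_\lambda^{-1}}$ into the unipotent, translation and Weyl-group pieces, obtain the valuation $-\mu(\beta_i)-\beta^\vee(\beta_i)-k$ along the $\beta$-string through each $\beta_i$, and read off the condition at the top term $k=q_i$. Beyond the paper, you also justify the ``only if'' direction explicitly, through the nonvanishing of the string coefficients via $\liesl_2$-theory and the disjointness of the resulting root spaces, which the paper leaves implicit.

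There is one small slip, which the paper shares and which does not matter for $\beta\in\Delta^+$ (the case where the lemma is actually used). For $\beta=-\beta_i$ the string is not trivial: it passes through $0$, so the expansion in your Step 1 contains a $\liet$-term rather than a root vector at $k=1$. That term has intermediate valuation and so imposes no extra condition.
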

	\begin{proof}
		By definition of the affine Springer fibre, we just have to check when does $$\Ad_{u^{-1}s_\beta z^{-\mu-\beta^\vee}u_\beta(-z^m\lambda^{-1})}(e)$$ \noindent belong to the standard Iwahori subalgebra $\mathfrak i = \mathrm{Lie}(\mathcal I)$. For this, it suffices to check that every simple root vector $X_{\beta_i}$ is mapped to $\lie{i}$. Applying the first factor, we get
		$$\Ad_{u_\beta(-z^m\lambda^{-1})}(X_{\beta_i}) = \sum_{j_i=0}^{q_i}c_{ij}z^{j_im}X_{\beta_i+j_i\beta}$$
		\noindent where the $c_{ij}$ are nonzero complex numbers. Now we apply the second factor to each of those terms:
		$$\Ad_{z^{-\mu-\beta^\vee}}(z^{j_im}X_{\beta_i + j_i\beta}) = z^{j_im-\mu(\beta_i+j_i\beta)-\beta^\vee(\beta_i+j_i\beta)}X_{\beta_i + j_i\beta}.$$
		The valuation of the term above simplifies (using $\beta^\vee(\beta) = 2$ and $m = \beta(\mu)+1$) to
		$$j_im-\mu(\beta_i)-j_i(m-1)-\beta^\vee(\beta_i)-2j_i = -\mu(\beta_i)-\beta^\vee(\beta_i)-j_i.$$
		Therefore, the minimum valuation that appears in the terms coming from $X_{\beta_i}$ is $-\mu(\beta_i)-\beta^\vee(\beta_i)-q_i$. The two remaining operations (i.e. applying $s_\beta$ and then $w^{-1}$) only permute the root vectors and do not change the valuations. The result follows by asking these to be positive or zero, noting that the latter requires the corresponding root vector to be in a positive root space.
	\end{proof}
	
	The conditions in the previous Lemma apply whenever $\mu$ is not minuscule and we have already arranged the part in ${}^{J_\mu}W$ to be \textit{maximal} by following the curves of the first kind.
	
	\begin{lemma}\label{affineflagcurvespringer2}
		Let $z^\mu \cdot u \in \widetilde{W}_G \cap \Fl^e_{G}$ and suppose that $w$ is the maximal element under the Bruhat order of ${}^{J_\mu}W$ and $\mu$ is not minuscule. Then, there exists $\beta \in \Delta^+$ such that $\mu+\beta^\vee$ is antidominant and the curve in Proposition \ref{affineflagcurve} is contained in $\Fl^e_G$.
	\end{lemma}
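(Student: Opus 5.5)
The plan is to choose $\beta$ by a dominance-order argument and then verify the criterion of Lemma \ref{affineflagcurvespringer}, after rewriting that criterion in terms of root strings. (I read the $w$ in the statement as $u$.) By Lemma \ref{affineflagspringercondition}, $\mu$ is antidominant and $u\in{}^{J_\mu}W$. Maximality of $u$ in ${}^{J_\mu}W$ means, as in the remark after Definition \ref{parabolicweyl}, that $u^{-1}$ sends a positive root $\gamma$ to a positive root if $\gamma$ lies in the span of $\Pi_{J_\mu}$, and to a negative root otherwise. Equivalently, for $\gamma\in\Delta^+$ we have $u^{-1}(\gamma)\in\Delta^+$ if and only if $\mu(\gamma)=0$.

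\textbf{Step 1: rewrite the criterion.} For each simple root $\beta_i$, let the $\beta$-string through $\beta_i$ be $\beta_i-p_i\beta,\dots,\beta_i+q_i\beta$. Then $\beta^\vee(\beta_i)=p_i-q_i$, and $s_\beta(\beta_i+q_i\beta)=\beta_i-p_i\beta$. So the condition of Lemma \ref{affineflagcurvespringer} becomes: for every $i$, either $-\mu(\beta_i)>p_i$, or $-\mu(\beta_i)=p_i$ and $u^{-1}(\beta_i-p_i\beta)\in\Delta^+$.

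\textbf{Step 2: check the two cases for a fixed $i$.}
\begin{enumerate}
\item If $p_i=0$, the inequality holds by antidominance. In the equality case $\mu(\beta_i)=0$ we have $s_{\beta_i}\in J_\mu$, so $u^{-1}(\beta_i)>0$ because $u\in{}^{J_\mu}W$.
\item If $p_i\ge1$ and $\beta\neq\beta_i$, then $\beta_i-p_i\beta$ is a negative root, since $\beta_i$ is simple and $\beta$ is positive. By the characterisation of maximality, the equality case holds if and only if $\mu(p_i\beta-\beta_i)\neq0$, that is, $p_i\mu(\beta)\neq\mu(\beta_i)$.
\item If $\beta=\beta_i$, the string is $-\beta_i,0,\beta_i$, so $p_i=2$ and the bottom of the string is $-\beta_i$. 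The condition reduces to $-\mu(\beta_i)>2$, or $-\mu(\beta_i)=2$ with $u^{-1}(-\beta_i)>0$. The latter is automatic since $\mu(\beta_i)\neq0$.
\end{enumerate}
So the requirement on $\beta$ is: $-\mu(\beta_i)\ge p_i$ for all $i$, with equality allowed only when $p_i\mu(\beta)\neq\mu(\beta_i)$ (for $\beta\neq\beta_i$). Separately, $\mu+\beta^\vee$ must be antidominant, i.e. $-\mu(\beta_i)\ge p_i-q_i$. This is weaker than the previous inequality.

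\textbf{Step 3: choose $\beta$.} Since $-\mu$ is dominant and not minuscule, Stembridge's description of the dominance order on dominant coweights gives a positive root $\beta$ with $-\mu-\beta^\vee$ dominant: minimal elements are minuscule or zero, and covers differ by a positive coroot. Among such $\beta$, I take one with $\beta^\vee$ minimal (for instance of minimal height). Then I would show that $p_i\le-\mu(\beta_i)$ for all $i$. The idea is that if $p_i>-\mu(\beta_i)\ge0$ for some $i$, then $\beta-\beta_i$ (a root, being on the string) or $s_{\beta_i}(\beta)$ gives a smaller positive coroot $\gamma^\vee$ with $-\mu-\gamma^\vee$ still dominant, contradicting minimality.

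\textbf{Main obstacle.} The hardest part is this minimality argument, together with the borderline case $-\mu(\beta_i)=p_i$ with $p_i\mu(\beta)=\mu(\beta_i)$. In that borderline case, $p_i\mu(\beta)=\mu(\beta_i)$ combined with $\mu(\beta)\le\mu(\beta_i)$ forces very rigid configurations. I expect to rule these out by the same minimality swap. If needed, I would treat non-simply-laced strings ($p_i\in\{2,3\}$) by a direct check in rank two, including $\lieg_2$, as in the proof of Lemma \ref{bruhatcurvespringer}.
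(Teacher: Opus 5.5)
\textbf{The reduction is right, but the existence of $\beta$ is not proved.} Your Steps 1 and 2 are correct and follow the paper's reduction. With $p_i$ the depth of the $\beta$-string below $\beta_i$, you have $\beta^\vee(\beta_i)=p_i-q_i$ and $s_\beta(\beta_i+q_i\beta)=\beta_i-p_i\beta$. So the criterion of Lemma \ref{affineflagcurvespringer} becomes $-\mu(\beta_i)\ge p_i$, with the equality case controlled by maximality of $u$ in ${}^{J_\mu}W$.

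The genuine gap is Step 3, which carries the whole combinatorial content: there must exist $\beta\in\Delta^+$ with $-\mu(\beta_i)\ge p_i$ for every simple $\beta_i$. The paper does not prove this here either; it imports it from \cite[Corollary 5.4]{gonzalez_very_2025}.

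\textbf{Where the difficulty lies.} In simply-laced types Stembridge's result already suffices. If $p_i\ge 1$ then $q_i=0$, so the condition is just dominance of $-\mu-\beta^\vee$ at $\beta_i$. A Stembridge root can fail only when $\beta$ and $\beta_i$ are both short and $\beta_i$ lies in the interior of the $\beta$-string.

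\textbf{Your proposed swap does not work.} In exactly those configurations, the hypotheses you have (dominance of $-\mu-\beta^\vee$ and $p_i>-\mu(\beta_i)$) do not make $\beta-\beta_i$ or $s_{\beta_i}(\beta)$ admissible. Two examples:
\begin{enumerate}
\item Type $B_3$ with $-\mu=2\omega_1^\vee=2e_1$, $\beta=e_1$ and $\beta_3=e_3$. Here $-\mu-\beta^\vee=0$ and $p_3=1>0=-\mu(\beta_3)$. But $s_{\beta_3}(\beta)=\beta$, and $-\mu-(e_1-e_3)^\vee=e_1+e_3$ is not dominant. The root that works is $\beta_1=e_1-e_2$, which your swap does not produce.
\item Type $G_2$ with $\beta_1$ short and $-\mu=\omega_1^\vee$. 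The root $2\beta_1+\beta_2$ fails at $\beta_1$, since $p_1=2>1$. Both $\beta-\beta_1$ and $s_{\beta_1}(\beta)$ equal $\beta_1+\beta_2$, and $-\mu-(\beta_1+\beta_2)^\vee$ pairs to $-3$ with $\beta_2$. The root that works is $\beta+\beta_1=3\beta_1+\beta_2$.
\end{enumerate}
The $G_2$ example also shows that minimality must be measured on coroots. The root $2\beta_1+\beta_2$ has smaller root height, but its coroot $2\beta_1^\vee+3\beta_2^\vee$ is larger than $(3\beta_1+\beta_2)^\vee=\beta_1^\vee+\beta_2^\vee$. So Step 3 needs a global argument of the kind supplied by the cited corollary, or the citation itself; your sketch does not provide one.

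\textbf{The obstacle you flagged is immediate.} Once $-\mu(\beta_j)\ge p_j\ge p_j-q_j$ for all $j$, $\mu+\beta^\vee$ is antidominant, so $\mu(\beta)\le-\beta^\vee(\beta)=-2$. In the borderline case $-\mu(\beta_i)=p_i\ge 1$, the bad condition $p_i\mu(\beta)=\mu(\beta_i)$ reads $p_i(\mu(\beta)+1)=0$. That forces $\mu(\beta)=-1$, which is impossible. This is exactly the paper's computation $\mu(\beta_i+\mu(\beta_i)\beta)=(1+\mu(\beta))\mu(\beta_i)\neq 0$, and it needs no minimality swap and no rank-two case check.
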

	\begin{proof}
		We construct $\beta$ verifying the properties of Lemma \ref{affineflagcurvespringer}. By \cite[Corollary 5.4]{gonzalez_very_2025} there exists $\beta^\vee$ such that for every $\beta_i \in \Delta^+$ we have
		$$-\mu(\beta_i) - \beta^\vee(\beta_i) - q_i \ge 0.$$
		Now we show that, for the chosen $u$, equality of the above expression automatically implies that $u^{-1}(s_\beta(\beta_i+q_i\beta)) \in \Delta^+$. Since $u \in {}^{J_\mu}W$ is the longest, we have:
		$$u^{-1}(\beta_i) \in \Delta^+\, \text{ for simple roots }\, \beta_i \in \Pi \text{ with } \mu(\beta_i) = 0$$
		$$u^{-1}(\gamma) \in \Delta^-\, \text{ for positive roots }\, \gamma \in \Delta^+ \text{ with } \mu(\gamma) \neq 0.$$
		Now observe that, when we apply $s_\beta$, we get
		$$s_\beta(\beta_i+q_i\beta) = \beta_i - (q_i+\beta^\vee(\beta_i))\beta.$$
		If equality holds in the first expression of the proof, that is, if $q_i+\beta^\vee(\beta) = -\mu(\beta_i)$, we have
		$$s_\beta(\beta_i+q_i\beta) = \beta_i + \mu(\beta_i)\beta.$$
		Finally, in order to determine whether $u^{-1}(\beta_i + \mu(\beta_i)\beta)$ is a positive root, we simply compute
		$$\mu(\beta_i + \mu(\beta_i)\beta) = (1+\mu(\beta))\mu(\beta_i).$$
		Since $\mu + \beta^\vee$ is antidominant, we have $\mu(\beta) \le -\beta^\vee(\beta) = -2$. As a consequence, the factor $1+\mu(\beta) \le -1$ never vanishes. Therefore, it suffices to analyse these two cases:
		\begin{itemize}
			\item If $\mu(\beta_i) = 0$, then $u^{-1}(\beta_i + \mu(\beta_i)\beta) =u^{-1}(\beta_i) \in \Delta^+$.
			\item If $\mu(\beta_i) \neq 0$ then $\mu(\beta_i + \mu(\beta_i)\beta) \neq 0$ and $\beta_i + \mu(\beta_i)\beta$ is a negative root ($\beta_i$ is simple, $\beta$ positive and $\mu(\beta_i) \le 0$) so $u^{-1}(\beta_i + \mu(\beta_i)\beta) \in \Delta^+$.
		\end{itemize}
	\end{proof}
	
	For later use we need the following shifted versions of the previous curves, which have the advantage of allowing for values of $\mu$ in the coweight lattice (even if they $\mu$ does not lift to $T$).
	
	\begin{lemma}\label{bruhatcurveshifted}
		Let $w=\mu \cdot u \in \widetilde{W} = P_{\Delta^\vee} \rtimes W$ and $\beta \in \Delta$. Let $u_\beta : \mathbb G_a \to G$ be a root subgroup for $\beta$. Then, the expression $u_{-\beta}(\lambda)$ for $\lambda \in \C^\times$ defines a curve in $\Fl_G$ with limit $1$ as $\lambda \to 0$ and $s_\beta$ as $\lambda \to \infty$. Additionally, if $u$ is not the maximal element under the Bruhat order of ${}^{J_\mu}W$, then there exists $\beta \in \Delta^+$ such that the previous curve is contained in $\Fl^{\Ad_{w^{-1}}(e)}_G$.
	\end{lemma}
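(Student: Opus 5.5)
The first claim needs no new computation: Lemma \ref{bruhatcurve} applied with $w=1$ (which lies in $\widetilde{W}_G$) shows that $u_{-\beta}(\lambda)$ traces a $\mathbb P^1$ in $\Fl_G$ with limits $1$ and $s_\beta$. For the second claim, I would reduce membership in $\Fl_G^{\Ad_{w^{-1}}(e)}$ to an explicit root condition. By definition, $u_{-\beta}(\lambda)\in \Fl_G^{\Ad_{w^{-1}}(e)}$ if and only if
$$\Ad_{u_{-\beta}(\lambda)^{-1}}\Ad_{u^{-1}z^{-\mu}}(e)\in \lie i.$$
The point of the shift is that this only involves $\Ad_{z^{-\mu}}$, which makes sense for $\mu\in P_{\Delta^\vee}$ even if $z^\mu\notin G((z))$.

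The computation runs as in Lemma \ref{affineflagspringercondition}. Write $\mu=\sum_i n_i\omega_i^\vee$. Since $w$ lies in the setting where the curves are relevant, I assume, as in that lemma and Remark \ref{sphericalquotient}, that $\mu$ is antidominant and $u\in {}^{J_\mu}W$. Then
$$\Ad_{u^{-1}z^{-\mu}}(e)=\sum_i z^{-n_i}X_{u^{-1}(\beta_i)}.$$
Terms with $n_i<0$ have positive valuation and stay in $\lie i$ after conjugating by the constant element $u_{-\beta}(\lambda)^{\pm1}\in G$. So only the indices with $\beta_i\in J_\mu$ matter, and for these $u^{-1}(\beta_i)\in\Delta^+$. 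Hence the condition becomes
$$\Ad_{\exp(\lambda X_{-\beta})}\bigl(X_{u^{-1}(\beta_p)}\bigr)\in\lie b \quad\text{for all } s_{\beta_p}\in J_\mu,$$
up to the harmless sign coming from inverting. Equivalently, $u^{-1}(\beta_p)-k\beta\notin\Delta^-$ for all $k\ge0$. This is exactly the finite-dimensional statement verified inside the proof of Lemma \ref{bruhatcurvespringer}, with $w$ replaced by $u$.

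To produce $\beta$, I use Remark \ref{chainproperty}. Since $u$ is not maximal in ${}^{J_\mu}W$, the chain property \cite[Theorem 2.5.5]{bjorner2005combinatorics} gives $\beta\in\Delta^+$ with $us_\beta\in{}^{J_\mu}W$, $us_\beta>u$ and $l(us_\beta)=l(u)+1$. Then $u^{-1}(\beta_p)$ and $s_\beta u^{-1}(\beta_p)$ are positive for all $s_{\beta_p}\in J_\mu$, which are precisely the hypotheses used in the root-string argument of Lemma \ref{bruhatcurvespringer}. That argument excludes length-three strings via reduced expressions and checks $\lieg_2$ by hand. I would invoke it verbatim, noting that it never used the translation part $z^\mu$ beyond determining $J_\mu$. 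Therefore the whole curve lies in $\Fl_G^{\Ad_{w^{-1}}(e)}$.

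The main obstacle is making sure the reduction to Lemma \ref{bruhatcurvespringer} is legitimate. That lemma was stated for $w\in\widetilde{W}_G$ with $w,ws_\beta\in\Fl^e_G$. Here $\mu$ is only a coweight, so I must check that its proof is purely root-theoretic, depending only on $u$, $us_\beta$ and $J_\mu$. I must also check that the ordering of the conjugations (curve on the right, $w^{-1}$ acting on $e$) matches the earlier formula $\Ad_{\exp(X_{-\beta})u^{-1}}(X_{\beta_p})$, possibly after replacing $\lambda$ by $-\lambda$.
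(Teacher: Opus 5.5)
Your proof is correct and is essentially the paper's argument. The paper takes the curve $w\,u_{-\beta}(\lambda)$ of Lemma \ref{bruhatcurvespringer} in $\Fl_{G_{\ad}}$, with $\beta$ supplied by Remark \ref{chainproperty}, left-translates it by $w^{-1}$ using $w^{-1}\Fl^e_{G_{\ad}}=\Fl^{\Ad_{w^{-1}}(e)}_{G_{\ad}}$, and lifts to $G$; this is exactly your direct reduction of $\Ad_{u_{-\beta}(\lambda)^{-1}}\Ad_{u^{-1}z^{-\mu}}(e)\in\lie i$ to the root-string condition, and the hypotheses you rely on ($\mu$ antidominant with $u\in{}^{J_\mu}W$, $\beta\in\Delta^+$ as in Lemma \ref{bruhatcurve}, and the finite Weyl group cover $u\lessdot us_\beta$, which is what the proof of Lemma \ref{bruhatcurvespringer} actually uses) are the ones the statement leaves implicit.
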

	
	\begin{proof}
		This follows directly by taking the corresponding curve in Lemma \ref{bruhatcurvespringer}, which can only be guaranteed to be defined when projected to the adjoint group $G_{\ad} = G/Z(G)$ (since $\widetilde{W} = \widetilde{W}_{G_{ad}}$), and then multiplying on the left by $w^{-1}$. The result belongs to $w^{-1}\Fl^e_{G_{\ad}} = \Fl^{\Ad_{w^{-1}}(e)}_{G_{\ad}}$ and, as can be seen from its expression, lifts to $\Fl^{\Ad_{w^{-1}}(e)}_{G} \subseteq \Fl_G$.
	\end{proof}
	
	\begin{proposition}\label{affineflagcurvespringershifted}
		Let $w=\mu \cdot u \in \widetilde{W} = P_{\Delta^\vee} \rtimes W$ and $\beta \in \Delta$. Let $u_\beta : \mathbb G_a \to G$ be a root subgroup for $\beta$. Let $m:=\beta(\mu) + 1$. Then, the expression
		$$u_{u^{-1}(\beta)}(z^{m-\beta(\mu)}\lambda^{-1})z^{u^{-1}(\beta^\vee)}u^{-1}s_\beta u \in G((z)),$$
		\noindent for $\lambda \in \C^\times$, defines a curve in $\Fl_G$ with limit $1$ as $\lambda \to 0$ and $z^{u^{-1}(\beta^\vee)}u^{-1}s_\beta u$ as $\lambda \to \infty$. Additionally, if $\mu$ is antidominant, not minuscule and $u$ is the maximal element under the Bruhat order of ${}^{J_\mu}W$, then there exists $\beta \in \Delta^+$ such that the previous curve is contained in $\Fl^{\Ad_{w^{-1}}(e)}_G$.
	\end{proposition}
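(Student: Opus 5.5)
The plan is to deduce both claims from Proposition \ref{affineflagcurve} and Lemma \ref{affineflagcurvespringer2}, transported by left multiplication by $w^{-1}$. This is the same strategy used for Lemma \ref{bruhatcurveshifted} from Lemma \ref{bruhatcurvespringer}. Throughout we work first in $G_{\ad}$, where $\widetilde{W} = \widetilde{W}_{G_{\ad}}$, so $z^\mu$ makes sense even when $\mu \notin X_*(T)$.

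\textbf{Limits.} Consider the curve $\gamma(\lambda) := u_\beta(z^m\lambda^{-1})z^{\mu+\beta^\vee}s_\beta u$ of Proposition \ref{affineflagcurve} in $\Fl_{G_{\ad}}$. Its limits are $z^\mu u = w$ at $0$ and $z^{\mu+\beta^\vee}s_\beta u$ at $\infty$. Multiply on the left by $w^{-1} = u^{-1}z^{-\mu}$. Using $z^{-\mu}u_\beta(x)z^{\mu} = u_\beta(z^{-\beta(\mu)}x)$ and then conjugating by $u^{-1}$, we get
$$w^{-1}\gamma(\lambda) = u^{-1}u_\beta(z^{m-\beta(\mu)}\lambda^{-1})z^{\beta^\vee}s_\beta u = u_{u^{-1}(\beta)}(\pm z^{m-\beta(\mu)}\lambda^{-1})\, z^{u^{-1}(\beta^\vee)}\, u^{-1}s_\beta u.$$
Up to the irrelevant sign, this is exactly the stated expression. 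The limits are therefore $w^{-1}w = 1$ and $w^{-1}z^{\mu+\beta^\vee}s_\beta u = u^{-1}z^{\beta^\vee}s_\beta u = z^{u^{-1}(\beta^\vee)}u^{-1}s_\beta u$, as claimed.

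\textbf{Lifting to $G$.} The stated expression involves only the root subgroup $u_{u^{-1}(\beta)}$, the coroot cocharacter $u^{-1}(\beta^\vee)\in Q_{\Delta^\vee}\subseteq X_*(T)$, and a Weyl group element. Hence it is defined in $G((z))$ itself, not just in $G_{\ad}((z))$. Its image in $\Fl_G$ is the curve, and the limit computations in $\Fl_{G_{\ad}}$ lift, because $\Fl_G \to \Fl_{G_{\ad}}$ is compatible with the explicit factorisations used in the proof of Proposition \ref{affineflagcurve}. Those factorisations take place in the $\SL_2((z))$ attached to the root.

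\textbf{Containment in the Springer fibre.} Assume $\mu$ is antidominant, not minuscule, and $u$ is maximal in ${}^{J_\mu}W$. Then $w \in \Fl^e_{G_{\ad}}$ by Lemma \ref{affineflagspringercondition}. Lemma \ref{affineflagcurvespringer2} applied in $G_{\ad}$ (whose proof only uses the Lie-theoretic conditions of Lemma \ref{affineflagcurvespringer}, which are insensitive to the isogeny) provides $\beta\in\Delta^+$ with $\mu+\beta^\vee$ antidominant and $\gamma(\lambda)\in\Fl^e_{G_{\ad}}$ for all $\lambda$. Left multiplication gives $w^{-1}\Fl^e_{G_{\ad}} = \Fl^{\Ad_{w^{-1}}(e)}_{G_{\ad}}$, by the same identity $\Fl^{\Ad_\eta X}=\eta\Fl^X$ used before. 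So the shifted curve satisfies $\Ad_{\sigma^{-1}}\Ad_{w^{-1}}(e)\in\lie i$.

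This condition is stated purely at the Lie algebra level, and $\lie i$ and $\Ad$ agree for $G$ and $G_{\ad}$. Hence the lifted curve in $\Fl_G$ lies in $\Fl^{\Ad_{w^{-1}}(e)}_G$. Here $\Ad_{w^{-1}}(e)$ is a well-defined element of $\lieg((z))$, since the adjoint action factors through $G_{\ad}$.

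The only delicate point is this lifting step: checking that the identification $w^{-1}\Fl^e = \Fl^{\Ad_{w^{-1}}e}$ and the Springer condition are compatible with passing from $G_{\ad}$ to $G$ when $w\notin\widetilde{W}_G$. I would handle it exactly as in Lemma \ref{bruhatcurveshifted}, by verifying the membership condition directly on the explicit lifted expression.
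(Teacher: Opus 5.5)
Your proposal is correct and takes the same route as the paper. You take the curve of Proposition \ref{affineflagcurve}, with $\beta$ supplied by Lemma \ref{affineflagcurvespringer2}, for $G_{\ad}$, translate it by $w^{-1}$ using $w^{-1}\Fl^e_{G_{\ad}} = \Fl^{\Ad_{w^{-1}}(e)}_{G_{\ad}}$, and note that the resulting expression involves only a root subgroup, a coroot and a Weyl group element, so it lifts to $\Fl_G$; you simply write out the conjugation computation and the lifting argument that the paper compresses into ``as can be seen from its expression''.
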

	
	\begin{proof}
		This follows directly by taking the corresponding curve in Lemmas \ref{affineflagcurvespringer} and \ref{affineflagcurvespringer2} for the adjoint group $G_{\ad} = G/Z(G)$ and multiplying on the left by $w^{-1}$. The result belongs to $w^{-1}\Fl^e_{G_{\ad}} = \Fl^{\Ad_{w^{-1}}(e)}_{G_{\ad}}$ and, as can be seen from its expression, lifts to $\Fl^{\Ad_{w^{-1}}(e)}_{G} \subseteq \Fl_G$.
	\end{proof}

	\section{Classification of very stable points of Borel type}\label{sectheorem}
		
	\subsection{The \texorpdfstring{$\C^\times$}{C*}-action on the space of Hecke transformations}
	The main strategy for the classification of very stable points of Borel type is to introduce a $\C^\times$-action on the space of Hecke transformations of $(E,\varphi,Q) \in \mathcal M_{sp}(G,\alpha)^{\C^\times}$ at a point $c \in C$, in such a way that it intertwines with the $\C^\times$-action on $\mathcal M_{sp}(G,\alpha)$. Now we describe this action for \textbf{any} fixed point $(E,\varphi,Q)$ (for any Vinberg $\C^\times$-pair $(G_0,\lieg_1)$ with grading element $\zeta \in \lieg_0$ and corresponding cocharacter $\xi_\lambda \in G_{ad}$) and in the remaining subsections we specialise to the Borel type. We keep the same notation of the previous sections, in particular we recall that Hecke transformations are performed at $c \in C$, $C_1$ is a formal disk around $c$ and $C_0 = C \setminus \{c\}$.
	
	There are two kinds of important local trivialisations at play: those aligned with the $(G_0,\lieg_1)$-reduction and those aligned with the parabolic structure.
	
	\begin{definition}
		Let $(E,\varphi,Q) \in \mathcal M_{sp}(G,\alpha)^{\C^\times}$ be a parabolic $G$-Higgs bundle fixed by the $\C^\times$-action reducing to a Vinberg pair $(G_0,\lieg_1)$ and $c \in C$. A local trivialisation $t_1 : E_1 \to C_1 \times G$ is called a \textbf{$G_0$-trivialisation} if $t_1(E_{G_0}) = C_1 \times G_0$. If, furthermore, we have $\varphi_1 \in \lieg_1[[z]]$, then it is called a \textbf{$(G_0,\lieg_1)$-trivialisation}.
		
		If $c \in D$, a local trivialisation $t_1 : E_1 \to C_1 \times G$ is called \textbf{parabolic} if $t_1(Q_c) = B \in G/B$.
	\end{definition}
	
	\begin{remark}\label{trivialisationsrelation}
		It is clear (since every principal bundle trivialises over a formal disk $C_1$) that the previous trivialisations always exist. However, in general it is not possible to have simultaneously a $(G_0,\lieg_1)$-trivialisation which is parabolic at $c \in D$. Given a $(G_0,\lieg_1)$-trivialisation, Theorem \ref{fixedpointsparab} shows that $t_1(Q_c) = g_0 wB$ for some $g_0 \in G_0$, $w \in W_{G_0} \backslash W$. In that case,
		$$t_1^p := w^{-1}g_0^{-1} \cdot t_1 : E_1 \to C_1 \times G$$
		\noindent defines a parabolic $G_0$-trivialisation. We say that it is \textbf{associated} to the $(G_0,\lieg_1)$-trivialisation $t_1$. 
	\end{remark}
	
	Recall that Proposition \ref{heckespaceparab} identifies the space of Hecke transformations at $c \in D$ with $\Fl_G^{\varphi_1}$ using a parabolic trivialisation. Therefore, if we instead work with a $(G_0,\lieg_1)$-trivialisation with $t_1(Q_c) = g_0wB$, the space becomes $$\Fl^{\varphi_1}_{G,g_0w} = g_0w\left(\Fl_G^{\Ad_{w^{-1}g_0^{-1}}(\varphi_1)}\right)w^{-1}g_0^{-1}$$
	\noindent according to Remarks \ref{heckespacesrelation} and \ref{trivialisationsrelation}.
	
	\begin{proposition}\label{heckeequivariance}
		Let $(E,\varphi,Q) \in \mathcal M_{sp}^{\C^\times}(G,\alpha)$ be a parabolic $G$-Higgs bundle fixed by the $\C^\times$-action and fix a $(G_0,\lieg_1)$-trivialisation $t_1 : E_1 \to C_1 \times G$ identifying the space of Hecke transformations with $\Gr_G^{\varphi_1}$ (if $c \notin D$, see Proposition \ref{heckespace}) or with $\Fl_{G,g_0w}^{\varphi_1}$ (if $c \in D$, where $t_1(Q_c) = g_0wB \in G/B$, see Proposition \ref{heckespaceparab} and the discussion above).
		
		There is a natural $\C^\times$-action in this space of Hecke transformations:
		$$\sigma \mapsto \xi_\lambda^{-1}\sigma.$$ Denoting by $(E',\varphi',Q')$ the Hecke transformation by $\sigma$ and by $(E'_\lambda, \varphi'_\lambda,Q'_\lambda)$ the Hecke transformation by $\xi_\lambda^{-1}\sigma$, we have
		$$(E',\lambda \varphi',Q') \simeq (E'_\lambda,\varphi'_\lambda,Q'_\lambda).$$
	\end{proposition}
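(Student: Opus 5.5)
We argue directly with the Beauville--Laszlo local data. With respect to the fixed $(G_0,\lieg_1)$-trivialisation $t_1$, the bundle $(E,\varphi,Q)$ corresponds to $(E_0, C_1\times G, \Phi, \varphi_0, \varphi_1, Q)$. The Hecke transformation by $\sigma$ is $(E_0, C_1\times G, \Phi\circ\sigma, \varphi_0, \Ad_{\sigma^{-1}}\varphi_1, Q')$, where $Q'$ is induced from $Q$ at the points $c_i\neq c$ and from $\sigma$ at $c$. Scaling the Higgs field by $\lambda$ yields $(E_0, C_1\times G, \Phi\circ\sigma, \lambda\varphi_0, \lambda\Ad_{\sigma^{-1}}\varphi_1, Q')$. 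The goal is to produce an isomorphism of local data, in the sense of the Beauville--Laszlo equivalence, from this to the local data of the transformation by $\xi_\lambda^{-1}\sigma$, namely $(E_0, C_1\times G, \Phi\circ\xi_\lambda^{-1}\sigma, \varphi_0, \Ad_{\sigma^{-1}\xi_\lambda}\varphi_1, Q'_\lambda)$.

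\textbf{Construction of the pair $(f_0,f_1)$.} For $f_0$ we take the restriction to $C_0$ of the automorphism $f_\lambda^{-1}$ from Proposition \ref{cstarfixedmeromorphic}, oriented so that it sends $\lambda\varphi$ to $\varphi$. Since $f_\lambda$ is built from the central cocharacter of $G_0$ acting on $E_\sigma$, and $t_1$ is a $G_0$-trivialisation, the automorphism reads in the trivialisation $t_1$ as left multiplication by $\xi_\lambda^{\pm1}$ on $C_{01}\times G$. Here we use a lift $\tilde\xi_\lambda$; the choice of lift is harmless because $Z(G)\subseteq\mathcal I$ and $Z(G)\subseteq G[[z]]$, so $\xi_\lambda^{-1}\sigma$ is well defined in $\Fl_G$ or $\Gr_G$. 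For $f_1$ we take the identity of $C_1\times G$. The compatibility square $f_0\circ\Phi\circ\sigma = \Phi\circ\xi_\lambda^{-1}\sigma\circ f_1$ then reduces to $f_0\circ\Phi = \Phi\circ\xi_\lambda^{-1}$ on $C_{01}$, which is exactly the statement that $f_0$ is given by $\xi_\lambda^{-1}$ in the chart $t_1$. The sign conventions have to be fixed carefully so that this is $\xi_\lambda^{-1}$ rather than $\xi_\lambda$; this is consistent with the local computation in the proof of Proposition \ref{parabolicupwardflow}.

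\textbf{Compatibility with the Higgs fields.} Over $C_0$, compatibility holds by the construction of $f_\lambda$. Over $C_1$ we must check $\lambda\Ad_{\sigma^{-1}}\varphi_1 = \Ad_{\sigma^{-1}\xi_\lambda}\varphi_1$. Because $t_1$ is a $(G_0,\lieg_1)$-trivialisation, $\varphi_1\in\lieg_1[[z]]$, and therefore $\Ad_{\xi_\lambda}\varphi_1=\lambda\varphi_1$, which gives the identity.

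\textbf{Compatibility with the parabolic structures.} At the points $c_i\neq c$, the map $f_0=f_\lambda^{-1}$ is a parabolic automorphism, since $(E,\varphi,Q)$ is a fixed point; this follows from Proposition \ref{localparab} and Remark \ref{cstarparabremark}. At $c$ itself, the parabolic structure is encoded in the coset of $\sigma$ in $G((z))/g_0w\mathcal I w^{-1}g_0^{-1}$, and $f_1=\Id$ preserves it tautologically. One should also check that $\xi_\lambda^{-1}$ preserves $\Fl^{\varphi_1}_{G,g_0w}$, so that the action is well defined. The defining condition is $\Ad_{\sigma^{-1}}\varphi_1\in\Ad_{g_0w}\lie i$. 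Replacing $\sigma$ by $\xi_\lambda^{-1}\sigma$ turns $\Ad_{\sigma^{-1}}\varphi_1$ into $\lambda\Ad_{\sigma^{-1}}\varphi_1$, and this scaling preserves the subalgebra condition. The same argument applies to $\Gr_G^{\varphi_1}$.

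\textbf{Main obstacle.} The delicate point is the bookkeeping of conventions: the direction of $\Phi$, whether the action is $\xi_\lambda$ or $\xi_\lambda^{-1}$, and the lifting of $\xi_\lambda$ from $\Ad(G)$ to $G$ through $Z(G)$. The way to handle this is to mirror exactly the conventions already used in the proof of Proposition \ref{heckespace}, together with the local identification of $f_\lambda$ from Proposition \ref{localparab}.
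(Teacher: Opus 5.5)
Your proposal is correct and follows essentially the same route as the paper: both compare the two Beauville--Laszlo local data via the pair $(f_\lambda^{-1},\Id)$, reduce the glueing condition to $f_\lambda\circ\Phi=\Phi\circ\xi_\lambda$, and check Higgs compatibility over $C_1$ using $\Ad_{\xi_\lambda}\varphi_1=\lambda\varphi_1$ for $\varphi_1\in\lieg_1[[z]]$. Your extra remarks are handled correctly and are points the paper leaves implicit: that the lift of $\xi_\lambda$ through $Z(G)$ does not matter, and that $f_\lambda^{-1}$ is parabolic at the punctures $c_i\neq c$.
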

	
	\begin{proof}
		We assume $c \in D$, otherwise the proof is identical by ignoring the parabolic structure at $c$ (see also \cite[Proposition 4.19]{gonzalez_very_2025}). It is clear that the action is well defined on the space of Hecke transformations, since $\Ad{\sigma^{-1}\xi_\lambda}(\varphi_1) = \lambda \cdot \Ad_{\sigma^{-1}}(\varphi_1)$.
		
		The isomorphism
		$$(E,\varphi,Q) \simeq (E, \lambda \varphi,Q)$$
		\noindent can be expressed using the local data over $C_0$ and $C_1$, $(E_0,C_1 \times G,\Phi,\varphi_0,\varphi_1, Q_0, g_0wB)$ and $(E_0,C_1 \times G,\Phi,\lambda \varphi_0,\lambda \varphi_1, Q_0, g_0wB)$ by $(f_\lambda, \xi_\lambda)$ where $f_\lambda : E_0 \xrightarrow{\sim} E_0$ is the restriction of the isomorphism to $C_0$. The compatibility conditions with the Higgs field are
		$$(\lambda \varphi_0) \circ f_\lambda = \varphi_0$$
		$$(\lambda \varphi_1) \circ \xi_\lambda = \varphi_1$$
		\noindent (the last one holds automatically due to $G$-equivariance and the fact that $\Ad_{\xi_\lambda^{-1}}(X) = \lambda^{-1}X$ for any $X \in \lieg_1$). The glueing condition is
		$$f_\lambda \circ \Phi = \Phi \circ \xi_\lambda$$
		\noindent over $C_{01}$.
		
		Now we perform two different operations. On one hand we apply a Hecke transformation by $\sigma \in G((z))$ and then we apply the $\C^\times$-action, yielding
		$$(E_0,C_1 \times G, \Phi\circ \sigma, \lambda \varphi_0, \lambda \Ad_{\sigma^{-1}}(\varphi_1),Q_0,g_0wB).$$
		
		On the other hand we apply a Hecke transformation by $\xi_\lambda^{-1}\sigma$, yielding
		$$(E_0,C_1 \times G, \Phi\circ \xi_\lambda^{-1}\sigma, \varphi_0, \lambda \Ad_{\sigma^{-1}}(\varphi_1),Q_0,g_0wB).$$
		
		We claim that both are isomorphic, using the pair $(f_\lambda^{-1},\Id)$. First we check the glueing condition, which is
		$$f_\lambda^{-1} \Phi \sigma = \Phi \circ \xi_\lambda^{-1}\sigma.$$
		This is equivalent to $f_\lambda \circ \Phi = \Phi \circ \xi_\lambda$ which we know from above is true. Then we check the compatibility with the Higgs fields:
		$$\varphi_0 \circ f_\lambda^{-1} = \lambda \varphi_0$$
		$$\lambda \Ad_{\sigma^{-1}}(\varphi_1) \circ \Id = \lambda \Ad_{\sigma^{-1}}(\varphi_1).$$
		The second is tautological and the first holds as seen above.
	\end{proof}
	
	\begin{remark}\label{conjugatedheckeequivariance}
		Note that if $c\in D$ and we take instead the associated parabolic $G_0$-trivialisation, the action on $\Fl_G^{\varphi_1}$ becomes $\sigma \mapsto w^{-1}g_0^{-1}\xi_\lambda^{-1}g_0w\sigma = w^{-1}\xi_\lambda^{-1}w\sigma$.
	\end{remark}
	
	\subsection{Hecke transformations of fixed points of Borel type and very stable upward flows}
	
	For the remainder of the section we assume that fixed points are of Borel type. In this case, fixing a parabolic $T$-trivialisation and performing a Hecke transformation by an element in $\widetilde{W}_G \cap \Fl_G^{\varphi_1}$ results in a new fixed point (since $\widetilde{W}_G$ is fixed by the $\C^\times$-action in Proposition \ref{heckeequivariance}) of Borel type, and we can relate their multiplicity divisors.
	
	\begin{proposition}\label{twistedmulthecke}
		Let $(E,\varphi,Q) \in \mathcal M_{sp}(G,\alpha)^{\C^\times}$ be a fixed point of Borel type and fix a parabolic $T$-trivialisation at $c \in D$ associated to some $(T,\lieg_1)$-trivialisation. Let $w = z^\mu \cdot u \in \Fl_G^{\varphi_1} \cap \widetilde{W}_G$ and denote by $(E',\varphi',Q')$ the corresponding Hecke transformation. It is again a fixed point of Borel type, with
		$$w_c(E',\varphi',Q') = w_c(E,\varphi,Q) \cdot w$$
		\noindent and $w_p(E',\varphi',Q') = w_p(E,\varphi,Q)$ for $p \neq c$.
	\end{proposition}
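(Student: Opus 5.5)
We prove this by working entirely in local data around $c$, in the chosen parabolic $T$-trivialisation $t_1^p = w_0^{-1}t_1$. Here $t_1$ is a $(T,\lieg_1)$-trivialisation with $t_1(Q_c) = w_0B$, and $w_0 = u_c(E,\varphi,Q)$; for $G_0=T$ the factor $g_0\in T$ can be absorbed. Since $w$ has coefficients in $\widetilde{W}_G$, it is fixed (modulo $\mathcal I$) by the action $\sigma \mapsto w_0^{-1}\xi_\lambda^{-1}w_0\sigma$ of Remark \ref{conjugatedheckeequivariance}. Indeed, $w_0^{-1}\xi_\lambda^{-1}w_0 \in T$, and conjugating it past $z^\mu u$ lands in $T \subseteq \mathcal I$. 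Proposition \ref{heckeequivariance} then shows that $(E',\varphi',Q')$ is $\C^\times$-fixed. Away from $c$ nothing changes: the Hecke transformation is an isomorphism over $C_0$, so $\varphi'_j$ and $Q'_i$ agree with the old ones there. This gives $w_p(E',\varphi',Q') = w_p(E,\varphi,Q)$ for $p\neq c$. Generic regularity of $\varphi'$ also follows, so the new fixed point is of Borel type.

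The key observation is that in the $(T,\lieg_1)$-trivialisation $t_1$ of the original bundle, the local Higgs field is
$$\varphi_1 = \sum_j f_j(z) X_{\beta_j}, \qquad \ord_z f_j = \ord_c(\varphi_j),$$
with the units absorbed by a $T[[z]]$-change. Hence, up to left multiplication by an element of $T[[z]]$, we have $\varphi_1 = \Ad_{z^{\mu_c}}(e)$, using $\Ad_{z^{\nu}}X_\beta = z^{\beta(\nu)}X_\beta$ with $\mu_c = \mu_c(E,\varphi)$. Passing to the parabolic trivialisation, the space of Hecke transformations is $\Fl_G^{\Ad_{w_0^{-1}}\varphi_1}$. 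In $G_{\ad}((z))$ we can write $\Ad_{w_0^{-1}z^{\mu_c}}(e) = \Ad_{w_c^{-1}}(e)$ with $w_c = (-\mu_c)\cdot w_0$, consistent with the sign convention of Remark \ref{signconvetion}. So the local picture at $c$ is "$e$ modified by $w_c$", and Lemma \ref{affineflagspringercondition} applies.

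Now we compute the new local data. The Hecke transformation by $\sigma = z^\mu u$ (in $\widetilde{W}_G$ notation) gives a new bundle whose trivialisation over $C_1$ carries the Higgs field
$$\Ad_{\sigma^{-1}}\Ad_{w_c^{-1}}(e) = \Ad_{(w_c\sigma)^{-1}}(e),$$
and whose parabolic structure is the trivial flag $B$. Write $w_c\sigma = \nu\cdot v \in \widetilde{W}$. By Lemma \ref{affineflagspringercondition} applied in $G_{\ad}$, $\nu$ is antidominant and $v\in{}^{J_\nu}W$. The element $\Ad_{v^{-1}z^{-\nu}}(e)$ has components $z^{-\beta_j(\nu)}X_{v^{-1}\beta_j}$. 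Changing the trivialisation by $v^{-1}$ on the left gives a $(T,\lieg_1)$-trivialisation. In it, the new Higgs field has vanishing orders $-\beta_j(\nu)$, so $\mu_c(E',\varphi') = -\nu$, and the parabolic structure sits at $v B$, so $u_c(E',\varphi',Q') = v$. Therefore $w_c(E',\varphi',Q') = \nu\cdot v = w_c(E,\varphi,Q)\cdot w$.

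The main obstacle is bookkeeping rather than any conceptual difficulty. One must check that the identification of $\varphi_1$ with $\Ad_{z^{\mu_c}}e$ holds up to an element of $T[[z]]\subseteq\mathcal I$, which does not affect the flag point. One must also track the semidirect product convention, i.e. $z^{\mu}$ acting as translation by $-\mu$, so that the product $w_c\sigma$ in $G((z))$ matches multiplication in $\widetilde{W}$, including the identity $u z^{\mu} u^{-1} = z^{u(\mu)}$. Finally, when $\mu_c\notin X_*(T)$ one should carry out the computation in $G_{\ad}$ as in Lemma \ref{bruhatcurveshifted}; the invariants $\mu_c$ and $u_c$ only depend on the adjoint data.
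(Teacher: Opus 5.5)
Your proposal is correct and follows essentially the paper's argument: work in the induced parabolic trivialisation, compute the new local Higgs field $\Ad_{u^{-1}z^{-\mu}}(X)$, conjugate by a Weyl element back into $\lieg_1[[z]]$, and read off $u_c' = u_c u$ and $\mu_c' = \mu_c - u_c(\mu)$. The difference is cosmetic. You first normalise $\varphi_1 = \Ad_{z^{\mu_c}}(e)$ by a $T[[z]]$-change, so the identity becomes the decomposition of $w_c\sigma$ in $\widetilde{W}$. The paper instead keeps the units and pins down $u_c'$ by observing that $u_c'u^{-1}z^{-\mu}u_c^{-1}$ must be a pure cocharacter.

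One small slip: in the convention $\varphi_1' = \Ad_\eta(\varphi_1)$, the change of trivialisation that brings $\Ad_{v^{-1}z^{-\nu}}(e)$ into $\lieg_1[[z]]$ is by $v$, not $v^{-1}$. That is also what your conclusion that the flag sits at $vB$ actually uses.
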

	
	\begin{proof}
		Denote $X:=\varphi_1 \in \mathfrak i$ the local expression of the Higgs field in the fixed trivialisation. We abbreviate $\mu_c := \mu_c(E,\varphi)$ and $u_c := u_c(E,\varphi,Q)$. In the induced trivialisation for $(E',\varphi',Q')$, which is still parabolic, we have that the Higgs field is $\Ad_{u^{-1}z^{-\mu}}(X) \in \mathfrak i$. We also know that $\Ad_{u_c}(X) \in \lieg_1[[z]]$ and $\Ad_{u_c'u^{-1}z^{-\mu}}(X) \in \lieg_1[[z]]$. This means that the $u_c'u^{-1}z^{-\mu}u_c^{-1}$ must be a cocharacter (otherwise it would send $\Ad_{u_c}(X)$, which is of the form $\sum_{i=1}^ru_i(z)z^{k_i}X_{\beta_i}$ where $u_i(0) \neq 0$, outside of $\lieg_1[[z]]$). We rewrite
		$$u_c'u^{-1}z^{-\mu}u_c^{-1} = (z^{-u_c'u^{-1}(\mu)})u_c'u^{-1}u_c^{-1}.$$
		Therefore, $u_c' = u_cu$ and the resulting cocharacter is $z^{-u_c(\mu)}$, so $\mu_c' = \mu_c - u_c(\mu)$. We can check in $\widetilde{W}$ that
		$$w_c \cdot w = (\mu_c^{-1}u_c)(\mu \cdot u) = \mu_c^{-1}u_c(\mu)u_cu = (u_c(\mu)^{-1}\mu_c)^{-1}u_cu,$$
		\noindent as desired.
	\end{proof} 
	
	For completeness (although we will not need it) we note that the same result clearly holds for $c \notin D$ and $z^\mu \in \Gr_G^{\varphi_1} \cap X_*^-(T)$. 
	
	We introduce the following terminology.
	
	\begin{definition}\label{reduced}
		We say that the twisted multiplicity divisor $w(E,\varphi,Q)$ of a fixed point of Borel type is \textbf{reduced} if its coefficient at every $c \in D$ is minimal under the Bruhat order of $\widetilde{W}$, and its coefficient at every $c \notin D$ is minimal under the Bruhat order of $W \backslash \widetilde{W}/W = P_{\Delta^\vee}^-$.
		
		Equivalently, it is reduced if for every $c \in D$ we have $l(w_c(E,\varphi,Q)) = 0$ (see Proposition \ref{lengthzero}) and for every $c \notin D$ we have that $w_c(E,\varphi,Q)$ is minuscule.
	\end{definition}
	
	Reduced twisted multiplicity divisors will correspond to very stable upward flows. We already know that, in the simplest case where $w(E,\varphi,Q) = 1$ from Example \ref{paraboliccanonical}, the upward flow equals the Hitchin section and it is indeed very stable. Now we illustrate that if $w(E,\varphi,Q)$ is supported at $D$ and reduced, we still get very stable sections of the Hitchin map as upward flows.
	
		\begin{example}\label{lengthzeroverystable}
		Assume that $G=G_{ad}$ is of adjoint type and suppose that $(E,\varphi,Q)$ is a smooth fixed point of Borel type with twisted multiplicity divisor $w(E,\varphi,Q)$ supported at $D$ (i.e. such that $w_c(E,\varphi,Q) = 1$ whenever $c \notin D$). Assume that $l(w_c(E,\varphi,Q)) = 0$ for $c \in D$, see Proposition \ref{lengthzero}. We are going to prove that $(E,\varphi,Q)$ is very stable by constructing its upward flow. We do so by performing a sequence of Hecke transformations at the parabolic punctures.
		
		We start at $c_1 \in D$ with a local $(T,\lieg_1)$-trivialisation $(C_{c_1} \simeq \Spec C[[z]], t_{c_1})$ around $c_1$ and perform the Hecke transformation by $\mu_{c_1}(E,\varphi)u_{c_1}(E,\varphi,Q)^{-1} \in \widetilde{W} = \widetilde{W}_G$. This is equivalent to performing the Hecke transformation by
		$$u_{c_1}^{-1}(E,\varphi,Q)\left(\mu_{c_1}(E,\varphi)u_{c_1}(E,\varphi,Q)^{-1}\right)u_{c_1}(E,\varphi,Q) = w_{c_1}(E,\varphi,Q)^{-1}$$
		\noindent in the associated parabolic $T$-trivialisation, so by Proposition \ref{twistedmulthecke} the resulting transformation $(E^1,\varphi^1,Q^1)$ has
		$$w_{c_1}(E^1,\varphi^1,Q^1) = w_{c_1}(E,\varphi,Q)w_{c_1}(E,\varphi,Q)^{-1} = 1$$
		\noindent and, since $1 \in \Fl^e$, it is a well-defined parabolic $G$-Higgs bundle.
		
		The transformation $(E^1,\varphi^1, Q^1)$ comes with an induced local trivialisation around $c_1$ which may no longer be $(T,\lieg_1)$-trivialisation. However, we can change it by a left multiplication by $u_{c_1}(E,\varphi,Q)^{-1}$ in order to make it again a $(T,\lieg_1)$-trivialisation at $c_1$, which is moreover parabolic at $c_1$. Of course, we can recover $(E,\varphi,Q)$ by performing a Hecke transformation by $w_{c_1}(E,\varphi,Q)$ with this trivialisation, which reverses the whole process.
		
		Now we iterate this at every puncture $c_i$, reaching an everywhere regular fixed point $(E^s,\varphi^s,Q^s)$ with $w(E^s,\varphi^s,Q^s) = 1$. By Example \ref{paraboliccanonical}, this fixed point is isomorphic to the canonical uniformising $(E^0,\varphi^0,Q^0)$. Therefore, applying this isomorphism, we may assume that the local expression of the Higgs field in our local trivialisations around each $c_i$ is constant, equal to $e = \sum_{j=1}^rX_{\beta_j}$. 
		
		An important remark is that, even after having applied this isomorphism, the Hecke transformations that revert back to $(E,\varphi,Q)$ are still given by the elements $w_{c_i}(E,\varphi,Q)$ in the new local trivialisations. Indeed, \textit{a priori} we can only guarantee that they have to be given by certain elements in the Schubert cell
		$$\mathcal I \cdot w_{c_i}(E,\varphi,Q) \in \Fl_G.$$
		However, by Proposition \ref{bruhatdecomposition}, since $l(w_{c_i}(E,\varphi,Q))=0$, this Schubert cell consists of a single point equal to $w_{c_i}(E,\varphi,Q)$.
		
		To conclude, recall from the conclusion in Example \ref{paraboliccanonical} that the upward flow of $(E^0,\varphi^0,Q^0)$ is given by the Hitchin section $(E^0,\varphi^a,Q^0)$ for $a \in \mathcal A_G$. Since $\varphi^a(c_i) = \varphi^0(c_i)$ at every $c_i \in D$, locally we have
		$$\varphi^a|_{C_{c_i}} = e + \sum_{j=1}^ra_j^i(z) \in \mathfrak i,$$ 
		\noindent where $a_j^i(z) \in z\left< f_j \right >[[z]]$ is the local expression of the section $a_jf_j \in H^0(E^0(\lieg) \otimes K_C(D))$ used to define $\varphi^a$. Thus, again by Proposition \ref{bruhatdecomposition} and $l(w_{c_i}(E,\varphi,Q)) = 0$, which implies that $w_{c_i}(E,\varphi,Q)$ normalises $\mathcal I$, we may perform the Hecke modification by $w_{c_i}(E,\varphi,Q)$ at every $c_i$ to $(E^0,\varphi^a,Q^0)$ as well. Doing this in order (from $c_s$ back to $c_1$) results in $(E,(\varphi^a)',Q)$, which define a section of the Hitchin map: locally around $c_i$ (and after conjugating by $u_{c_i}(E,\varphi,Q)$ to make the resulting trivialisation a $(T,\lieg_1)$-trivialisation) we have
		$$(\varphi^a)'|_{C_{c_i}} = zX_{\beta_{j_0}} + \sum_{j \neq j_0}X_{\beta_j} + \sum_{j=1}^r(a')_j^i(z).$$
		
		\noindent where $j_0$ is such that $\mu_{c_i}(E,\varphi) = \omega_{j_0}^\vee$, and $(a')_j^i(z) \in \lieg[[z]]$ is given by $(a')_j^i(z) = \Ad_{z^{-\omega_{j_0}^\vee}}a_j^i(z)$. Note that $(a')_j^i(z)$ does not have poles as $\omega_{j_0}^\vee$ is minuscule, so its pairing with every root is at most one, while the $a_j^i(z)$ have at least valuation one. From the local expressions above it is clear that this section is in the upward flow of $(E,\varphi,Q)$: applying the automorphism given by $\xi_\lambda^{-1}$ we have
		$$\Ad_{\xi_\lambda^{-1}}(\lambda \cdot (\varphi^a)'|_{C_{c_i}}) = zX_{\beta_{j_0}} + \sum_{j \neq j_0}X_{\beta_j} + \sum_{j=1}^r\lambda^{\deg(p_j)-1}(a')_j^i(z)$$
		\noindent since every $f_j$ has degree $1-\deg(p_j)$ in the Borel grading, as explained in Example \ref{paraboliccanonical}. Thus, the rightmost sum vanishes when $\lambda \to 0$.
		
		Therefore, we get the subvariety
		$$\mathcal A(G) \simeq \left\{(E,(\varphi^a)',Q) : a \in \mathcal A(G)\right\} \subseteq W^+_{(E,\varphi,Q)}.$$
		
		Moreover, both sides are vector spaces of the same dimension (since $(E,\varphi,Q)$ is smooth), hence equality holds. It then follows from this explicit description of the upward flow that $(E,\varphi,Q)$ is very stable.
	\end{example}
	
	\begin{example}
		We now clarify the construction in the previous example in the simplest possible case. Let $G=\PGL_2(\C)$, $D=c$ and $w_c(E,\varphi,Q) = (-\omega_1^\vee) \cdot s_{\beta_1}$. The corresponding fixed point, in terms of vector bundles, is isomorphic to
		$$E = \mathcal O_C \oplus K_C^{-1}$$
		$$\varphi = \begin{pmatrix}
			0 & 0 \\
			s_c & 0
		\end{pmatrix}$$
		$$Q = \{E|_c \supseteq \mathcal O_C|_c \supseteq 0\},$$
		\noindent where $s_c \in H^0(\mathcal O_C(c))$ is the canonical section. Given a point $a \in H^0(K_C^2(c))$ in the Hitchin base, the upward flow is given by keeping $E$ and $Q$ and using the Higgs fields
		$$(\varphi^a)' = \begin{pmatrix}
			0 & a\\
			s_c & 0
		\end{pmatrix}.$$
	\end{example}
	
	\subsection{Hecke curves in the moduli space}
	
	Using Proposition \ref{heckeequivariance}, the curves inside the Hecke transformation spaces studied in Lemmas \ref{bruhatcurve}, \ref{bruhatcurvespringer}, \ref{affineflagcurvespringer}, \ref{affineflagcurvespringer2} and Propositions  \ref{affineflagcurve}, \ref{affineflagcurvespringershifted} induce curves in the moduli space $\mathcal M_{sp}(G,\alpha)^{\C^\times}$. The goal of this subsection is to verify this by showing that the $\C^\times$-action preserves these curves and that stability is also maintained.
	
	\begin{proposition}\label{affineflagcurvetype2}
		Let $w = \mu \cdot u \in \widetilde{W}$ and $\beta \in \Delta^+$ such that $u \le us_{\beta}$ in the Bruhat order of $\widetilde{W}$. There is an element $\sigma_{w,\beta} \in \Fl_G$ such that, for the $\C^\times$-action in Proposition \ref{heckeequivariance} conjugated by $u^{-1}$ (see Remark \ref{conjugatedheckeequivariance}), we have
		$$\lim\limits_{\lambda \to 0}\lambda \cdot \sigma_{w,\beta} = 1$$
		$$\lim\limits_{\lambda \to \infty}\lambda \cdot \sigma_{w,\beta} = s_\beta.$$
	\end{proposition}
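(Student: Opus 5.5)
We take $\sigma_{w,\beta}$ to be a point on the curve of Lemma \ref{bruhatcurve} (equivalently the first curve in Lemma \ref{bruhatcurveshifted}), transported to the associated parabolic $T$-trivialisation. Concretely, we set
$$\sigma_{w,\beta} := u_{-\beta}(1) = \exp(X_{-\beta}) \in G \subseteq G((z)),$$
and we check that the conjugated $\C^\times$-action of Remark \ref{conjugatedheckeequivariance}, namely $\sigma \mapsto u^{-1}\xi_\lambda^{-1}u\,\sigma$, moves this point along the curve $u_{-\beta}(\lambda^{\pm 1})$. Lemma \ref{bruhatcurve} then gives the two limits $1$ and $s_\beta$ in $\Fl_G$. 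If the orientation comes out reversed, we replace $\beta$ by the appropriate sign or use $u_{\beta}$ instead, and we fix this by the length hypothesis below.

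First, compute the action. Since $\xi_\lambda$ is the cocharacter of the grading element $\zeta=\sum_j\omega_j^\vee$, conjugation by $u^{-1}\xi_\lambda^{-1}u$ acts on the root subgroup $U_{-\beta}$ through the character $\gamma\mapsto\lambda^{-\heightr(\gamma)}$ evaluated at $\gamma=u(-\beta)$. This gives
$$u^{-1}\xi_\lambda^{-1}u\; u_{-\beta}(1) = u_{-\beta}\bigl(\lambda^{\heightr(u(\beta))}\bigr)\; u^{-1}\xi_\lambda^{-1}u .$$
The trailing factor lies in $T\subseteq\mathcal I$, so it disappears in $\Fl_G=G((z))/\mathcal I$. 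The orbit is therefore $\lambda\mapsto u_{-\beta}(\lambda^{k})$ with $k=\heightr(u(\beta))$.

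Second, determine the sign of $k$. The hypothesis $u\le us_\beta$ together with $\beta\in\Delta^+$ gives $u(\beta)\in\Delta^+$, by the standard characterisation $l(us_\beta)>l(u)\iff u(\beta)>0$. Hence $k\ge 1$. As $\lambda\to 0$ we get $u_{-\beta}(\lambda^k)\to 1$, and as $\lambda\to\infty$ we get $u_{-\beta}(\lambda^k)\to s_\beta$, using the $\SL_2$ factorisation in the proof of Lemma \ref{bruhatcurve}. When $\mu$ does not lift to $X_*(T)$, we argue as in Lemma \ref{bruhatcurveshifted}: the point $\sigma_{w,\beta}$ involves only $G$ and not $z^\mu$, so it lies in $\Fl_G$ for $G$ itself even though $w$ is only in $\widetilde W$.

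The main obstacle is bookkeeping the conventions: whether the conjugated action is by $u^{-1}\xi_\lambda^{-1}u$ or by $u\xi_\lambda^{-1}u^{-1}$, and with which sign. An error there would turn the relevant height into $\heightr(u^{-1}(\beta))$ or flip $\lambda\leftrightarrow\lambda^{-1}$. I would first recompute the conjugation directly from Remark \ref{trivialisationsrelation}, where $t_1^p=w^{-1}g_0^{-1}t_1$. If the exponent turns out negative, the fix is to take $\sigma_{w,\beta}:=u_{-\beta}(1)$ but read the limits through the inverse parametrisation, or to use the root subgroup $u_{\beta}$ with the factorisation written in the opposite order. The Bruhat hypothesis is exactly the condition that fixes the sign of the relevant height.
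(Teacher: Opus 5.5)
Your proposal is correct and matches the paper's proof. The paper also takes $\sigma_{w,\beta}=u_{-\beta}(1)$ and computes $u^{-1}\xi_\lambda^{-1}u\cdot u_{-\beta}(1)=u_{-\beta}(\lambda^k)$ modulo $T\subseteq\mathcal I$, with $k=u(\beta)(\zeta)=\heightr(u(\beta))>0$ because $u\le us_\beta$ forces $u(\beta)\in\Delta^+$; it then reads off the two limits from Lemma \ref{bruhatcurve}, exactly as you do. Your sign computation already gives the correct orientation, so the contingency discussion in your last paragraph is not needed.
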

	\begin{proof}
		The element is that of Lemma \ref{bruhatcurveshifted}, i.e. it is $u_{-\beta(\lambda)}(1)$ where $u_{-\beta} : \mathbb G_a \to G$ is the root subgroup for $-\beta$.
		
		Indeed,
		$$\lambda \cdot \sigma_{w,\beta} = u^{-1}\xi_\lambda^{-1}u \cdot u_{-\beta}(1) = (u^{-1}\xi)_{\lambda}^{-1}u_{-\beta}(1) = u_{-\beta}(\lambda^k)$$
		\noindent where $k = \beta(u^{-1}(\zeta)) = u(\beta)(\zeta) > 0$, since $u(\beta) \in \Delta^+$ because $u \le us_\beta$.
	\end{proof}
	
	\begin{proposition}\label{affineflagcurvetype1}
		Let $w=z^\mu \cdot u \in \widetilde{W}$ and $\beta \in \Delta^+$. There is an element $\sigma'_{w,\beta} \in \Fl_G$ such that, for the $\C^\times$-action in Proposition \ref{heckeequivariance} conjugated by $u^{-1}$ (see Remark \ref{conjugatedheckeequivariance}), we have
		$$\lim\limits_{\lambda \to 0}\lambda \cdot \sigma'_{w,\beta} = 1$$
		$$\lim\limits_{\lambda \to \infty}\lambda \cdot \sigma'_{w,\beta} = z^{u^{-1}(\beta^\vee)}u^{-1}s_\beta u.$$
	\end{proposition}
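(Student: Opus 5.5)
Following the proof of Proposition \ref{affineflagcurvetype2}, I take $\sigma'_{w,\beta}$ to be the curve of Proposition \ref{affineflagcurvespringershifted} evaluated at $\lambda=1$:
$$\sigma'_{w,\beta} := u_{u^{-1}(\beta)}(z^{m-\beta(\mu)})\,z^{u^{-1}(\beta^\vee)}u^{-1}s_\beta u, \qquad m=\beta(\mu)+1,$$
so that $m-\beta(\mu)=1$ and the root-group factor is $u_{u^{-1}(\beta)}(z)$. The plan is to compute the conjugated action $\lambda\cdot\sigma = u^{-1}\xi_\lambda^{-1}u\,\sigma$ of Remark \ref{conjugatedheckeequivariance} explicitly. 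I will show that it reparametrises the curve of Proposition \ref{affineflagcurvespringershifted}, and then read off both limits from that proposition.

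The first step is to move the torus element $u^{-1}\xi_\lambda^{-1}u=(u^{-1}\xi)_\lambda^{-1}$ past the factors of $\sigma'_{w,\beta}$.
\begin{itemize}
\item Conjugating the root subgroup gives $(u^{-1}\xi)_\lambda^{-1}u_{u^{-1}(\beta)}(z)(u^{-1}\xi)_\lambda = u_{u^{-1}(\beta)}(\lambda^{-k}z)$, where $k=u^{-1}(\beta)(u^{-1}(\zeta))=\beta(\zeta)=\heightr(\beta)>0$ because $\beta\in\Delta^+$.
\item The torus element then commutes past $z^{u^{-1}(\beta^\vee)}$, since both lie in $T((z))$.
\item Passing it through $u^{-1}s_\beta u$ turns it into $(u^{-1}s_\beta u)^{-1}(u^{-1}\xi)_\lambda^{-1}(u^{-1}s_\beta u)\in T$.
\item This last element lies in $T\subseteq\mathcal I$, so it is absorbed by the right quotient in $\Fl_G$.
\end{itemize}
Hence
$$\lambda\cdot\sigma'_{w,\beta}=u_{u^{-1}(\beta)}(\lambda^{-k}z)\,z^{u^{-1}(\beta^\vee)}u^{-1}s_\beta u,$$
which is the curve of Proposition \ref{affineflagcurvespringershifted} with parameter $\lambda^k$. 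The sign conventions should be checked here, in particular that the action really is by $\xi_\lambda^{-1}$ on the left, as in Proposition \ref{heckeequivariance}.

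The limits then follow from Proposition \ref{affineflagcurvespringershifted}, since $\lambda^k\to0$ (resp. $\infty$) as $\lambda\to0$ (resp. $\infty$). Concretely:
\begin{itemize}
\item As $\lambda\to\infty$, the root-group factor tends to $1$, giving $z^{u^{-1}(\beta^\vee)}u^{-1}s_\beta u$.
\item As $\lambda\to0$, I reduce to the $\liesl_2$ computation in the proof of Proposition \ref{affineflagcurve}. Rewrite the element as $u^{-1}\bigl(u_\beta(\pm\lambda^{-k}z)z^{\beta^\vee}s_\beta\bigr)u$. The $\SL_2$ factorisation there shows that $u_\beta(\lambda^{-k}z)z^{\beta^\vee}s_\beta = u_{-\beta}(\lambda^kz^{-1})\,b$ with $b\in\mathcal I$ (the matrix with diagonal $\lambda^{-k},\lambda^k$).
\end{itemize}

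The main obstacle is this $\lambda\to0$ limit. After conjugating by $u^{-1}$, the Iwahori element $b$ becomes $u^{-1}bu$, which need not lie in $\mathcal I$, so it cannot simply be discarded on the right. The cleanest fix is to factor $b$ as a torus part times $u_\beta(-z\cdot\text{const})$. The torus part lies in $T$ and is absorbed. The unipotent part is conjugated to a root subgroup for $u^{-1}(\beta)$ with a positive power of $z$, so it lies in $\mathcal I$ regardless of the sign of $u^{-1}(\beta)$. Meanwhile $u^{-1}u_{-\beta}(\lambda^kz^{-1})u=u_{-u^{-1}(\beta)}(\pm\lambda^kz^{-1})\to1$ as $\lambda\to 0$, so the limit is $1$. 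I expect verifying that these conjugated factors are genuinely in $\mathcal I$, uniformly in $\lambda$, to be the delicate point.
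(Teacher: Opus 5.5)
Your proposal is correct and is essentially the paper's proof. It uses the same element $\sigma'_{w,\beta}$ and the same computation commuting $u^{-1}\xi_\lambda^{-1}u$ through the factors, which produces $\lambda^{-\heightr(\beta)}$ in the root-group argument and leaves a torus element absorbed into $\mathcal I$. It then appeals, as the paper does, to Proposition \ref{affineflagcurvespringershifted} for the limits.

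Your extra check of the $\lambda\to 0$ limit is also correct. Factoring $b$ as a torus element times $u_\beta(-\lambda^k z)$ gives $u^{-1}bu\in T\cdot u_{u^{-1}(\beta)}(z\C[[z]])\subseteq\mathcal I$ whatever the sign of $u^{-1}(\beta)$. This makes explicit what the paper leaves to ``repeat the computations'' in Proposition \ref{affineflagcurve}.
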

	\begin{proof}
		Let $m:=\beta(\mu)+1$. This element is
		$$\sigma'_{w,\beta} := u_{u^{-1}(\beta)}(z^{m-\beta(\mu)})z^{u^{-1}(\beta^\vee)}u^{-1}s_\beta u.$$
		
		Indeed,
		\begin{align*}
			\lambda \cdot \sigma'_{w,\beta} &= \left(u^{-1}\xi^{-1}_\lambda u\right) u_{u^{-1}(\beta)}(z^{m-\beta(\mu)})z^{u^{-1}(\beta^\vee)}u^{-1}s_\beta u \\
			&= u_{u^{-1}(\beta)}(\lambda^{-\text{ht}(\beta)}z^{m-\beta(\mu)})z^{u^{-1}(\beta^\vee)}u^{-1}s_\beta u \cdot u^{-1}(s_\beta^{-1}(\xi_\lambda))
		\end{align*}	
		
		The rightmost element belongs to $T \subseteq \mathcal I$. Therefore, we obtain (up to an orientation-preserving reparametrisation) the curve whose limits are described in Proposition \ref{affineflagcurvespringershifted}.
	\end{proof}
	
	Now we show that, unless $w \in \widetilde{W} \cap \Fl_{G_{ad}}^e$ is minimal under the Bruhat order (i.e. unless it has length zero, see Proposition \ref{lengthzero}), we can always find curves of either of the two previous types, contained within $\Fl^{\Ad_{w^{-1}}(e)}_{G}$ and limiting at $1$ when $\lambda \to 0$.
	
	\begin{proposition}\label{affineflagwobblycurves}
		Let $w = \mu \cdot u \in \widetilde{W} \cap \Fl_{G_{ad}}^e$ and suppose that $l(w) \neq 0$. Then, there exists $\tilde{\sigma}_{w} \in \Fl^{\Ad_{w^{-1}}(e)}_{G}$ such that $\lambda \cdot \tilde{\sigma}_w \in \Fl^{\Ad_{w^{-1}}(e)}_{G}$ for all $\lambda \in \C^\times$ (for the $\C^\times$-action in Proposition \ref{heckeequivariance} conjugated by $u^{-1}$, see Remark \ref{conjugatedheckeequivariance}) and
		$$\lim\limits_{\lambda \to 0}\lambda \cdot \tilde{\sigma}_w = 1.$$
	\end{proposition}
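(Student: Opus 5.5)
The plan is to split according to the two ways in which $w=\mu\cdot u$ can fail to be of length zero, using the characterisation in Proposition \ref{lengthzero}. Since $w\in\widetilde{W}\cap\Fl^e_{G_{\ad}}$, Lemma \ref{affineflagspringercondition} tells us that $\mu$ is antidominant and $u\in{}^{J_\mu}W$. By Proposition \ref{lengthzero}, $l(w)\neq 0$ then means that one of the following holds: either $u$ is not the longest element of ${}^{J_\mu}W$, or $u$ is the longest element but $\mu$ is not minuscule. In each case we take $\tilde{\sigma}_w$ from one of the two families of curves already built, namely $\sigma_{w,\beta}$ from Proposition \ref{affineflagcurvetype2} and $\sigma'_{w,\beta}$ from Proposition \ref{affineflagcurvetype1}.

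\textbf{Case 1: $u$ is not maximal in ${}^{J_\mu}W$.} By the chain property (Remark \ref{chainproperty}) there is $\beta\in\Delta^+$ with $us_\beta\in{}^{J_\mu}W$, $u\le us_\beta$ and $l(us_\beta)=l(u)+1$. Lemma \ref{bruhatcurveshifted}, which rests on Lemma \ref{bruhatcurvespringer}, then shows that the whole curve $u_{-\beta}(\lambda)$, for $\lambda\in\C^\times$, lies in $\Fl^{\Ad_{w^{-1}}(e)}_G$. We set $\tilde{\sigma}_w:=\sigma_{w,\beta}=u_{-\beta}(1)$. The computation in the proof of Proposition \ref{affineflagcurvetype2} gives $\lambda\cdot\tilde{\sigma}_w=u_{-\beta}(\lambda^k)$ with $k=u(\beta)(\zeta)>0$, because $u(\beta)\in\Delta^+$. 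So the orbit stays on the curve, and hence inside the affine Springer fibre, and it tends to $1$ as $\lambda\to 0$.

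\textbf{Case 2: $u$ is maximal in ${}^{J_\mu}W$ and $\mu$ is not minuscule.} Here we use Lemma \ref{affineflagcurvespringer2}, in the shifted form of Proposition \ref{affineflagcurvespringershifted}. It provides $\beta\in\Delta^+$ with $\mu+\beta^\vee$ antidominant such that the curve
$$u_{u^{-1}(\beta)}(z\lambda^{-1})\,z^{u^{-1}(\beta^\vee)}u^{-1}s_\beta u,\qquad \lambda\in\C^\times,$$
lies in $\Fl^{\Ad_{w^{-1}}(e)}_G$. We set $\tilde{\sigma}_w:=\sigma'_{w,\beta}$. The proof of Proposition \ref{affineflagcurvetype1} shows that, modulo a factor in $T\subseteq\mathcal I$, the conjugated action is
$$\lambda\cdot\tilde{\sigma}_w=u_{u^{-1}(\beta)}\bigl(\lambda^{-\heightr(\beta)}z\bigr)\,z^{u^{-1}(\beta^\vee)}u^{-1}s_\beta u.$$
This is an orientation-preserving reparametrisation of the curve above, because $\heightr(\beta)>0$. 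It therefore stays in the Springer fibre and tends to $1$ as $\lambda\to 0$.

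\textbf{Main obstacle.} The delicate point is to check that the orbit is \emph{exactly} the curve shown to lie in the Springer fibre, and not just a curve with the same limit points. Two things need care.

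First, the conjugation by $u^{-1}$ in Remark \ref{conjugatedheckeequivariance} has to match the left translation by $w^{-1}$ used in Lemma \ref{bruhatcurveshifted} and Proposition \ref{affineflagcurvespringershifted}. One checks that $z^{-\mu}$ commutes with $\xi_\lambda$ and only contributes the factor $z^{-\beta(\mu)}$ absorbed into $m-\beta(\mu)=1$. The $T$-valued factor produced when $\xi_\lambda$ is moved past $z^{u^{-1}(\beta^\vee)}u^{-1}s_\beta u$ is then killed in $\Fl_G=G((z))/\mathcal I$.

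Second, the sign of the exponent of $\lambda$ must be positive in Case 1 and negative in Case 2. This is what makes the $\lambda\to 0$ limit equal to $1$ rather than the other endpoint. I would verify it directly from $u(\beta)\in\Delta^+$ in Case 1 and from $\beta\in\Delta^+$ in Case 2.
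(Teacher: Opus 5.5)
Your proof is correct and follows the paper's argument. You use Lemma \ref{affineflagspringercondition} and Proposition \ref{lengthzero} to split $l(w)\neq 0$ into its two possible causes, then take $\tilde{\sigma}_w=\sigma_{w,\beta}$ (via Remark \ref{chainproperty} and Lemma \ref{bruhatcurveshifted}) or $\tilde{\sigma}_w=\sigma'_{w,\beta}$ (via Proposition \ref{affineflagcurvespringershifted}), with the orbit computations from Propositions \ref{affineflagcurvetype2} and \ref{affineflagcurvetype1}. Your ordering of the cases (``$u$ not maximal'' first, so the non-minuscule case can assume $u$ maximal) is slightly cleaner than the paper's, which applies Proposition \ref{affineflagcurvespringershifted} whenever $\mu$ is non-minuscule even though that result requires $u$ to be maximal in ${}^{J_\mu}W$.
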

	\begin{proof}
		Recall from Lemma \ref{affineflagspringercondition} that $\mu$ is antidominant and $u \in {}^{J_\mu}W$ and from Proposition \ref{lengthzero} that either $\mu$ is not minuscule or $u$ is not the longest in ${}^{J_\mu}W$.
		
		If $\mu$ is not minuscule, by Proposition \ref{affineflagcurvespringershifted} we can find some positive root $\beta \in \Delta^+$ such that we can take $\tilde{\sigma}_w = \sigma_{w,\beta}'$ as in Proposition \ref{affineflagcurvetype1}.
		
		On the other hand, if $u$ is not maximal within ${}^{J_\mu}W$, by Remark \ref{chainproperty} there is another $u' = us_\beta \in {}^{J_\mu}W$ for some $\beta \in \Delta^+$ such that $u' \ge u$, and by Lemma \ref{bruhatcurveshifted} we can take $\tilde{\sigma}_w = \sigma_{w,\beta}$ as in Proposition \ref{affineflagcurvetype2}.
	\end{proof}
	
	Now we consider the stability of the points in these curves, so that they are fully contained in the moduli space. If we start with a smooth (in particular, stable) fixed point with twisted multiplicity $w_c(E,\varphi,Q)$ and consider its Hecke transformations by the curve $\lambda \cdot \tilde{\sigma}_w$ having it as limit when $\lambda \to 0$, by openness and $\C^\times$-invariance of stability the result is stable for all $\lambda \in \C^\times$, and we just need to consider the other endpoint (when $\lambda \to \infty$). We do so for both kinds of curves.
	
	\begin{proposition}\label{bruhatstable}
		Let $c \in D$ and let $(E,\varphi,Q)$ be a fixed point of Borel type. Let $w_c = (E,\varphi,Q)$, $\mu_c = \mu_c(E,\varphi)$ and $u_c = u_c(E,\varphi,Q)$, so that $w_c = z^{-\mu_c} \cdot u_c \in \widetilde{W} \cap \Fl_{G_{\ad}}^e$. Fix a local parabolic $T$-trivialisation around $c$ associated to a $(T,\lieg_1)$-trivialisation, identifying the space of Hecke transformations with $\Fl^{\Ad_{w_c^{-1}}(e)}_G$ by Proposition \ref{heckespaceparab}. Let $\beta \in \Delta^+$ be such that $\sigma_{w_c,\beta} \in \Fl^{\Ad_{w_c^{-1}}(e)}_G$. Let $(E',\varphi',Q')$ denote the Hecke transformation by
		$$\lim\limits_{\lambda \to \infty} \lambda \cdot \sigma_{w_c,\beta} = s_\beta.$$
		Then, if $(E,\varphi,Q)$ is (semi)stable, so is $(E',\varphi',Q')$.
	\end{proposition}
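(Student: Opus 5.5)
The plan is to compute the Hecke transformation $(E',\varphi',Q')$ explicitly in terms of twisted multiplicities, and then to compare the numerical stability criterion of Proposition \ref{boreltypestabilityparab} for $(E,\varphi,Q)$ and for $(E',\varphi',Q')$.

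\emph{Step 1: identify the transformation.} First I note that $s_\beta$ lies in the affine Springer fibre $\Fl^{\Ad_{w_c^{-1}}(e)}_G$. This holds because it is the limit of the curve $\lambda\cdot\sigma_{w_c,\beta}$, which lies in this closed subvariety by hypothesis and by the $\C^\times$-invariance established in Proposition \ref{heckeequivariance}. Since $s_\beta \in W \subseteq \widetilde{W}_G$, Proposition \ref{twistedmulthecke} applies with $w = s_\beta$ (so $\mu = 0$, $u = s_\beta$). It shows that $(E',\varphi',Q')$ is again a fixed point of Borel type, with
$$w_c(E',\varphi',Q') = w_c \cdot s_\beta = z^{-\mu_c}\cdot u_c s_\beta,$$
and with all other coefficients unchanged. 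Consequently $\mu_p(E',\varphi') = \mu_p(E,\varphi)$ for every $p\in C$. By Remark \ref{toptypeborelparabolic} the topological type is therefore unchanged, $d_{E'} = d_E$. As in Example \ref{exampleheckemod}, only the parabolic structure at $c$ changes: $u_c' = u_c s_\beta$.

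\emph{Step 2: compare the stability inequalities.} By Proposition \ref{boreltypestabilityparab}, for each dominant $\chi \in X^*_+(T)$ the two stability expressions differ only in the term at $c$. Writing $\alpha$ for the weight at $c$, the change is
$$\chi(u_c s_\beta(\alpha)) - \chi(u_c(\alpha)) = -\beta(\alpha)\,\chi\bigl(u_c(\beta^\vee)\bigr),$$
using $s_\beta(\alpha) = \alpha - \beta(\alpha)\beta^\vee$. It then suffices to show this quantity is $\le 0$, since then each strict (respectively non-strict) inequality for $(E,\varphi,Q)$ implies the corresponding one for $(E',\varphi',Q')$.

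The sign comes from two facts. First, $\beta(\alpha) > 0$, because $\beta\in\Delta^+$ and $\alpha$ lies in the interior of the standard Weyl alcove. Second, $u_c(\beta)\in\Delta^+$. Given these, $u_c(\beta^\vee) = u_c(\beta)^\vee$ is a positive coroot, and therefore $\chi(u_c(\beta^\vee)) \ge 0$ for dominant $\chi$.

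\emph{Main obstacle.} The only delicate point is justifying $u_c(\beta)\in\Delta^+$, i.e.\ that the curve $\sigma_{w_c,\beta}$ was obtained in the setting $u_c \le u_c s_\beta$ of Proposition \ref{affineflagcurvetype2} and Remark \ref{chainproperty}. If this is not simply part of the standing hypotheses, I would argue directly from the curve. Under the conjugated action of Remark \ref{conjugatedheckeequivariance}, $\lambda\cdot u_{-\beta}(1) = u_{-\beta}(\lambda^k)$ with $k = u_c(\beta)(\zeta)$. The limit at $\lambda\to 0$ equals $1$ and the limit at $\lambda\to\infty$ equals $s_\beta$ exactly when $k>0$, that is, when $u_c(\beta)$ is a positive root.

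A secondary check is that the conventions for the $W$-action on $\alpha$ in $\deg(\sigma,Q_i)$ match those in $u_c$. This is ensured by reading $\deg(\sigma, Q_c) = u_c$ exactly as in the proof of Proposition \ref{boreltypestabilityparab}.
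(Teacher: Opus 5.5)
Your proposal is correct and is essentially the paper's argument. Like the paper, you use Proposition~\ref{twistedmulthecke} (only the parabolic datum at $c$ changes, $u_c \mapsto u_c s_\beta$, and $d_{E'} = d_E$), then the criterion of Proposition~\ref{boreltypestabilityparab}, which reduces everything to $-\beta(\alpha_c)\,\chi(u_c(\beta^\vee)) \le 0$. The paper takes $u_c(\beta)\in\Delta^+$ from the hypothesis $u_c \le u_c s_\beta$ under which $\sigma_{w_c,\beta}$ is defined (Proposition~\ref{affineflagcurvetype2}); your derivation of it from the direction of the limits is a valid alternative check.
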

	
	\begin{proof}
		We apply Proposition \ref{boreltypestabilityparab}. Let $\chi \in X_+^*(T)$. Since $(E,\varphi) \simeq (E',\varphi')$ as meromorphic Higgs bundles, we have $d_E = d_{E'}$. Moreover, if $i \in \{1,\dots,s\}$ is the index such that $c=c_i$, we have by Proposition \ref{twistedmulthecke} that $\chi(u_{c_j}(E,\varphi,Q)(\alpha_j)) = \chi(u_{c_j}(E',\varphi',Q')(\alpha_j))$ for $j \neq i$, while 
		\begin{align*}
			\chi(u_{c_i}(E',\varphi',Q')(\alpha_i)) &= \chi(u_cs_\beta(\alpha_i)) = \chi(u_c(\alpha_i)) - \beta(\alpha_i)\chi(u_c(\beta^\vee)) \\
			&\le \chi(u_c(\alpha_i)) = \chi(u_{c_i}(E,\varphi,Q)(\alpha_i))
		\end{align*}
		\noindent since $\beta(\alpha_i) \ge 0$, $u_c(\beta^\vee) \in \Delta^{\vee}_+$ (because $u_cs_\beta \ge u_c$) and $\chi$ is dominant. 
	\end{proof}
	
	\begin{proposition}\label{bruhatstable2}
		Let $c \in D$ and let $(E,\varphi,Q)$ be a fixed point of Borel type. Let $w_c = (E,\varphi,Q)$, $\mu_c = \mu_c(E,\varphi)$ and $u_c = (E,\varphi,Q)$, so that $w_c = z^{-\mu_c} \cdot u_c \in \widetilde{W} \cap \Fl_{G_{\ad}}^e$. Fix a local parabolic $T$-trivialisation associated to a $(T,\lieg_1)$-trivialisation around $c$, identifying the space of Hecke transformations with $\Fl^{\Ad_{w_c^{-1}}(e)}_G$. Let $\beta \in \Delta^+$ be such that $\sigma'_{w_c,\beta} \in \Fl^{\Ad_{w_c^{-1}}(e)}_G$. Let $(E',\varphi',Q')$ denote the Hecke transformation by
		$$\lim\limits_{\lambda \to \infty} \lambda \cdot \sigma'_{w_c,\beta} = z^{u_c^{-1}(\beta^\vee)}u_c^{-1}s_\beta u_c.$$
		Then, if $(E,\varphi,Q)$ is (semi)stable, so is $(E',\varphi',Q')$.
	\end{proposition}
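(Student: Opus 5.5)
We argue exactly as in Proposition \ref{bruhatstable}, using the criterion of Proposition \ref{boreltypestabilityparab}. First we identify the new fixed point. By Proposition \ref{twistedmulthecke}, the Hecke transformation by $w':=z^{u_c^{-1}(\beta^\vee)}u_c^{-1}s_\beta u_c$ in the parabolic $T$-trivialisation gives
$$w_c(E',\varphi',Q') = w_c\cdot w' = z^{-\mu_c}u_c z^{u_c^{-1}(\beta^\vee)}u_c^{-1}s_\beta u_c = z^{-\mu_c+\beta^\vee}s_\beta u_c,$$
and leaves the coefficients at every other point unchanged. In the notation of Definition \ref{twistmultdiv} this means $\mu_c' = \mu_c-\beta^\vee$ and $u_c' = s_\beta u_c$. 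Here the sign bookkeeping should be checked against the convention of Remark \ref{signconvetion}. The relevant computation is the one in the proof of Proposition \ref{twistedmulthecke}, which gives $\mu_c' = \mu_c - u_c(\mu)$ with $\mu = -u_c^{-1}(\beta^\vee)$ in that convention.

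Next we compare the two sides of the inequality in Proposition \ref{boreltypestabilityparab}(2) for a fixed dominant $\chi\in X^*_+(T)$. By Remark \ref{toptypeborelparabolic}, $d_{E'} = d_E - \beta^\vee$, so $\chi(d_{E'}) = \chi(d_E) - \chi(\beta^\vee)$. The local terms at $c_j\neq c$ are unchanged. At $c=c_i$ we have
$$\chi(s_\beta u_c(\alpha_i)) = \chi(u_c(\alpha_i)) - \beta(u_c(\alpha_i))\,\chi(\beta^\vee).$$
The total change of the parabolic degree is therefore
$$-\chi(\beta^\vee)\bigl(1+\beta(u_c(\alpha_i))\bigr).$$

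The key point is the sign of this quantity. Since $\beta\in\Delta^+$, its coroot $\beta^\vee$ is a positive coroot, so $\chi(\beta^\vee)\ge 0$ for dominant $\chi$. Also $u_c(\alpha_i)$ lies in some alcove adjacent to the origin: $\alpha_i$ is in the open fundamental alcove, so $|\gamma(\alpha_i)|<1$ for every root $\gamma$. Hence $\beta(u_c(\alpha_i)) = u_c^{-1}(\beta)(\alpha_i) > -1$, and the factor $1+\beta(u_c(\alpha_i))$ is strictly positive. The parabolic degree therefore does not increase, and the strict or non-strict inequality in Proposition \ref{boreltypestabilityparab} for $(E,\varphi,Q)$ implies the same inequality for $(E',\varphi',Q')$.

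We must also check that $(E',\varphi',Q')$ really is a Borel-type point satisfying the hypotheses of Proposition \ref{boreltypestabilityparab}. This follows because the limit point lies in the closed affine Springer fibre $\Fl^{\Ad_{w_c^{-1}}(e)}_G$ and is an element of $\widetilde{W}$, so Proposition \ref{twistedmulthecke} applies.

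The main obstacle is getting the signs right, namely that the degree shift is $-\beta^\vee$ rather than $+\beta^\vee$ with the paper's conventions. If it came out with the opposite sign, the bound $|\beta(u_c(\alpha_i))|<1$ would instead be used in the form $1-\beta(u_c(\alpha_i))>0$, and the argument would still go through.
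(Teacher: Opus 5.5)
Your argument is correct and is essentially the paper's proof: you use Proposition \ref{twistedmulthecke} to get $\mu_c'=\mu_c-\beta^\vee$ and $u_c'=s_\beta u_c$, and then show via Proposition \ref{boreltypestabilityparab} that the parabolic degree changes by $-(1+\beta(u_c(\alpha_i)))\chi(\beta^\vee)\le 0$, using $|\gamma(\alpha_i)|<1$ for all roots $\gamma$. One caveat on your hedging: your direct computation already fixes the sign (in the notation $w=z^\mu\cdot u$ of Proposition \ref{twistedmulthecke} one has $\mu=u_c^{-1}(\beta^\vee)$, not its negative), and your closing fallback is false, since with $\mu_c'=\mu_c+\beta^\vee$ the change would be $(1-\beta(u_c(\alpha_i)))\chi(\beta^\vee)\ge 0$ and stability would not follow; the sign is exactly what makes the argument work.
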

	
	\begin{proof}
		
		We apply Proposition \ref{boreltypestabilityparab}. Let $\chi \in X_+^*(T)$. By Proposition \ref{twistedmulthecke} we have $w_{c'}(E',\varphi',Q') = w_{c'}(E,\varphi,Q)$ for $c' \neq c$, while $w_c(E',\varphi',Q') = w_c(E,\varphi,Q) \cdot z^{u_c^{-1}(\beta^\vee)}u_c^{-1}s_\beta u_c$. Therefore, 
		$$\mu_c(E',\varphi') = \mu_c(E,\varphi) - \beta^\vee$$
		$$u_c(E',\varphi',Q') = s_\beta u_c(E,\varphi,Q).$$
		
		Thus,
		\begin{align*}
			\chi(d_{E'}) + \sum_{i=1}^s\chi(u_{c_i}(E',\varphi',Q')(\alpha_i)) 
			&= \chi(d_E) - \chi(\beta^\vee) + \sum_{c' \in D \setminus \{c\}}\chi(u_{c'}(E,\varphi,Q)(\alpha_{c'})) \\
			&\quad + \chi(s_\beta u_c(\alpha_c)) \\[0.5em]
			&= \chi(d_{E}) + \sum_{i=1}^s\chi(u_{c_i}(E,\varphi,Q)(\alpha_i)) \\
			&\quad + \bigl(-\chi(\beta^\vee) + \chi(s_\beta u_c(\alpha_c) - u_c(\alpha_c))\bigr).
		\end{align*}
		
		Since $(E,\varphi,Q)$ is stable, it suffices to prove that the last term $-\chi(\beta^\vee) + \chi(s_\beta u_c(\alpha_c) - u_c(\alpha_c))$ is not positive. Indeed:
		$$-\chi(\beta^\vee)+\chi\big(s_\beta u_c(\alpha_{c}) - u_c(\alpha_{c})\big) = -\chi(\beta^\vee) + \chi(-\beta(u_c(\alpha_{c}))\beta^\vee) = -(1+\beta(u_c(\alpha_{c})))\chi(\beta^\vee).$$
		
		Since $\chi$ is dominant and $\beta$ positive, $\chi(\beta^\vee) \ge 0$. Moreover, since $\alpha_{c}$ is in the interior of the standard Weyl alcove, we have $\beta(u_c(\alpha_{c})) \in (-1,1)$, so the term $1+\beta(u_c(\alpha_{c}))$ is also positive.
	\end{proof}
	
	\subsection{Classification theorem}
	
	We are in position to use all the analysis in the previous sections to establish the classification of very stable fixed points of Borel type. Recall the notion of a reduced twisted multiplicity divisor from Definition \ref{reduced}.
	
	\begin{theorem}\label{main}
		Let $(E,\varphi,Q) \in \mathcal M_{sp}(G,\alpha)$ be a smooth fixed point of Borel type. Then, $(E,\varphi,Q)$ is very stable if and only if $w(E,\varphi,Q)$ is reduced.
	\end{theorem}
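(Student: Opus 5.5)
We prove the two implications separately, using the Hecke-transformation machinery of Section \ref{sechecke} and the equivariance of Proposition \ref{heckeequivariance}.

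\textbf{Not reduced $\Rightarrow$ wobbly.} Suppose some coefficient $w_c := w_c(E,\varphi,Q)$ fails to be minimal.

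First take $c \in D$. Then $l(w_c) \neq 0$ and $w_c \in \widetilde{W} \cap \Fl^e_{G_{\ad}}$ by Remark \ref{sphericalquotient} and Lemma \ref{affineflagspringercondition}. Fix a parabolic $T$-trivialisation associated to a $(T,\lieg_1)$-trivialisation at $c$, so that Hecke transformations at $c$ are parametrised by $\Fl_G^{\Ad_{w_c^{-1}}(e)}$. Proposition \ref{affineflagwobblycurves} provides $\tilde\sigma_{w_c}$ whose $\C^\times$-orbit, for the conjugated action of Remark \ref{conjugatedheckeequivariance}, stays in this Springer fibre and tends to $1$ as $\lambda \to 0$. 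By Proposition \ref{heckeequivariance}, the Hecke transformations along $\lambda\cdot\tilde\sigma_{w_c}$ form a $\C^\times$-orbit $(E',\lambda\varphi',Q')$ of parabolic Higgs bundles whose $\lambda\to 0$ limit is $(E,\varphi,Q)$. Since $(E,\varphi,Q)$ is stable, openness of stability and $\C^\times$-invariance show that these points are stable, so the orbit lies in $W^+_{(E,\varphi,Q)}$.

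The $\lambda\to\infty$ limit is the Hecke transformation by $s_\beta$ or by $z^{u_c^{-1}(\beta^\vee)}u_c^{-1}s_\beta u_c$. By Proposition \ref{twistedmulthecke} it is a fixed point of Borel type with a different twisted multiplicity, hence not isomorphic to $(E,\varphi,Q)$. Propositions \ref{bruhatstable} and \ref{bruhatstable2} show it is stable, so it lies in $\mathcal M_{sp}(G,\alpha)$. Thus $W^+_{(E,\varphi,Q)}$ contains a non-constant $\C^\times$-orbit whose $\lambda\to\infty$ limit exists in the moduli space. That limit is a point of the nilpotent cone different from $(E,\varphi,Q)$, so $W^+_{(E,\varphi,Q)}$ is not closed and the point is wobbly.

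For $c\notin D$ with $\mu_c$ non-minuscule, repeat the argument in $\Gr_G$ using the projection of the curve of Proposition \ref{affineflagcurve}, as in \cite{gonzalez_very_2025}. Stability of the endpoint follows from a variant of Proposition \ref{bruhatstable2} with no parabolic term: the change in the degree is $-\chi(\beta^\vee)\le 0$.

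\textbf{Reduced $\Rightarrow$ very stable.} Generalise Example \ref{lengthzeroverystable}, which already handles divisors supported at $D$ for adjoint $G$. Let $(E',\varphi',Q')\in W^+_{(E,\varphi,Q)}\cap h_G^{-1}(0)$; we must show it equals $(E,\varphi,Q)$. The plan is to reduce to the everywhere-regular point $(E^0,\varphi^0,Q^0)$, whose upward flow is the Hitchin section and is very stable by Example \ref{paraboliccanonical}.

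At each $c\in D$, $l(w_c)=0$ means $w_c$ normalises $\mathcal I$, so the Schubert cell $\mathcal I w_c$ is a single point. Hence the Hecke modification by $w_c^{-1}$ is canonical and applies uniformly to every point of the upward flow: each point has Higgs field locally of the form $\Ad(\cdot)$ of an element of $\lie i$ compatible with $w_c$, by Proposition \ref{parabolicupwardflow}. At $c\notin D$ with $\mu_c$ minuscule, the analogous statement is that the $G[[z]]$-orbit of a minuscule coweight is closed, which gives a well-defined modification on the upward flow. This is the argument used in \cite{gonzalez_very_2025}. These modifications commute with the $\C^\times$-action by Proposition \ref{heckeequivariance}. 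They therefore define a $\C^\times$-equivariant injective map from $W^+_{(E,\varphi,Q)}$ to a flow of a point whose twisted multiplicity is trivial after passing to $G_{\ad}$, and this map preserves the Hitchin map up to a shift of the base.

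The main obstacle is making this transfer rigorous without assuming $G$ is adjoint, and ensuring that nilpotent points map to nilpotent points. I would pass to $G_{\ad}$ by quotienting by $Z(G)$-twists, using Remark \ref{recoverfromtwistedmult}. I would then argue that a nilpotent $(E',\varphi',Q')\neq(E,\varphi,Q)$ in the flow would give a nilpotent point $\neq(E^0,\varphi^0,Q^0)$ in $W^+_{(E^0,\varphi^0,Q^0)}$, contradicting very stability of the Hitchin section.

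Alternatively, one can run a dimension count: show that the image of the Hitchin map restricted to the flow is a linear subspace of the same dimension as the flow, so that $h_G$ restricted to $W^+_{(E,\varphi,Q)}$ is finite. This is one of the equivalent conditions for very stability.
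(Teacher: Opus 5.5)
\paragraph{The wobbliness direction.} This direction is the paper's argument: Proposition \ref{affineflagwobblycurves}, equivariance via Proposition \ref{heckeequivariance}, openness of stability, and Propositions \ref{bruhatstable} and \ref{bruhatstable2} for the endpoint, with $c\notin D$ deferred to \cite{gonzalez_very_2025}.

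\paragraph{The converse has a gap at the parabolic points.} You transport nilpotent points of $W^+_{(E,\varphi,Q)}$ into $W^+_{(E^0,\varphi^0,Q^0)}$ and invoke very stability of the Hitchin section. The transport itself is fine: $w_c$ normalises $\lie i$ and its nilradical, so modification by $w_c^{-1}$ applies to every strongly parabolic Higgs bundle and is invertible. The problem is that your argument presupposes that $(E^0,\varphi^0,Q^0)$ is an $\alpha$-stable point of $\mathcal M_{sp}(G,\alpha)$, and stability of $(E,\varphi,Q)$ does not imply this. By Propositions \ref{boreltypestabilityparab} and \ref{twistedmulthecke}, modifying by $w_c^{-1}$ at $c\in D$ changes the left-hand side of the stability inequality by $-\chi(\mu_c)+\chi\bigl(\alpha_c-u_c(\alpha_c)\bigr)$, and this can be positive.

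For a concrete case, take $G=\PGL_2(\C)$, $C=\mathbb P^1$, $s=4$, and generic weights with $\sum_j\beta_1(\alpha_j)>2$. Let $E=\mathcal O\oplus\mathcal O(2)$, with $\varphi$ vanishing simply along $D$ and flags $\mathcal O|_{c_j}$. This is a stable fixed point with reduced divisor $\sum_j\bigl((-\omega_1^\vee)\cdot s_{\beta_1}\bigr)c_j$. However, $(E^0,\varphi^0,Q^0)$ is unstable, so there is no Hitchin-section flow in $\mathcal M_{sp}(G,\alpha)$ for you to land in. For $g\ge 1$, Proposition \ref{boreltypestabilityparab} shows $(E^0,\varphi^0,Q^0)$ is always $\alpha$-stable, so your route does work there. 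The theorem, however, has no genus restriction.

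\paragraph{How the paper avoids this.} The paper never transports into $W^+_{(E^0,\varphi^0,Q^0)}$ at $D$. It removes the support off $D$ by induction and then applies Example \ref{lengthzeroverystable}, which goes in the opposite direction. It Hecke-modifies the family $(E^0,\varphi^a,Q^0)$, used only as parabolic Higgs bundles, by the $w_{c_i}$ to get a family through $(E,\varphi,Q)$. That family is then:
\begin{itemize}
\item stable by openness at the given stable point;
\item contained in $W^+_{(E,\varphi,Q)}$ by the explicit $\xi_\lambda$-computation;
\item equal to the whole flow by a dimension count.
\end{itemize}
You can repair your argument in one of two ways. Either finish as the paper does, which is what your closing ``dimension count'' alternative would need in order to have content. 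Or prove that a length-zero modification at $c$, together with moving $\alpha_c$ by the corresponding symmetry of the alcove, is an isomorphism of moduli spaces commuting with the $\C^\times$-action and $h_G$.

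\paragraph{Two smaller points.} First, at $c\notin D$, closedness of the minuscule orbit is not the relevant fact. That orbit is $G/P_{\mu_c}$, not a point, so the modification is not canonical, and closedness does not give injectivity on the flow. What the argument uses (as in \cite{gonzalez_very_2025} and the paper's induction step) is:
\begin{itemize}
\item $z^{\mu_c}\in\Gr_G^{\varphi_1'}$ for every upward-flow point, because the deformation in Proposition \ref{parabolicupwardflow} is $\lie b^{opp}$-valued and $\mu_c$ is dominant;
\item once the transported nilpotent point is identified with the transported fixed point, whose Higgs field is regular nilpotent at $c$, its affine Springer fibre contains a unique modification of the inverse minuscule type. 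This forces $(E',\varphi',Q')\simeq(E,\varphi,Q)$.
\end{itemize}
Second, Hecke modifications preserve $h_G$ exactly, not up to a shift of the base, since the $p_i(\varphi)$ are unchanged on $C\setminus\{c\}$.
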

	
	\begin{proof}		
		Suppose first that there is some $c \in C$ with $l(w_c(E,\varphi,Q)) \neq 0$. We may assume that $c \in D$, since the case with $c \notin D$ (which means that $\mu_c(E,\varphi)$ is not minuscule) was carried out in \cite[Theorem 5.9]{gonzalez_very_2025} using the projections of the curves $\sigma'_{w,\beta}$ to $\Gr_G$, and works here identically due to the local nature of Hecke transformations. 
		
		Now, fix a local parabolic $T$-trivialisation for $(E,\varphi,Q)$ around $c$ associated to a $(T,\lieg_1)$-trivialisation, identifying the space of Hecke transformations with $\Fl^{\Ad_{w_c(E,\varphi,Q)^{-1}}(e)}_G$ by Proposition \ref{heckespaceparab}. Let $(E_\lambda,\varphi_\lambda,Q_\lambda)$ for $\lambda \in \C^\times$ denote the Hecke transformations by $\lambda \cdot \tilde{\sigma}_{w_c(E,\varphi,Q)}$, where $\tilde{\sigma}_{w_c(E,\varphi,Q)}$ is that of Proposition \ref{affineflagwobblycurves}. This is well defined since $\lambda \cdot \tilde{\sigma}_{w_c(E,\varphi,Q)} \in \Fl^{\Ad_{w_c(E,\varphi,Q)^{-1}}(e)}_G$. Moreover, since $\lim\limits_{\lambda \to 0} \lambda \cdot \tilde{\sigma}_{w_c(E,\varphi,Q)} = 1$, we have
		$$\lim\limits_{\lambda \to 0}(E_\lambda,\varphi_\lambda, Q_\lambda) \simeq (E,\varphi,Q).$$
		By openness and $\C^\times$-invariance of stability, every $(E_\lambda,\varphi_\lambda,Q_\lambda)$ is stable. Moreover, the bundle
		$$(E_\infty,\varphi_\infty,Q_\infty) := \lim\limits_{\lambda \to \infty}(E_\lambda,\varphi_\lambda,Q_\lambda)$$
		\noindent is also stable by Propositions \ref{bruhatstable} and \ref{bruhatstable2}. The whole curve and its limits must lie in $(h_G)^{-1}(0)$ (since Hecke transformations preserve the Hitchin fibre) so they are nilpotent and, in particular, strongly parabolic (see Remark \ref{nilpotentissp}). This means that $(E,\varphi,Q)$ is wobbly, as the $(E_\lambda, \varphi_\lambda, Q_\lambda)$ for $\lambda \in \C^\times$ are nilpotent in its upward flow.
		
		For the converse, start by assuming that $G=G_{ad}$ is of adjoint type. Assume that all the $w_c(E,\varphi,Q)$ have zero length. The proof is by induction on the size of the support of $w(E,\varphi,Q)$, and it is very similar to that of \cite[Theorem 5.9]{gonzalez_very_2025} but using the techniques developed in previous sections. The base case corresponds to empty support, in which case the construction in Example \ref{paraboliccanonical} shows that it is very stable and its upward flow is a section of $h_G$. Otherwise, take $c \in C$ with $w_c(E,\varphi,Q) \neq 1$. Choose a $(T,\lieg_1)$-trivialisation for this $c \in C$. Assume that it is not very stable. Then, there is $(E',\varphi',Q') \in h_G^{-1}(0) \cap W^+_{(E,\varphi,Q)}$. By Proposition \ref{parabolicupwardflow} the Higgs field can be locally expressed as 
		$$\varphi'_1 = \varphi_1 + \sum_{\beta \in \Delta^{-}}f_\beta(z)X_{\beta} + \sum_{j=1}^rf_j(z)\beta^\vee_j,$$
		\noindent where the coefficients $f_\beta$, $f_j$ are in $\C[[z]]$. Therefore, if $c \notin D$, the proof of \cite[Theorem 5.9]{gonzalez_very_2025} continues identically to conclude that $(E,\varphi,Q)$ is very stable, by performing a Hecke transformation by the minuscule coweight $\omega_j^\vee$ such that $z^{\omega_j^\vee} \in \Gr_G^{\varphi_1}$, which therefore also satisfies $z^{\omega_j^\vee} \in \Gr_G^{\varphi_1'}$, to reduce the size of the support of the twisted multiplicity divisor and apply the induction hypothesis. We may then assume that $w(E,\varphi,Q)$ is supported at $D$. But then the explicit construction of the upward flow in Example \ref{lengthzeroverystable} shows that $(E,\varphi,Q)$ is very stable.
		
		Finally, if $G$ is not of adjoint type, the converse follows by using the map
		$$\mathcal M(G,\alpha) \to \mathcal M(G_{\ad},\alpha)$$
		\noindent given by $(E,\varphi,Q) \mapsto (E/Z(G),\varphi,Q)$. This preserves stability and the twisted multiplicity divisor, as well as the $\C^\times$-action and the image via the Hitchin map. Therefore, if $w(E,\varphi,Q)$ were reduced but 
		$$W^+_{(E,\varphi,Q)} \cap h_G^{-1}(0)$$
		\noindent had nonzero dimension, applying this map would yield that
		$$W^+_{(E/Z(G),\varphi,Q)} \cap h_{G_{\ad}}^{-1}(0)$$
		\noindent would also have nonzero dimension with the same twisted multiplicity divisor, a contradiction.
	\end{proof}
	
	\section{Mirror line bundle for parabolic very stable upward flows}\label{secmirror}
	Let $(E,\varphi,Q)$ be a smooth fixed point of Borel type with twisted multiplicity divisor $w:= w(E,\varphi,Q)$ supported at $D$. In this case, we know (cf. Example \ref{lengthzeroverystable}) that the corresponding upward flow $W^+_{(E,\varphi,Q)}$ is a section of the Hitchin map $h_G$. Therefore, the general philosophy of mirror symmetry being realised (generically over the Hitchin base) by a fibrewise Fourier--Mukai transform \cite[Theorem A]{donagi_langlands_2012} dictates that we should be able to produce a \textit{mirror} line bundle over $\mathcal M_{sp}(G^\vee,\alpha)$ recording how $W^+_{(E,\varphi,Q)}$ shifts from a given Hitchin section $W^+_{(E^0,\varphi^0,Q^0)}$.
	
	We propose a natural candidate for such a mirror line bundle. Suppose there exists a universal parabolic bundle
	$$\mathbb E \to \mathcal M_{sp}(G^\vee,\alpha) \times C,$$
	\noindent that is, a $G^\vee$-bundle endowed with universal parabolic structures
	$$\mathbb Q_i \in H^0(\mathcal M_{sp}(G^\vee,\alpha) \times \{c_i\}, \mathbb E|_{\mathcal M_{sp}(G^\vee,\alpha) \times \{c_i\}}/B^\vee)$$
	\noindent such that $(\mathbb E, \mathbb Q)|_{\{(E,\varphi, Q)\} \times C} \simeq (E,Q)$. Then, we have a reduction $\mathbb (E_{\mathcal M_{sp}(G^\vee,\alpha) \times \{c_i\}})_{B^\vee}$ to the Borel subgroup $B^\vee \subseteq G^\vee$ via $\mathbb Q_i$ and, given a cocharacter $\mu \in X_*(T) = X^*(T^\vee) = X^*(B^\vee)$, we can construct a line bundle $\mu((\mathbb E_{\mathcal M_{sp}(G^\vee,\alpha) \times \{c_i\}})_{B^\vee})$. Recall that our chosen twisted multiplicity divisor $w$ is supported over $D$, therefore at every $c_i$ we have a naturally associated antidominant minuscule cocharacter $\mu_i^{-1} := \mu_{c_i}(E,\varphi)^{-1}$.
	
	\begin{conjecture}\label{mirrorconjecture}
		The line bundle on $\mathcal M_{sp}(G^\vee,\alpha)$ mirror to the structure sheaf $\mathcal O_{W^+_{(E,\varphi,Q)}}$ on $\mathcal M_{sp}(G,\alpha)$ is given by
		$$\mathcal L_w := \bigotimes_{c_i \in D}\mu_{i}^{-1}((\mathbb E_{\mathcal M_{sp}(G^\vee,\alpha) \times \{c_i\}})_{B^\vee})$$
		\noindent up to tensoring by a fixed line bundle (independent of $w$) given by the chosen normalisation for $\mathbb E$.
	\end{conjecture}
	
	There are a few remarks to be taken into consideration. Firstly, we are implicitly assuming that every $\mu_i \in P_{\Delta^\vee}$ lifts to $X^*(T^\vee)$. This is automatic if the group $G=G_{\ad}$ is of adjoint type, but for other groups it imposes an additional restriction. 
	
	Secondly, this is dependent on the existence of $\mathbb E$ as a bundle over $\mathcal M_{sp}(G^\vee,\alpha) \times C$. Of course, the proposed bundles always exist at the level of the moduli stack (see \cite{laszloLineBundlesModuli1997}), but on the moduli space they may only descend with a twist by a $Z(G)$-gerbe corresponding to a class in $H^2(\mathcal M_{sp}(G^\vee,\alpha), Z(G))$ (see e.g. \cite[Section 6.2.1]{hausel_very_2022}). Even if $\mathbb E$ does not exist globally over the moduli space, the specific combination of $\mu_i$ may cause the induced twisting for $\mathcal L_w$ to become trivial, but this is not automatic.
	
	Lastly, in order to check this conjecture using the duality via Fourier--Mukai transforms along the fibres of the Hitchin map, it is convenient to have a \textit{good} description of said fibres (ideally, as honest abelian varieties or their torsors) generically over the Hitchin base. In other words, it is convenient to have smoothness of the corresponding spectral or cameral curve. However, even if we have chosen our parabolic weights $\alpha$ in the interior of the standard Weyl alcove, for a generic point over the Hitchin base the strongly parabolic condition makes $\varphi|_{c_i}$ into a regular \textit{nilpotent} element of $\lieg$. Therefore, the cameral curve is generally not smooth, complicating the approach for general $G$.
	
	Therefore, in the remainder of this section we verify Conjecture \ref{mirrorconjecture} in the most favourable setting of $G=\PGL_n(\C)$ and $G^\vee = \SL_n(\C)$ with weights $\alpha$ (always in the interior of the standard alcove) chosen generically so that $\mathcal M_{sp}(\SL_n(\C),\alpha)$ is smooth. In this way, all $\mu_i^{-1}$ make sense as characters of $T^\vee$, the universal bundle $\mathbb E$ exists globally, uniquely up to normalisation (see \cite[Theorem 1.1]{biswasBrauerGroupModuli2011} for the vanishing of the twisting class) and, as we review in the next subsection, the generic fibres of the Hitchin map are abelian varieties via the Beauville--Narasimhan--Ramanan (BNR) correspondence, as the generic spectral curves are smooth. 
	
	Having restricted to the case $G=\PGL_n(\C)$, in the sequel we work in the usual setting of vector bundles with full flags at the parabolic points, as described in Example \ref{typeafixed} (for $\PGL_n(\C)$, the only change needed for $\SL_n(\C)$ is that $E$ becomes an actual vector bundle, no longer up to line bundle tensoring, and is subject to the constraint $\det(E) \simeq \mathcal O_C$).
	
	\subsection{Spectral description of Hitchin fibres}\label{subsecspectral}
	
	Let us recall the aspects of the BNR correspondence that will be needed in order to check the duality. In our setting with full flags it turns out to be largely similar to the classical BNR correspondence in \cite{beauville_spectral_1989}. We follow \cite[Section 3]{gomezTorelliTheoremModuli2011}, see also \cite{suParabolicHitchinMaps2022} for a more general description involving any flag types.
	
	Let $\mathcal A:= \mathcal A(\SL_n(\C)) \simeq \mathcal A(\PGL_n(\C)) = \bigoplus_{j=2}^nH^0(C,K_C^j(j-1)D)$ denote the Hitchin base and choose a point $a = (a_2,\dots,a_n) \in \mathcal A$. Let $S:=\mathrm{Tot}(K_C(D)) = \underline{\Spec}\Sym^*(K_C^{-1}(-D))$ denote the total space of $K_C(D)$, equipped with the map $\pi_S: S \to C$. There is a tautological section $x \in H^0(S, \pi_S^*(K_C(D)))$, as well as pull-backs $a_j' := \pi_S^*(a_j) \in H^0(S, \pi_S^*(K_C^j((j-1)D)))$. We map these naturally into $H^0(S, \pi_S^*(K_C^j(jD)))$, resulting in a well defined section
	$$s_a := x^n + a_2'x^{n-2} + \dots + a_n'$$
	\noindent in $H^0(S, \pi_S^*(K_C^n(nD)))$.
	
	\begin{definition}
		The \textbf{spectral curve} associated to $a$ is the closed subscheme of $S$ defined by $s_a = 0$, and it is denoted by $C_a$. In other words, it is the cover:
		$$\pi_a : C_a := \underline{\Spec}(\Sym^*(K_C^{-1}(-D))/\mathcal I_a) \to C$$
		\noindent where $\mathcal I_a$ is the ideal sheaf in $\Sym^*(K_C^{-1}(-D))$ generated by the elements of the form $\nu + \nu a_2 + \dots + \nu a_n$ with $\nu$ a local section of $K_C^{-n}(-nD)$. 
	\end{definition}
	
	An important aspect of $C_a$ in our strongly parabolic setting is that it is totally ramified over each $c_i \in C$, as every $a_j$ vanishes at the parabolic punctures. We denote by $c_i^a \in C_a$ the only closed point with $\pi_a(c_i^a) = c_i$. Even then, there is an open dense subset $\mathcal A^{s} \subseteq \mathcal A$ such that $a \in \mathcal A^{s}$ implies that $C_a$ is smooth, see \cite[Lemma 3.1]{gomezTorelliTheoremModuli2011}.
	
	\begin{proposition}[{BNR correspondence, \cite[Lemma 3.2]{gomezTorelliTheoremModuli2011}}]\label{parabolicbnr}
		For $a \in \mathcal A^s$, the fibre $h_{\SL_n(\C)}^{-1}(a)$ is isomorphic to 
		$$P^\vee_a := \{L \in \Pic(C_a) : \det \left((\pi_a)_*L\right) \simeq \mathcal O_C\},$$
		\noindent and the fibre $h_{\PGL_n(\C)}^{-1}(a)$ is isomorphic to
		$$P_a := \Pic(C_a)/\pi_a^*\Pic(C).$$
	\end{proposition}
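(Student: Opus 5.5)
This is the parabolic analogue of \cite[Lemma 3.2]{gomezTorelliTheoremModuli2011}. I would prove it by running the classical BNR argument of \cite{beauville_spectral_1989} on the smooth spectral curve $C_a$, and then tracking the extra flag data at the punctures.

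\textbf{Step 1: from Higgs data to a line bundle.} Take $(E,\varphi,Q)$ with $h(E,\varphi,Q)=a$ and $a\in\mathcal A^s$. The Higgs field makes $E$ a module over $\Sym^*(K_C^{-1}(-D))$. By Cayley--Hamilton this module is annihilated by $\mathcal I_a$, so it descends to a coherent sheaf $L$ on $C_a$ with $(\pi_a)_*L\simeq E$.
\begin{itemize}
\item Since $C_a$ is integral and $E$ is torsion free of rank $n$, $L$ is torsion free of rank one.
\item Since $C_a$ is smooth, $L$ is a line bundle.
\end{itemize}
Conversely, a line bundle $L$ on $C_a$ gives the pair $((\pi_a)_*L,(\pi_a)_*(x\cdot))$ with characteristic polynomial $s_a$. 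Thus, before adding flags, we already have the usual bijection between such Higgs pairs and $\Pic(C_a)$.

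\textbf{Step 2: the flags are determined by the spectral data.} This is the key local point. Since $C_a$ is totally ramified at $c_i$, the fibre $E|_{c_i}=L\otimes\mathcal O_{C_a}/\pi_a^*\mathfrak m_{c_i}$ is a module over $\mathcal O_{C_a,c_i^a}/(t^n)$, where $t$ is a uniformiser at $c_i^a$. On it the residue $\varphi|_{c_i}$ acts as multiplication by $x$, and $x$ vanishes to order exactly one at $c_i^a$ (it is a uniformiser there, because $a_n$ vanishes to order one at $c_i$ for smooth $C_a$). Hence $\varphi|_{c_i}$ is a regular nilpotent endomorphism. A regular nilpotent endomorphism preserves a unique full flag, namely
$$Q_i^j=\ker(\varphi|_{c_i}^j)=t^{n-j}L/t^nL.$$
So every strongly parabolic structure compatible with $\varphi$ is forced, and adding $Q$ introduces no extra moduli. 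Conversely, every $L$ determines such a flag. I would also check here that the smoothness of $C_a$ really forces the order of vanishing to be one; this is where the hypothesis $a\in\mathcal A^s$ is used locally.

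\textbf{Step 3: stability.} Any $\varphi$-invariant subbundle would give a factor of the characteristic polynomial, and $s_a$ is irreducible because $C_a$ is integral. So there are no invariant subbundles, every such point is stable for every $\alpha$, and no stability issue arises.

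\textbf{Step 4: the two groups.}
\begin{itemize}
\item For $\SL_n(\C)$ we impose $\det E\simeq\mathcal O_C$, which cuts out $P_a^\vee$. The requirement that $\varphi$ be traceless is automatic from $a_1=0$.
\item For $\PGL_n(\C)$ we take bundles up to tensoring by $M\in\Pic(C)$. Since $(\pi_a)_*(L\otimes\pi_a^*M)=(\pi_a)_*L\otimes M$ by the projection formula, this gives $\Pic(C_a)/\pi_a^*\Pic(C)$.
\end{itemize}
Isomorphisms of triples correspond to isomorphisms of line bundles, because automorphisms commuting with $\varphi$ are $\pi_{a*}\mathcal O_{C_a}^\times$-valued and the flags are canonical. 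To upgrade the bijections to isomorphisms, I would run the same construction in families over a base $S$ using the universal spectral curve.

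The \textbf{main obstacle} is Step 2: the local computation at $c_i^a$ showing that $x$ is a uniformiser, which is what identifies the forced flag with $t^{n-j}L/t^nL$.
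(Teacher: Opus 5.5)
Your proposal is correct and follows the paper's route. The paper invokes the classical BNR correspondence via \cite[Lemma 3.2]{gomezTorelliTheoremModuli2011} and, as in your Step 2, notes that $\varphi_L=(\pi_a)_*x$ is regular nilpotent at each $c_i$ ($x$ is a uniformiser at $c_i^a$ by smoothness), so the compatible flag is unique and equals $(\pi_a)_*\bigl(L(-(n-j)c_i^a)/L(-nc_i^a)\bigr)$, i.e.\ your $t^{n-j}L/t^nL$; the only unstated input in Steps 1 and 3 is that $C_a$ is connected, hence integral, because $\deg K_C(D)>0$.
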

	
	\begin{remark}\label{torsorfibres}
		The notation $P_a$ and $P_a^\vee$ only seeks to convey that the former corresponds to $G$ and the latter to $G^\vee$. Strictly speaking, they are not dual abelian varieties: $P_a^\vee$ is a torsor for the Prym variety $\Prym(C_a/C)$, while $P_a$ decomposes in $n$ components which are torsors for the identity component, the latter being an honest abelian variety dual of $\Prym(C_a/C)$.  
	\end{remark}
	
	The corresponding point $(E_L,\varphi_L,Q_L)$ in the fibre is recovered from the line bundle $L \to C_a$ by taking $E_L := (\pi_a)_*L$ as the vector bundle and $\varphi_L := (\pi_a)_*x$ as the Higgs field. Since this Higgs field is regular nilpotent at each $c_i \in D$, there is already a unique parabolic structure $Q_{L,i}$ at each $c_i$ which is compatible with $\varphi_L$, so it is automatically determined without any extra data. We can describe it explicitly as
	$$Q_{L,i}^j := (\pi_a)_*\left(\frac{L(-(n-j)c_i^a)}{L(-nc_i^a)}\right).$$
	
	In particular, note for later use that
	$$E_L/Q_{L,i}^j = (\pi_a)_*\left(\frac{L}{L(-(n-j)c_i^a)}\right) = (\pi_a)_*L|_{(n-j)c_i^a}.$$
	
	\subsection{Universal line bundles over the moduli space and duality}\label{subsecuniv}
	
	Now we set up the appropriate universal bundles required to test the duality.
	
	Let
	$$\pi_{\mathscr C} : \mathscr C \to \mathcal A^s \times C$$
	\noindent denote the family of spectral curves, so that $\mathscr{C}|_{\{a\} \times C} \simeq C_a$. In this way, we can describe $h^{-1}_{\SL_n(\C)}(\mathcal A^s) \subseteq \mathcal M_{sp}(\SL_n(\C),\alpha)$ as 
	$$\mathscr P^\vee := \{\mathscr L \in \Pic(\mathscr C/\mathcal A^s) : [\det((\pi_{\mathscr C})_*\mathscr L)] = [\mathcal O_{\mathcal A^s \times C}] \text{ in } \Pic(\mathcal A^s \times C/\mathcal A^s)\},$$ which is a torsor for the relative Prym given by $\Prym(\mathscr{C}/\mathcal A^s \times C)$, and $h^{-1}_{\PGL_n(\C)}(\mathcal A^s)$ as the quotient of relative Picard group schemes $\mathscr P := \Pic(\mathscr C/\mathcal A^s)/\pi_{\mathscr{C}}^*(\mathcal A^s \times \Pic(C))$.
	
	As indicated in Remark \ref{torsorfibres}, we expect a duality of torsors over dual abelian varieties, therefore according to the general SYZ philosophy, certain gerbes (sometimes referred to as \textit{B-fields} following the physical terminology behind mirror symmetry) should enter the picture (see \cite[Section 1]{hausel_mirror_2003}), accounting for the glueing of the chosen local trivialisations of the torsors. However, in our current parabolic setting we can avoid this entirely, precisely because the existence of the very stable upward flows that we are currently considering: these are sections of the Hitchin map which provide a well-defined choice of origin, identifying $P_a$ and each component of $P^\vee_a$ with the abelian varieties themselves in a coherent way across the whole of $\mathcal A^s$. This absence of the necessity for B-fields was already observed in \cite[Theorem 3.5]{gothenTopologicalMirrorSymmetry2019}. We thus let
	$$\mathscr O : \mathcal A^s \to \mathscr P$$
	\noindent be the section given by $\mathscr O(a) := \mathcal O_{C_a}(c_1^a)$ (through the BNR correspondence, this gives precisely the very stable upward flow determined by $\mu(E,\varphi) = \omega_1^\vee \cdot c_1$). By taking tensor powers, this choice determines an origin for every component of the fibres of $h_{\PGL_n(\C)}$ across $\mathcal A^s$.
	
	Similarly, we need to choose such an origin 
	$$\mathscr O^\vee : \mathcal A^s \to \mathscr P^\vee$$
	\noindent for $h_{\SL_n(\C)}$. If $\deg(D)$ is even, this can be achieved straightforwardly by taking $M$ to be a square root of $K_C(D)$ and setting $\mathscr{O^\vee}(a) := \pi_a^*(M^{n-1})$: this merely corresponds to one of the upward flows with trivial multiplicity divisor $\mu(E,\varphi) = 1$. Otherwise, no such upward flows exist, so one needs to select a different very stable upward flow, or any section of $h_{\SL_n(\C)}$. This can always be done, as explained in the proof of \cite[Theorem 3.5]{gothenTopologicalMirrorSymmetry2019}.
	
	We now fix a universal line bundle
	$$\mathbb L \to \mathscr P^\vee \times_{\mathcal A^s} \mathscr C$$
	\noindent rigidified with the following normalisations: 
	\[
	\mathbb L|_{\mathscr O^\vee(\mathcal A^s) \times_{\mathcal A^s} \mathscr C} = \mathcal O_{\mathscr C}, \qquad \mathbb L|_{\mathscr P^\vee \times_{\mathcal A^s} \{c_1^a\}_{a \in \mathcal A_s}} = \mathcal O_{\mathscr P^\vee}.
	\]
	
	Being universal, it satisfies that for $M \in P_a^\vee$, we have $\mathbb L|_{\{M\} \times C_a} \simeq M$. Moreover, over $\mathcal A^s$ we can recover (one of the normalisations of) the universal parabolic bundle $\mathbb E$ via the BNR correspondence, that is
	$$\mathbb E|_{h^{-1}_{SL_n(\C)}(\mathcal A^s) \times C} \simeq (\Id \times \pi_{\mathscr C})_*\mathbb L.$$
	
	Similarly, the filtration $\mathbb Q_i$ of $\mathbb E|_{h^{-1}_{SL_n(\C)}(\mathcal A^s) \times \{c_i\}}$ can be recovered via the BNR correspondence as indicated at the end of Section \ref{subsecspectral}.
	
	The duality for the Hitchin systems over $\mathcal A^s$ is given by the Fourier--Mukai transform, which requires introducing a Poincaré line bundle on the product of our two abelian varieties. However, since we only care about the dual line bundles of sections
	$$\mathscr S : \mathcal A^s \to \mathscr P,$$
	\noindent we may bypass this and write it down merely in terms of $\mathbb L$.
	
	\begin{remark}\label{fouriermukai}
		The dual of the structure sheaf of the image of a section $\mathscr S : \mathcal A^s \to \mathscr P$ which is induced by a line bundle $\mathscr L \to \mathscr C$ can be computed as follows. If $\mathbb P \to \mathscr P \times_{\mathcal A^s} \mathscr P^\vee$ denotes the relative Poincaré line bundle, the desired dual is the line bundle over $\mathscr P^\vee$ given by the slice $\mathbb P|_{\mathscr S(A^s) \times_{A_s} \mathscr P^\vee}$. Said restriction can be related to $\mathbb L$ \cite[Equation 5-18]{meloFourierMukaiAutoduality2019} as follows:
		$$\mathbb P|_{\mathscr S(A^s) \times_{A_s} \mathscr P^\vee} = \det\left(R(p_{\mathscr P^\vee})_*\left(\mathbb L \otimes p_{\mathscr C}^*(\mathscr L\right)\right)\otimes \det(R(p_{\mathscr P^\vee})_* \mathbb L)^{-1}.$$
		
		The rightmost factor accounts for the correct normalisation in a way that computing the mirror of the structure sheaf of our basepoint section $\mathscr O$, which corresponds to the line bundle $\mathscr L$ with divisor given by the preimage of $\mathcal A^s \times \{c_1\}$ in $\mathscr C$, gives $\mathcal O_{\mathscr P^\vee}$.
	\end{remark}
	
	\subsection{Computation of the mirror line bundle}
	
	We now verify Conjecture \ref{mirrorconjecture} for the structure sheaf of $W^+_w := W^+_{(E,\varphi,Q)}$. The first step is to identify the line bundles corresponding to $W^+_w$ via the BNR correspondence. In order to do this, it is useful to have an explicit description of $W^+_w$ in terms of vector bundles, and we can do so by recalling Example \ref{lengthzeroverystable}. Let $j_i$ be the index such that $\mu_i = \omega^\vee_{j_i}$, denote by
	$$D_j := \sum_{c_i \in D : j_i = j}c_i$$
	\noindent and, for notational consistency, set $D_n := D - D_1 - \dots - D_{n-1}$ (similarly, set $j_i = n$ for those $c_i \in D_n$). 
	
	For $a \in \mathcal \mathcal A$, we denote by $(E,\varphi_a,Q)$ the point in $W^+_w \cap h^{-1}_{\PGL_n(\C)}(a)$. Then, we have
	$$E := \bigoplus_{j=0}^{n-1}K_C^{-j}(-jD + \sum_{k=1}^jD_k),$$ 
	$$
	\begin{aligned}
		\varphi_a|_{K_C^{-j}(-jD+\sum_{k=1}^jD_k)} &:= s_{D_{j+1}} \in H^0(\mathcal O_C(D_{j+1})) \\
		&= H^0\Bigg(\Hom\Bigg(K_C^{-j}\left(-jD + \sum_{k=1}^jD_k\right), \\
		&\hspace{2.5cm} K_C^{-(j+1)}\left(-(j+1)D + \sum_{k=1}^{j+1}D_k\right) \otimes K_C(D)\Bigg)\Bigg),
	\end{aligned}
	$$
	\noindent for $j<n-1$, where $s_{D_j}$ denotes the canonical section, and
	$$\varphi_a|_{K_C^{-(n-1)}(-(n-1)D+\sum_{k=1}^{n-1}D_k)} := \sum_{j=2}^na_j,$$
	\noindent where each summand is viewed as
	$$
	\begin{aligned}
		a_j &\in H^0\Bigg(\Hom\Bigg(K_C^{-(n-1)}\left(-(n-1)D+\sum_{k=1}^{n-1}D_k\right), \\
		&\hspace{2.5cm} K_C^{-(n-j)}\left(-(n-j)D+\sum_{k=1}^{n-j}D_k\right) \otimes K_C(D)\Bigg)\Bigg) \\
		&= H^0\left(K_C^j\left(jD-\sum_{k=n-j+1}^{n-1}D_k\right)\right)
	\end{aligned}
	$$
	\noindent under the natural map of $H^0(K^j_C(j-1)D)$ into the right hand side above (since the sum of all $D_j$ is an effective divisor lower or equal than $D$).
	
	The parabolic structure is given explicitly by the permutation indicated by $u(E,Q)$. Set $V_j := K_C^{-j}(-jD + \sum_{k=1}^jD_k)$. Then, for $c_i \in D_j$, the corresponding structure is
	\begin{align*}
		Q_{c_i} := 0 &\subsetneq V_{j-1}|_{c_i} \subsetneq V_{j-2}|_{c_i}\oplus V_{j-1}|_{c_i} \subsetneq \dots \subsetneq \bigoplus_{l=0}^{j-1}V_{l}|_{c_i} \\
		&\subsetneq \bigoplus_{l=0}^{j-1}V_{l}|_{c_i} \oplus V_{n-1}|_{c_i} \\
		&\subsetneq \bigoplus_{l=0}^{j-1}V_{l}|_{c_i} \oplus V_{n-2}|_{c_i} \oplus V_{n-1}|_{c_i} \\
		&\subsetneq \dots \subsetneq E|_{c_i}.
	\end{align*}

	Using this explicit description, we compute the corresponding spectral line bundle.
	
	\begin{proposition}\label{bnrcomputation}
		Let $a \in \mathcal A^s$ and let $(E,\varphi_a,Q) \in \mathcal M_{sp}(\PGL_n(\C),\alpha)$ denote the point $W^+_w \cap h^{-1}_{\PGL_n(\C)}(a)$. Then, under the identification of Proposition \ref{parabolicbnr}, $(E,\varphi_a,Q)$ corresponds to the line bundle
		$$L^w := \bigotimes_{c_i \in D}\mathcal O_{C_a}((n-j_i) \cdot c_i^a),$$
		\noindent where $j_i \in \{1,\dots,n-1\}$ is such that $\mu_i = \omega_{j_i}^\vee$.
	\end{proposition}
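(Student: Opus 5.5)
The plan is to prove directly that $(\pi_a)_*L^w \simeq E$ in a way that identifies multiplication by the tautological section $x$ with $\varphi_a$. The parabolic structure then takes care of itself: for $a \in \mathcal A^s$ the residue of $\varphi_a$ at each $c_i$ is regular nilpotent, so, as recalled after Proposition \ref{parabolicbnr}, there is a unique compatible full flag. Hence $Q$ must coincide with $Q_{L^w,i}$ as soon as $E$ and $\varphi_a$ are matched. Since $P_a = \Pic(C_a)/\pi_a^*\Pic(C)$, it is enough to produce such an isomorphism up to twisting $E$ by a line bundle from $C$.

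\textbf{Step 1: the unmodified case.} Take $L = \mathcal O_{C_a}$. The standard BNR computation gives
$$(\pi_a)_*\mathcal O_{C_a} = \bigoplus_{j=0}^{n-1} x^j \cdot K_C^{-j}(-jD),$$
with $x$ acting by the companion matrix of $s_a$. This is exactly the case $D_1 = \dots = D_{n-1} = 0$ of the description of $W^+_w$ above, that is, the Hitchin section of Example \ref{paraboliccanonical}.

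\textbf{Step 2: local structure of $C_a$ at $c_i^a$.} Since every $a_j$ vanishes at $c_i$ and $C_a$ is smooth, $a_n$ must vanish to order exactly one at $c_i$. Trivialise $K_C(D)$ near $c_i$ and write $z$ for a local coordinate at $c_i$. Then $x$ is a uniformiser $y$ at $c_i^a$, and $z = y^n \cdot(\text{unit})$. This step is the key input, and the one I expect to need the most care. The reason is that we need both the simple vanishing of $a_n$ and the precise compatibility of the embedding $H^0(K_C^j(j-1)D) \to H^0(K_C^j(jD - \sum_{k=n-j+1}^{n-1}D_k))$ with the companion-matrix entries.

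\textbf{Step 3: pole orders and the pushforward of $L^w$.} By Step 2, the rational function $x^j s_{c_i}^{-1}$ has a pole of order $n-j$ at $c_i^a$. So near $c_i$, the sections of $\mathcal O_{C_a}((n-j_i)c_i^a)$ form a free $\mathcal O_C$-module with basis
$$1,\ x,\ \dots,\ x^{j_i-1},\ x^{j_i}s_{c_i}^{-1},\ \dots,\ x^{n-1}s_{c_i}^{-1}.$$
Doing this at every puncture and gluing with Step 1 gives
$$(\pi_a)_*L^w = \bigoplus_{j=0}^{n-1} x^j\, K_C^{-j}\Big(-jD + \sum_{k \le j} D_k\Big) = E.$$
Under this identification, multiplication by $x$ sends the $j$-th summand to the $(j+1)$-st via the inclusion $\mathcal O_C \to \mathcal O_C(D_{j+1})$, that is, by $s_{D_{j+1}}$. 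On the last summand, $x \cdot x^{n-1} = -\sum_{j} a_j x^{n-j}$. Rewriting this in the twisted basis produces exactly the entries $a_j$, viewed through the natural map described before the statement. Hence $(\pi_a)_*x = \varphi_a$.

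\textbf{Step 4: conclusion.} The flags agree by the uniqueness argument above. One can also check this directly: $Q^j_{L^w,i} = (\pi_a)_*\big(L^w(-(n-j)c_i^a)/L^w(-nc_i^a)\big)$ reproduces the permuted flag $Q_{c_i}$ in the ordered basis from Step 3. Therefore $(E,\varphi_a,Q)$ corresponds to $L^w$ in $P_a$, and the ambiguity from line bundles pulled back from $C$ is absorbed by the quotient by $\pi_a^*\Pic(C)$.
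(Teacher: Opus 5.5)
Your proposal is correct and takes essentially the same route as the paper. You work locally at each $c_i^a$, use that $x$ is a uniformiser with $z$ equal to $x^n$ up to a unit, and find that exactly the basis elements $x^j$ with $j \ge j_i$ acquire a simple pole. Gluing then gives $(\pi_a)_*L^w = E$ with $(\pi_a)_*x = \varphi_a$, and the flags match by uniqueness for regular nilpotent residues; your explicit check of the $x$-action and of the flags fills in details the paper leaves as ``clear''.
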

	\begin{proof}
		We compute $(\pi_a)_*(L^w)$ locally. Trivialising $K_C(D)$ around $c_i$ (thus $\pi_S^*K_C(D)$ around $c_i^a$), we find that the tautological section $x$ defines a uniformising parameter around $c_i^a$, since the smoothness of $C_a$ implies that $\ord_{c_i^a}(a_n') = n \cdot \ord_{c_i}(a_n) = n$.
		
		We may choose as well a uniformiser $z$ at $c_i$, such that we get identifications of the completed local rings $R = \widehat{\mathcal O_{C,c_i}} \simeq \C[[z]]$, $R' := \widehat{\mathcal O_{C_a,c_i^a}} \simeq \C[[x]] = \C[[z]][x]/(x^n+a_2(z)x^{n-2}+\dots+a_n(z))$ where $z$ equals $x^n$ up to multiplication by an invertible element. We choose $\{1,x,\dots,x^{n-1}\}$ as a basis for $\widehat{\mathcal O_{C_a,c_i^a}}$ over $\C[[z]]$. Around $c_i$, local sections of $\pi_*L^w$ correspond to meromorphic functions
		$$f = \sum_{j=0}^{n-1}f_j(z)x^{j} \in \widehat{\mathcal O_{C_a,c_i^a}}$$
		\noindent with $\ord_{c_i^a}(f) \ge j_i-n$. Since $\ord_{c_i^a}(f_j(z)x^j) = j+n\ord_{c_i}(f_j)$, which are all distinct for $0 \le j \le n-1$, the condition is equivalent to
		$$j+n\ord_{c_i}(f_j) \ge j_i-n,$$
		\noindent that is,
		$$\ord_{c_i}(f_j) \ge -\left(1+\frac{j-j_i}{n}\right).$$
		Therefore, the $f_j$ are allowed a simple pole at $c_i$ whenever $j \ge j_{i}$, so that globally
		$$\pi_* L^w = \bigoplus_{j=0}^{n-1}K_C^{-j}(-jD + \sum_{k=1}^jD_k) = E.$$
		
		It is clear from the above analysis that $(\pi_a)_*(x) = \varphi_a$, and there is a unique parabolic structure compatible with $\varphi_a$ since smoothness of $C_a$ implies that it is regular nilpotent at the parabolic punctures.
	\end{proof}
	
	We are now in position of computing the mirror line bundle.
	
	\begin{proposition}\label{conjecturecheck}
		For $G=\PGL_n(\C)$ and generic weights $\alpha$ in the interior of the standard alcove, the dual of the structure sheaf $\mathcal O_{W^+_w}$ where $w$ is a twisted multiplicity divisor supported at $D$ equals the line bundle $\mathcal L_w$ proposed in Conjecture \ref{mirrorconjecture} over $\mathcal A^s$.
	\end{proposition}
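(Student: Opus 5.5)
Here is the plan. By Proposition \ref{bnrcomputation}, over $\mathcal A^s$ the section $W^+_w$ of $h_{\PGL_n(\C)}$ is induced by the line bundle $\mathscr L^w := \bigotimes_{c_i \in D}\mathcal O_{\mathscr C}((n-j_i)\cdot \mathscr c_i)$ on $\mathscr C$. Here $\mathscr c_i \subseteq \mathscr C$ denotes the divisor $\{c_i^a\}_{a \in \mathcal A^s}$, which is the reduced preimage of $\mathcal A^s \times \{c_i\}$. By Remark \ref{fouriermukai}, the dual of $\mathcal O_{W^+_w}$ is therefore
$$\det\bigl(R(p_{\mathscr P^\vee})_*(\mathbb L \otimes p_{\mathscr C}^*\mathscr L^w)\bigr)\otimes \det\bigl(R(p_{\mathscr P^\vee})_*\mathbb L\bigr)^{-1}.$$
So the whole computation reduces to understanding how the determinant of cohomology changes when $\mathbb L$ is twisted by the divisors $\mathscr c_i$.

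First, I will add the points one at a time. For each $k$ I use the exact sequence
$$0 \to \mathbb L((k-1)\mathscr c_i) \to \mathbb L(k\mathscr c_i) \to \mathbb L(k\mathscr c_i)|_{\mathscr P^\vee \times_{\mathcal A^s}\mathscr c_i} \to 0.$$
Since $\mathscr c_i \to \mathcal A^s$ is an isomorphism, multiplicativity of $\det R p_*$ shows that the factor added at each step is the restriction of $\mathbb L(k\mathscr c_i)$ to $\mathscr P^\vee \times_{\mathcal A^s} \mathscr c_i$. Iterating over $k=1,\dots,n-j_i$ and over all $i$, the dual becomes
$$\bigotimes_i\bigotimes_{k=1}^{n-j_i}\mathbb L(k\mathscr c_i)|_{\mathscr c_i}.$$
Each factor is $\mathbb L|_{\mathscr c_i}\otimes N^{\otimes k}$, where $N$ is the normal bundle of $\mathscr c_i$. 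The normal bundle is pulled back from $\mathcal A^s$, and is in fact essentially $K_C(D)^{-1}|_{c_i}$ twisted by a power, since $x$ is a uniformiser. It therefore only contributes a factor independent of the point of $\mathscr P^\vee$ over each $a$. This is the ambiguity allowed by the "fixed normalisation" clause, or it can be absorbed into the rigidification choices.

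Second, I compare with $\mathcal L_w$. Take $\mu_i=\omega_{j_i}^\vee$ for $\SL_n$; as a character of $B^\vee$ it is the determinant of the first $j_i$ graded pieces of the flag. Since the dual uses $\mu_i^{-1}$, up to the convention-dependent normalisation by the total determinant $\det \mathbb E|_{c_i}$ (which is trivial because we are working with $\SL_n$), we get
$$\mu_i^{-1}(\mathbb E_i) \simeq \det(\mathbb E|_{c_i}/\mathbb Q_i^{j_i})^{\pm 1}.$$
The end of Section \ref{subsecspectral} gives
$$\mathbb E|_{c_i}/\mathbb Q_i^{j_i} = (\pi)_*\bigl(\mathbb L|_{(n-j_i)\mathscr c_i}\bigr).$$
Filtering $\mathbb L|_{(n-j_i)\mathscr c_i}$ by powers of the maximal ideal, the graded pieces are $\mathbb L|_{\mathscr c_i}\otimes N^{-k}$ for $k=0,\dots,n-j_i-1$. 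Hence the determinant is $(\mathbb L|_{\mathscr c_i})^{\otimes(n-j_i)}$ up to pullbacks from $\mathcal A^s$. This matches the previous step: both sides equal $\bigotimes_i (\mathbb L|_{\mathscr c_i})^{\otimes (n-j_i)}$ modulo line bundles pulled back from the base. The rigidification $\mathbb L|_{\mathscr c_1}$ trivial is consistent with the basepoint $\mathscr O$ mapping to $\mathcal O$.

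The main obstacle is the bookkeeping of signs and of the factors pulled back from $\mathcal A^s$. Concretely, I need to verify three things. The first is whether $\mu_i^{-1}$ evaluated on the $B^\vee$-bundle gives the quotient determinant or the sub determinant; they differ by $\det\mathbb E|_{c_i}$, which is trivial for $\SL_n$, and by inversion. The second is that the normal-bundle factors are genuinely independent of the point in the fibre, so that they amount to a normalisation. The third is that the answer is compared with the same rigidified $\mathbb E$ used in Conjecture \ref{mirrorconjecture}. I would fix conventions by checking the case $w=\omega_1^\vee c_1$, i.e.\ the basepoint $\mathscr O$, whose mirror must be trivial, and then treat the general case.
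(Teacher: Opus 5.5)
Your proposal is correct and is essentially the paper's argument. Both use Proposition \ref{bnrcomputation} and the restriction formula of Remark \ref{fouriermukai} to reduce to the determinant of $R(p_{\mathscr P^\vee})_*$ of $\mathbb L$ along the thickened puncture $(n-j_i)\mathscr c_i$, and both then invoke the BNR description $\mathbb E_i/\mathbb Q_i^{j_i}\simeq(p_{\mathscr P^\vee})_*(\mathbb L|_{(n-j_i)\mathscr c_i})$. The only difference is that you expand both sides into graded pieces $(\mathbb L|_{\mathscr c_i})^{\otimes(n-j_i)}$, where the paper identifies them in one step.

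Two small points on your listed obstacles. The factors pulled back from $\mathcal A^s$ are in fact trivial, since $\mathcal A^s$ is open in the affine space $\mathcal A$ and so $\Pic(\mathcal A^s)=0$. The residual sign cannot be detected by your basepoint check, because both candidates are trivial there by the rigidification $\mathbb L|_{\mathscr c_1}\simeq\mathcal O$. Instead it is fixed by the convention the paper adopts, $\omega_{j}^{-1}(\mathbb E_i)=\det(\mathbb E_i/\mathbb Q_i^{j})$, and with that convention your computation closes.
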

	
\begin{proof}
	We choose the universal bundle $\mathbb E$ normalised as before. Let $\mathbb E_i := \mathbb E|_{h_{\SL_n(\C)}^{-1}(\mathcal A^s) \times \{c_i\}}$, equipped with its filtration
	$$0 \subsetneq \mathbb Q_i^1 \subsetneq \dots \subsetneq \mathbb Q_i^n = \mathbb E_i.$$
	
	According to Proposition \ref{bnrcomputation}, it suffices to verify that the dual of $\mathscr L := \mathcal O_{\mathscr C}((n-j_i) \cdot c_i^\bullet)$, where $c_i^\bullet = \pi_{\mathscr C}^{-1}(\mathcal A^s \times \{c_i\})$, equals (up to fixed normalisation) 
	$$\omega_{j_i}^{-1}(\mathbb E_i) = \det(\mathbb E_i/\mathbb Q_{i}^{j_i}).$$
	
	Thus, by Remark \ref{fouriermukai} it suffices to show
	$$R(p_{\mathscr P^\vee})_*\mathbb L|_{\mathscr P^\vee \times_{\mathcal A^s} \{(n-j_i) \cdot c_i^\bullet\}} \simeq \mathbb E_i/\mathbb Q_{i}^{j_i},$$
	\noindent which is a direct consequence of the description of the filtration as a pushforward via the BNR correspondence, as explained at the end of Section \ref{subsecspectral}.
\end{proof}
	
	\begin{remark}
		Mirror symmetry predicts that, since $W^+_w$ is a holomorphic Lagrangian subvariety of $\mathcal M_{sp}(\PGL_n(\C),\alpha)$ (a \textit{BAA-brane}), the dual line bundle should carry a hyperholomorphic connection (a \textit{BBB-brane}). From the algebraic construction it is only apparent that the one proposed in Conjecture \ref{mirrorconjecture} is holomorphic in one of the structures, so an interesting question is to check their hyperholomorphicity. This could be achieved, as in \cite[Remark 7.1]{hausel_very_2022}, by considering the moduli space as a hyperkähler reduction and checking that the proposed line bundles can be obtained from unitary representations of the corresponding gauge group.
	\end{remark}
	
	\begin{remark}
		Due to the local behaviour of Hecke transformations, if the divisor $w$ is not supported at $D$, the dual can be obtained as in \cite[Section 6]{hausel_very_2022}. In other words, for the very stable upward flow with twisted multiplicity coweight $w$, letting $\mu_c := \mu_c(E,\varphi)$ and considering the representations $\rho^{\mu_c} : G^\vee \to \GL(V^{\mu_c})$ with highest weight $\mu_c$, the mirror bundle for $\mathcal O_{W^+_w}$ should be (up to normalisation) given by
		$$\bigotimes_{c \notin D}\rho^{\mu_c}(\mathbb E|_{\mathcal M_{sp}(G^\vee,\alpha) \times \{c\}}) \otimes \bigotimes_{c_i \in D}\mu_i^{-1}((\mathbb E|_{\mathcal M_{sp}(G^\vee,\alpha) \times \{c_i\}})_{B^\vee}).$$
	\end{remark}
	
	\bibliographystyle{plain}
	\bibliography{bibliography}
	
\end{document}